\renewcommand{\baselinestretch}{1.5}
\definecolor{light}{gray}{.8} 
\def\singlespace{\def\baselinestretch{1}\@normalsize}
\newcommand{\lineProof}{}
\newcommand{\lineProof}
{\begin{center}
\line(1,0){250}
\end{center}}
 \theoremstyle{plain}
 \newtheorem{thm}{Theorem}[section]
 \newtheorem{lem}[thm]{Lemma}
 \newtheorem{prop}[thm]{Proposition}
 \theoremstyle{remark}
 \newtheorem{rem}{\textbf{Remark}}
 \theoremstyle{definition}
 \numberwithin{equation}{section}
\newcommand{\Keywords}[1]{\par\noindent{\small{\em Keywords\/}: #1}}
\newcommand{\argmin}{\text{argmin }}
\newcommand{\sign}{\text{Sign}}
\newcommand{\diag}{\text{diag}}
\newcommand{\E}{\mathbb{E}}
\newcommand{\Var}{\text{Var}}
\newcommand{\Cov}{\text{Cov}}
\newcommand{\bZ}{\mathbf{Z}}
\newcommand{\SC}{\overset{a.s.}{\longrightarrow}}
\newcommand{\ConvDist}{\overset{d}{\longrightarrow}}
\newcommand{\ConvProb}{\overset{P}{\longrightarrow}}
\newcommand{\abs}[1]{\left\lvert#1\right\rvert}
\newcommand{\norm}[1]{\left\lVert#1\right\rVert}
\newcommand{\normF}[1]{\left\lVert#1\right\rVert_{F}}  
\newcommand{\normFd}[1]{\left\lVert#1\right\rVert_{F,d}}  
\newcommand{\normE}[1]{\left\lVert#1\right\rVert_{2}}  
\newcommand{\normEd}[1]{\left\lVert#1\right\rVert_{2,d}}  
\renewcommand{\abs}[1]{\lvert#1\rvert}
\renewcommand{\norm}[1]{\lVert#1\rVert}
\renewcommand{\normF}[1]{\lVert#1\rVert_{F}}  
\renewcommand{\normFd}[1]{\lVert#1\rVert_{F,d}}  
\renewcommand{\normE}[1]{\lVert#1\rVert_{2}}  
\renewcommand{\normEd}[1]{\lVert#1\rVert_{2,d}}  
\newcommand{\bd}[1]{\boldsymbol{#1}}
\newcommand{\OnItem}[1]{\textbf{#1}}
\newcommand{\revTang}[1]{\textcolor[rgb]{0.78,0.39,0.00}{#1}}
\renewcommand{\revTang}[1]{#1}
\begin{document}

\title {Partial Consistency  with Sparse Incidental Parameters}


\author[1]{Jianqing Fan}
\author[2]{Runlong Tang}
\author[1]{Xiaofeng Shi}
\affil[1]{Princeton University}
\affil[2]{Johns Hopkins University}
\maketitle

\setcounter{page}{1}   
\pagenumbering{roman}   

\setcounter{tocdepth}{3} 


\begin{abstract}
Penalized estimation principle is fundamental to high-dimensional problems.
In the literature, it has been extensively and successfully applied
to various models with only structural parameters.
As a contrast,
in this paper,
we apply this penalization principle
to a linear regression model
with
a finite-dimensional vector of structural parameters and
a high-dimensional vector of sparse incidental parameters.
For the estimators of the structural parameters,
we derive their consistency and
asymptotic normality,
which reveals an oracle property.
However,
the penalized estimators for the incidental parameters
possess only partial selection consistency but not consistency.
This is an interesting partial consistency phenomenon:
the structural parameters are consistently estimated
while the incidental ones cannot.
For the structural parameters, also considered is
an alternative two-step penalized estimator,
which has fewer possible asymptotic distributions
and thus is more suitable for statistical inferences.
We further extend the methods and results to the case
where the dimension of the structural parameter vector
diverges with but slower than the sample size.
A data-driven approach for selecting a penalty regularization parameter
is provided.
The finite-sample performance of
the penalized estimators for the structural parameters
is evaluated by simulations
and a real data set is analyzed.
\end{abstract}

\Keywords{
Structural Parameters,
Sparse Incidental Parameters,
Penalized Estimation,
Partial Consistency,
Oracle Property,
Two-Step Estimation,
Confidence Intervals
}

\setcounter{page}{1}    
\pagenumbering{arabic}  


\section{Introduction}
\label{sec1}
Since the pioneering papers by
\cite{Tibshirani1996a}
and \cite{Fan2001a},
the penalized estimation methodology for
exploiting sparsity has been
studied extensively.
For example,
\cite{Zhao2006} provides
an almost necessary and sufficient condition,
namely Irrepresentable Condition,
for the LASSO estimator to be strong sign consistent.
\cite{Fan2011b} shows that an oracle property
holds for
the folded concave penalized estimator
with ultrahigh dimensionality.
For an overview on this topic, see
\cite{Fan2010}.

All the aforementioned papers consider models
only with the so-called structural parameters,
which are related to every data point.
By contrast,
we consider in this paper another type of models where
there are not only the structural parameters
but also the so-called incidental parameters,
each of which is related to only one data point.
Specifically, suppose data $\{\bd{X}_{i}, Y_{i}\}_{i=1}^{n}$ are from the following linear model:
\begin{equation}\label{model:basic}
    Y_{i} = \mu_{i}^{\star} + \bd{X}_{i}^{T}\bd{\beta}^{\star} + \epsilon_{i},
\end{equation}
where
the vector of \emph{incidental} parameters
$\bd{\mu}^{\star}=(\mu_{1}^{\star}, \cdots, \mu_{n}^{\star})^{T}$
is sparse,
the vector of \emph{structural} parameters
$\bd{\beta}^{\star} = (\beta_{1}^{\star}, \cdots, \beta_{d}^{\star})^{T}$
is of main interest, and for each $i$,
$\bd{X}_{i}$ is a $d$-dimensional covariate vector,
and $\epsilon_{i}$ is a random error.
Let $\bd{\nu}=(\bd{\mu}^{\star T}, \bd{\beta}^{\star T})^{T}$.
Then, in model (\ref{model:basic}),
a different data point $(\bd{X}_{i}, Y_{i})$ depends on a different subset of $\bd{\nu}$,
that is, $\mu_{i}^{\star}$ and $\bd{\beta}^{\star}$.

Model (\ref{model:basic}) arises as a working model for estimation
from \cite{Fan2012a}, which considers
a large-scale hypothesis testing problem
under arbitrary dependence of test statistics.
By Principal Factor Approximation, a method proposed by \cite{Fan2012a},
the dependent test statistics $\bZ = (Z_1, \cdots, Z_p)^{T} \sim N(\bd{\mu}, \bd{\Sigma})$ can be decomposed as
$Z_{i} = \mu_{i} + \bd{b}_{i}^{T}\bd{W} + K_{i}$,
where $\bd{\mu}=(\mu_{1}, \cdots, \mu_{p})^{T}$
and $\bd{b}_{i}$ is the $i$th row of the first $k$ unstandardized principal components,
denoted by $\bd{B}$, of $\bd{\Sigma}$ and  $\bd{K}=(K_{1}, \cdots, K_{p})^{T}\sim N(0, \bd{A})$  with $\bd{A} = \bd{\Sigma} - \bd{B} \bd{B}^{T}$.
The common factor $\bd{W}$ drives the dependence among the test statistics.
This realized but unobserved factor is critical for False Discovery Proportion (FDP) estimation
and power improvements by removing the common factor $\{\bd{b}_{i}^{T}\bd{W}\}$ from the test statistics.
Hence, an important goal is to estimate $\bd W$ with given $\{\bd{b}_i\}_{i=1}^n$.
In many applications on large-scale hypothesis testing,
the parameters $\{\mu_i\}_{i=1}^p$ are sparse.
For example, genome-wide association studies show that
the expression level of gene CCT8
is highly related to the phenotype of Down Syndrome.
It is of interest to test the association
between each of millions of SNP's and the CCT8 gene expression level.
In the framework of \cite{Fan2012a},
each $\mu_{i}$ stands for such an association.
That is,
if $\mu_{i}=0$, the $i$th SNP is not associated with the CCT8 gene expression level;
otherwise, it is associated.
Since most of the SNP's are not associated the CCT8 gene expression level,
it is reasonable to assume
$\{\mu_i\}_{i=1}^p$ are sparse.
Replacing
$Z_{i}$,
$\mu_{i}$,
$\bd{b}_{i}$,
$\bd{W}$,
$k$, $p$,
and $K_{i}$
with
$Y_{i}$,
$\mu_{i}^{\star}$,
$\bd{X}_{i}$,
$\bd{\beta}^{\star}$,
$d$, $n$,
and $\epsilon_{i}$
respectively,
we obtain model (\ref{model:basic}) formally.
It is of interest to study this model independently
with simplifications.

Although model (\ref{model:basic})
emerges from a critical component of estimating FDP in \cite{Fan2012a},
it stands with its own interest.
For example,
in some applications,
there are only few signals (nonzero $\mu_{i}^{\star}$'s)
and what is interesting is to learn about $\bd{\beta}^{\star}$,
which reflects the relationship
between the covariates and response.
For another example,
those few nonzero $\mu_{i}^{\star}$'s
might be some measurement or recording errors
of the responses $\{Y_{i}\}$.
In these cases,
model (\ref{model:basic})
is suitable for modeling data with contaminated responses
and a method producing a reliable estimator for $\bd{\beta}^{\star}$
is essentially a robust replacement for ordinary least squares estimate,
which is sensitive to outliers.

Several models with structural and incidental parameters
have first been studied
in a seminal paper by \cite{Neyman1948},
which points out the inconsistency of the maximum likelihood estimators (MLE)
of structural parameters
in the presence of a large number of incidental parameters and
provides a
modified
MLE.
However, their method does not work for model (\ref{model:basic})  due to no exploration
of the sparsity of incidental parameters.
\cite{Kiefer1956} shows
the consistency of the MLE of the structural parameters
when the incidental parameters are assumed to be
from a common distribution.
That is, they eliminate the essential high-dimensional issue
of the incidental parameters by randomizing them.
In contrast,
this paper considers deterministic incidental parameters
and handles the high-dimensional issue by penalization with a sparsity assumption.
\cite{Basu1977}
considers the elimination of nuisance parameters
via marginalizing and conditioning methods
and
\cite{Moreira2009}
solves the incidental parameter problem
with an invariance principle. For a review of the incidental parameter problems
in statistics and economics, see
\citet{Lancaster2000}.

Without loss of generality,
suppose the first $s$ incidental parameters $\{\mu_{i}^{\star}\}_{i=1}^{s}$ are nonvanishing
and the remaining are zero.
Then, model (\ref{model:basic}) can be written in a matrix form as
$\bd{Y} = \bd{X}\bd{\nu} + \bd{\epsilon}$,
where
\begin{equation*}
    \bd{X} =
    \begin{pmatrix}
  \bd{I}_{s} & \bd{X}_{1,s}^{T} & \bd{0} \\
  \bd{0} & \bd{X}_{s+1,n}^{T} & \bd{I}_{n-s} \\
 \end{pmatrix},
\end{equation*}
$\bd{X}_{i,j}^{T}=(\bd{X}_{i}, \bd{X}_{i+1}, \cdots, \bd{X}_{j})^{T}$,
$\bd{I}_{k}$ is a $k\times k$ identity matrix,
$\bd{0}$ is a generic block of zeros
and
$ \bd{\nu} = (\mu_{1}^{\star}, \cdots, \mu_{s}^{\star}, \bd{\beta}^{T}, \mu_{s+1}^{\star}, \cdots, \mu_{n}^{\star})^{T}$.
Although this is a sparse high-dimensional problem,
the matrix $\bd{X}$ does not satisfy
the sufficient conditions of the theoretical results in \cite{Zhao2006} and \cite{Fan2011b}
due to the inconsistency of the estimation of the incidental parameters in $\bd{\nu}$
and the penalty should not be simply placed on all parameters.
For details, see Supplement \ref{PC.Paper.Supplement.Introduction}.

In this paper,
we investigate a penalized estimator \revTang{of $(\bd{\mu}^{\star}, \bd{\beta}^{\star})$}
defined by
\begin{equation} \label{eq1.2}
(\hat{\bd{\mu}}, \hat{\bd{\beta}})
=
\underset{(\bd{\mu},\bd{\beta})\in\mathbb{R}^{n+d}}{\argmin}
\sum_{i=1}^{n}(Y_{i}-\mu_{i} - \bd{X}_{i}^{T}\bd{\beta})^{2}
+ \sum_{i=1}^{n}p_{\lambda}(|\mu_{i}|),
\end{equation}
where $p_{\lambda}$ is a penalty function
with a regularization parameter $\lambda$.
Since only the incidental parameters are sparse,
the penalty is 
imposed on them.
An iterative algorithm is proposed
to numerically compute the estimators.
The estimator $\hat{\bd{\beta}}$
possesses consistency, asymptotic normality and an oracle property.
On the other hand, the nonvanishing elements of $\bd{\mu}^{\star}$ cannot be consistently estimated
even if $\bd{\beta}^{\star}$ were known.
So, there is a partial consistency phenomenon.

Penalized estimation (\ref{eq1.2}) is a one-step method.
For the estimation of $\bd{\beta}^{\star}$, We also propose a two-step method
whose first step is designed to eliminate the influence of the data
with large incidental parameters.
The estimator $\tilde{\bd{\beta}}$ from the two-step method
has fewer possible asymptotic distributions
than $\hat{\bd{\beta}}$
and thus is more suitable for constructing confidence regions for $\bd{\beta}^{\star}$.
It is asymptotically equivalent to
the one-step estimator $\hat{\bd{\beta}}$ when the sizes of the nonzero incidental parameters are small enough,
that is, when the incidental parameters are really sparse.
Also, the two-step method improves the convergence rate and efficiency over the one-step method
for challenging situations
where large nonzero incidental parameters
increase the asymptotic covariance or even reduce the convergence rate
for the one-step method.

The rest of the paper is organized as follows.
In Section \ref{sec2},
the model and penalized estimation method
are formally introduced
and the corresponding penalized estimators are
characterized.
In Section \ref{sec3},
asymptotic properties
of the penalized estimators
are derived;
a penalized two-step estimator is proposed and its theoretical properties are obtained;
we also 
provide a data-driven approach for selecting the regularization parameter.
In Section \ref{sec:Diverging number of structural parameters},
we consider the case where the number of covariates grows with but slower than the sample size.
In Section \ref{sec6}, we present simulation results
and analyze a read data set.
Section \ref{PC.Paper.Conclusion} concludes this paper
with a discussion
and all the proofs and some theoretical results are relegated to
the appendix and supplements.

\section{Model and Method}
\label{sec2}

\revTang{The matrix form of model (\ref{model:basic}) is} given by
\begin{equation}
\label{eq2.1}
    \bd{Y} = \bd{\mu}^{\star} + \bd{X}\bd{\beta}^{\star} + \bd{\epsilon},
\end{equation}
where $\bd{Y}=(Y_{1}, Y_{2}, \cdots, Y_{n})^{T}$,
$\bd{X}=(\bd{X}_{1}, \bd{X}_{2}, \cdots, \bd{X}_{n})^{T}$, and
$\bd{\epsilon}=(\epsilon_{1},\epsilon_{2},\cdots,\epsilon_{n})^{T}$.
The covariates
$\{\bd{X}_{i}\}_{i=1}^{n}$ are independent and identically distributed (i.i.d.) copies of $\bd{X}_{0}\in\mathbb{R}^{d}$,
which is a random vector
with mean zero and a covariance matrix $\bd{\Sigma}_{X}>0$.
They are independent of the random errors $\{\epsilon_{i}\}$,
which are i.i.d. copies of $\epsilon_{0}$,
which is a random variable with mean zero and variance $\sigma^{2}>0$.
Denote $a_{n} \ll b_{n}$ and $a_{n} \gg b_{n}$
if $a_{n}=o(b_{n})$ and $b_{n}=o(a_{n})$,
respectively.
There is an assumption on the covariates and random errors.
\begin{description}
  \item[Assumption (A):]
  There exist positive sequences $\kappa_{n}\ll  \sqrt{n}, \gamma_{n} \ll  \sqrt{n}$
such that
\begin{equation}
\label{eq2.2}
P(\max_{1\leq i\leq n} \normE{\bd{X}_{i}} > \kappa_{n}) \rightarrow 0
\text{ and }
P(\max_{1\leq i\leq n} |\epsilon_{i}| > \gamma_{n}) \rightarrow 0,
\text{ as } n\rightarrow \infty,
\end{equation}
where $\|\cdot\|_{2}$ stands for the $l_{2}$ norm of $\mathbb{R}^{d}$.
\end{description}

Suppose there are three types of incidental parameters in model
(\ref{model:basic}) or (\ref{eq2.1}):
for simplicity on the indexes,
the first $s_{1}$ incidental parameters $\{\mu_{i}^{\star}\}_{i=1}^{s_{1}}$ are large
in the sense that
$|\mu_{i}^{\star}| \gg  \max\{\kappa_{n}, \gamma_{n}\}$ for ${1\leq i \leq s_{1}}$;
the next $s_{2}$ ones $\{\mu_{i}^{\star}\}_{i=s_{1}+1}^{s}$ are nonzero and bounded by $\gamma_{n}$
with $s=s_{1}+s_{2}$;
the last $n-s$ ones $\{\mu_{i}^{\star}\}_{i=s+1}^{n}$ are zero.
Note that it is unknown to us
which $\mu_{i}^{\star}$'s are large, bounded or zero.
The sparsity of $\bd{\mu}^{\star}$ is understood by
$s_{1} + s_{2} \ll n$, i.e. $s_{1}+s_{2}=o(n)$.
Denote the vectors of the three types of incidental parameters $\bd{\mu}_{1}^{\star}$,
$\bd{\mu}_{2}^{\star}$, and $\bd{\mu}_{3}^{\star}$, respectively.

The penalized estimation (\ref{eq1.2}) can be written as
\begin{equation}
\label{eq2.3}
    (\hat{\bd{\mu}}, \hat{\bd{\beta}})
    =
    \underset{(\bd{\mu}, \bd{\beta})}{\argmin}
    L(\bd{\mu},\bd{\beta}), \qquad
    L(\bd{\mu},\bd{\beta})
    =
    \normE{\bd{Y} - \bd{\mu} - \bd{X}\bd{\beta}}^{2}
    +
    \sum_{i=1}^{n} p_{\lambda}(|\mu_{i}|).
\end{equation}
The penalty function $p_{\lambda}$ can be the soft (i.e. $L_{1}$ or LASSO),
hard, SCAD
or a general folded concave penalty function \citep{Fan2001a}.
For simplicity,
we next consider only the soft penalty function,
that is, $p_{\lambda}(|\mu_{i}|) = 2\lambda|\mu_{i}|$.
The cases with
the hard and SCAD penalties
can be considered
in a similar way.

By Lemma \ref{lem2.1} in Supplement \ref{PC.Paper.Supplement.Model.Data.Method},
A necessary and sufficient condition for
$(\hat{\bd{\mu}}, \hat{\bd{\beta}})$
to be a minimizer of $L(\bd{\mu},\bd{\beta})$ is that
$\hat{\bd{\beta}} = (\bd{X}^{T}\bd{X})^{-1}\bd{X}^{T}(\bd{Y}-\hat{\bd{\mu}})$,
$Y_{i} - \hat{\mu}_{i} - \bd{X}_{i}^{T}\hat{\bd{\beta}} = \lambda  \sign(\hat{\mu}_{i})$ for $i \in \hat{I}_{0}^{c}$
and
$|Y_{i} - \bd{X}_{i}^{T}\hat{\bd{\beta}}| \leq \lambda$ for $i \in \hat{I}_{0}$,
where $\sign(\cdot)$ is a sign function and $\hat{I}_{0}=\{1\leq i\leq n: \hat{\mu}_{i} = 0\}$.

Numerically, the special structure of $L(\bd{\mu}, \bd{\beta})$
suggests a marginal decent algorithm
for the minimization problem in (\ref{eq2.3}),
which iteratively computes
$
    \bd{\mu}^{(k)}
    =
    \underset{\bd{\mu}\in\mathbb{R}^{n}}{\argmin}
    L(\bd{\mu},\bd{\beta}^{(k-1)})$ and
$
    \bd{\beta}^{(k)}
    =
    \underset{\bd{\beta}\in\mathbb{R}^{d}}{\argmin}
    L(\bd{\mu}^{(k)},\bd{\beta})
$
until convergence. The advantage of this algorithm is that
there exist analytic solutions to the \revTang{above two minimization problems}.
They are respectively the soft-threshold estimators with residuals
$\{Y_i - \bd{X}_{i}^{T}\bd{\beta}^{(k-1)}\}$
and ordinary least-squares estimator with responses $\bd{Y}-\bd{\mu}^{(k)}$.
In this section and the next,
the number of the covariates $d$ is assumed to be a \emph{fixed} integer.
A case where $d$ diverging to infinity
will be considered in Section \ref{sec:Diverging number of structural parameters}.
For the case where $d$ is finite,
we make the following assumption on $\lambda$.
\begin{description}
  \item[Assumption (B):]
  The regularization parameter $\lambda$ satisfies
\begin{equation}
\label{eq3.1}
\kappa_{n} \ll  \lambda,~  \alpha\gamma_{n} \leq \lambda, \text{ and } \lambda \ll  \min\{\mu^{\star},\sqrt{n}\},
\end{equation}
where
$\kappa_{n}$ and $\gamma_{n}$ are defined in (\ref{eq2.2}),
$\alpha$ is a constant greater than 2,
and $\mu^{\star} = \min_{1\leq i \leq s_1} |\mu_{i}^{\star}|$.
\end{description}
For simplicity,
abbreviate ``with probability going to one" to ``wpg1".
A stopping rule for the above algorithm
is based on the successive difference
$\normE{\bd{\beta}^{(k+1)} - \bd{\beta}^{(k)}}$.
By Proposition \ref{prop2.2} in Supplement \ref{PC.Paper.Supplement.Model.Data.Method},
wpg1,
the iterative algorithm stops at the
the second iteration,
given the initial estimator is bounded wpg1.

Suppose $\{\bd{\beta}^{(k)}\}$ has a theoretical limit $\bd{\beta}^{(\infty)}$,
corresponding to which,
there is a limit estimator $\bd{\mu}^{(\infty)}$.
Then, $(\bd{\mu}^{(\infty)}, \bd{\beta}^{(\infty)})$ is a solution of the following system of nonlinear equations
\begin{equation}
\label{eq2.4}
    \bd{\beta}
    =
    (\bd{X}^{T}\bd{X})^{-1}\bd{X}^{T}(\bd{Y}-\bd{\mu}),
\end{equation}
and, with soft-threshold applied to each component, it follows
\begin{equation}
\label{eq2.5}
\bd{\mu} = (|\bd{Y} - \bd{X}^T\bd{\beta}|- \lambda)_{+} \sign(\bd{Y} - \bd{X}^T\bd{\beta}),
\end{equation}
where $(\cdot)_{+}$ returns the maximum value of the input and zero.
By Lemma \ref{lem2.3} in Supplement \ref{PC.Paper.Supplement.Model.Data.Method},
a necessary and sufficient condition for
$(\hat{\bd{\mu}}, \hat{\bd{\beta}})$
to be a minimizer of $L(\bd{\mu},\bd{\beta})$ is that
it is a solution to equations
(\ref{eq2.4}) and (\ref{eq2.5}).
Hence,
$(\bd{\mu}^{(\infty)}, \bd{\beta}^{(\infty)})$
is a minimizer of $L(\bd{\mu},\bd{\beta})$
and
can also be denoted
as $(\hat{\bd{\mu}},\hat{\bd{\beta}})$.

Note that
$\hat{\bd{\beta}}$ is also the minimizer of the profiled loss function
$\tilde{L}(\bd{\beta})=L(\bd{\mu(\bd{\beta})},\bd{\beta})$,
where $\bd{\mu(\bd{\beta})}$
as a function of $\bd{\beta}$
is given by (\ref{eq2.5}) .
Interestingly, this profiled loss function
is a criterion function equipped with the famous Huber loss function
(see \cite{Huber1964} and \cite{Huber1973}).
Specifically, the profiled loss function can be expressed as
$
\tilde{L}(\bd{\beta})=
\sum_{i=1}^{n}\rho(Y_{i}-\bd{X}_{i}^{T}\bd{\beta})
$,
where $\rho(x)= x^2 I(|x| \leq \lambda) + (2\lambda x - \lambda^2) I(|x|> \lambda)$
is \emph{exactly}  the Huber loss function,
which is optimal in a minimax sense.
This equivalence
between the penalized estimation and Huber's robust estimation
indicates that the penalization principle is versatile and
can naturally produce an important loss function in robust statistics.
This equivalence
also provides a formal endorsement of
the least absolute deviation
robust regression (LAD) in \cite{Fan2012a}
and indicates that
it is better to use all data with LAD regression
rather than 90\% of them.
It is worthwhile to note that
the penalized estimation is only formally equal to the Huber's.
Our model (\ref{eq2.1})
considers \emph{deterministic} sparse incidental parameters $\mu_{i}^{\star}$'s,
while the model in Huber's works assumes \emph{random} contamination as in \cite{Kiefer1956}.
Recently,
there appear a few papers on
robust regression in high-dimensional settings,
see, for example, \cite{Chen2010}, \cite{Lambert-Lacroix2011}, \cite{Fan2014} and \cite{Bean2012}.
\cite{Portnoy2000} provide a high level review of literature on robust statistics.

From the equations  (\ref{eq2.4}) and (\ref{eq2.5}),
$\hat{\bd{\beta}}$ is a solution to
\begin{equation}
\label{eq2.6}
\varphi_{n}(\bd{\beta})=0,
\text{ where }
    \varphi_{n}(\bd{\beta})
    =
    \bd{\beta} -
    (\bd{X}^{T}\bd{X})^{-1}\bd{X}^{T}(\bd{Y}-\bd{\mu}(\bd{\beta})).
\end{equation}
In general, this is a Z-estimation problem.
The following theoretical analysis
is based on this characterization of
$\hat{\bd{\beta}}$.

At the end of this section, we provide for further analysis some notations and an expansion of $\varphi_{n}(\bd{\beta})$.
Let
$\mathbb{S}=\sum_{i=1}^{n}\bd{X}_{i}\bd{X}_{i}^{T}$,
$\mathbb{S}_{S}=\sum_{i\in S}\bd{X}_{i}\bd{X}_{i}^{T}$,
$\mathbb{S}_{S}^{\mu}=\sum_{i\in S}\bd{X}_{i}\mu_{i}^{\star}$,
$\mathbb{S}_{S}^{\epsilon}=\sum_{i\in S}\bd{X}_{i}\epsilon_{i}$,
$\mathcal{S}=\sum_{i=1}^{n}\bd{X}_{i}$
and
$\mathcal{S}_{S}=\sum_{i\in S}\bd{X}_{i}$,
where $S$ is a subset of $\{1,2,\cdots,n\}$.
It is straightforward to show
\begin{align}
\label{eq2.7}
\varphi_{n}(\bd{\beta})
& = \nonumber
(\mathbb{S}_{S_{10}} + \mathbb{S}_{S_{11}}+\mathbb{S}_{S_{12}})(\bd{\beta}-\bd{\beta}^{\star})
 -(\mathbb{S}_{S_{11}}^{\mu} +  \mathbb{S}_{S_{12}}^{\mu}) \\
& -
(\mathbb{S}_{S_{10}}^{\epsilon}+
\mathbb{S}_{S_{11}}^{\epsilon}
+\mathbb{S}_{S_{12}}^{\epsilon})
-
\lambda
(\mathcal{S}_{S_{20}} + \mathcal{S}_{S_{21}} + \mathcal{S}_{S_{22}}
- \mathcal{S}_{S_{30}} - \mathcal{S}_{S_{31}} - \mathcal{S}_{S_{32}}),
\end{align}
where the index sets
$S_{10}  = \{s+1\leq i \leq n: |\bd{X}_{i}^{T}(\bd{\beta}^{\star}-\bd{\beta}) + \epsilon_{i}| \leq \lambda\}$,
$S_{11}  = \{1\leq i \leq s_{1}: |\mu_{i}^{\star}+\bd{X}_{i}^{T}(\bd{\beta}^{\star}-\bd{\beta}) + \epsilon_{i}| \leq \lambda\}$
and
$S_{12}  = \{s_{1}+1\leq i \leq s: |\mu_{i}^{\star}+\bd{X}_{i}^{T}(\bd{\beta}^{\star}-\bd{\beta}) + \epsilon_{i}| \leq \lambda\}$;
$S_{20}$, $S_{21}$ and $S_{22}$  are defined similarly
except that the absolute operation is omitted and ``$\leq$" is replaced by ``$>$";
$S_{30}$, $S_{31}$ and $S_{32}$,  are defined similarly with $S_{20}$, $S_{21}$ and $S_{22}$
except that ``$> \lambda$" is replaced by ``$< -\lambda$".
Note that all these index sets depend on $\bd{\beta}$.

\section{Asymptotic Properties}
\label{sec3}
In this section,
we consider the asymptotic properties of the penalized estimators  $\hat{\bd{\beta}}$ and $\hat{\bd{\mu}}$.
Assumption (A),
together with Assumption (B),
enables the penalized estimation method to
distinguish
the large incidental parameters
from \revTang{others},
and thus simplifies
the asymptotic properties of the index sets $S_{ij}$'s
in (\ref{eq2.7})
\revTang{in the sense that} they become independent of $\bd{\beta}$ wpg1.
Denote a hypercube of $\bd{\beta}^{\star}$ by
$B_{C}(\bd{\beta}^{\star})=\{\bd{\beta}\in\mathbb{R}^{d}:|\beta_{j}-\beta_{j}^{\star}|\leq C, 1\leq j\leq d\}$
with a constant $C>0$.

\begin{lem}[On Index Sets $S_{ij}$'s] \label{lem3.1}
Under Assumptions (A) and (B),
for every $C>0$ and
every $\bd{\beta}\in B_{C}(\bd{\beta}^{\star})$,
wpg1,
$S_{10} = S_{10}^{\star}$,
$S_{11} = \emptyset$,
$S_{12} = S_{12}^{\star}$,
$S_{20} = \emptyset$,
$S_{21} = S_{21}^{\star}$,
$S_{22} = \emptyset$,
$S_{30} = \emptyset$,
$S_{31} = S_{31}^{\star}$
and
$S_{32} = \emptyset$,
where the limit index sets
$S_{10}^{\star}=\{s+1, s+2, \cdots, n\}$,
$S_{12}^{\star}=\{s_{1}+1, s+2, \cdots, s\}$,
$S_{21}^{\star}=\{1 \leq i\leq s_{1}: \mu_{i}^{\star} > 0 \}$ and
$S_{31}^{\star}=\{1 \leq i\leq s_{1}: \mu_{i}^{\star} < 0 \}$.
\end{lem}
By Lemma~\ref{lem3.1}, wpg1, the solution $\hat{\bd{\beta}}$ to (\ref{eq2.6})
has an analytic expression:
\begin{equation}\label{eq3.2}
\hat{\bd{\beta}}
=
\bd{\beta}^{\star}
+
(\mathbb{S}_{S_{10}^{\star}} + \mathbb{S}_{S_{12}^{\star}})^{-1}
[\mathbb{S}_{S_{12}^{\star}}^{\mu}
 +
(\mathbb{S}_{S_{10}^{\star}}^{\epsilon}
+
\mathbb{S}_{S_{12}^{\star}}^{\epsilon})
+
\lambda
(\mathcal{S}_{S_{21}^{\star}}
-\mathcal{S}_{S_{31}^{\star}})],
\end{equation}
from which,
we derive asymptotic properties of $\hat{\bd{\beta}}$.
Some analysis needs the following assumption.
\begin{description}
  \item[Assumption (C):] There exists some constant $\delta>0$ such that $\E\normE{\bd{X}_{0}}^{2+\delta}<\infty$
  and
$\normE{\bd{\mu}_{2}^{\star}}/ \| \bd{\mu}_{2}^{\star}\|_{2+\delta}$ diverges to infinity,
where
$\| \bd{\mu}_{2}^{\star}\|_{2+\delta} = \bigl (\sum_{i=s_{1}+1}^{s}|\mu_{i}^{\star}|^{2+\delta} \bigr )^{1/(2+\delta)}$.
\end{description}
The following result shows the existence of a unique consistent estimator of $\bd{\beta}^{\star}$.
\begin{thm}[Existence and Consistency of $\hat{\bd{\beta}}$]
\label{thm3.2}
Under Assumptions (A) and (B),
if either $s_{2}=o(n/(\kappa_{n}\gamma_{n}))$
or Assumption (C) holds,
then,
for every fixed $C>0$,
wpg1,
there exists a unique estimator $\hat{\bd{\beta}}_{n}\in B_{C}(\bd{\beta}^{\star})$
such that $\psi_{n}(\hat{\bd{\beta}}_{n})=0$ and $\hat{\bd{\beta}}_{n}\ConvProb \bd{\beta}^{\star}$.
\end{thm}

In Theorem
\ref{thm3.2},
there are two different kinds of sufficient conditions:
on is on $s_{2}$, which is the size of bounded incidental parameters $\bd{\mu}_{2}^{\star}$,
and the other is Assumption (C), which is about the norms of $\bd{\mu}_{2}^{\star}$.
They come from different analysis approaches on the term $\mathbb{S}_{S_{12}^{\star}}^{\mu}$ in (\ref{eq3.2}).
One does not imply the other.
For details, see Supplement \ref{PC.Paper.Supplement.Asymptotic.Properties}.
Specially,
if $s_{2}=O(n^{\alpha_{2}})$ for some $\alpha_{2}\in(0,1)$
and $\kappa_{n}\gamma_{n}\ll n^{(1-\alpha_{2})}$,
then $\hat{\bd{\beta}}$ is consistent by Theorem \ref{thm3.2}.


Next, we consider the asymptotic distributions of the consistent estimator $\hat{\bd{\beta}}_{n}$
obtained in Theorem \ref{thm3.2}.
Without loss of generality,
we assume the sizes of index sets
$S_{21}^{\star}=\{1\leq i\leq s_{1}: \mu_{i}^{\star} > 0\}$
and
$S_{31}^{\star}=\{1\leq i\leq s_{1}: \mu_{i}^{\star} < 0\}$
are asymptotically equivalent to $as_{1}$ and $(1-a)s_{1}$
with a constant $a\in(0,1)$.
Similar to Theorem \ref{thm3.2},
there are two different sets of conditions on $\bd{\mu}_{2}^{\star}$
corresponding to two different analysis approaches.
Denote $\sim$ as the asymptotic equivalence and $D_{n} = \normE{\bd{\mu}_{2}^{\star}}$.
\begin{thm}[\revTang{Asymptotic Distributions on $\hat{\bd{\beta}}_{n}$}]
\label{thm3.4}
Under Assumptions (A) and (B), suppose $s_{2} \ll  \sqrt{n}/(\kappa_{n}\gamma_{n})$ holds
or Assumption (C) and $D_{n}^{2}/n=o(1)$ hold.
\begin{enumerate} \itemsep -0.05in
\item [(1)] If $s_{1} \ll  n/\lambda^{2}$,
then
$
\sqrt{n}(\hat{\bd{\beta}}_{n}-\bd{\beta}^{\star})
\ConvDist
N(0, \sigma^{2}\bd{\bd{\Sigma}}_{X}^{-1});
$ {\bf [main case]}

\item [(2)] If $s_{1}\sim b n/\lambda^{2}$,
then
$
\sqrt{n}(\hat{\bd{\beta}}_{n}-\bd{\beta}^{\star})
\ConvDist
N(0, (b+\sigma^{2})\bd{\bd{\Sigma}}_{X}^{-1}),
$
for every constant $b\in\mathbb{R}^{+}$;

\item [(3)] If $s_{1}\gg n/\lambda^{2}$,
then
$
r_{n}(\hat{\bd{\beta}}_{n}-\bd{\beta}^{\star})
\ConvDist
N(0,\bd{\bd{\Sigma}}_{X}^{-1}),
$
where $r_{n} \sim n/(\lambda\sqrt{s_{1}})$.
\end{enumerate}
\end{thm}

When the incidental parameters are really sparse,
the size $s_{1}$ of large incidental parameters is small
and
the size $s_{2}$ or the magnitude $D_{n}$ of bounded incidental parameters is also small
so that the conditions of case \emph{(1)} tends to hold.
This case is of most interest
and we denote it as the \emph{main case}.
The other cases are presented to provide
a relatively complete picture of the asymptotic distributions of $\hat{\bd{\beta}}$.
In fact, Theorem \ref{thm3.6} in Supplement \ref{PC.Paper.Supplement.Asymptotic.Properties}
shows more possible asymptotic distributions.
Note that
the constant $a$ does not appear in the limit distributions of Theorem \ref{thm3.4}
due to cancelation
and that
the sub-$\sqrt{n}$ consistency emerges
in case \emph{(3)} when $s_1$ is large,
because for this case
the impact of the large incidental parameters is too big to be handled efficiently
by the penalized estimation.
For case \emph{(2)}, in one direction,
as $b \to 0$,
its condition and limit distribution become those of case \emph{(1)};
in the other direction,
as $b$ increases, it approaches case \emph{(3)}.
This boundary phenomenon was in spirit similar to that in \cite{Tang2012a}.
Specially,
if
$\lambda \ll  n^{\alpha_{1}}$,
$\kappa_{n}\gamma_{n} \ll  n^{\alpha_{2}}$,
$s_{1} \ll n^{1-\alpha_{1}}$
and  $s_{2} \ll n^{1/2-\alpha_{2}}$,
for some
$\alpha_{1}\in(0,1)$
and
$\alpha_{2}\in(0,1/2)$,
then
$
\sqrt{n}(\hat{\bd{\beta}}_{n}-\bd{\beta}^{\star})
\ConvDist
N(0, \sigma^{2}\bd{\bd{\Sigma}}_{X}^{-1})
$
by the main case of
Theorem \ref{thm3.4}.

\begin{rem}[An Oracle Property]
\label{rem3}
Suppose an oracle tells
the true $\bd{\mu}^{\star}$.
Then, with the adjusted responses $\bd{Y}-\bd{\mu}^{\star}$,
the oracle estimator of $\bd{\beta}^{\star}$ is given by
$
    \hat{\bd{\beta}}^{(O)}
=
(\bd{X}\bd{X}^{T})^{-1}\bd{X}^{T}(\bd{Y}-\bd{\mu}^{\star}).
$
The limiting distribution of
$
\sqrt{n}(\hat{\bd{\beta}}_{n}^{(O)}-\bd{\beta}^{\star})
$
is
$
N(0, \sigma^{2}\bd{\bd{\Sigma}}_{X}^{-1}).
$
Comparing this with
the main case of
Theorems \ref{thm3.4},
it follows that
the penalized estimator $\hat{\bd{\beta}}_{n}$ enjoys an oracle property.
\end{rem}

\revTang{Although mainly interested in the estimation of $\bd{\beta}^{\star}$,
we also obtain  the soft-threshold estimator $\hat{\bd{\mu}}$ of $\bd{\mu}^{\star}$}: for each $i$,
\begin{equation}
\label{PC.Paper.expression.hat.mu}
    \hat\mu_{i} = \mu_{i}(\hat{\bd{\beta}}) = (|Y_{i}-\bd{X}_{i}^{T}\hat{\bd{\beta}} |- \lambda)_{+}
    \mbox{sgn}(Y_{i}-\bd{X}_{i}^{T}\hat{\bd{\beta}}).
\end{equation}
Denote
$
\mathcal{E}
=
\{
\hat\mu_{i}\not=0, \text{ for } i=1,2,\cdots, s_{1};
\text{ and }
\hat\mu_{i}=0, \text{ for } i=s_{1}+1,s_{1}+2,\cdots, n
\}.
$

\begin{thm}[Partial Selection Consistency on $\hat{\bd{\mu}}$]
\label{thm3.7}
Under Assumptions (A) and (B),
\revTang{if $\hat{\bd{\beta}} \ConvProb \bd{\beta}^{\star}$},
then
$
P(\mathcal{E}) \rightarrow 1.
$
\end{thm}
Theorem \ref{thm3.7} shows that,
wpg1,
the indexes of $\bd{\mu_{1}^{\star}}$ and $\bd{\mu_{3}^{\star}}$
are estimated correctly,
but those of $\bd{\mu_{2}^{\star}}$ wrongly.
We call this a partial selection consistency phenomenon.

\subsection{Two-Step Estimation}
\label{subsec3.1}

Theorems \ref{thm3.4}
shows that the penalized estimator $\hat{\bd{\beta}}_{n}$
has multiple different limit distributions,
which complicates the application of these theorems in practice.
In addition, the convergence rate of $\hat{\bd{\beta}}_{n}$
is less than the optimal rate $\sqrt{n}$ in the challenging cases
where the impact of large incidental parameters is substantial.
To address these issues, we propose the following two-step estimation method:
firstly, we apply the penalized estimation
(\ref{eq2.3})
and
let
$
\hat I_{0} = \{1\leq i\leq n: \hat\mu_{i} = 0\};
$
secondly, we define the two-step estimator as
\begin{equation}
\label{eq3.4}
\tilde{\bd{\beta}}
=
(\bd{X}_{\hat I_{0}}^{T}\bd{X}_{\hat I_{0}})^{-1}\bd{X}_{\hat I_{0}}^{T}\bd{Y}_{\hat I_{0}},
\end{equation}
where
$\bd{X}_{\hat I_{0}}$ consists of $\bd{X}_{i}$'s
whose indexes are in $\hat I_{0}$
and
$\bd{Y}_{\hat I_{0}}$ consists of the corresponding $Y_{i}$'s.
The following theorem shows that
$\tilde{\bd{\beta}}$ is consistent and
Asymptotic Gaussian.
\begin{thm}[Consistency and Asymptotic Normality on $\tilde{\bd{\beta}}$]
\label{thm3.8}
Suppose Assumptions (A) and (B) hold.
If either $s_{2}=o(n/(\kappa_{n}\gamma_{n}))$
or Assumption (C) holds,
then $\tilde{\bd{\beta}} \ConvProb \bd{\beta}^{\star}$.
If either $s_{2}=o(\sqrt{n}/(\kappa_{n}\gamma_{n}))$ holds
or Assumptions (C) and $D_{n}^{2}/n=o(1)$ hold,
then
$
\sqrt{n}(\tilde{\bd{\beta}} - \bd{\beta}^{\star} )
\ConvDist
N(0, \sigma^{2}\bd{\bd{\Sigma}}_{X}^{-1})
$.
\end{thm}

Comparing Theorem \ref{thm3.8} with Theorem \ref{thm3.4},
we see that $\hat{\bd{\beta}}$ has three possible asymptotic distributions
but $\tilde{\bd{\beta}}$ has only one since for $\tilde{\bd{\beta}}$ the conditions on $s_{1}$ disappear.
It is because the two-step method identifies and removes large incidental parameters
by exploiting the partial selection consistency property of $\hat{\bd{\mu}}$
in Theorem \ref{thm3.7}.
Further, the two-step estimator improves the convergence rate to the optimal one over the one-step estimator
for the challenging case with $s_{1} \gg  n/\lambda^{2}$.
Because of these advantages,
we suggest to use the two-step method to make statistical inferences.

When the incidental parameters are sparse in the sense that
the size or the magnitude of the bounded incidental parameters are small,
i.e. $s_{2}=o(\sqrt{n}/(\kappa_{n}\gamma_{n}))$ or $D_{n}^{2}/n=o(1)$,
it follows, by Theorem \ref{thm3.8},
$\sqrt{n}(\tilde{\bd{\beta}} - \bd{\beta}^{\star} )
\ConvDist
N(0, \sigma^{2}\bd{\bd{\Sigma}}_{X}^{-1})$,
from which
a confidence region
with asymptotic confidence level $1-\alpha$  is given by
\begin{equation}
\label{eq3.5}
\{\bd{\beta}\in\mathbb{R}^{d}: \sigma^{-1}\sqrt{n}\normE{\bd{\bd{\Sigma}}_{X}^{1/2}(\tilde{\bd{\beta}}-\bd{\beta})} \leq q_{\alpha}(\chi_{d}) \},
\end{equation}
where $q_{\alpha}(\chi_{d})$ is the upper $\alpha$-quantile of $\chi_{d}$,
the square root of the chi-squared distribution with degrees of freedom $d$.
%
For each component $\beta^{\star}_{j}$ of $\bd{\beta}^{\star}$,
an asymptotic $1-\alpha$ confidence interval is given by
\begin{equation}
\label{PC.Paper.PLS.Soft.TwoStage.Beta.i.Confidence.Interval}
[\tilde{\beta}_{j} \pm n^{-1/2}\sigma\bd{\bd{\Sigma}}_{X}^{-1/2}(j,j)z_{\alpha/2}],
\end{equation}
where
$\bd{\bd{\Sigma}}_{X}^{-1/2}(j,j)$ is the square root of the $(j,j)$ entry of $\bd{\bd{\Sigma}}_{X}^{-1}$ and
$z_{\alpha/2}$ is the upper $\alpha/2$-quantile of $N(0,1)$.
%
%
The confidence
region (\ref{eq3.5})
and
interval (\ref{PC.Paper.PLS.Soft.TwoStage.Beta.i.Confidence.Interval})
involve unknown parameters $\bd{\bd{\Sigma}}_{X}$ and $\sigma$.
They can be estimated by
$
    \hat{\bd{\Sigma}}_{X} = (1/n)\bd{X}^{T}\bd{X}
$
and
\begin{equation}
\label{eq.sigma.hat}
\hat{\sigma}
=
\#(\hat I_{0})^{-1/2} \normE{ \bd{Y}_{\hat I_{0}} - \bd{X}_{\hat I_{0}}^{T} \tilde{\bd{\beta}} }.
\end{equation}
By Law of Large Numbers, $\hat{\bd{\Sigma}}_{X}$ is consistent.
By lemma \ref{lem3.9} in Supplement,
$\hat{\sigma}$ is also consistent.
Hence,
after replacing $\bd{\bd{\Sigma}}_{X}$ and $\sigma$ in
the confidence
region (\ref{eq3.5})
and
interval (\ref{PC.Paper.PLS.Soft.TwoStage.Beta.i.Confidence.Interval})
with
$\hat{\bd{\Sigma}}_{X}$ and $\hat{\sigma}$,
the resulting confidence region and interval
have the asymptotic confidence level $1-\alpha$.

\subsection{Theoretical and Data-Driven Regularization Parameters}
\label{subsec:Regularization Parameters}
By Assumption (B),
the theoretical regularization parameter $\lambda$
depends on
$\kappa_{n}$ and $\gamma_{n}$,
which are also crucial
to the boundary conditions
of the asymptotic properties of
the penalized estimators
$\hat{\bd{\beta}}$ and $\tilde{\bd{\beta}}$.
By Assumption (A),
$\kappa_{n}$ and $\gamma_{n}$
is determined by the distributions
of $\bd{X}_{0}$ and $\epsilon_{0}$, respectively.
It is of interest to explicitly derive
$\kappa_{n}$ and $\gamma_{n}$
for some typical cases on
the covariates and errors.
When
the covariates are bounded with $C_{X} > 0$
and the random errors follow $N(0,\sigma^{2})$,
let $\kappa_{n}=\sqrt{d}C_{X}$
and $\gamma_{n}=\sqrt{2\sigma^{2}\log(n)}$.
They satisfy (\ref{eq2.2}) in Assumption (A)
and the specification of $\lambda$ (\ref{eq3.1}) in Assumption (B) becomes
$\alpha\sqrt{2\sigma^{2}\log(n)} \leq \lambda \ll \min\{\mu^{\star},\sqrt{n}\}$.
When
$\bd{X}_{0}$ and $\epsilon_{0}$
follow $N(0, \bd{\bd{\Sigma}}_{X})$
and $N(0,\sigma^{2})$, respectively.
Denote by $\sigma_{X}^{2}$ the maximum of diagonal elements of $\bd{\bd{\Sigma}}_{X}$.
We can take $\kappa_{n}=\sqrt{2d\sigma_{X}^{2}\log(n)}$ and $\gamma_{n}=\sqrt{2\sigma^{2}\log(n)}$.
Then, (\ref{eq3.1}) becomes
$\sqrt{\log(n)} \ll \lambda \ll \min\{\mu^{\star},\sqrt{n}\}$.
A case on exponentially tailed random variables has all been considered
in Supplement
\ref{PC.Paper.Supplement.Typical.Covariates.Errors}.

Although the theoretical specification of $\lambda$ guaranties desired asymptotic properties,
a data-driven specification is of interest in practice.
A popular way to specify $\lambda$ is to use 
multi-fold cross-validation.
The validation set, however, needs to be made as little contaminated as possible.
We propose the following procedure
to identify a data-driven regularization parameter:
\begin{description}
  \item[Step 1:] On the training and testing data sets.
    \begin{enumerate}
      \item Apply ordinary least squares (OLS) to all the data and obtain residuals $\hat\epsilon_{i}^{(OLS)} = Y_{i} - \bd{X}_{i}^{T}\hat{\bd{\beta}}^{(OLS)}$ for each $i$.
      \item Identify the set of ``pure" data
            corresponding to the $n_{pure}$ smallest values in  $\{|\hat\epsilon_{i}^{(OLS)}|\}$.
      \item Compute the updated OLS estimator $\hat{\bd{\beta}}^{(OLS,2)}$ with the ``pure" data
            and obtain updated residuals $\{\hat\epsilon_{i}^{(OLS,2)}\}$
            for each $i$.
      \item Identify the updated ``pure" data set
            corresponding to the $n_{pure}$ smallest $\{|\hat\epsilon_{i}^{(OLS,2)}|\}$
            and label the remaining as the ``contaminated" data set.
      \item Randomly select a subset from the updated ``pure" data set as a testing set
      and merge the remaining ``pure'' data set and the ``contaminated'' one into a training set.
    \end{enumerate}
  \item[Step 2:] On the range $[\lambda_{L}, \lambda_{U}]$
        of the regularization parameter.
    \begin{enumerate}
    \item Compute the standard deviation $\hat\sigma_{pure}$ of the residuals
            of the ``pure" data set.
    \item Set $\lambda_{L}=\alpha_{l}\hat\sigma_{pure}$
            and $\lambda_{U}=\alpha_{u}\hat\sigma_{pure}$,
            where $\alpha_{l}<\alpha_{u}$ are positive constants.
    \end{enumerate}
  \item[Step 3:] On the data-driven regularization parameter.
    \begin{enumerate}
      \item For each grid point of $\lambda$ in the interval $[\lambda_L, \lambda_U$],
            apply a penalized method to the training set
            and obtain the estimator $\hat{\bd{\beta}}_{\lambda, train}$
            and the corresponding test error $\hat{\sigma}_{\lambda,test}^{2} = \sum_{\text{testing set}}(Y_{i}-\bd{X}_{i}^{T}\hat{\bd{\beta}}_{\lambda, train})^{2}$.
      \item Identify the data-driven regularization parameter $\lambda_{opt}$,
            which minimizes $\hat{\sigma}_{\lambda,test}^{2}$
            among the grid points.
    \end{enumerate}
\end{description}
  This simple data-driven procedure can certainly be further improved.
  For example, In Step 1, the sub-steps 3 and 4 can be repeated more times
  to obtain a better ``pure" data set.
  In Step two, the range for $\lambda$ can also be obtained from quantiles
  of $\{|\hat\epsilon_{i}^{(OLS,2)}|\}$.
  We can also hybrid quantities based on $\hat\sigma_{pure}$ and quantiles of $\{|\hat\epsilon_{i}^{(OLS,2)}|\}$
  to determine $[\lambda_{L}, \lambda_{U}]$.

The good performance of this data-driven regularization parameter will be demonstrated in
Subsection \ref{sec6.2}.

\section{Diverging number of structural parameters}
\label{sec:Diverging number of structural parameters}
In Sections
\ref{sec2} and
\ref{sec3},
we have considered model (\ref{eq2.1})
under the assumption that
the number of covariates $d$ is a fixed integer.
However, when there are a moderate or large number of covariates,
it is appropriate to assume that $d$ diverges to infinity with the sample size.
In this section,
we consider model (\ref{eq2.1})
with the assumption that $d\rightarrow \infty$ and $d\ll n$.

Since the number of covariates grows orderly slower than the sample size,
we chose to continue use the penalized estimation
(\ref{eq2.3})
for $(\bd{\mu}^{\star},\bd{\beta}^{\star})$
and the penalized two-step estimation
(\ref{eq3.4})
for $\bd{\beta}^{\star}$.
The corresponding estimators are still denoted as
$(\hat{\bd{\mu}},\hat{\bd{\beta}})$
and
$\tilde{\bd{\beta}}$,
but we should keep it in mind that
their dimensions diverge to infinity with $n$.
The characterizations of $\hat{\bd{\beta}}$ in Lemmas
\ref{lem2.1}
and
\ref{lem2.3}
are still valid since they are finite-sample results.
The iteration algorithm also wpg1 stops at the second iteration,
which is shown by Proposition
\ref{prop2.2:infty}
in Supplement \ref{PC.Paper.Supplement.Extension.d.infty}.

As before, it is critical to properly specify the regularization parameter $\lambda$
for the case with a diverging number of covariates.
\begin{description}
  \item[Assumption (B')]
  The regularization parameter $\lambda$ satisfies
  \begin{equation}
\label{eq5.1}
\sqrt{d}\kappa_{n} \ll  \lambda,~  \alpha\gamma_{n} \leq \lambda, \text{ and } \lambda \ll  \mu^{\star},
\end{equation}
where
$\kappa_{n}$ and $\gamma_{n}$
are defined in (\ref{eq2.2})
and
$\alpha>2$.
\end{description}
Comparing Assumption (B') with Assumption (B),
the main difference in formation is that $\kappa_{n}$ is changed to $\sqrt{d}\kappa_{n}$.
In fact, $\kappa_{n}$ in (\ref{eq5.1})
also depends on $d$, which will be shown
in Supplement
\ref{PC.Paper.Supplement.Typical.Covariates.Errors}.
This difference is caused by the assumption that $d$ diverges to $\infty$.
\begin{lem}[On Index Sets $S_{ij}$'s]
\label{PC.Paper.d.infty.index.sets.consistency}
Under Assumptions (A) and (B'),
the conclusion of Lemma \ref{lem3.1} holds.
\end{lem}
Thus, wpg1,
still valid is the crucial analytic expression of $\hat{\bd{\beta}}$
(\ref{eq3.2}),
from which we derive its theoretical properties.
They are similar to those in the previous section,
with additional technical complexity caused by the diverging dimension $d$.

Denote
$\normFd{\cdot}=d^{-1/2}\normF{\cdot}$,
where $\normF{\cdot}$ is the Frobenius norm,
and $\kappa_{X}=d^{-1} \sum_{j=1}^{d}(\E[X_{0j}^{4}])^{1/2}$,
the average of the square root of the fourth marginal moments of $\bd{X}_{0}$.
We make the following assumptions on $\bd{\Sigma}_{X}$ and $\kappa_{X}$.
\begin{description}
  \item[Assumption (D):] $\normFd{\bd{\Sigma}_{X}^{-1}}$ is bounded.
  \item[Assumption (E):]  $\kappa_{X}$ is bounded.
\end{description}

\begin{thm}[Existence and Consistency on $\hat{\bd{\beta}}$]
\label{thm5.2}
Suppose  Assumptions (A), (B'), (D) and (E) hold.
If there exists $r_d$, a sequence of positive numbers depending on $d$,  such that
$d^{3}/n\rightarrow 0$,
$(r_dd)^{2}/n\rightarrow 0$,
$s_{1}=o(n/(r_d\sqrt{d}\kappa_{n}\lambda))$
and
$s_{2}=o(n/(r_d\sqrt{d}\kappa_{n}\gamma_{n}))$,
then,
for every fixed $C>0$,
wpg1,
there exists a unique estimator $\hat{\bd{\beta}}\in B_{C}(\bd{\beta}^{\star})$
such that
$\psi_{n}(\hat{\bd{\beta}})=0$
and
$r_d \| \hat{\bd{\beta}} -  \bd{\beta}^{\star}\|_{2} \ConvProb 0$.
\end{thm}

Next, we consider the asymptotic distribution on $\hat{\bd{\beta}}$.
Since the dimension of $\hat{\bd{\beta}}$ diverges to infinity,
following \cite{Fan2011b}, it is more appropriate to study its linear maps.
Let
$\bd{A}_{n}$ be a $q\times d$ matrix,
where $q$ is a fixed integer,
$\bd{G}_{n}=\bd{A}_{n}\bd{A}_{n}^{T}$
with the largest eigenvalue
$\lambda_{\max}(\bd{G}_{n})$,
and
$\bd{G}_{X,n} = \bd{A}_{n} \bd{\Sigma}_{X}^{-1} \bd{A}_{n}^{T}$.
Denote by $\lambda_{\min}(\bd{\Sigma}_{X})$
the smallest eigenvalue of $\bd{\Sigma}_{X}$,
$\sigma_{X,\max}^{2}=\max_{1\leq j\leq d}\Var[X_{0j}]$,
$\sigma_{X,\min}^{2}=\min_{1\leq j\leq d}\Var[X_{0j}]$
and
$\gamma_{X,\max}=\max_{1\leq j\leq d}\E|X_{0j}|^{3}$.
Abbreviate ``with respect to" by ``wrt".
We assume further
\begin{description}\itemsep -0.05in
  \item[Assumption (D'):] $\lambda_{\min}(\bd{\Sigma}_{X})$ is bounded away from zero, which implies Assumption (D).
  \item[Assumption (D''):] $\normFd{\bd{\bd{\Sigma}}_{X}}$ is bounded.

   \item[Assumption (F):] $\normF{\bd{A}_{n}}$ and
  $\lambda_{\max}(\bd{G}_{n})$ are bounded and
  $\bd{G}_{X,n}$ converges to a $q\times q$ symmetric matrix $\bd{G}_{X}$ wrt $\normF{\cdot}$.
  \item[Assumption (G):] $\sigma_{X, \max}>0$; $\sigma_{X, \max}$ and $\gamma_{X, \min}$ are bounded from above
and $\sigma_{X,\min}$ is bounded away from zero.
\end{description}

Similar to the main case of
Theorem \ref{thm3.4},
a properly scaled $\hat{\bd{\beta}}_{n}$
is asymptotically Gaussian.

\begin{thm}[Asymptotic Distribution on $\hat{\bd{\beta}}$]
\label{thm3.4:d:infty:alternative:Paper}
Suppose Assumptions (A), (B'), (D'), (D''), (E), (F) and (G) hold.
If $d^{5}\log d = o(n)$, $s_{1}=o(\sqrt{n}/(\lambda \sqrt{d}\kappa_{n}))$ and $s_{2}=o(\sqrt{n}/(\sqrt{d}\kappa_{n}\gamma_{n}))$,
then
$
\sqrt{n}\bd{A}_{n}(\hat{\bd{\beta}} - \bd{\beta}^{\star})
\ConvDist
N(0, \sigma^{2}\bd{G}_{X})
$.
\end{thm}

The penalized estimator $\hat{\bd{\mu}}$ obtained
by (\ref{PC.Paper.expression.hat.mu}) is partially consistent.
\begin{thm}[Partial Selection Consistency on $\hat{\bd{\mu}}$]
\label{thm5.4}
Suppose Assumptions (A) and (B')hold
and $\hat{\bd{\beta}}$ is a consistent estimator of $\bd{\beta}^{\star}$
wrt $r_d\normE{\cdot}$.
If $r_d\geq 1/\sqrt{d}$,
then
$
P(\mathcal{E}) \rightarrow 1.
$
\end{thm}

We can construct the penalized two-step estimator
$\tilde{\bd{\beta}}$
through
(\ref{eq3.4})
with $\hat{\bd{\mu}}$.
This two-step estimator is consistent
by Theorem \ref{thm5.5} in Supplement \ref{PC.Paper.Supplement.Extension.d.infty}
and its asymptotic distribution,
as an extension of the main case in Theorem \ref{thm3.8},
is given by the following theorem.
\begin{thm}
[Asymptotic Distribution on $\tilde{\bd{\beta}}$]
\label{thm3.8:d:infty:alternative:Paper}
Suppose
all the assumptions and conditions
of Theorem
\ref{thm3.4:d:infty:alternative:Paper} hold
except that the condition on $s_{1}$ is not required.
Then
$
\sqrt{n}\bd{A}_{n}(\tilde{\bd{\beta}} - \bd{\beta}^{\star} )
\ConvDist
N(0, \sigma^{2}\bd{G}_{X})
$.
\end{thm}

From Theorems
\ref{thm3.4:d:infty:alternative:Paper}
and
\ref{thm3.8:d:infty:alternative:Paper},
Wald-type asymptotic confidence regions of $\bd{\beta}^{\star}$ are availabe.
For example,
a confidence region based on $\tilde{\bd{\beta}}$
with asymptotic confidence level $1-\alpha$  is given by
\begin{equation}
\label{eq5.2:alternative}
\{\bd{\beta}\in\mathbb{R}^{d}: \sigma^{-1}\sqrt{n}\normE{\bd{G}_{X,n}^{-1/2}\bd{A}_{n}(\tilde{\bd{\beta}}-\bd{\beta})} \leq q_{\alpha}(\chi_{q}) \}.
\end{equation}
Since $\bd{G}_{X,n}$ involves the unknown $\bd{\Sigma}_{X}$,
we estimate it by $\hat{\bd{G}}_{X,n}=\bd{A}_{n}\hat{\bd{\Sigma}}_{X}^{-1}\bd{A}_{n}^{T}$.
On the other hand, $\sigma$ is estimated by $\hat{\sigma}$ in (\ref{eq.sigma.hat}) as before.
After plugging
$\hat{\bd{G}}_{X,n}$ and $\hat{\sigma}$
into (\ref{eq5.2:alternative}),
we obtain
\begin{equation}
\label{eq5.2:alternative:plug:in}
\{\bd{\beta}\in\mathbb{R}^{d}: \hat{\sigma}^{-1}\sqrt{n}\normE{\hat{\bd{G}}_{X,n}^{-1/2}\bd{A}_{n}(\tilde{\bd{\beta}}-\bd{\beta})} \leq q_{\alpha}(\chi_{q}) \}.
\end{equation}
By Lemma \ref{lem3.9:d:infty:Paper} in the appendix, the consistency of $\hat\sigma$ is assured.
Then, Theorem \ref{model:three:multivariate:mean:zero:thm:twostage:limitdistribution.general.covariates.estimated.Sigma.alternative.errors:d:infty:Paper}
in the appendix
guarantees the asymptotic validity of the confidence region
(\ref{eq5.2:alternative:plug:in}).

\section{Numerical Evaluations and Real Data Analysis}
\label{sec6}
In this section,
we evaluate the finite-sample performance of the penalized estimation through simulations
and use it to analyze a real data set.
The model for simulations
is given by, for $i=1, \cdots, n$,
$Y_{i} = \mu_{i}^{\star} + X_{i,1}\beta_{1}^{\star} + \cdots + X_{i,50}\beta_{50}^{\star} + \epsilon_{i}$.
For simplicity,
the deterministic sparse incidental parameters $\{\mu_{i}^{\star}\}$
are generated as i.i.d. copies of $\mu$:
$\mu$ is 0, $U[-c,c]$ and $W(c+\text{Exp}(\tau))$ with probabilities $p_{0}$, $p_{1}$, and $p_{2}$ respectively,
where
$U[-c,c]$ is a uniform random variable in $[-c,c]$,
$W$ takes values $1$ and $-1$ with probabilities $p_{w}$ and $1-p_{w}$, respectively,
and $\text{Exp}(\tau)$ follows an exponential distribution with mean $1/\tau>0$.
Note that $c$ can be viewed as a contamination parameter.
The larger $c$ is, the more contaminated the data.
On the other hand, $p_{w}$ determines the asymmetry of the incidental parameters.
The regression coefficients
$\beta_{1}^{\star} = \cdots = \beta_{50}^{\star} = 1$;
$\{(X_{i,1}, \cdots, X_{i,50})\} \overset{i.i.d.}{\sim} N(0,\bd{\Sigma}_{X})$,
where $\bd{\Sigma}_{X}(i,j) = 2\exp(-\abs{i-j}))$,
which is a Toeplitz matrix and the constant 2 is used to inflate the covariance a little;
the covariates are independent of
$\{\epsilon_{i}\} \overset{i.i.d.}{\sim} N(0,1)$; and $n = 500$;
$p_{0}=0.8$, $p_{1}=0.1$, $p_2 = 0.1$,
$c$ is 0.5, 1, 3 or 5, $p_{w}$ is 0.5 or 0.75 and $\tau=1$.

\subsection{Performance of Penalized Methods} \label{sec6.1}
The following methods for estimating $\bd{\beta}^{\star}$ are evaluated.
(i) Oracle method (O): an oracle knows the index set $S$ of zero $\mu_{i}^{\star}$'s.
Its performance is used as a benchmark.
(ii) Ordinary least squares method (OLS): all $\mu_{i}^{\star}$'s are thought as zeros.
(iii) Four penalized least squares (PLS) methods, namely,
PLS with soft penalty (PLS.Soft or S),
PLS with hard penalty (PLS.Hard or H),
two-step PLS with soft penalty (PLS.Soft.TwoStep or S.TS) and
two-step PLS with hard penalty (PLS.Hard.TwoStep or H.TS).
More specifically,
the oracle estimator of $\bd{\beta}^{\star}$ is given by
$\hat{\bd{\beta}}^{(O)}
=
(\sum_{i\in S}\bd{X}_{i}\bd{X}_{i}^{T})^{-1}\sum_{i\in S}\bd{X}_{i}Y_{i}$.
The hard penalty function is
$p_{\lambda}(|t|) = \lambda^{2} - (|t|-\lambda)^{2}\{|t|<\lambda\}$ (see \cite{Fan2001a}).
Each method is evaluated by
the square root of the empirical mean squared error (RMSE).
Every penalized method
is evaluated with a grid of values for the regularization parameter $\lambda$,
ranging from .5 to 5 by .25.

\begin{figure}
  \centering
  \includegraphics[scale=0.4]{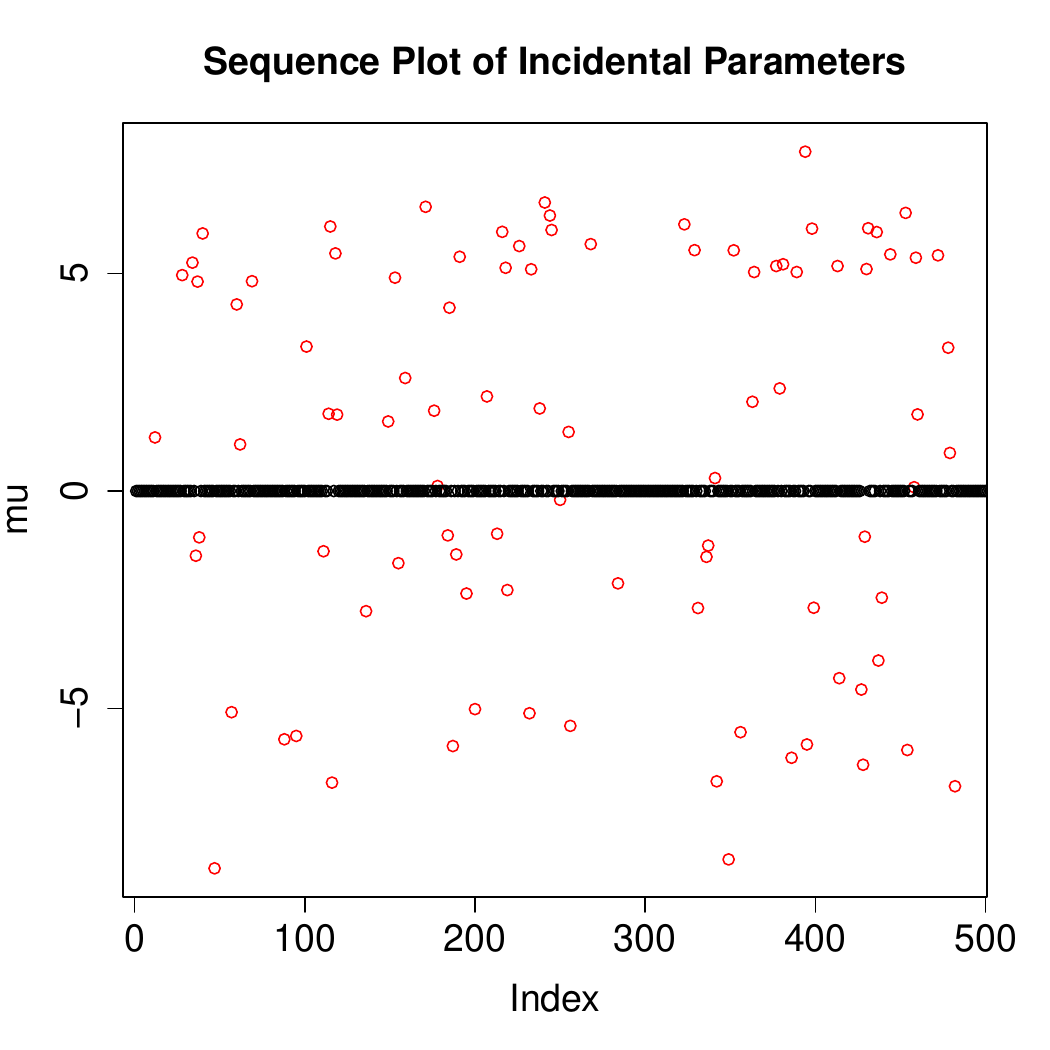}
  \includegraphics[scale=0.4]{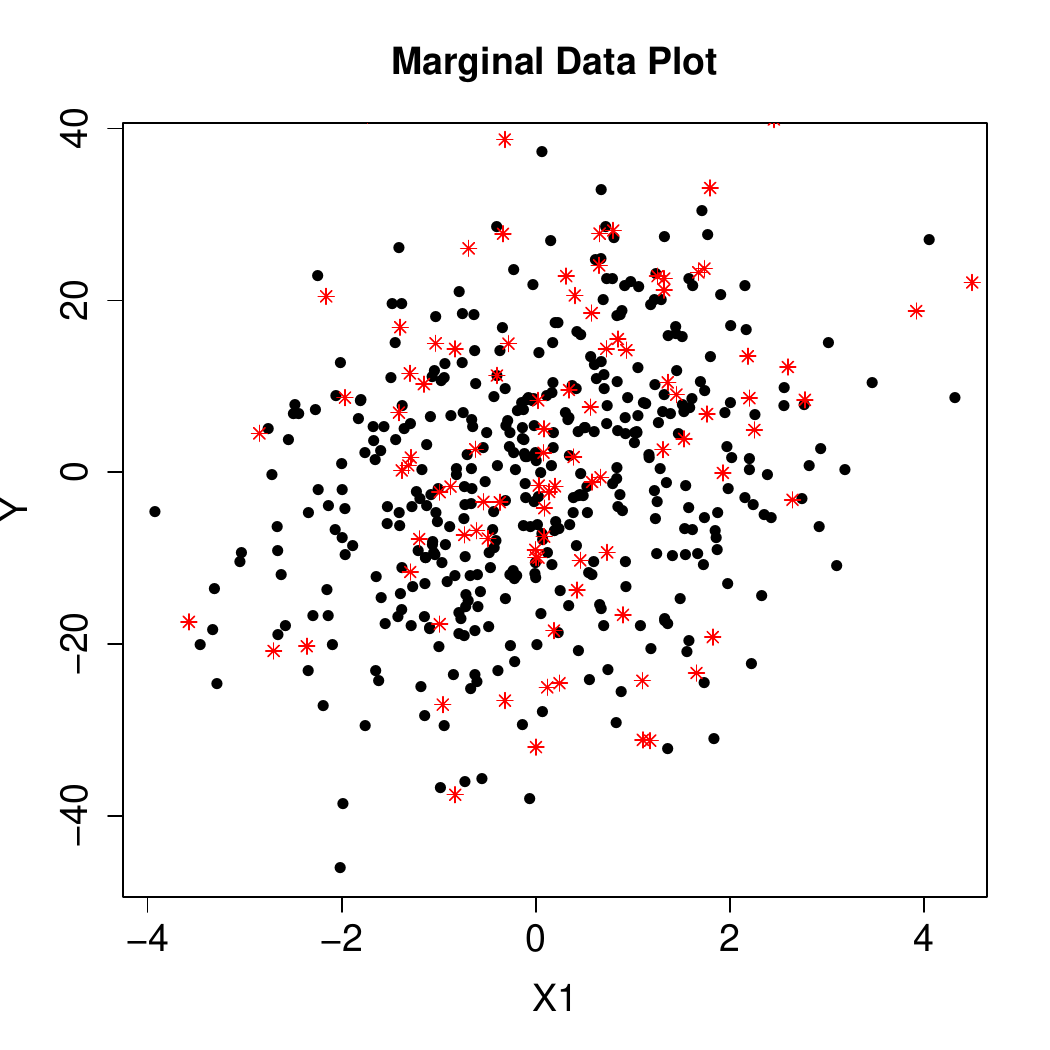}
  \caption
  [The sequence plot of realized intercepts and RMSE of  $\beta_{1}^{\star}$,
    $\beta_{2}^{\star}$ and $\bd{\beta}^{\star}$ for different methods.]
  {\small
    The sequence plot on the left shows 500 incidental parameters $\mu_{i}^{\star}$'s with $c=3$ and $p_{w}=0.75$.
    The non-zero $\mu_{i}^{\star}$'s are in red.
    The scatter plot on the right shows
    the responses $Y_{i}$'s
    against the first covariate $X_{i1}$'s of a data set generated with those 500 incidental parameters.
    The red stars stand for the contaminated sample points, the ones with nonzero $\mu_{i}^{\star}$'s.  }
  \label{fig1}
\end{figure}

\begin{figure}[h]
  \centering
  \includegraphics[scale=0.5]{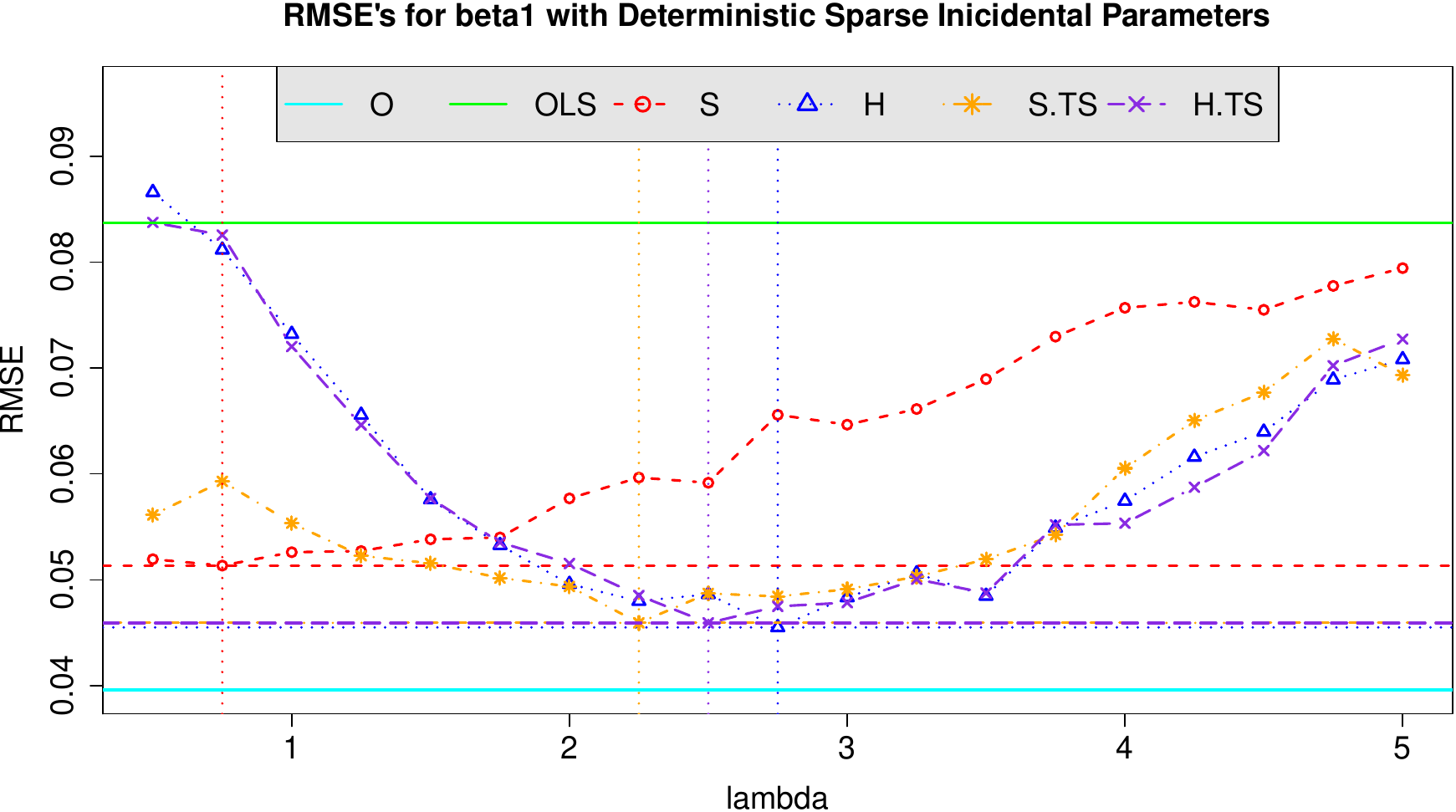}
  \caption[The simulated RMSE of  $\beta_{1}^{\star}$,
    $\beta_{2}^{\star}$ and $\bd{\beta}^{\star}$ for different methods with fixed intercepts.]
  { \small
    RMSE's of the O(Oracle), OLS, S(PLS.Soft), H(PLS.Hard),
    S.TH(PLS.Soft.TwoStep) and H.TH(PLS.Hard.TwoStep)
    estimators of $\beta_{1}^{\star}$
    with the incidental parameters shown in
    Figure \ref{fig1}.
    The top and bottom solid horizontal lines
    show the RMSE's for OLS and O, respectively.
    Other four horizontal lines indicate the minimal RMSE's for those four PLS methods
    and the corresponding four best $\lambda$'s are shown by the vertical lines.}
  \label{fig2}
\end{figure}

The sequence plot of Figure \ref{fig1}
shows $500$ realized incidental parameters $\mu_{i}^{\star}$'s
with $c=3$ and $p_{w}=0.75$.
They are used in data generation of simulations.
The scatter plot of Figure \ref{fig1}
shows the responses $Y_{i}$'s
against the first covariate $X_{i1}$'s of a generated data set
and the red stars stand for the contaminated sample points, the ones with nonzero $\mu_{i}^{\star}$'s.
With fifty covariates, usually it is difficult to graphically identify
the contaminated data points.

With the above incidental parameters,
those six methods are evaluated by simulations
with iteration number 1000.
Since $\bd{\Sigma}_{X}$ is a Toeplitz matrix with equal diagonal elements,
the asymptotic variances of the estimators of $\beta_{1}^{\star}$ and $\beta_{2}^{\star}$ are different
and representative for estimators of other $\beta_{i}^{\star}$'s.
So, we only report simulation results on the estimation of  $\beta_{1}^{\star}$ and $\beta_{2}^{\star}$.

Figure \ref{fig2} shows RMSE's of six estimators for $\beta_{1}^{\star}$.
RMSE's for $\beta_{2}^{\star}$
are similar.
As expected,
the oracle method has the smallest RMSE
and OLS the largest.
RMSE of each PLS method as a function of $\lambda$ forms a convex curve,
which achieves a minimal RMSE significantly below the green line of OLS
and close to the cyan line of O.
More specifically,
RMSE of PLS.Hard achieves the minimal RMSE when $\lambda$ is around 2.75.
On the other hand,
RMSE of PLS.Soft decreases a little till $\lambda$ is around .75,
then increases and stays above RMSE of PLS.Hard.
This reflects the fact that
a large $\lambda$ in a soft-threshold method usually causes bias.
PLS.Hard.TwoStep has very similar performance
with PLS.Hard for all $\lambda$.
PLS.Soft.TwoStep has similar performance with PLS.Soft when $\lambda$ is small.
However,
as $\lambda$ becomes large,
PLS.Soft.TwoStep moves closer to PLS.Hard than PLS.Soft.
It is because PLS.Soft.TwoStep and PLS.Hard.TwoStep have similar estimation
when $\lambda$ is large.
The minimal RMSE of PLS.Soft is slightly larger than those of other PLS Methods.

Table \ref{table 1} depicts
the minimal RMSE's of the estimators for $\beta_{1}^{\star}$ and $\beta_{2}^{\star}$
with the corresponding optimal $\lambda$'s and biases.
The biases are ignorable comparing withe the RMSE's.
The optimal $\lambda$ for PLS.Soft and other PLS methods are around .75 and 2.5, respectively.
This indicates the simple soft threshold method tends to work best with a small $\lambda$
due to the bias issue.
Denote the empirical relative efficiency (ERE) of an estimator $A$
with respect to another estimator $B$
as RMSE($B$)$/$RMSE($A$).
Then, for the estimation of $\beta_{1}^{\star}$,
the ERE's of PLS.Soft, PLS.Hard, PLS.Soft.TwoStep and PLS.Hard.TwoStep
with respect to O
are around $78\%$, $87\%$, $87\%$ and $87\%$, respectively;
the ERE's of the PLS methods
with respect to OLS
are around $176\%$, $196\%$, $196\%$ and $196\%$, respectively.
The ERE's for $\beta_{2}^{\star}$ are similar.
Thus, in terms of ERE (and RMSE),
the PLS methods perform closely to O
and significantly better than OLS.

From Table \ref{table 1},
we can also see that the RMSE's of the estimators for $\beta_{1}^{\star}$
are always smaller than those for $\beta_{1}^{\star}$.
This is because the first covariate is less correlated with others covariates than the second one.

\begin{table}
  \centering
  \begin{tabular}{lccccccccc}
    \hline\hline
         & O & OLS & S & H & S.TS & H.TS & S.P & H.P & LAD  \\ \hline
    Bias($\hat{\beta}_{1}$)$\times 10^{4}$ &  .8 & 17.8 & 1.3 & .6 & 5.9 & 20.4 & 9.5 & 11.5 & 10.4 \\
    RMSE($\hat{\beta}_{1}$)$\times 10^{2}$ &  4.0 & 8.4 & \textbf{5.1} & \textbf{4.6} & \textbf{4.6} & \textbf{4.6} & \textbf{\emph{5.4}}  &  \textbf{\emph{5.0}} & \emph{\textbf{5.4}}\\
    $\lambda$ &   &  & .75 & 2.75 & 2.25 & 2.5 & $\overline{2.45}$  & $\overline{2.47}$ & \\ \hline
    Bias($\hat{\beta}_{2}$)$\times 10^{4}$ &  -2.3 & -46.7 & 24.6 & 43.5 & -21.8 & 10.0 & 32.3 & 13.9 & -7.3\\
    RMSE($\hat{\beta}_{2}$)$\times 10^{2}$ & 4.4 & 9.0 & \textbf{5.3} & \textbf{4.9} & \textbf{5.0} & \textbf{4.9} & \textbf{\emph{5.8}} & \textbf{\emph{5.5}} & \emph{\textbf{5.9}}\\
    $\lambda$ &  &  & .75 & 2.75 & 2.75 & 2.5 & $\overline{2.45}$ & $\overline{2.47}$ & \\ \hline
  \end{tabular}
  \caption[The simulated RMSE of estimators of $\beta_{1}^{\star}$ and $\beta_{1}^{\star}$.]
  {\small
        RMSE's of the Oracle, OLS and LAD estimators and
        The minimal RMSE's of the penalized estimators of $\beta_{1}^{\star}$ and $\beta_{2}^{\star}$
        with the corresponding optimal or data-driven $\lambda$'s and biases
        when the incidental parameters shown in Figure \ref{fig1} are used.
        For a data-driven method,
        the data-driven $\lambda$ is different in each iteration
        so that the reported $\lambda$'s are averages.
        The lines over the numbers emphasize the numbers are averages.
        The standard deviations for the data-driven $\lambda$'s
        of S.P and H.P are
        0.37 and 0.34, respectively.}
  \label{table 1}
\end{table}

\begin{figure}
  \centering
  \includegraphics[scale=0.66]{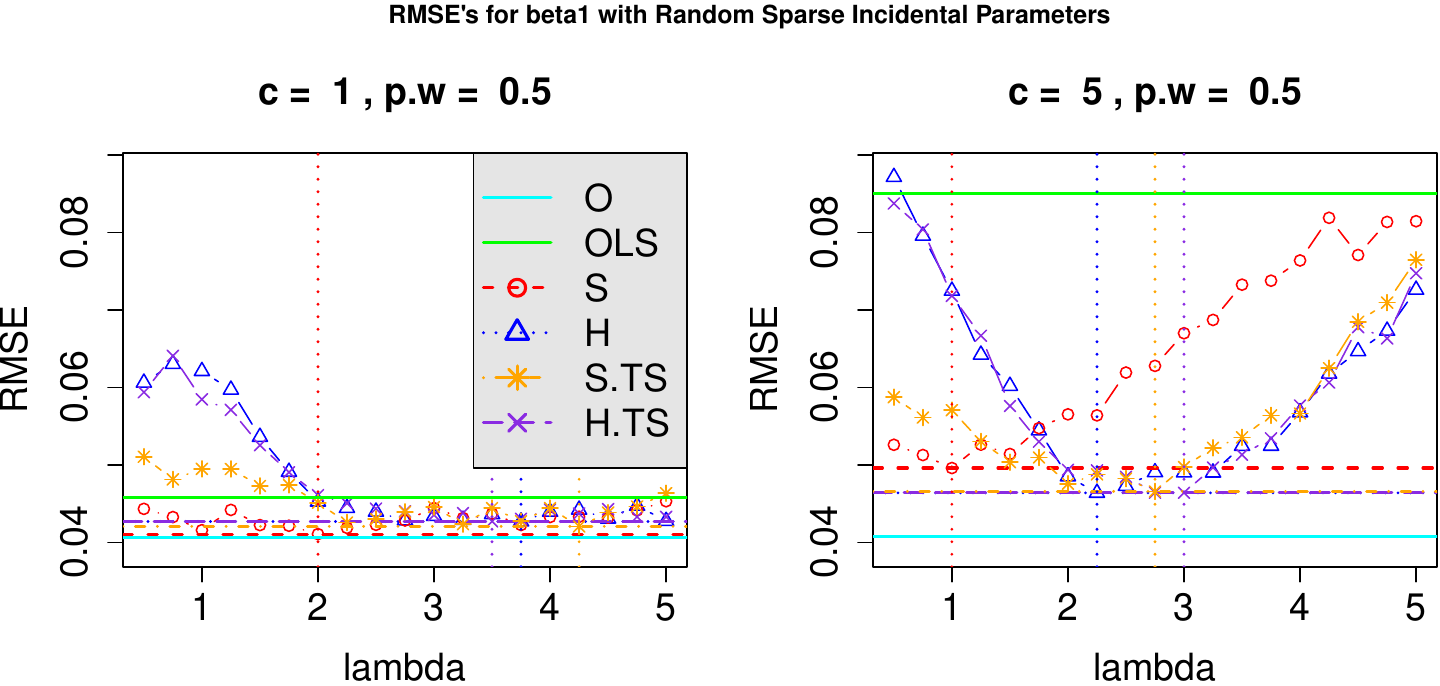}
  \caption[The simulated RMSE of different estimators of $\bd{\beta}^{\star}$ under four model settings.]
    {\small
    Similar to Figure \ref{fig2},
    these plots show the RMSE's of the O(Oracle), OLS,
    S(PLS.Soft), H(PLS.Hard),
    S.TH(PLS.Soft.TwoStep) and H.TH(PLS.Hard.TwoStep)
    estimators of $\beta_{1}^{\star}$
    with randomly generated $\bd{\mu}^{\star}$
    under two settings with
    $p_{w}=0.5$
    and
    $c=1$ or $5$.
    }
  \label{fig3}
\end{figure}

In order to examine the performance of the methods with general incidental parameters
but not just those in Figure \ref{fig1},
we generate $\bd{\mu}^{\star}$ randomly for each iteration.
The iteration number for each simulation is also 1000.

Figure \ref{fig3}
shows the RMSE's of six estimators of $\beta_{1}^{\star}$ under two settings:
$p_{w}=0.5$ and $c=1$ or $5$.
Each plot in Figure \ref{fig3}
presents a similar pattern with Figure \ref{fig2}.
When $p_{w}$ is fixed at 0.5,
the RMSE's of each non-oracle estimator of $\beta_{1}^{\star}$ increases
as the contamination parameter $c$ increases from 1 to 5.
This indicates that
each non-oracle estimator performs worse
as the data becomes more contaminated.
However, the PLS estimators are more robust than OLS,
which is very sensitive to the change of $c$.
We have also done simulations with $p_{w}=0.75$
and the RMSE's of the estimators of $\beta_{1}^{\star}$ are similar to those with $p_{w}=0.75$
so that the corresponding plots are similar to those in Figure \ref{fig3}.
In other words,
the RMSE's of all estimators
are stable with respect to $p_{w}$,
which means the magnitudes of the nonzero incidental parameters matter most but not their signs.
We also note that some penalized methods perform closely to or even outperform
the oracle one when $c$ is small as showed in the plot with $c=1$.
This happens because that O ignores all the contaminated data points,
even those with very light contamination,
but the penalized methods exploit information in such points.

\begin{table}
  \centering
  \begin{tabular}{cccccccccccc}
    \hline \hline
    RMSE$(\hat{\beta_{1}})$$\times 10^{2}$ & O & OLS & S & H & S.TS & H.TS & S.P & H.P & LAD\\ \hline
    $(p_{w},c)=(.5, .5)$ & 4.06 & 4.06 & 3.92 & 3.95 & 3.90 & 3.96 & 4.01 & 4.38 & 4.85 \\
    $\lambda$ &  &  & 1.25 & 3.5 & 2.75 & 2.5 & $\overline{2.08}$ & $\overline{2.15}$  & \\ \hline
    $(p_{w},c)=(.5, 1)$ & 4.06 & 4.58 & 4.11 & 4.27 & 4.21 & 4.27 & 4.01 & 4.59 & 4.75 \\
    $\lambda$ &  &  & 2 & 3.75 & 4.25 & 3.5 & $\overline{2.20}$ &$\overline{2.24}$& \\ \hline
    $(p_{w},c)=(.5, 3)$ & 4.12 & 6.47 & \textbf{4.81} & \textbf{4.99} & \textbf{4.81} & \textbf{4.80} & \emph{\textbf{5.64}} & \emph{\textbf{5.19}} & \emph{\textbf{5.49}}\\
    $\lambda$ &  &  & 1 & 2.5 & 2 & 2.25 & $\overline{2.42}$ &$\overline{2.45}$& \\ \hline
    $(p_{w},c)=(.5, 5)$ & 4.07 & 8.50 & \textbf{4.96} & \textbf{4.63} & \textbf{4.66} & \textbf{4.64} & \emph{\textbf{6.36}} & \emph{\textbf{4.87}} & \emph{\textbf{5.52}}\\
    $\lambda$ &  &  & 1 & 2.25 & 2.75 & 3 & $\overline{2.52}$ &$\overline{2.58}$& \\ \hline
    $(p_{w},c)=(.75, .5)$ & 4.13 & 4.02 & 3.91 & 3.88 & 3.98 & 3.91 & 4.08 & 4.41 & 4.79\\
    $\lambda$ &  &  & 1.5 & 5 & 3.25 & 4.5 & $\overline{2.08}$ &$\overline{2.14}$& \\ \hline
    $(p_{w},c)=(.75, 1)$ & 4.17 & 4.41 & 4.19 & 4.15 & 4.15 & 4.20 & 4.16 & 4.59 & 4.98\\
    $\lambda$ &  &  & 3.25 & 3 & 4 & 3.5 & $\overline{2.15}$ &$\overline{2.23}$& \\ \hline
    $(p_{w},c)=(.75, 3)$ & 4.15 & 5.99 & \textbf{4.91} & \textbf{4.93} & \textbf{4.80} & \textbf{5.02} & \emph{\textbf{5.35}} & \emph{\textbf{5.06}} & \emph{\textbf{5.52}}\\
    $\lambda$ &  &  & 1 & 2.25 & 2 & 2.5 & $\overline{2.44}$  &$\overline{2.47}$& \\ \hline
    $(p_{w},c)=(.75, 5)$ & 3.97 & 8.41 & \textbf{5.01} & \textbf{4.66} & \textbf{4.75} & \textbf{4.66} & \emph{\textbf{6.22}} &  \emph{\textbf{4.90}} & \emph{\textbf{5.76}}\\
    $\lambda$ &  &  & 1.5 & 2.25 & 2.5 & 3 & $\overline{2.55}$ &$\overline{2.60}$& \\ \hline
  \end{tabular}
  \caption[The simulated RMSE of $\hat{\bd{\beta}}$
        for different methods under eight different settings.]
       {\small
       Similar to Table \ref{table 1},
       this one shows
       the RMSE's and minimal RMSE's of nine estimators of $\beta_{1}^{\star}$
       under eight settings on randomly generated $\bd{\mu}^{\star}$ with $p_{w}=0.5, 0.75$ and $c=0.5, 1, 3, 5$.
       The standard deviations of the data-driven $\lambda$'s of S.P and H.P for different settings are between 0.2 and 0.45.
       }
  \label{table 2}
\end{table}

Table \ref{table 2} contains the RMSE's of the estimators of $\beta_{1}^{\star}$ under eight settings with
$p_{w}=0.5$ or $0.75$ and $c=0.5, 1, 3$ or $5$.
For each $p_{w}$, as $c$ increases from 0.5 to 5,
the RMSE's of O with the multiplication factor $10^{2}$ is almost constantly around 4,
those of OLS increases from about 4 to 8.5
and those of PLS ones grow from about 4 to 5,
which confirms the robustness of the PLS estimators.
When $c\leq 1$ is small with respect to the variance of random error $\sigma=1$,
the data points are only slightly contaminated.
OLS and PLS methods perform similar to O.
However, when $c\geq 3$ is large,
which means the data are more contaminated,
the RMSE's of OLS become significantly larger,
but PLS methods perform still closely to O.

\subsection{Performance of Data-Driven Penalized Methods}
\label{sec6.2}
Previous simulations have shown
the PLS methods
with optimal $\lambda$'s
have good RMSE's
comparing with those of the Oracle and OLS ones.
In practice, however,
these optimal $\lambda$'s are unknown.
One approach to obtain a data driven $\lambda$
has been introduced in Subsection \ref{subsec:Regularization Parameters}.
Since, as shown in the previous simulation results,
the two-step PLS methods
perform similarly with the one-step PLS methods, i.e. PLS.Soft and PLS.Hard,
only the latter
are studied by simulations
with data-driven $\lambda$'s
and denoted as PLS.Soft.Prac (S.P) and PLS.Hard.Prac (H.P), respectively.
For estimating data-driven $\lambda$'s,
$\alpha_{l}=2$ and  $\alpha_{u}=7$.
The size of the pure data set $n_{pure}$ is $n/2$
and that of the testing data set is $n_{pure}/2$.

Simulations are first run with the deterministic sparse incidental parameters
as showed in Figure \ref{fig1}.
We can see in Table \ref{table 1} that
the RMSE's of estimators of $\beta_{1}^{\star}$ from PLS.Soft.Prac and PLS.Hard.Prac
are around 5.4 and 5.0,
slightly larger than the optimal values 5.1 and 4.6.,respectively.
However,
they are still significantly smaller than
RMSE of OLS, which is 8.4.
The observations of the estimators of $\beta_{2}^{\star}$ are similar.
As before, we also evaluate
the performance of the data-driven PLS methods
with random sparse incidental parameters.
Table \ref{table 2} shows that, for a given $p_{w}$,
when $c$ is small such as 0.5 and 1,
the RMSE's of PLS.Soft.Prac and PLS.Hard.Prac
are close to those of PLS.Soft and PLS.Hard
with the optimal $\lambda$'s.
In these cases, PLS.Soft.Prac performs
slightly better than PLS.Hard.Prac,
and even better than PLS.Soft with the optimal $\lambda$ and the Oracle method.
On the other hand, for a given $p_{w}$,
when $c$ is large such as 3 and 5,
the RMSE's of PLS.Soft.Prac and PLS.Hard.Prac
are greater than those of PLS.Soft and PLS.Hard, respectively,
but still less than those of OLS.
In these cases, the RMSE's of PLS.Soft.Prac are
larger than those of PLS.Hard.Prac,
which indicates the bias issue
of the soft threshold method.
Thus, the data-driven regularization parameter works well
with penalized estimation.
When the data is slightly contaminated,
the soft penalty is preferred;
otherwise, the hard penalty is recommended.

Tables \ref{table 1} and \ref{table 2} also contain
RMSE of the least absolute deviation regression method (LAD) used in \cite{Fan2012a}
with \emph{all} but not part of the sample points with small residuals.
Generally speaking, in both deterministic and random incidental parameter cases,
LAD performs similarly with the PLS methods with data-driven $\lambda$'s.
More specifically, when $c\leq 1$ is small, PLS.Soft.Prac outperform LAD;
otherwise, LAD performs better.
For all the cases, LAD is dominated by PLS.Hard.Prac.
These observations confirm that LAD is an effective robustness method
and the penalized methods make improvement.


\subsection{Data-Driven Confidence Intervals}
\label{sec:Data-Driven Confidence Intervals}

We next turn to investigate the finite-sample performance of
the asymptotic confidence interval (CI)
(\ref{PC.Paper.PLS.Soft.TwoStage.Beta.i.Confidence.Interval})
for $\beta_{j}^{\star}$ with $j=1,2$
based on PLS two-step methods.
Since CI (\ref{PC.Paper.PLS.Soft.TwoStage.Beta.i.Confidence.Interval})
is based on the properties of the penalized two-step estimator with the soft penalty,
we focus on PLS.TS.Soft
with a data-driven regularization parameter $\lambda$.
The choice of $\lambda$ in Subsection \ref{subsec:Regularization Parameters}
for minimizing RMSE
is usually no longer suitable for
constructing
confidence intervals,
since it is designed to achieve minimal RMSE.
We propose to first obtain $\hat{\sigma}_{pure}$ as in the data-driven procedure in
Subsection \ref{subsec:Regularization Parameters}
and then simply set the data-driven $\lambda$ be five times of $\hat{\sigma}_{pure}$.
Since $\hat{\sigma}_{pure}$ tends to underestimate $\sigma$,
this data-driven $\lambda$ is usually not large
with respect to $\sigma$.
Denote this method as PLS.TwoStage.Soft.Prac or S.TS.P.
After plugging in $\hat{\sigma}$ and $\hat{\sigma}_{j}^{-1}$, the square root of the $(j, j)$th element of $\hat{\bd{\Sigma}}_{X}^{-1}$,
and replace $n$ by $m=\#(\hat{I}_{0})$ in the theoretical CI (\ref{PC.Paper.PLS.Soft.TwoStage.Beta.i.Confidence.Interval}),
we obtain a data-driven CI
$[\tilde{\beta}_{j} \pm m^{-1/2}\hat{\sigma}\hat{\sigma}_{j}^{-1}z_{\alpha/2}]$,
where $\tilde{\beta}_{j}$ is the PLS.TwoStage.Soft.Prac estimator of $\beta_{j}^{\star}$ for each $j$.

This data-driven CI
is compared with CI's based on Oracle and OLS methods.
More specifically,
denote the Oracle and OLS estimators of $\beta_{j}^{\star}$
as $\hat{\beta}_{j}^{(O)}$ and $\hat{\beta}_{j}^{(OLS)}$, respectively.
Then, the corresponding CI's are given by
$[\hat{\beta}_{j}^{(O)} \pm m_{o}^{-1/2}\hat{\sigma}^{(O)}\hat{\sigma}_{j}^{-1}z_{\alpha/2}]$ and
$[\hat{\beta}_{j}^{(OLS)} \pm n^{-1/2}\hat{\sigma}^{(OLS)}\hat{\sigma}_{j}^{-1}z_{\alpha/2}]$,
where $m_{o}$ is the number of zero incidental parameters
and $\hat{\sigma}^{(O)}$ and $\hat{\sigma}^{(OLS)}$ are the estimators of $\sigma$
from O and OLS methods, respectively.

The simulation settings are the same to the previous ones with \emph{deterministic} sparse incidental parameters
except the following changes.
(a) The number of covariates $d$ is reduced to 5 from 50.
This is because when $d=50$ and the nominal level is $95\%$,
even the empirical coverage rate (CR) of the oracle confidence interval for $\beta_{1}^{\star}$ becomes $93.5\%$,
not very close to $95\%$.
(b) The iteration number is increased from 1000 to 10000 to improve the accuracy of CR's.
(c) The probabilities of nonzero incidental parameters
    $(p_{1}, p_{2})$ are set to be $(0.01, 0.01)$, $(0.03, 0.03)$ and $(0.05, 0.05)$;
the contamination parameter $c$ is increased to $10$.
In order to achieve good second order asymptotic approximation,
we can either increase the sample size or enlarge the signal noise ratio.
Here we adopt the latter.

\begin{table}
  \centering
  \begin{tabular}{l||l|ccc||l|ccc}
    \hline \hline
  $(p_{1}, p_{2})$ & $\beta_{1}^{\star}$ & O & OLS & S.TS.P & $\beta_{2}^{\star}$ & O & OLS & S.TS.P\\ \hline
  (.01,.01) &    &  & .950 & .944 &    &  & .948 & .945 \\
  (.03,,03) & CR & .955 & .951 & .948 & CR & .948 & .946 & .947 \\
  (.05,.05) &    &  & .946 & .945 &    &  & .949 & .946 \\ \hline
  (.01,.01) &    &  & .200 & \textbf{.135} &  & & .213 & \textbf{.144} \\
  (.03,,03) & AL & .133 & .279 & \textbf{.137} & AL & .142 & .297 & \textbf{.146} \\
  (.05,.05) &    & & .382 & \textbf{.139} &   & & .407 & \textbf{.149} \\ \hline
  \end{tabular}
  \caption[Coverage rates (CR) and average length (AL) of $95\%$ confidence intervals.]
       {\small Coverage rates (CR) and average length (AL) of $95\%$ confidence intervals
       for $\beta_{1}^{\star}$ and $\beta_{2}^{\star}$
       from O, OLS, PLS.TwoStage.Soft.Prac methods
       under three settings on deterministic sparse incidental parameters.}
  \label{table:CI}
\end{table}

Table \ref{table:CI} reports the empirical coverage rates (CR)
and average lengths (AL)
of the CI's of $\beta_{1}^{\star}$ and $\beta_{2}^{\star}$
from O, OLS and PLS.TS.Soft.Prac methods
under three different settings on the incidental parameters.
For the oracle method,
these three settings are the same
and thus only one set of simulation results are presented.
Table \ref{table:CI} shows that
the CR's of all methods under all settings are close to the nominal level .95.
The OLS treats the deterministic incidental parameters as random ones
and achieves excellent CR's.
However, the AL's of OLS are significantly larger than those of O and PLS.TS.Soft.Prac,
especially when there are more non-zero incidental parameters.
On the other hand, the AL's of PLS.TS.Soft.Prac are only slightly larger than
those of O.
This means PLS.TS.Soft.Prac has excellent efficiency
in terms of AL's given excellent CR's.
Also note that the AL's for $\beta_{1}^{\star}$ are less than those for $\beta_{2}^{\star}$.
This is because the asymptotic variance of $\hat{\beta}_{1}$
is less than that of $\hat{\beta}_{2}$
when the covariance matrix $\bd{\Sigma}_{X}$ is a Toeplitz matrix.
Simulations with random incidental parameters under the same settings
have also been done and
the results are similar to those in Table \ref{table:CI}
with slightly inflated AL's for OLS and PLS.TS.Soft.Prac
due to the randomness of the incidental parameters.

\subsection{Real Data Analysis}
We implement the penalized estimation
with the soft penalty
in the method of estimating false discovery proportion of a multiple testing procedure
proposed by \cite{Fan2012a}
for investigating the association
between the expression level of gene CCT8,
which is closely related to Down Syndrome phenotypes,
and thousands of SNPs.
The data set consists of three populations:
60 Utah residents (CEU),
45 Japanese and 45 Chinese (JPTCHB)
and
60 Yoruba (YRI).
More details on the data set can be found
in
\cite{Fan2012a}.

\begin{figure}
  \centering
  \includegraphics[scale=0.3]{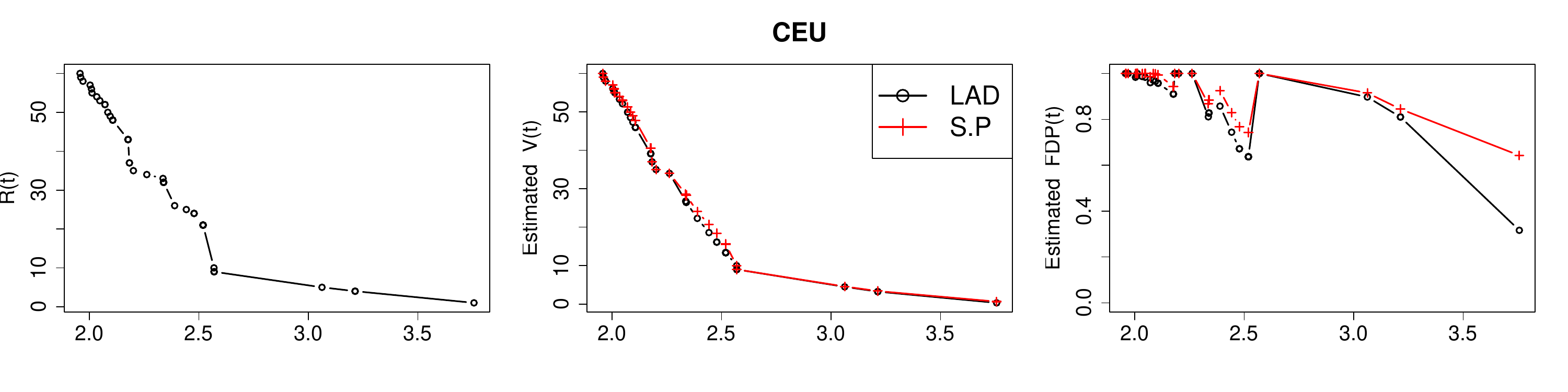} \\
  \includegraphics[scale=0.3]{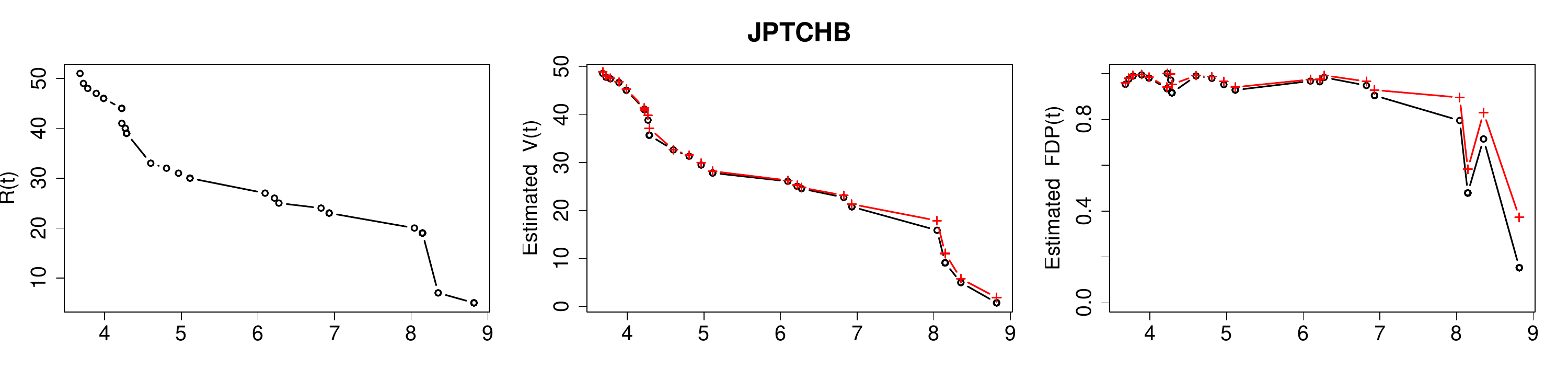} \\
  \includegraphics[scale=0.3]{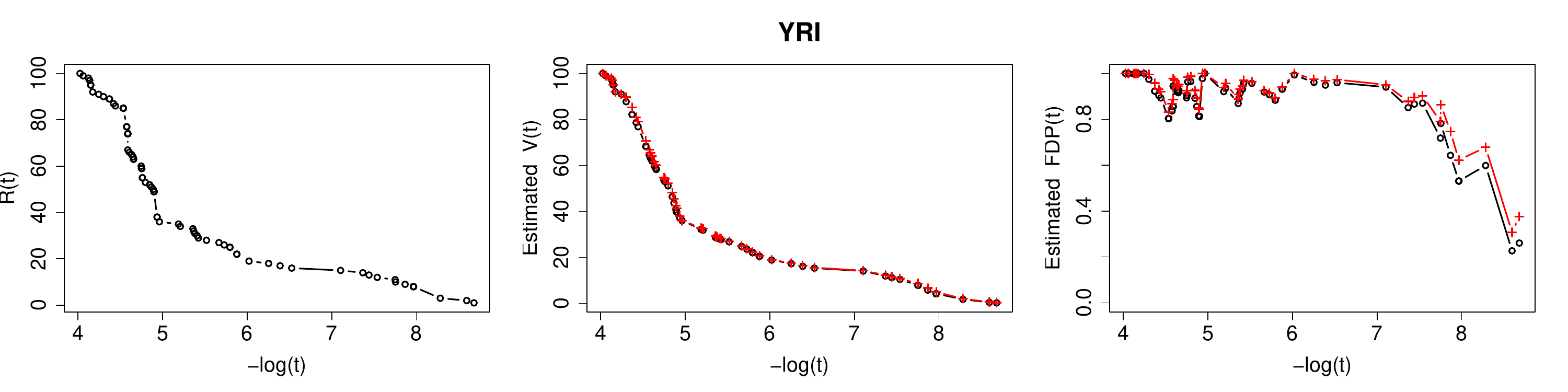}
  \caption
  [Discovery number $R(t)$,
  estimated false discovery number $V(t)$
  and estimated false discovery proportion $\text{FDP}(t)$]
  {\small Discovery number $R(t)$,
  estimated false discovery number $V(t)$
  and estimated false discovery proportion $\text{FDP}(t)$
  as functions of a threshold on $t$
  for populations CEU, JTPCHB and YRI.
  The $x$-axis is $-\log_{10}(t)$.}
  \label{fig.real.data}
\end{figure}

In the testing procedure by \cite{Fan2012a},
a filtered least absolute deviation regression (LAD) is used
to estimate the loading factors
with $90\%$ of the cases (SNPs)
whose test statistics are small
and thus the resulting estimator is statistically biased.
We upgrade this step
with S.P
described in Subsections
\ref{subsec:Regularization Parameters}
and
\ref{sec6.2}
and
re-estimate the number of false discoveries $V(t)$
and
the false discovery proportion $\text{FDP}(t)$
as functions of $-\log_{10}(t)$,
where $t$ is a thresholding value.
Figure \ref{fig.real.data}
shows the number of total discoveries $R(t)$,
$\hat{V}(t)$ and $\widehat{\text{FDP}}(t)$
from procedures using filtered LAD and S.P.
It is clear that
$\hat{V}(t)$ and $\widehat{\text{FDP}}(t)$
with S.P
are uniformly larger than
but reasonably close to
those with filtered LAD.
Table \ref{table.real.data}
contains $R(t)$ and
$\widehat{\text{FDP}}(t)$
with filtered LAD and S.P
for several specific thresholds.
The estimated $\text{FDP}$s
with S.P for CEU and YRI
are slightly larger than
those with LAD
and $\widehat{\text{FDP}}$
for JPTCHB with S.P is more than double of
that with filtered LAD.
This suggests that the estimation of $\text{FDP}$ with filtered LAD
might tend to be optimistic.

\begin{table}
  \centering
  \begin{tabular}{ccccc}
    \hline
    Population & $t$ & $R(t)$ & $\widehat{\text{FDP}}(t)$ with LAD & $\widehat{\text{FDP}}(t)$ with S.P \\ \hline
    CEU & $6.12 \times 10^{-4}$ & 4 & .810 & .845 \\
    JPTCHB & $1.51 \times 10^{-9}$ & 5 &  .153 & .373 \\
    YRI & $2.54 \times 10^{-9}$ & 2 & .227 & .308 \\ \hline
  \end{tabular}
  \caption[The simulated RMSE ]
  {\small Discovery numbers $R(t)$ and estimated false discover proportions $\hat{\text{FDP}}(t)$s from
  methods with LAD and S.P
  for specific values of threshold $t$.}
  \label{table.real.data}
\end{table}

\section{Conclusion and Discussion}
\label{PC.Paper.Conclusion}
This paper considers the estimation
of structural parameters
with a finite or diverging number of covariates
in a linear regression model with the presence of
high-dimensional sparse incidental parameters.
By exploiting the sparsity,
we propose an estimation method
penalizing the incidental parameters.
The penalized estimator of
the structural parameters
is consistent
and asymptotically Gaussian
and achieves
an oracle property.
On the contrary,
the penalized estimator of
the incidental parameters
possesses only partial selection consistency
but not consistency.
Thus, the structural parameters are consistently estimated
while the incidental parameters not,
which presents
a partial consistency phenomenon.
Further, in order to construct better confidence regions for the structural parameters,
we propose
a two-step estimator,
which has fewer possible asymptotic distributions
and can be
asymptotically even more efficient than the one-step penalized estimator
when the size and magnitude of nonzero incidental parameters are substantially large.

Simulation results show
that the penalized methods
with best regularization parameters
achieve significantly smaller mean square errors than
the ordinary least squares method which ignores the incidental parameters.
Also provided
is a data-driven regularization parameter,
with which
the penalized estimators
continue to significantly outperform
ordinary least squares
when the incidental parameters are too large to be neglected.
In terms of average length together with excellent coverage rates,
the advantage of the confidence intervals based on
the two-step estimator with an alternative data-driven regularization parameter
is verified by simulations.
A data set on genome-wide association
is analyzed
with a multiple testing procedure
equipped with a data-driven penalized method
and
false discovery proportions
are estimated.

In econometrics, a fixed effect panel data model is given by,
for $1\leq i\leq n$ and $1\leq t\leq T$,
\begin{equation}\label{model:discussion:fixed effect panel data}
    Y_{it} = \mu_{i}^{\star} + \bd{X}_{it}^{T}\bd{\beta}^{\star} + \epsilon_{it},
\end{equation}
where $\mu_{i}^{\star}$'s are unknown fixed effects.
When $T$ diverges, the fixed effects can be consistently estimated.
When $T$ is finite and greater than or equal to 2,
although the fixed effects can no longer be consistently estimated,
they can be removed by a within-group transformation: for each $i$,
$Y_{it} - \bar{Y}_{i} = (\bd{X}_{it}-\bar{\bd{X}}_{i})^{T}\bd{\beta}^{\star} + \epsilon_{it}-\bar{\epsilon}_{i}$,
where $\bar{Y}_{i}$, $\bar{\bd{X}}_{i}$ and $\bar{\epsilon}_{i}$
are the averages of $Y_{it}$'s, $\bd{X}_{it}$'s and $\epsilon_{it}$'s, respectively.
When $T$ is equal to 1, however, the within-group transformation fails.
Note that, with $T=1$, Model (\ref{model:discussion:fixed effect panel data})
becomes Model (\ref{model:basic})
so that the proposed penalized estimations provide a solution
under the sparsity assumption on the fixed effects.

Although this paper only illustrates
the partial consistency phenomenon
of a penalized estimation method
for a linear regression model,
such a phenomenon
shall universally
exist for a general parametric model,
which contains both
a structural parameter
and a high-dimensional sparse incidental parameter.
For example, consider a panel data logistic regression model:
$P(Y_{it}=1|\bd{X}_{it})=(1+\exp\{-(\mu_{i}^{\star} + \bd{X}_{it}^{T}\bd{\beta}^{\star})\})^{-1}$.
When $T$ is finite, the fixed effects $\mu_{i}^{\star}$'s cannot be removed by
the within-group transformation as in the panel data linear model (\ref{model:discussion:fixed effect panel data}).
However, the proposed penalized estimations can still provide a solution.

Further, if the structural parameter has a dimension diverging
faster than the sample size
and is sparse,
it is expected that the partial consistency phenomenon
will continue to appear
when sparsity penalty is imposed
on both the structural and incidental parameters.

\section*{Acknowledgement}
The authors thank the Editor, an associate editor, and three referees for their many helpful comments that have resulted in significant improvements in the article.
The research was partially supported by NSF grants DMS-1206464 and DMS-0704337 and NIH Grants R01-GM100474-01 and
R01-GM072611-08.

\appendix

\section{Appendix}
In this appendix,
we provide the proofs of
the theoretical results
in Section \ref{sec:Diverging number of structural parameters}.
The proofs of
the results
in Sections
\ref{sec2} and
\ref{sec3}
are in Supplements
\ref{PC.Paper.Supplement.Model.Data.Method} and
\ref{PC.Paper.Supplement.Asymptotic.Properties}.

Denote $\mathbb{S}_{k,l}=\mathbb{S}_{\{k, k+1, \cdots, l\}}$
and $\mathbb{S}_{k,l}^{\epsilon} = \mathbb{S}_{\{k, k+1, \cdots, l\}}^{\epsilon}$.
Let
$ \mathcal{B} =\{\max_{s+1\leq i\leq n}\normE{\bd{X}_{i}}\leq \kappa_{n}\}$ and
$ \mathcal{D}= \bigcap_{i=1}^{n} \{ - \gamma_{n} \leq  \epsilon_{i} \leq \gamma_{n} \}$.
Then $P(\mathcal{B}) \rightarrow 1$
and
$P(\mathcal{D})\rightarrow 1$ by
(\ref{eq2.2}).

\begin{proofName}
\subsection{Proof of Lemma \ref{PC.Paper.d.infty.index.sets.consistency}}
\end{proofName}
\begin{proof}[Proof of Lemma \ref{PC.Paper.d.infty.index.sets.consistency}]
We first consider $S_{i0}$'s, then $S_{i1}$'s, and finally $S_{i2}$'s with $i=1,2,3$.
Consider $S_{10}$, $S_{20}$ and $S_{30}$.
Let
$
\mathcal{\mathcal{A}} =
\{S_{10}=S_{10}^{\star}\}
$.
Note that
$
P(\mathcal{A})
 \geq
P(\mathcal{A} | \mathcal{B})
P(\mathcal{B})
$
and $P(\mathcal{B})\rightarrow 1$.
It suffices to show that $P(\mathcal{A}|\mathcal{B})\rightarrow 1$.
By $\lambda \gg  \sqrt{d}\kappa_{n}$,
it follows
$
P(\mathcal{A} | \mathcal{B})
\geq
P(\{s+1\leq i \leq n: -\lambda + \max_{s+1\leq i\leq n}\normE{\bd{X}_{i}}\sqrt{d}C
\leq \epsilon_{i} \leq \lambda - \max_{s+1\leq i\leq n}\normE{\bd{X}_{i}}\sqrt{d}C \}
\supset S_{10}^{\star}
| \mathcal{B})
\geq
P(\{s+1\leq i \leq n: -\lambda + \kappa_{n}\sqrt{d}C
\leq \epsilon_{i} \leq \lambda - \kappa_{n}\sqrt{d}C \}
\supset S_{10}^{\star})
\geq
P(\mathcal{D}) \rightarrow 1$.
Thus, wpg1,
$S_{10}=S_{10}^{\star}$.
From
$
S_{10} \cup S_{20} \cup S_{30} = S_{10}^{\star}
$,
it follows that, wpg1,
$
S_{20}=S_{30}=\emptyset
$.
Consider $S_{21}$, $S_{31}$ and $S_{11}$.
Recall that
$\mu^{\star}=\min\{|\mu_{i}^{\star}|:1\leq i\leq s_{1}\}$
and note that
$\lambda - \mu^{\star} + \sqrt{d}C\kappa_{n} < -\gamma_{n}$ when $n$ is large.
Let
$S_{211}=S_{21}S_{21}^{\star}$
and
$S_{212}=S_{21}S_{21}^{\star c}$.
We will show
$P(S_{211}=S_{21}^{\star})\rightarrow 1$
and
$P(S_{212}=\emptyset)\rightarrow 1$.
Then
$P(S_{21}=S_{21}^{\star})\rightarrow 1$.
Denote
$\mathcal{A}_{1} = \{S_{211} \supset S_{21}^{\star}\}$.
On the event $\mathcal{B}$,
$S_{211}
\supset
\{1\leq i \leq s_{1}: \epsilon_{i} > \lambda - \mu^{\star} + \sqrt{d}C\kappa_{n} \text{ and } \mu_{i}^{\star}>0\}
 \supset
\{1\leq i \leq s_{1}: \epsilon_{i} > -\gamma_{n} \text{ and } \mu_{i}^{\star}>0\}$.
Then,
$
P(\mathcal{A}_{1})
 \geq
P(\mathcal{A}_{1}|\mathcal{B})P(\mathcal{B})
 \geq
P(\{1\leq i \leq s_{1}: \epsilon_{i} > -\gamma_{n} \text{ and } \mu_{i}^{\star}>0\} \supset S_{21}^{\star} ) P(\mathcal{B})
\rightarrow 1\cdot 1 = 1
$.
It follows that, wpg1, $S_{211}\supset S_{21}^{\star}$.
Note that $S_{211}\subset S_{21}^{\star}$.
Then, wpg1, $S_{211}=S_{21}^{\star}$.
Denote $\mathcal{A}_{2}=\{S_{212} = \emptyset\}$.
On the event $\mathcal{B}$,
$S_{212}
\subset
\{1\leq i \leq s_{1}: \epsilon_{i} > \lambda + \mu^{\star} - \sqrt{d}C\kappa_{n} \text{ and } \mu_{i}^{\star}<0\}$,
which contains
$\{1\leq i \leq s_{1}: \epsilon_{i} > \gamma_{n} \}$.
Then,
$
P(\mathcal{A}_{2})
\geq
P(\mathcal{A}_{2}| \mathcal{B})P(\mathcal{B})
\geq
P(\{1\leq i \leq s_{1}: \epsilon_{i} > \gamma_{n} \} = \emptyset ) P(\mathcal{B})
\rightarrow =1
$.
Then, wpg1, $S_{212}=\emptyset$.
Thus,
$P(S_{21}=S_{21}^{\star})\rightarrow 1$.
Similarly,
we can show, wpg1, $S_{31} =  S_{31}^{\star}$.
Note that $S_{11}$, $S_{21}$ and $S_{31}$
are disjoint and their union is $S_{21}^{\star}\cup S_{31}^{\star}$.
Then, wpg1, $S_{11}=\emptyset$.
Consider $S_{12}$, $S_{22}$ and $S_{32}$.
Denote
$\mathcal{A} = \{S_{12}=S_{12}^{\star}\}$.
Note that
$-\lambda - \mu^{\star}_{i} + \sqrt{d}C\kappa_{n} < -\gamma_{n}$
and
$\lambda - \mu^{\star}_{i} - \sqrt{d}C\kappa_{n}  > \gamma_{n}$
when $n$ is large
for $s_{1}+1\leq i \leq s$.
On the event $\mathcal{B}$,
$
S_{12}
\supset
\{s_{1}+1\leq i \leq s:
-\lambda - \mu^{\star}_{i} + \sqrt{d}C\kappa_{n}
\leq
\epsilon_{i}
\leq
\lambda - \mu^{\star}_{i} - \sqrt{d}C\kappa_{n} \}$,
which contains
$
\{s_{1}+1\leq i \leq s:
-\gamma_{n}
\leq
\epsilon_{i}
\leq
\gamma_{n} \}
$.
Then,
$
P(\mathcal{A})
\geq
P(\mathcal{A}|\mathcal{B})P(\mathcal{B})
\geq
P(\{s_{1}+1\leq i \leq s:
-\gamma_{n}
\leq
\epsilon_{i}
\leq
\gamma_{n} \} = S_{12}^{\star} ) P(\mathcal{B})
\rightarrow 1
$.
Thus, wpg1, $S_{12}=S_{12}^{\star}$.
Note that $S_{12}$, $S_{22}$ and $S_{32}$
are disjoint and their union
is $S_{12}^{\star}$.
Then, wpg1,
$
S_{22}=S_{32}=\emptyset
$.
\end{proof}

Before proceeding to the proofs of
Theorems
\ref{thm5.2}
to
\ref{model:three:multivariate:mean:zero:thm:twostage:limitdistribution.general.covariates.estimated.Sigma.alternative.errors:d:infty:Paper},
we denote
$\bar{\sigma}_{X}^{2} = (1/d)\sum_{j=1}^{d}\Var[X_{0j}]$ and
$\bar{\sigma}_{XX}^{2} = (1/d^{2})\sum_{k=1}^{d}\sum_{l=1}^{d}\Var[X_{0k}X_{0l}]$
and make the following assumptions.
\begin{description}
  \item[Assumption (E1):] $\bar{\sigma}_{X}^{2}$ is bounded.
  \item[Assumption (E2):] $\bar{\sigma}_{XX}^{2}$ is bounded.
\end{description}
Assumption (E) in Section \ref{sec:Diverging number of structural parameters}
implies
Assumptions (E1)
and
(E2)
by Cauchy-Schwarz inequality.
For simplicity, we adopt
the notation $\lesssim$,
which means the left hand side is bounded by a constant times the right,
where the constant does not affect related analysis.
\begin{proofName}
\subsection{Lemma \ref{PC.Paper.Inverse.Matrix.Convergence.Probability.d.Infity}}
\end{proofName}
Below are three lemmas
needed for proving
Theorems
\ref{thm5.2}
to
\ref{model:three:multivariate:mean:zero:thm:twostage:limitdistribution.general.covariates.estimated.Sigma.alternative.errors:d:infty:Paper}.
Their proofs are in Supplement
\ref{PC.Paper.Supplement.Extension.d.infty}.
Suppose that $\bd{M}$ and $\bd{E}$ are matrices
and $\norm{\cdot}$ is a matrix norm and
that $\{\bd{A}_{n}\}$ is a sequence of random $d\times d$ matrices
and $\bd{A}$ a deterministic $d\times d$ matrix,
and denote $\hat{\bd{\Sigma}}_{n} = (1/n)\mathbb{S}_{n}$,
the sample covariance matrix.
\begin{lem}[\cite{Stewart1969}]
\label{lemma:Stewart:1969}
If $\norm{\bd{I}}=1$ and
$\norm{\bd{M}^{-1}}
\norm{\bd{E}} < 1$,
then
\begin{equation*}
\frac
{\norm{(\bd{M}+\bd{E})^{-1} - \bd{M}^{-1}}}
{\norm{\bd{M}^{-1}}}
\leq
\frac
{\norm{\bd{M}^{-1}}\norm{\bd{E}}}
{1-\norm{\bd{M}^{-1}}\norm{\bd{E}}}.
\end{equation*}
\end{lem}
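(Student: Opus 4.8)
The plan is to reduce the bound to a Neumann series estimate, which is the standard route for this perturbation inequality. First I would factor $\bd{M}+\bd{E}=\bd{M}(\bd{I}+\bd{M}^{-1}\bd{E})$ and note that, by submultiplicativity of the matrix norm, $\norm{\bd{M}^{-1}\bd{E}}\le\norm{\bd{M}^{-1}}\,\norm{\bd{E}}<1$. Consequently $\bd{I}+\bd{M}^{-1}\bd{E}$ is invertible with inverse equal to the convergent Neumann series $\sum_{k=0}^{\infty}(-\bd{M}^{-1}\bd{E})^{k}$ (the partial sums are Cauchy since they are dominated by the geometric series $\sum_{k}\norm{\bd{M}^{-1}\bd{E}}^{k}$), and hence $\bd{M}+\bd{E}$ is invertible with $(\bd{M}+\bd{E})^{-1}=(\bd{I}+\bd{M}^{-1}\bd{E})^{-1}\bd{M}^{-1}$.

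Then I would write the difference as $(\bd{M}+\bd{E})^{-1}-\bd{M}^{-1}=\bigl[(\bd{I}+\bd{M}^{-1}\bd{E})^{-1}-\bd{I}\bigr]\bd{M}^{-1}$ and expand the bracketed factor, $(\bd{I}+\bd{M}^{-1}\bd{E})^{-1}-\bd{I}=\sum_{k=1}^{\infty}(-\bd{M}^{-1}\bd{E})^{k}$. Taking norms term by term gives $\norm{(\bd{I}+\bd{M}^{-1}\bd{E})^{-1}-\bd{I}}\le\sum_{k=1}^{\infty}\norm{\bd{M}^{-1}\bd{E}}^{k}=\norm{\bd{M}^{-1}\bd{E}}/(1-\norm{\bd{M}^{-1}\bd{E}})$, whence $\norm{(\bd{M}+\bd{E})^{-1}-\bd{M}^{-1}}\le\norm{\bd{M}^{-1}}\cdot\norm{\bd{M}^{-1}\bd{E}}/(1-\norm{\bd{M}^{-1}\bd{E}})$.

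To finish, I would use $\norm{\bd{M}^{-1}\bd{E}}\le\norm{\bd{M}^{-1}}\,\norm{\bd{E}}$ together with the fact that $t\mapsto t/(1-t)$ is increasing on $[0,1)$ to replace $\norm{\bd{M}^{-1}\bd{E}}$ by $\norm{\bd{M}^{-1}}\,\norm{\bd{E}}$ in the bound, and then divide both sides by $\norm{\bd{M}^{-1}}$, which yields exactly the claimed inequality. The hypothesis $\norm{\bd{I}}=1$ is not actually needed along this route; it would only enter if one instead bounded $\norm{(\bd{I}+\bd{M}^{-1}\bd{E})^{-1}}$ directly by the full Neumann series $\sum_{k\ge0}\norm{\bd{M}^{-1}\bd{E}}^{k}$, whose $k=0$ term equals $\norm{\bd{I}}$, and it is recorded in the statement merely for definiteness.

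I do not expect any real obstacle: this is a textbook bound. The only points that call for a little care are that the ambient norm must be submultiplicative (which holds for the Frobenius norm used throughout the paper, so I would simply invoke it) and that one keeps a single factor $\norm{\bd{M}^{-1}}$ outside the geometric sum so that the final estimate is stated relative to $\norm{\bd{M}^{-1}}$, as in the claim. A fully finite alternative, avoiding infinite series altogether, is to use the identity $(\bd{I}+\bd{M}^{-1}\bd{E})^{-1}-\bd{I}=-(\bd{I}+\bd{M}^{-1}\bd{E})^{-1}\bd{M}^{-1}\bd{E}$ together with $\norm{(\bd{I}+\bd{M}^{-1}\bd{E})^{-1}}\le 1/(1-\norm{\bd{M}^{-1}\bd{E}})$; this gives the same conclusion and is the form in which one would most naturally write it up.
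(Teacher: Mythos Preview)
Your argument is correct and is the standard Neumann-series derivation of this perturbation bound. The paper, however, does not prove this lemma at all: it is quoted verbatim from \cite{Stewart1969} and used as a black box in the proof of Lemma~\ref{PC.Paper.Inverse.Matrix.Convergence.Probability.d.Infity}. So there is nothing to compare against; you have simply supplied the classical proof the paper chose to cite rather than reproduce. Your observation that the hypothesis $\norm{\bd{I}}=1$ is not strictly needed along the ``subtract $\bd{I}$ first'' route is accurate, and your remark that submultiplicativity is the essential structural assumption is exactly right.
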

\begin{lem}
\label{PC.Paper.Inverse.Matrix.Convergence.Probability.d.Infity}
If
$\normFd{\bd{A}^{-1}}$ is bounded,
$\bd{A}_{n} \ConvProb \bd{A}$,
and $r_d\geq 1/\sqrt{d}$,
then
$\bd{A}_{n}^{-1} \ConvProb \bd{A}^{-1}$,
where the convergence in probability is wrt $r_d\normF{\cdot}$.
\end{lem}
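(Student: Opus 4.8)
The plan is to deduce the claim from the deterministic perturbation estimate of Lemma~\ref{lemma:Stewart:1969}, applied with the spectral norm, after carefully translating between the Frobenius norm $\normF{\cdot}$, its normalized version $\normFd{\cdot}=d^{-1/2}\normF{\cdot}$, and the spectral norm $\normE{\cdot}$. The point is that Stewart's bound needs a submultiplicative norm with $\norm{\bd{I}}=1$ (the spectral norm), while the hypothesis and conclusion are phrased in $r_{d}\normF{\cdot}$, so the whole proof is really a matter of matching norms and powers of $d$.

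First I would set $\bd{E}_{n}=\bd{A}_{n}-\bd{A}$ and extract the consequences of the hypotheses. Since $r_{d}\geq 1/\sqrt{d}$, the assumed convergence $r_{d}\normF{\bd{E}_{n}}\ConvProb 0$ forces $\normFd{\bd{E}_{n}}\leq r_{d}\normF{\bd{E}_{n}}\ConvProb 0$, and boundedness of $\normFd{\bd{A}^{-1}}$ gives $\normE{\bd{A}^{-1}}\leq\normF{\bd{A}^{-1}}=\sqrt{d}\,\normFd{\bd{A}^{-1}}$ (in the applications of this lemma $\bd{A}=\bd{\Sigma}_{X}$ and assumption \assum{(B1')} moreover keeps $\normE{\bd{A}^{-1}}$ bounded, which is what the rate computation below uses). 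I would also record the mixed submultiplicativity inequalities $\normF{\bd{B}\bd{C}}\leq\normE{\bd{B}}\normF{\bd{C}}$ and $\normF{\bd{B}\bd{C}}\leq\normF{\bd{B}}\normE{\bd{C}}$, equivalently $\normFd{\bd{B}\bd{C}}\leq\normE{\bd{B}}\normFd{\bd{C}}$ and $\normFd{\bd{B}\bd{C}}\leq\normFd{\bd{B}}\normE{\bd{C}}$.

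Then I would invoke Lemma~\ref{lemma:Stewart:1969} with $\norm{\cdot}=\normE{\cdot}$ (so $\normE{\bd{I}}=1$), $\bd{M}=\bd{A}$, $\bd{E}=\bd{E}_{n}$: on the event $\{\normE{\bd{A}^{-1}}\normE{\bd{E}_{n}}<1/2\}$, whose probability tends to one, $\bd{A}_{n}$ is invertible, $\normE{\bd{A}_{n}^{-1}-\bd{A}^{-1}}\leq 2\normE{\bd{A}^{-1}}^{2}\normE{\bd{E}_{n}}$, and in particular $\normE{\bd{A}_{n}^{-1}}\lesssim\normE{\bd{A}^{-1}}$ with probability going to one. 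I would then plug this into the identity $\bd{A}_{n}^{-1}-\bd{A}^{-1}=-\bd{A}^{-1}\bd{E}_{n}\bd{A}_{n}^{-1}$ and use the mixed bounds,
\begin{equation*}
r_{d}\normF{\bd{A}_{n}^{-1}-\bd{A}^{-1}}
= r_{d}\sqrt{d}\,\normFd{\bd{A}^{-1}\bd{E}_{n}\bd{A}_{n}^{-1}}
\leq r_{d}\sqrt{d}\,\normE{\bd{A}^{-1}}\,\normFd{\bd{E}_{n}}\,\normE{\bd{A}_{n}^{-1}}
\lesssim \normE{\bd{A}^{-1}}^{2}\, r_{d}\normF{\bd{E}_{n}},
\end{equation*}
which tends to zero in probability, giving $\bd{A}_{n}^{-1}\ConvProb\bd{A}^{-1}$ with respect to $r_{d}\normF{\cdot}$. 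The step I expect to be the real obstacle is exactly this dimension bookkeeping: the naive chain loses a factor $\sqrt{d}$ at $\normF{\cdot}=\sqrt{d}\,\normFd{\cdot}$, so one must peel that $\sqrt{d}$ against $\normFd{\bd{E}_{n}}$ (rather than $\normE{\bd{E}_{n}}$) and keep the inverse controlled through $\normFd{\bd{A}^{-1}}$ (and, in the intended applications, $\normE{\bd{A}^{-1}}$ via \assum{(B1')}) rather than $\normF{\bd{A}^{-1}}$; the hypothesis $r_{d}\geq 1/\sqrt{d}$ is precisely what licenses passing from the assumed $r_{d}\normF{\cdot}$-convergence of $\bd{A}_{n}$ to the $\normFd{\cdot}$-convergence used in the middle of the chain, and also what makes the event in the application of Stewart's lemma asymptotically certain.
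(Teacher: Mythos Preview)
Your detour through the spectral norm creates a genuine gap relative to the lemma's stated hypotheses. The only size assumption on $\bd{A}^{-1}$ is that $\normFd{\bd{A}^{-1}}$ is bounded, which yields only $\normE{\bd{A}^{-1}}\le\normF{\bd{A}^{-1}}=\sqrt{d}\,\normFd{\bd{A}^{-1}}=O(\sqrt{d})$, not $O(1)$. Your final estimate $r_{d}\normF{\bd{A}_{n}^{-1}-\bd{A}^{-1}}\lesssim\normE{\bd{A}^{-1}}^{2}\,r_{d}\normF{\bd{E}_{n}}$ can therefore behave like $d\cdot o(1)$ and need not vanish. You acknowledge this yourself and patch it by invoking \assum{(B1')}, but that is an \emph{extra} hypothesis not present in the lemma; the paper in fact applies this lemma under the weaker \assum{(B1)} alone (for instance in the proof of Theorem~\ref{thm5.2}), so your argument does not establish the lemma as written.

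The paper's proof sidesteps the whole issue by applying Lemma~\ref{lemma:Stewart:1969} directly with the norm $\normFd{\cdot}$, for which $\normFd{\bd{I}_{d}}=1$: with $\bd{E}_{n}=\bd{A}_{n}-\bd{A}$ and a constant $C$ bounding $\normFd{\bd{A}^{-1}}$, one obtains
\[
\normFd{\bd{A}_{n}^{-1}-\bd{A}^{-1}}
\le
\frac{\normFd{\bd{A}^{-1}}^{2}\,\normFd{\bd{E}_{n}}}{1-\normFd{\bd{A}^{-1}}\,\normFd{\bd{E}_{n}}}
\le
\frac{C^{2}\,\normFd{\bd{E}_{n}}}{1-C\,\normFd{\bd{E}_{n}}}
\]
straight from the stated hypotheses, and multiplying through by $\sqrt{d}$ turns this into $r_{d}\normF{\bd{A}_{n}^{-1}-\bd{A}^{-1}}\le C^{2}\,r_{d}\normF{\bd{E}_{n}}/(1-C\,\normFd{\bd{E}_{n}})\to 0$, with no spectral bound on $\bd{A}^{-1}$ needed anywhere. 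Your instinct to worry about which norm is fed into Stewart's inequality is reasonable, but the fix should not be to import an eigenvalue assumption the lemma does not make.
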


\begin{proofName}
\subsection{Lemma \ref{PC.Paper.Sample.Covariance.Matrix.Convergence.d.infty}}
\end{proofName}
\begin{lem}
\label{PC.Paper.Sample.Covariance.Matrix.Convergence.d.infty}
If Assumption (E2) holds
and $r_d^{2}d^{4}/n \rightarrow 0$,
then
$
\hat{\bd{\Sigma}}_{n} \ConvProb \bd{\Sigma}_{X}
$
wrt $r_d\normF{\cdot}$.
\end{lem}

\begin{proofLong}
\begin{proof}
[Long Proof of Lemma \ref{PC.Paper.Sample.Covariance.Matrix.Convergence.d.infty}]
For any $\delta>0$, we have
\begin{align*}
P(\normF{\hat{\bd{\Sigma}}_{n} - \bd{\Sigma}}>\delta)
= &
P([\sum_{k=1}^{d}\sum_{l=1}^{d}(\frac{1}{n}\sum_{i=1}^{n}X_{ik}X_{il}-\sigma_{kl})^{2}]^{1/2} > \delta) \\
\leq &
\sum_{k=1}^{d}\sum_{l=1}^{d}
P((\frac{1}{n}\sum_{i=1}^{n}X_{ik}X_{il}-\sigma_{kl})^{2} > \frac{\delta^{2}}{d^{2}}) \\
\leq &
\sum_{k=1}^{d}\sum_{l=1}^{d}\frac{d^{2}}{\delta^{2}}
P(\frac{1}{n}\sum_{i=1}^{n} X_{ik}X_{il}-\sigma_{kl})^{2} \\
= &
\sum_{k=1}^{d}\sum_{l=1}^{d}\frac{d^{2}}{\delta^{2}}
\frac{1}{n^{2}}\sum_{i=1}^{n} P(X_{ik}X_{il}-\sigma_{kl})^{2} \\
= &
\sum_{k=1}^{d}\sum_{l=1}^{d}\frac{d^{2}}{\delta^{2}}
\frac{1}{n^{2}}\sum_{i=1}^{n} P(X_{0k}X_{0l}-\sigma_{kl})^{2} \\
= &
\sum_{k=1}^{d}\sum_{l=1}^{d}\frac{d^{2}}{\delta^{2}}
\frac{1}{n} \Var[X_{0k}X_{0l}] \\
\leq &
\frac{d^{4}}{n}
\frac{1}{\delta^{2}}
\max_{1\leq k,l\leq d}\Var[X_{0k}X_{0l}].
\end{align*}
Thus, if $d^{4}/n\rightarrow 0$,
then $\normF{\hat{\bd{\Sigma}}_{n} - \bd{\Sigma}}\ConvProb 0$,
which means that
$\hat{\bd{\Sigma}}_{n}$ is a consistent estimator of $\bd{\Sigma}$
wrt $\normF{\cdot}$.

Next, change $\delta$ to $\sqrt{d}\delta$.
Then,
\begin{align*}
P(\normFd{\hat{\bd{\Sigma}}_{n} - \bd{\Sigma}}>\delta)
\leq &
\frac{d^{4}}{n}
\frac{1}{d\delta^{2}}
\max_{1\leq k,l\leq d}\Var[X_{0k}X_{0l}] \\
= &
\frac{d^{3}}{n}
\frac{1}{\delta^{2}}
\max_{1\leq k,l\leq d}\Var[X_{0k}X_{0l}].
\end{align*}
Thus, if $d^{3}/n\rightarrow 0$,
then $\normFd{\hat{\bd{\Sigma}}_{n} - \bd{\Sigma}}\ConvProb 0$,
which means that
$\hat{\bd{\Sigma}}_{n}$ is a consistent estimator of $\bd{\Sigma}$
wrt $\normFd{\cdot}$.
\end{proof}
\end{proofLong}

\begin{proofName}
\subsection{Proof of Theorem
\ref{thm5.2}}
\end{proofName}
\begin{proof}
[Proof of Theorem
\ref{thm5.2}]
By
the proof of
Lemma \ref{PC.Paper.d.infty.index.sets.consistency},
wpg1,
the solution $\hat{\bd{\beta}}_{n}$ to $\varphi_{n}(\bd{\beta})=0$ on $\mathcal{B}_{C}(\bd{\beta}^{\star})$ is explicitly given by
$
\hat{\bd{\beta}}_{n}
= \bd{\beta}^{\star} + T_{0}^{-1}(T_{1} + T_{2} + T_{3}-T_{4})
$,
where
$T_{0}=(1/n)\mathbb{S}_{s_{1}+1, n}$,
$T_{1}=(1/n)\mathbb{S}_{S_{12}^{\star}}^{\mu}$,
$T_{2}=(1/n)\mathbb{S}_{s_{1}+1, n}^{\epsilon}$,
$T_{3}=(\lambda/n)\mathcal{S}_{S_{21}^{\star}}$ and
$T_{4}=(\lambda/n)\mathcal{S}_{S_{31}^{\star}}$.
Then,
$
r_d\normE{\hat{\bd{\beta}}_{n} - \bd{\beta}^{\star}}
\leq
\normFd{T_{0}^{-1}} \sum_{i=1}^{4}r_d\sqrt{d}\normE{T_{i}}.
$
We will show that
$\normFd{T_{0}^{-1}}$ is bounded by a positive constant wpg1
and
$r_d\sqrt{d}\normE{T_{i}} \ConvProb 0$ for $i=1,2,3,4$.
Then, $r_d\normE{\hat{\bd{\beta}}_{n} - \bd{\beta}^{\star}}=o_{P}(1)$.
Consider $T_{0}$.
By Lemma \ref{PC.Paper.Sample.Covariance.Matrix.Convergence.d.infty},
$ \normFd{T_{0}-\bd{\Sigma}_{X}}
\ConvProb 0 $
under Assumption (E2)
and the condition $d^{3}/n\rightarrow 0$.
Then, by Lemma \ref{PC.Paper.Inverse.Matrix.Convergence.Probability.d.Infity},
together with Assumption (D),
$\normFd{T_{0}^{-1}-\bd{\Sigma}_{X}^{-1}}
\ConvProb 0$.
This
implies that, wpg1,
$\normFd{T_{0}^{-1}}$
is bounded by a positive constant.
Consider $T_{1}$.
Wpg1,
$
r_d\sqrt{d}\normE{T_{1}}
\leq
r_d\sqrt{d}s_{2}\kappa_{n}\gamma_{n}/n
=o(1)$
for $s_{2}=o(n/(r_d\sqrt{d}\kappa_{n}\gamma_{n}))$.
Consider $T_{2}$.
For any $\delta>0$,
$
P(\normE{T_{2}} > \delta)
\leq
(1/\delta^{2})
P\normE{(1/n)\sum_{i=s_{1}+1}^{n}\bd{X}_{i}\epsilon_{i}}^{2}
\leq
d\sigma^{2}\bar{\sigma}_{X}^{2}/(n\delta^{2})
$,
where
$\bar{\sigma}_{X}^{2}=(1/d)\sum_{j=1}^{d}\sigma_{j}^{2}$.
Thus,
$
P(r_d\sqrt{d}\normE{T_{2}} > \delta)
\leq
r_d^{2}d^{2}\sigma^{2}\bar{\sigma}_{X}^{2}/(n\delta^{2})
\rightarrow 0
$
by Assumption \textbf{(E1)}
and $(r_dd)^{2}/n \rightarrow 0$.
Consider $T_{3}$ and $T_{4}$.
Wpg1,
$
r_d\sqrt{d}\normE{T_{3}}
\leq
r_d\sqrt{d}\lambda
s_{1}
\kappa_{n}/n
=o(1)
$
for $s_{1}=o(n/(r_d\sqrt{d}\lambda \kappa_{n}))$.
Similarly, $r_d\sqrt{d}\normE{T_{4}}=o_{P}(1)$.
%
\end{proof}

\begin{proofLong}
\begin{proof}
[Long Proof of Theorems
\ref{thm5.2}]
By
the proof of Theorem \ref{thm3.2}
and
Lemma \ref{lem3.1}, wpg1,
the solution $\hat{\bd{\beta}}_{n}$ to $\varphi_{n}(\bd{\beta})=0$ on $\mathcal{B}_{C}(\bd{\beta}^{\star})$ is explicitly given by
\begin{equation*}
\hat{\bd{\beta}}_{n}
= \bd{\beta}^{\star} + T_{0}^{-1}(T_{1} + T_{2} + T_{3}-T_{4}),
\end{equation*}
where
\begin{align*}
T_{0}&=(1/n)\mathbb{S}_{s_{1}+1, n}, \\
T_{1}&=(1/n)\mathbb{S}_{S_{12}^{\star}}^{\mu}, \\
T_{2}&=(1/n)\mathbb{S}_{s_{1}+1, n}^{\epsilon}, \\
T_{3}&=(\lambda/n)\mathcal{S}_{S_{21}^{\star}}, \\
T_{4}&=(\lambda/n)\mathcal{S}_{S_{31}^{\star}}.
\end{align*}

Then,
\begin{align*}
\normE{\hat{\bd{\beta}}_{n} - \bd{\beta}^{\star}}
& \leq
\normE{T_{0}^{-1}(T_{1} + T_{2} + T_{3}-T_{4})} \\
& \leq
\normF{T_{0}^{-1}}\normE{T_{1} + T_{2} + T_{3}-T_{4})} \\
& \leq
\normF{T_{0}^{-1}} \sum_{i=1}^{4}\normE{T_{i}}.
\end{align*}
Then,
\begin{align*}
r_d\normE{\hat{\bd{\beta}}_{n} - \bd{\beta}^{\star}}
& \leq
\frac{1}{\sqrt{d}}\normF{T_{0}^{-1}} \sum_{i=1}^{4}r_d\sqrt{d}\normE{T_{i}}.
\end{align*}

We will show the followings:
\begin{itemize}
  \item $(1/\sqrt{d})\normF{T_{0}^{-1}}$ is bounded by a constant wpg1.
  \item $r_d\sqrt{d}\normE{T_{i}} \ConvProb 0$ for $i=1,2,3,4$.
\end{itemize}

Thus, we have the following conclusions:
\begin{itemize}
  \item $r_d\normE{\hat{\bd{\beta}}_{n} - \bd{\beta}^{\star}}=o_{P}(1)$, that is,
$\normE{\hat{\bd{\beta}}_{n} - \bd{\beta}^{\star}}=o_{P}(1/r_d)$.
\end{itemize}

\lineProof

\OnItem{On $(1/\sqrt{d})\normF{T_{0}^{-1}}$.}
We will show that
\begin{gather*}
    (1/\sqrt{d})\normF{T_{0}^{-1}}
    -
    (1/\sqrt{d})\normF{\bd{\Sigma}_{X}^{-1}}
    \ConvProb
    0.
\end{gather*}
By the assumption that
there exists a constant $C>0$ such that
\begin{equation*}
    (1/\sqrt{d})\normF{\bd{\Sigma}_{X}^{-1}} \leq C,
\end{equation*}
we have, wpg1,
\begin{equation*}
    (1/\sqrt{d})\normF{T_{0}^{-1}} \leq 2C.
\end{equation*}

Since
\begin{align*}
|\normFd{T_{0}^{-1}}-\normFd{\bd{\Sigma}_{X}^{-1}}|
\leq
\normFd{T_{0}^{-1}-\bd{\Sigma}_{X}^{-1}},
\end{align*}
it is sufficient to show that
\begin{equation*}
\normFd{T_{0}^{-1}-\bd{\Sigma}_{X}^{-1}}
\ConvProb 0.
\end{equation*}
By Lemma \ref{PC.Paper.Inverse.Matrix.Convergence.Probability.d.Infity},
it is sufficient to show that
\begin{equation*}
\normFd{T_{0}-\bd{\Sigma}_{X}}
\ConvProb 0.
\end{equation*}
In fact,
by
Lemma \ref{PC.Paper.Sample.Covariance.Matrix.Convergence.d.infty},
this holds
under the condition that $d^{3}/n\rightarrow 0$.

\lineProof

Thus, wpg1,
$\normFd{T_{0}^{-1}}$ is bounded by a constant
under the conditions
\begin{itemize}
  \item There exists a constant $C>0$ such that $\normFd{\bd{\Sigma}_{X}^{-1}} \leq C$.
  \item $d^{3}/n\rightarrow 0$.
\end{itemize}

\lineProof

\OnItem{On $T_{1}$.}
We have, wpg1,
\begin{align*}
r_d\sqrt{d}\normE{T_{1}}
& \leq
r_d\sqrt{d}\frac{1}{n}\sum_{i=s_{1}+1}^{s}\normE{\bd{X}_{i}\mu_{i}^{\star}} \\
& =
r_d\sqrt{d}\frac{1}{n}\sum_{i=s_{1}+1}^{s}\normE{\bd{X}_{i}}\cdot|\mu_{i}^{\star}| \\
& \leq
r_d\sqrt{d}s_{2}\kappa_{n}\gamma_{n}/n.
\end{align*}
Thus, if $s_{2}=o(n/(r_d\sqrt{d}\kappa_{n}\gamma_{n}))$,
then $r_d\sqrt{d}\normE{T_{1}}=o_{P}(1)$.

\lineProof

\OnItem{On $T_{2}$.}
We have
\begin{align*}
T_{2}
&=
(1/n)\mathbb{S}_{s_{1}+1, n}^{\epsilon}
=
(1/n)\sum_{i=s_{1}+1}^{n}\bd{X}_{i}^{T}\epsilon_{i}.
\end{align*}

We have
\begin{align*}
P(\normE{T_{2}} > \delta)
& =
P(\normE{(1/n)\sum_{i=s_{1}+1}^{n}\bd{X}_{i}\epsilon_{i}} > \delta) \\
& \leq
\frac{1}{\delta^{2}}
P\normE{(1/n)\sum_{i=s_{1}+1}^{n}\bd{X}_{i}\epsilon_{i}}^{2} \\
& =
\frac{1}{\delta^{2}}
\frac{1}{n^{2}}
P\normE{\sum_{i=s_{1}+1}^{n}\bd{X}_{i}\epsilon_{i}}^{2} \\
& =
\frac{1}{\delta^{2}}
\frac{1}{n^{2}}
P\sum_{i=s_{1}+1}^{n}\sum_{j=s_{1}+1}^{n}\bd{X}_{i}^{T}\bd{X}_{j}\epsilon_{i}\epsilon_{j} \\
& =
\frac{1}{\delta^{2}}
\frac{1}{n^{2}}
\sum_{i=s_{1}+1}^{n}P\normE{\bd{X}_{i}}^{2}\epsilon_{i}^{2} \\
& =
\frac{1}{\delta^{2}}
\frac{1}{n^{2}}
\sum_{i=s_{1}+1}^{n}P\normE{\bd{X}_{i}}^{2}P\epsilon_{i}^{2} \\
& =
\frac{1}{\delta^{2}}
\frac{1}{n^{2}}
(n-s_{1})P\normE{\bd{X}_{0}}^{2}P\epsilon_{0}^{2} \\
& =
\frac{1}{\delta^{2}}
\frac{1}{n^{2}}
(n-s_{1})P\normE{\bd{X}_{0}}^{2}\sigma^{2} \\
& =
\frac{1}{\delta^{2}}
\frac{1}{n^{2}}
(n-s_{1})\sigma^{2}\sum_{j=1}^{d}PX_{0j}^{2} \\
& =
\frac{1}{\delta^{2}}
\frac{1}{n^{2}}
(n-s_{1})\sigma^{2}\sum_{j=1}^{d}\sigma_{j}^{2} \\
& =
\frac{1}{\delta^{2}}
\frac{1}{n^{2}}
d(n-s_{1})\sigma^{2}\frac{1}{d}\sum_{j=1}^{d}\sigma_{j}^{2} \\
& \leq
\frac{1}{\delta^{2}}
\frac{d}{n}
\sigma^{2}\frac{1}{d}\sum_{j=1}^{d}\sigma_{j}^{2} \\
& \leq
\frac{1}{\delta^{2}}
\frac{d}{n}
\sigma^{2}\sigma_{X}^{2},
\end{align*}
where
$\sigma_{X}^{2}=\max_{1\leq j\leq d} \sigma_{j}^{2}$ is bounded.
Thus, we have
\begin{equation*}
P(r_d\sqrt{d}\normE{T_{2}} > \delta)
\leq
\frac{1}{\delta^{2}}
\frac{(r_dd)^{2}}{n}
\sigma^{2}\sigma_{X}^{2}.
\end{equation*}
Thus, if $(r_dd)^{2}/n \rightarrow 0$,
then $r_d\sqrt{d}\normE{T_{2}}\ConvProb 0$.
\lineProof

\OnItem{On $T_{3}$ and $T_{4}$.}
We have, wpg1,
\begin{align*}
r_d\sqrt{d}\normE{T_{3}}
& =
r_d\sqrt{d}\normE{\lambda \frac{1}{n} \mathcal{S}_{S_{21}^{\star}}} \\
& =
r_d\sqrt{d}\lambda \frac{1}{n} \normE{\mathcal{S}_{S_{21}^{\star}}} \\
& \leq
r_d\sqrt{d}\lambda \frac{1}{n}
\#(S_{21}^{\star})
\kappa_{n} \\
& \leq
r_d\sqrt{d}\lambda
s_{1}
\kappa_{n}/n.
\end{align*}
Thus,
if $s_{1}=o(n/(r_d\sqrt{d}\lambda \kappa_{n}))$,
then $r_d\sqrt{d}\normE{T_{3}}=o_{P}(1)$.
In the same way, we can show that $r_d\sqrt{d}\normE{T_{4}}=o_{P}(1)$ under the condition
$s_{1}=o(n/(r_d\sqrt{d}\lambda \kappa_{n}))$.

\lineProof

Therefore,
$\hat{\bd{\beta}}_{n}$ is a consistent estimator of $\bd{\beta}^{\star}$
wrt $r_d\normEd{\cdot}$.\\

\lineProof

Next, we show some special cases with $r_d=1$ and $r_d=1/\sqrt{d}$.
We have,
\begin{align*}
\normE{\hat{\bd{\beta}}_{n} - \bd{\beta}^{\star}}
& \leq
\normE{T_{0}^{-1}(T_{1} + T_{2} + T_{3}-T_{4})} \\
& \leq
\normF{T_{0}^{-1}}\normE{T_{1} + T_{2} + T_{3}-T_{4})} \\
& \leq
\normF{T_{0}^{-1}} \sum_{i=1}^{4}\normE{T_{i}}.
\end{align*}
Then,
\begin{align*}
\frac{1}{\sqrt{d}}\normE{\hat{\bd{\beta}}_{n} - \bd{\beta}^{\star}}
& \leq
\frac{1}{\sqrt{d}}\normF{T_{0}^{-1}} \sum_{i=1}^{4}\normE{T_{i}}.
\end{align*}

We will show the followings:
\begin{itemize}
  \item $(1/\sqrt{d})\normF{T_{0}^{-1}}$ is bounded by a constant wpg1.
  \item $\normE{T_{i}} \ConvProb 0$ for $i=1,2,3,4$.
  \item $\sqrt{d}\normE{T_{i}} \ConvProb 0$ for $i=1,2,3,4$.
\end{itemize}

Thus, we have the following conclusions:
\begin{itemize}
  \item $(1/\sqrt{d})\normE{\hat{\bd{\beta}}_{n} - \bd{\beta}^{\star}}=o_{P}(1)$, that is,
$\normE{\hat{\bd{\beta}}_{n} - \bd{\beta}^{\star}}=o_{P}(\sqrt{d})$.
  \item $\normE{\hat{\bd{\beta}}_{n} - \bd{\beta}^{\star}}=o_{P}(1)$, that is,
$\normE{\hat{\bd{\beta}}_{n} - \bd{\beta}^{\star}}=o_{P}(1)$.
\end{itemize}

\lineProof

\OnItem{On $(1/\sqrt{d})\normF{T_{0}^{-1}}$.}
We will show that
\begin{gather*}
    (1/\sqrt{d})\normF{T_{0}^{-1}}
    -
    (1/\sqrt{d})\normF{\bd{\Sigma}_{X}^{-1}}
    \ConvProb
    0.
\end{gather*}
By the assumption that
there exists a constant $C>0$ such that
\begin{equation*}
    (1/\sqrt{d})\normF{\bd{\Sigma}_{X}^{-1}} \leq C,
\end{equation*}
we have, wpg1,
\begin{equation*}
    (1/\sqrt{d})\normF{T_{0}^{-1}} \leq 2C.
\end{equation*}

Since
\begin{align*}
|\normFd{T_{0}^{-1}}-\normFd{\bd{\Sigma}_{X}^{-1}}|
\leq
\normFd{T_{0}^{-1}-\bd{\Sigma}_{X}^{-1}},
\end{align*}
it is sufficient to show that
\begin{equation*}
\normFd{T_{0}^{-1}-\bd{\Sigma}_{X}^{-1}}
\ConvProb 0.
\end{equation*}
By Lemma \ref{PC.Paper.Inverse.Matrix.Convergence.Probability.d.Infity},
it is sufficient to show that
\begin{equation*}
\normFd{T_{0}-\bd{\Sigma}_{X}}
\ConvProb 0.
\end{equation*}
In fact,
by
Lemma \ref{PC.Paper.Sample.Covariance.Matrix.Convergence.d.infty},
this holds
under the condition that $d^{3}/n\rightarrow 0$.

\lineProof

Thus, wpg1,
$\normFd{T_{0}^{-1}}$ is bounded by a constant
under the conditions
\begin{itemize}
  \item There exists a constant $C>0$ such that $\normFd{\bd{\Sigma}_{X}^{-1}} \leq C$.
  \item $d^{3}/n\rightarrow 0$.
\end{itemize}

\lineProof

\OnItem{On $T_{1}$.}
We have, wpg1,
\begin{align*}
\normE{T_{1}}
& \leq
\frac{1}{n}\sum_{i=s_{1}+1}^{s}\normE{\bd{X}_{i}\mu_{i}^{\star}}
=
\frac{1}{n}\sum_{i=s_{1}+1}^{s}\normE{\bd{X}_{i}}\cdot|\mu_{i}^{\star}|
\leq
s_{2}\kappa_{n}\gamma_{n}/n.
\end{align*}
Thus, if $s_{2}=o(n/(\kappa_{n}\gamma_{n}))$,
then $\normE{T_{1}}=o_{P}(1)$.

\lineProof
Similarly, we have
\begin{align*}
\sqrt{d}\normE{T_{1}}
\leq
s_{2}\kappa_{n}\gamma_{n}\sqrt{d}/n.
\end{align*}
Thus, if $s_{2}=o(n/(\sqrt{d}\kappa_{n}\gamma_{n}))$,
then $\sqrt{d}\normE{T_{1}}=o_{P}(1)$.

\lineProof

\OnItem{On $T_{2}$.}
We have
\begin{align*}
T_{2}
&=
(1/n)\mathbb{S}_{s_{1}+1, n}^{\epsilon}
=
(1/n)\sum_{i=s_{1}+1}^{n}\bd{X}_{i}^{T}\epsilon_{i}.
\end{align*}

We have
\begin{align*}
P(\normE{T_{2}} > \delta)
& =
P(\normE{(1/n)\sum_{i=s_{1}+1}^{n}\bd{X}_{i}\epsilon_{i}} > \delta) \\
& \leq
\frac{1}{\delta^{2}}
P\normE{(1/n)\sum_{i=s_{1}+1}^{n}\bd{X}_{i}\epsilon_{i}}^{2} \\
& =
\frac{1}{\delta^{2}}
\frac{1}{n^{2}}
P\normE{\sum_{i=s_{1}+1}^{n}\bd{X}_{i}\epsilon_{i}}^{2} \\
& =
\frac{1}{\delta^{2}}
\frac{1}{n^{2}}
P\sum_{i=s_{1}+1}^{n}\sum_{j=s_{1}+1}^{n}\bd{X}_{i}^{T}\bd{X}_{j}\epsilon_{i}\epsilon_{j} \\
& =
\frac{1}{\delta^{2}}
\frac{1}{n^{2}}
\sum_{i=s_{1}+1}^{n}P\normE{\bd{X}_{i}}^{2}\epsilon_{i}^{2} \\
& =
\frac{1}{\delta^{2}}
\frac{1}{n^{2}}
\sum_{i=s_{1}+1}^{n}P\normE{\bd{X}_{i}}^{2}P\epsilon_{i}^{2} \\
& =
\frac{1}{\delta^{2}}
\frac{1}{n^{2}}
(n-s_{1})P\normE{\bd{X}_{0}}^{2}P\epsilon_{0}^{2} \\
& =
\frac{1}{\delta^{2}}
\frac{1}{n^{2}}
(n-s_{1})P\normE{\bd{X}_{0}}^{2}\sigma^{2} \\
& =
\frac{1}{\delta^{2}}
\frac{1}{n^{2}}
(n-s_{1})\sigma^{2}\sum_{j=1}^{d}PX_{0j}^{2} \\
& =
\frac{1}{\delta^{2}}
\frac{1}{n^{2}}
(n-s_{1})\sigma^{2}\sum_{j=1}^{d}\sigma_{j}^{2} \\
& =
\frac{1}{\delta^{2}}
\frac{1}{n^{2}}
d(n-s_{1})\sigma^{2}\frac{1}{d}\sum_{j=1}^{d}\sigma_{j}^{2} \\
& \leq
\frac{1}{\delta^{2}}
\frac{d}{n}
\sigma^{2}\frac{1}{d}\sum_{j=1}^{d}\sigma_{j}^{2} \\
& \leq
\frac{1}{\delta^{2}}
\frac{d}{n}
\sigma^{2}\sigma_{X}^{2},
\end{align*}
where
$\sigma_{X}^{2}=\max_{1\leq j\leq d} \sigma_{j}^{2}$ is bounded.
Thus, if $d/n \rightarrow 0$,
then $\normE{T_{2}}\ConvProb 0$.
\lineProof

From the above derivation, we have
\begin{equation*}
P(\sqrt{d}\normE{T_{2}} > \delta)
\leq
\frac{1}{\delta^{2}}
\frac{d^{2}}{n}
\sigma^{2}\sigma_{X}^{2}.
\end{equation*}
Thus, if $d^{2}/n \rightarrow 0$,
then $\sqrt{d}\normE{T_{2}}\ConvProb 0$.
\lineProof

\OnItem{On $T_{3}$ and $T_{4}$.}
We have, wpg1,
\begin{align*}
\normE{T_{3}}
=
\normE{\lambda \frac{1}{n} \mathcal{S}_{S_{21}^{\star}}}
=
\lambda \frac{1}{n} \normE{\mathcal{S}_{S_{21}^{\star}}}
\leq
\lambda \frac{1}{n}
\#(S_{21}^{\star})
\kappa_{n}
\leq
\lambda
s_{1}
\kappa_{n}/n.
\end{align*}
Thus,
if $s_{1}=o(n/(\lambda \kappa_{n}))$,
then $T_{3}=o_{P}(1)$.
In the same way, we can show that $T_{4}=o_{P}(1)$ under the condition
$s_{1}=o(n/(\lambda \kappa_{n}))$.

\lineProof

From the above derivation,
we have
\begin{align*}
\sqrt{d}\normE{T_{3}}
\leq
\lambda
s_{1}
\kappa_{n}\sqrt{d}/n.
\end{align*}
Thus,
if $s_{1}=o(n/(\sqrt{d}\lambda \kappa_{n}))$,
then $T_{3}=o_{P}(1)$.
In the same way, we can show that $T_{4}=o_{P}(1)$ under the condition
$s_{1}=o(n/(\sqrt{d}\lambda \kappa_{n}))$.

\lineProof

Therefore,
$\hat{\bd{\beta}}_{n}$ is a consistent estimator of $\bd{\beta}^{\star}$
wrt $\normEd{\cdot}$.
\end{proof}
\end{proofLong}

\begin{proofName}
\subsection{Lemma \ref{PC.Paper.Scaled.IID.Sum.Convergence.Rate.d.infty}}
\end{proofName}
The next lemma is needed for proving
Theorem \ref{thm3.4:d:infty:alternative:Paper}
and its proof is in Supplement \ref{PC.Paper.Supplement.Extension.d.infty}.
Suppose $\{\bd{\xi}_{i}\}$ are i.i.d. copies of $\bd{\xi}_{0}$,
a $d$-dimensional random vector
with mean zero.
Denote $\sigma_{\xi,\max}^{2}=\max_{1\leq j\leq d}\Var[\xi_{0j}]$,
$\sigma_{\xi,\min}^{2}=\min_{1\leq j\leq d}\Var[\xi_{0j}]$
and
$\gamma_{\xi,\max}=\max_{1\leq j\leq d}\E|\xi_{0j}|^{3}$.
\begin{lem}
\label{PC.Paper.Scaled.IID.Sum.Convergence.Rate.d.infty}
Suppose $\sigma_{\xi, \max}$ and $\gamma_{\xi,\max}$ are bounded from above
and $\sigma_{\xi,\max}$ is bounded from zero.
If $d=o(\sqrt{n})$,
then
$(1/\sqrt{n})\sum_{i=1}^{n}\bd{\xi}_{i} = O_{P}(\sqrt{d \log{d}})$ wrt $\normE{\cdot}$.
\end{lem}

\begin{proofName}
\subsection{Proof of Theorem \ref{thm3.4:d:infty:alternative:Paper}}
\end{proofName}
\begin{proof}
[Proof of Theorem
\ref{thm3.4:d:infty:alternative:Paper}]
We reuse the notations $T_{i}$'s in the proof of Theorems
\ref{thm5.2},
from which,
$
\sqrt{n}\bd{A}_{n}(\hat{\bd{\beta}}_{n} - \bd{\beta}^{\star})
=V_{1}+V_{2}+V_{3}-V_{4}
$,
where
$V_{i}=\bd{B}_{n}T_{i}$
for $i=1,2,3,4$
and $\bd{B}_{n}=\sqrt{n}\bd{A}_{n}
T_{0}^{-1}$.
It is sufficient to show that
$V_{2}\ConvDist
N(0, \sigma^{2}\bd{G}_{X})$
and other $V_{i}$'s are $o_{P}(1)$.
Consider $V_{1}$.
We have
$
\normE{V_{1}}
\leq
\sqrt{n d}
\normF{\bd{A}_{n}}
\normFd{T_{0}^{-1}}
\normE{T_{1}}.
$
By Assumption (F),
$\normF{\bd{A}_{n}}$
is bounded.
By Lemmas
\ref{PC.Paper.Inverse.Matrix.Convergence.Probability.d.Infity}
and
\ref{PC.Paper.Sample.Covariance.Matrix.Convergence.d.infty}
and Assumption (D),
for $d=o(n^{1/3})$, wpg1,
$\normFd{T_{0}^{-1}}$
is bounded.
We have, wpg1,
$
\normE{T_{1}}
\leq
s_{2}\kappa_{n}\gamma_{n}/n$.
Then,
$
\normE{V_{1}}
\lesssim
\sqrt{d/n}s_{2}\kappa_{n}\gamma_{n},
$
Thus,
$\normE{V_{1}}=o_{P}(1)$
for $s_{2}=o(\sqrt{n}/(\sqrt{d}\kappa_{n}\gamma_{n}))$.
Consider $V_{2}$.
We have
$V_{2}=V_{21}+ V_{22}$,
where
$
V_{21}
=
\sqrt{n}\bd{A}_{n}\bd{\bd{\Sigma}}_{X}^{-1}T_{2}$ and
$V_{22}
=
\sqrt{n}\bd{A}_{n}
(T_{0}^{-1} - \bd{\bd{\Sigma}}_{X}^{-1})
T_{2}$.
First, note that
$
V_{21}
=
\sqrt{(n-s_{1})/n}
\sum_{i=s_{1}+1}^{n}
\bd{Z}_{n,i}
$,
where
$
\bd{Z}_{n,i}
=
(1/\sqrt{n-s_{1}})
\bd{A}_{n}\bd{\bd{\Sigma}}_{X}^{-1}\bd{X}_{i}\epsilon_{i}
$.
On one hand,
for every $\delta>0$,
$
\sum_{i=s_{1}+1}^{n}
\E\normE{\bd{Z}_{n,i}}^{2}
\{\normE{\bd{Z}_{n,i}} > \delta\}
\leq
(n-s_{1})
\E\normE{\bd{Z}_{n,0}}^{4}/\delta^{2},
$
and
$\E\normE{\bd{Z}_{n,0}}^{4}
=
\frac{1}{(n-s_{1})^{2}}
\E
\epsilon_{0}^{4}
\E
(
\bd{X}_{0}^{T}
\bd{\bd{\Sigma}}_{X}^{-1}
\bd{A}_{n}^{T}
\bd{A}_{n}
\bd{\bd{\Sigma}}_{X}^{-1}
\bd{X}_{0}
)^{2}$,
which is
$\leq
\frac{d^{2}}{(n-s_{1})^{2}}
\E
\epsilon_{0}^{4}
\lambda_{\max}(\bd{G}_{n})
\lambda_{\min}^{-2}(\bd{\bd{\Sigma}}_{X})
\kappa_{X}^{2}$.
Then,
by Assumptions (D'), (E) and (F) and for $d=o(\sqrt{n})$,
$
\sum_{i=s_{1}+1}^{n}
\E\normE{\bd{Z}_{n,i}}^{2}
\{\normE{\bd{Z}_{n,i}} > \delta\}
\rightarrow 0
$.
On the other hand,
$
\sum_{i=s_{1}+1}^{n}
\Cov(\bd{Z}_{n,i})
=
\sigma^{2}\bd{A}_{n}\bd{\bd{\Sigma}}_{X}^{-1}\bd{A}_{n}^{T}
\rightarrow
\sigma^{2}\bd{G}_{X}
$
by Assumption (F).
Thus, by central limit theorem
(see Proposition 2.27 in \cite{Vaart1998}),
$
V_{21}
\ConvDist
N(0, \sigma^{2}\bd{G}_{X})
$.
Next, consider $V_{22}$.
Note that
$
\normE{V_{22}}
\leq
\normF{\bd{A}_{n}}
(d\log(d))^{1/2}\normF{T_{0}^{-1} - \bd{\bd{\Sigma}}_{X}^{-1}}
(d\log(d))^{-1/2}\normE{\sqrt{n}T_{2}}
$.
By Assumption (F), $\normF{\bd{A}_{n}}$ is $O(1)$;
by Lemmas \ref{PC.Paper.Inverse.Matrix.Convergence.Probability.d.Infity} and
        \ref{PC.Paper.Sample.Covariance.Matrix.Convergence.d.infty},
        $(d\log(d))^{1/2}\normF{T_{0}^{-1} - \bd{\bd{\Sigma}}_{X}^{-1}}$ is $o_{P}(1)$
        for $d^{5}\log(d) = o(n)$;
by Lemma \ref{PC.Paper.Scaled.IID.Sum.Convergence.Rate.d.infty},
        $(d\log(d))^{-1/2}\normE{\sqrt{n}T_{2}}=(d\log(d))^{-1/2}\normE{\frac{1}{\sqrt{n}}\mathbb{S}_{s_{1}+1,n}^{\epsilon}}$ is $O_{P}(1)$ for $d=o(\sqrt{n})$.
Then,
$V_{22}\ConvProb 0$.
Thus, by slutsky's lemma,
$V_{2}\ConvDist
N(0, \sigma^{2}\bd{G}_{X})$.
Consider $V_{3}$ and $V_{4}$.
First consider $V_{3}$.
By noting that $s_{1}=o(\sqrt{n}/(\lambda \sqrt{d}\kappa_{n}))$,
wpg1,
$
\normE{V_{3}}
\leq
\sqrt{n d}
\normF{\bd{A}_{n}}
\normFd{T_{0}^{-1}}
\normE{T_{3}}
\lesssim
\sqrt{d}\lambda s_{1}\kappa_{n}/\sqrt{n}
\rightarrow 0$.
Thus, $\normE{V_{3}}=o_{P}(1)$.
In the same way, $\normE{V_{4}}=o_{P}(1)$.
%
\end{proof}

\begin{proofLong}
\begin{proof}
[Long Proof of Theorem
\ref{thm3.4:d:infty:alternative:Paper}]
We reuse the notations on $T_{i}$'s in the proof of Theorems
\ref{thm5.2},
from which,
$$
\sqrt{n}\bd{A}_{n}(\hat{\bd{\beta}}_{n} - \bd{\beta}^{\star})
=V_{1}+V_{2}+V_{3}-V_{4},
$$
where
\begin{align*}
V_{1} &=
\sqrt{n}\bd{A}_{n}
T_{0}^{-1}T_{1}, \\
V_{2} &=
\sqrt{n}\bd{A}_{n}
T_{0}^{-1}T_{2}, \\
V_{3} &=
\sqrt{n}\bd{A}_{n}
T_{0}^{-1}T_{3}, \\
V_{4} &=
\sqrt{n}\bd{A}_{n}
T_{0}^{-1}T_{4}.
\end{align*}

Next we derive the asymptotic properties of $V_{i}$'s,
from which the desired result follows by applying Slutsky's lemma.

\lineProof

\OnItem{On $V_{1}$.}
We have
\begin{align*}
\normE{V_{1}}
& =
\normE{\sqrt{n}
T_{0}^{-1}T_{1}} \\
& \leq
\sqrt{n}
\normF{\bd{A}_{n}}
\normF{T_{0}^{-1}}
\normE{T_{1}} \\
& =
\sqrt{n d}
\normF{\bd{A}_{n}}
\normFd{T_{0}^{-1}}
\normE{T_{1}}.
\end{align*}

By Assumption (F)
$\normF{\bd{A}_{n}}$
is bounded.

By Lemmas
\ref{PC.Paper.Inverse.Matrix.Convergence.Probability.d.Infity}
and
\ref{PC.Paper.Sample.Covariance.Matrix.Convergence.d.infty}
and Assumption (D),
for $d=o(n^{1/3})$, wpg1,
$\normFd{T_{0}^{-1}}$
is bounded.

We have, wpg1,
\begin{equation*}
\normE{T_{1}}
=
\normE{\frac{1}{n}\sum_{i=s_{1}+1}^{s}\bd{X}_{i}\mu_{i}^{\star}}
\leq
\frac{1}{n}s_{2}\kappa_{n}\gamma_{n}.
\end{equation*}

Then,
\begin{align*}
\normE{V_{1}}
& \lesssim
\sqrt{\frac{d}{n}}s_{2}\kappa_{n}\gamma_{n},
\end{align*}
where $\lesssim$ means that the left side is bounded by a constant times the right side.

Thus,
$\normE{V_{1}}=o_{P}(1)$
for $s_{2}=o(\sqrt{n}/(\sqrt{d}\kappa_{n}\gamma_{n}))$.

\lineProof

\OnItem{On $V_{2}$.}
We have
$V_{2}=V_{21}+ V_{22}$,
where
\begin{align*}
V_{21}
& =
\sqrt{n}\bd{A}_{n}\bd{\bd{\Sigma}}_{X}^{-1}
T_{2}, \\
V_{22}
& =
\sqrt{n}\bd{A}_{n}
(T_{0}^{-1} - \bd{\bd{\Sigma}}_{X}^{-1})
T_{2}. \\
\end{align*}

First, consider $V_{21}$.
We have
\begin{align*}
V_{21}
& =
\sqrt{n}\bd{A}_{n}\bd{\bd{\Sigma}}_{X}^{-1}T_{2} \\
& =
\sqrt{n}\bd{A}_{n}\bd{\bd{\Sigma}}_{X}^{-1}
\frac{1}{n}\sum_{i=s_{1}+1}^{n}\bd{X}_{i}\epsilon_{i} \\
& =
\frac{\sqrt{n-s_{1}}}{\sqrt{n}}
\sum_{i=s_{1}+1}^{n}
\bd{Z}_{n,i},
\end{align*}
where
\begin{equation*}
\bd{Z}_{n,i}
=
\frac{1}{\sqrt{n-s_{1}}}
\bd{A}_{n}\bd{\bd{\Sigma}}_{X}^{-1}\bd{X}_{i}\epsilon_{i}.
\end{equation*}

For every $\delta>0$,
\begin{align*}
\sum_{i=s_{1}+1}^{n}
\E\normE{\bd{Z}_{n,i}}^{2}
\{\normE{\bd{Z}_{n,i}} > \delta\}
& \leq
\sum_{i=s_{1}+1}^{n}
\E\normE{\bd{Z}_{n,i}}^{4}/\delta^{2}
=
(n-s_{1})
\E\normE{\bd{Z}_{n,0}}^{4}/\delta^{2},
\end{align*}
and
\begin{align*}
\E\normE{\bd{Z}_{n,0}}^{4}
& =
\E(\normE{\bd{Z}_{n,0}}^{2})^{2} \\
& =
\E(
\frac{1}{n-s_{1}}
\epsilon_{0}^{2}
\bd{X}_{0}^{T}
\bd{\bd{\Sigma}}_{X}^{-1}
\bd{A}_{n}^{T}
\bd{A}_{n}
\bd{\bd{\Sigma}}_{X}^{-1}
\bd{X}_{0}
)^{2} \\
& =
\frac{1}{(n-s_{1})^{2}}
\E
\epsilon_{0}^{4}
\E
(
\bd{X}_{0}^{T}
\bd{\bd{\Sigma}}_{X}^{-1}
\bd{A}_{n}^{T}
\bd{A}_{n}
\bd{\bd{\Sigma}}_{X}^{-1}
\bd{X}_{0}
)^{2} \\
& \leq
\frac{1}{(n-s_{1})^{2}}
\E
\epsilon_{0}^{4}
\lambda_{\max}(\bd{G}_{n})
\lambda_{\min}(\bd{\bd{\Sigma}}_{X})^{-2}
\E
(
\bd{X}_{0}^{T}
\bd{X}_{0}
)^{2} \\
& \leq
\frac{d^{2}}{(n-s_{1})^{2}}
\E
\epsilon_{0}^{4}
\lambda_{\max}(\bd{G}_{n})
\lambda_{\min}(\bd{\bd{\Sigma}}_{X})^{-2}
\frac{1}{d^{2}}
\E
(
\bd{X}_{0}^{T}
\bd{X}_{0}
)^{2}\\
& \leq
\frac{d^{2}}{(n-s_{1})^{2}}
\E
\epsilon_{0}^{4}
\lambda_{\max}(\bd{G}_{n})
\lambda_{\min}(\bd{\bd{\Sigma}}_{X})^{-2}
\frac{1}{d^{2}}
\E
(
\sum_{j=1}^{d}X_{0j}^{2}
)^{2}\\
& \leq
\frac{d^{2}}{(n-s_{1})^{2}}
\E
\epsilon_{0}^{4}
\lambda_{\max}(\bd{G}_{n})
\lambda_{\min}(\bd{\bd{\Sigma}}_{X})^{-2}
\frac{1}{d^{2}}
\sum_{k=1}^{d}\sum_{l=1}^{d}\E X_{0k}^{2}X_{0l}^{2} \\
& \leq
\frac{d^{2}}{(n-s_{1})^{2}}
\E
\epsilon_{0}^{4}
\lambda_{\max}(\bd{G}_{n})
\lambda_{\min}(\bd{\bd{\Sigma}}_{X})^{-2}
\frac{1}{d^{2}}
\sum_{k=1}^{d}\sum_{l=1}^{d}(\E X_{0k}^{4})^{1/2}(\E X_{0l}^{4})^{1/2} \\
& \leq
\frac{d^{2}}{(n-s_{1})^{2}}
\E
\epsilon_{0}^{4}
\lambda_{\max}(\bd{G}_{n})
\lambda_{\min}(\bd{\bd{\Sigma}}_{X})^{-2}
(\frac{1}{d}\sum_{j=1}^{d}(\E X_{0j}^{4})^{1/2})^{2}.
\end{align*}
Then,
\begin{align*}
\sum_{i=s_{1}+1}^{n}
\E\normE{\bd{Z}_{n,i}}^{2}
\{\normE{\bd{Z}_{n,i}} > \delta\}
& \leq
\frac{d^{2}}{n-s_{1}}
\frac{1}{\delta^{2}}
\E
\epsilon_{0}^{4}
\lambda_{\max}(\bd{G}_{n})
\lambda_{\min}(\bd{\bd{\Sigma}}_{X})^{-2}
(\frac{1}{d}\sum_{j=1}^{d}(\E X_{0j}^{4})^{1/2})^{2}.
\end{align*}
Thus, by Assumption (E) and for $d=o(\sqrt{n})$,
\begin{equation*}
\sum_{i=s_{1}+1}^{n}
\E\normE{\bd{Z}_{n,i}}^{2}
\{\normE{\bd{Z}_{n,i}} > \delta\}
\rightarrow 0.
\end{equation*}

On the other hand,
\begin{align*}
\sum_{i=s_{1}+1}^{n}
\Cov(\bd{Z}_{n,i})
& =
\sum_{i=s_{1}+1}^{n}
\frac{1}{\sqrt{n-s_{1}}}
\bd{A}_{n}\bd{\bd{\Sigma}}_{X}^{-1}
\Cov(\bd{X}_{i}\epsilon_{i})
\bd{\bd{\Sigma}}_{X}^{-1}
\bd{A}_{n}^{T}
\frac{1}{\sqrt{n-s_{1}}} \\
& =
\sum_{i=s_{1}+1}^{n}
\frac{1}{n-s_{1}}
\bd{A}_{n}
\bd{\bd{\Sigma}}_{X}^{-1}
\bd{\bd{\Sigma}}_{X}\sigma^{2}
\bd{\bd{\Sigma}}_{X}^{-1}
\bd{A}_{n}^{T}\\
& =
\sigma^{2}\bd{A}_{n}\bd{\bd{\Sigma}}_{X}^{-1}\bd{A}_{n}^{T}
\rightarrow
\sigma^{2}\bd{G}_{X}.
\end{align*}

Thus, by central limit theorem
(see, for example, Proposition 2.27 in \cite{Vaart1998}),
\begin{align*}
V_{21}
& \ConvDist
N(0, \sigma^{2}\bd{G}_{X}).\\
\end{align*}

Next, consider $V_{22}$.
We have
\begin{align*}
\normE{V_{22}}
& =
\normE{\sqrt{n}\bd{A}_{n}
(T_{0}^{-1} - \bd{\bd{\Sigma}}_{X}^{-1})T_{2}} \\
& \leq
\sqrt{n}
\normF{\bd{A}_{n}}
\normF{T_{0}^{-1} - \bd{\bd{\Sigma}}_{X}^{-1}}
\normE{T_{2}} \\
& =
\normF{\bd{A}_{n}}
(d\log(d))^{1/2}\normF{T_{0}^{-1} - \bd{\bd{\Sigma}}_{X}^{-1}}
(d\log(d))^{-1/2}\normE{\sqrt{n}T_{2}}.
\end{align*}
Note that
\begin{itemize}
  \item By Assumption (F), $\normF{\bd{A}_{n}}$ is $O(1)$;
  \item by Lemmas \ref{PC.Paper.Inverse.Matrix.Convergence.Probability.d.Infity} and
        \ref{PC.Paper.Sample.Covariance.Matrix.Convergence.d.infty},
        $(d\log(d))^{1/2}\normF{T_{0}^{-1} - \bd{\bd{\Sigma}}_{X}^{-1}}$ is $o_{P}(1)$
        for $d^{5}\log(d) = o(n)$;
  \item by Lemma \ref{PC.Paper.Scaled.IID.Sum.Convergence.Rate.d.infty},
        $(d\log(d))^{-1/2}\normE{\sqrt{n}T_{2}}=(d\log(d))^{-1/2}\normE{\frac{1}{\sqrt{n}}\mathbb{S}_{s_{1}+1,n}^{\epsilon}}$ is $O_{P}(1)$ for $d=o(\sqrt{n})$.
\end{itemize}
Then,
$V_{22}\ConvProb 0$.

\lineProof

Thus, by slutsky's lemma,
$V_{2}\ConvDist
N(0, \sigma^{2}\bd{G}_{X})$.

\lineProof

\OnItem{On $V_{3}$ and $V_{4}$.}
First consider $V_{3}$.
By noting that $s_{1}=o(\sqrt{n}/(\lambda \sqrt{d}\kappa_{n}))$,
wpg1,
\begin{align*}
\normE{V_{3}}
& =
\normE{\sqrt{n}\bd{A}_{n}
T_{0}^{-1}T_{3}} \\
& \leq
\sqrt{n}
\normF{\bd{A}_{n}}
\normF{T_{0}^{-1}}
\normE{T_{3}} \\
& =
\sqrt{n d}
\normF{\bd{A}_{n}}
\normFd{T_{0}^{-1}}
\normE{T_{3}} \\
& \lesssim
\sqrt{n d}\lambda s_{1}\kappa_{n}/n
=
\sqrt{d}\lambda s_{1}\kappa_{n}/\sqrt{n}
\rightarrow 0.
\end{align*}
Thus, $\normE{V_{3}}=o_{P}(1)$.
In the same way, $\normE{V_{4}}=o_{P}(1)$.

Therefore, the result of the theorem follows
by Slutsky's lemma.
\end{proof}
\end{proofLong}

\begin{proof}[Proof of Theorem
\ref{thm5.4}]
By the definition of $\mathcal{E}$, we have
$
P(\mathcal{\mathcal{E}})
= T_{1}T_{2}T_{3}$,
where
$T_{1} = P(\bigcap_{i=1}^{s_{1}}\{|\mu_{i}^{\star}+\bd{X}_{i}^{T}(\bd{\beta}^{\star}-\hat{\bd{\beta}})+\epsilon_{i}| > \lambda\})$,
$T_{2} = P(\bigcap_{i=s_{1}+1}^{s}\{|\mu_{i}^{\star}+\bd{X}_{i}^{T}(\bd{\beta}^{\star}-\hat{\bd{\beta}})+\epsilon_{i}| \leq \lambda\})$
and
$T_{3} = P(\bigcap_{i=s+1}^{n}\{|\bd{X}_{i}^{T}(\bd{\beta}^{\star}-\hat{\bd{\beta}})+\epsilon_{i}| \leq \lambda\})$.
We will show that each $T_{i}$ converges to one.
Then, $P(\mathcal{\mathcal{E}})\rightarrow 1$.
Denote $\mathcal{C}=\{r_d\normE{\hat{\bd{\beta}}-\bd{\beta}^{\star}}\leq 1\}$.
Then $P(\mathcal{C})\rightarrow 1$ since $\hat{\bd{\beta}}$ is a consistent estimator of $\bd{\beta}^{\star}$
wrt $r_d\normE{\cdot}$.
Consider $T_{1}$.
We have
$1-T_{1}\leq T_{11} + T_{12},$
where
$T_{11} =
P(\bigcup_{i\in S_{21}^{\star}}\{|\mu_{i}^{\star}+\bd{X}_{i}^{T}(\bd{\beta}^{\star}-\hat{\bd{\beta}})+\epsilon_{i}| \leq \lambda\})$
and
$T_{12} =
P(\bigcup_{i\in S_{31}^{\star}}\{|\mu_{i}^{\star}+\bd{X}_{i}^{T}(\bd{\beta}^{\star}-\hat{\bd{\beta}})+\epsilon_{i}| \leq \lambda\}).$
It is sufficient to show that both $T_{11}$ and $T_{12}$ converge to zero.
By
$\sqrt{d}\kappa_{n} \ll  \lambda \ll  \mu^{\star}$,
$
T_{11}
\leq
P(
\bigcup_{i\in S_{21}^{\star}}\{\epsilon_{i} \leq \lambda - \mu^{\star} + \normE{\bd{X}_{i}}\cdot\normE{\hat{\bd{\beta}} - \bd{\beta}^{\star}}\}, \mathcal{C}) + P(\mathcal{C}^{c})$,
which is
$\leq
P(
\bigcup_{i\in S_{21}^{\star}}\{\epsilon_{i} \leq \lambda - \mu^{\star} + \sqrt{d}\kappa_{n}\}) + P(\mathcal{C}^{c})
\leq
s_{1}
P\{\epsilon_{0} \leq -\gamma_{n}\} + P(\mathcal{C}^{c})
\longrightarrow 0$.
Similarly, $T_{12}\rightarrow 0$.
Thus
$T_{1}\rightarrow 1$.
Consider $T_{2}$ and $T_{3}$.
By
$\alpha\gamma_{n} \leq \lambda$
and $\sqrt{d}\kappa_{n} \ll \lambda$,
$
T_{2}
\geq
P(
\bigcap_{i=s_{1}}^{s}\{-\lambda - \mu_{i}^{\star} + (1/r_d)\kappa_{n} \leq \epsilon_{i} \leq \lambda - \mu_{i}^{\star} - (1/r_d)\kappa_{n} \}, \mathcal{C})$,
which is
$\geq
P(
\bigcap_{i=s_{1}}^{s}\{-\lambda - \mu_{i}^{\star} + \sqrt{d}\kappa_{n} \leq \epsilon_{i} \leq \lambda - \mu_{i}^{\star} - \sqrt{d}\kappa_{n} \}, \mathcal{C})
\geq
P(
\bigcap_{i=s_{1}}^{s}\{-\gamma_{n} \leq \epsilon_{i} \leq \gamma_{n} \}, \mathcal{C})
\rightarrow 1$.
Then
$T_{2}\rightarrow 1$.
Similarly, $T_{3} \rightarrow 1$.
\end{proof}

\begin{proofLong}
\begin{proof}[Long Proof of Theorem
\ref{thm5.4}]
By the definition of $\mathcal{E}$, we have
$$
P(\mathcal{\mathcal{E}})
= T_{1}T_{2}T_{3},$$
where
\begin{align*}
T_{1} &= P(\bigcap_{i=1}^{s_{1}}\{|\mu_{i}^{\star}+\bd{X}_{i}^{T}(\bd{\beta}^{\star}-\hat{\bd{\beta}})+\epsilon_{i}| > \lambda\}), \\
T_{2} &= P(\bigcap_{i=s_{1}+1}^{s}\{|\mu_{i}^{\star}+\bd{X}_{i}^{T}(\bd{\beta}^{\star}-\hat{\bd{\beta}})+\epsilon_{i}| \leq \lambda\}), \\
T_{3} &= P(\bigcap_{i=s+1}^{n}\{|\bd{X}_{i}^{T}(\bd{\beta}^{\star}-\hat{\bd{\beta}})+\epsilon_{i}| \leq \lambda\}).
\end{align*}

We will show that each of $T_{1}$, $T_{2}$ and $T_{3}$ converges to one.
Then $P(\mathcal{\mathcal{E}})\rightarrow 1$.

\lineProof

Denote $\mathcal{C}=\{r_d\normE{\hat{\bd{\beta}}-\bd{\beta}^{\star}}\leq 1\}$.
Then $P(\mathcal{C})\rightarrow 1$ since $\hat{\bd{\beta}}$ is a consistent estimator of $\bd{\beta}^{\star}$
wrt $r_d\normE{\cdot}$.

\OnItem{On $T_{1}$.}
We have
$$1-T_{1}\leq T_{11} + T_{12},$$
where
\begin{align*}
T_{11} &=
P(\bigcup_{i\in S_{21}^{\star}}\{|\mu_{i}^{\star}+\bd{X}_{i}^{T}(\bd{\beta}^{\star}-\hat{\bd{\beta}})+\epsilon_{i}| \leq \lambda\})\\
T_{12} &=
P(\bigcup_{i\in S_{31}^{\star}}\{|\mu_{i}^{\star}+\bd{X}_{i}^{T}(\bd{\beta}^{\star}-\hat{\bd{\beta}})+\epsilon_{i}| \leq \lambda\}).
\end{align*}
It is sufficient to show that both $T_{11}$ and $T_{12}$ converge to zero.

By noting
$\sqrt{d}\kappa_{n} \ll  \lambda \ll  \mu^{\star}$, we have
\begin{align*}
T_{11}
& \leq
P(
\bigcup_{i\in S_{21}^{\star}}\{\epsilon_{i} \leq \lambda - \mu^{\star} + \normE{\bd{X}_{i}}\cdot\normE{\hat{\bd{\beta}} - \bd{\beta}^{\star}}\}, \mathcal{C}) + P(\mathcal{C}^{c})\\
& \leq
P(
\bigcup_{i\in S_{21}^{\star}}\{\epsilon_{i} \leq \lambda - \mu^{\star} + (1/r_d)\kappa_{n}\}) + P(\mathcal{C}^{c}) \\
& \leq
P(
\bigcup_{i\in S_{21}^{\star}}\{\epsilon_{i} \leq \lambda - \mu^{\star} + \sqrt{d}\kappa_{n}\}) + P(\mathcal{C}^{c}) \\
& \leq
s_{1}
P\{\epsilon_{0} \leq -\gamma_{n}\} + P(\mathcal{C}^{c})
\longrightarrow 0.
\end{align*}
Similarly, $T_{12}\rightarrow 0$.
Thus
$T_{1}\rightarrow 1$.

\lineProof

\OnItem{On $T_{2}$ and $T_{3}$.}
By noting
$\alpha\gamma_{n} \leq \lambda$
and $\sqrt{d}\kappa_{n} \ll \lambda$,
we have
\begin{align*}
T_{2}
& \geq
P(
\bigcap_{i=s_{1}}^{s}\{-\lambda - \mu_{i}^{\star} + (1/r_d)\kappa_{n} \leq \epsilon_{i} \leq \lambda - \mu_{i}^{\star} - (1/r_d)\kappa_{n} \}, \mathcal{C}) \\
& \geq
P(
\bigcap_{i=s_{1}}^{s}\{-\lambda - \mu_{i}^{\star} + \sqrt{d}\kappa_{n} \leq \epsilon_{i} \leq \lambda - \mu_{i}^{\star} - \sqrt{d}\kappa_{n} \}, \mathcal{C}) \\
& \geq
P(
\bigcap_{i=s_{1}}^{s}\{-\gamma_{n} \leq \epsilon_{i} \leq \gamma_{n} \}, \mathcal{C})
\rightarrow 1.
\end{align*}
Then
$T_{2}\rightarrow 1$.
Similarly, $T_{3} \rightarrow 1$.
Then, we have
$T_{3}\rightarrow 1$.
This completes the proof.
\end{proof}
\end{proofLong}

\begin{proofName}
\subsection{Proof of Theorem \ref{thm3.8:d:infty:alternative:Paper}}
\end{proofName}
\begin{proof}
[Proof of Theorem \ref{thm3.8:d:infty:alternative:Paper}]
Note that
$ \sqrt{n}\bd{A}_{n}\bd{\bd{\Sigma}}_{X}^{1/2}(\tilde{\bd{\beta}} - \bd{\beta}^{\star})
=
\tilde{R}_{1}
+
\tilde{R}_{2}
+
V_{1}
+
V_{2}
$,
where
$\tilde{R}_{1} = \sqrt{n}\bd{A}_{n}R_{1}$,
$\tilde{R}_{2} = \sqrt{n}\bd{A}_{n}R_{2}$,
$
R_{1}
=
(\bd{X}_{\hat I_{0}}^{T}\bd{X}_{\hat I_{0}})^{-1}\bd{X}_{\hat I_{0}}^{T}\bd{Y}_{\hat I_{0}}\{\hat{I}_{0}\not=I_{0}\}
$,
$
R_{2}
=
-(\bd{X}_{I_{0}}^{T}\bd{X}_{I_{0}})^{-1}\bd{X}_{I_{0}}^{T}\bd{Y}_{I_{0}}\{\hat{I}_{0}\not=I_{0}\}
$,
and $V_{i}$'s are defined in the proof of Theorem
\ref{thm3.4:d:infty:alternative:Paper}.
Since $P(\normE{\tilde{R}_{1}}=0)\geq P\{\hat{I}_{0} = I_{0}\} \rightarrow 1$,
we have $\tilde{R}_{1} = o_{P}(1)$.
Similarly, $\tilde{R}_{2} = o_{P}(1)$.
By the proof of Theorem
\ref{thm3.4:d:infty:alternative:Paper},
$V_{1}=o_{P}(1)$ and $V_{2}\ConvDist N(0, \sigma^{2}\bd{G}_{X})$.
Therefore, the desired result follows by Slutsky's lemma.
\end{proof}

\begin{proofLong}
\begin{proof}
[Long Proof of Theorem \ref{thm3.8:d:infty:alternative:Paper}]
We reuse the notations on $T_{i}$'s in the proof of Theorems
\ref{thm3.4:d:infty:alternative:Paper}.
We have
\begin{align*}
\tilde{\bd{\beta}} - \bd{\beta}^{\star}
& =
R_{1} + R_{2} + T_{0}^{-1}T_{1} + T_{0}^{-1}T_{2},
\end{align*}
where
$
R_{1}
=
(\bd{X}_{\hat I_{0}}^{T}\bd{X}_{\hat I_{0}})^{-1}\bd{X}_{\hat I_{0}}^{T}\bd{Y}_{\hat I_{0}}\{\hat{I}_{0}\not=I_{0}\}
$,
$
R_{2}
=
-(\bd{X}_{I_{0}}^{T}\bd{X}_{I_{0}})^{-1}\bd{X}_{I_{0}}^{T}\bd{Y}_{I_{0}}\{\hat{I}_{0}\not=I_{0}\}
$,
$T_{0} = \mathbb{S}_{s_{1}+1,n}/n$,
$
T_{1} =
\mathbb{S}_{S_{12}^{\star}}^{\mu}/n$
and
$
T_{2} =
\mathbb{S}_{s_{1}+1,n}^{\epsilon}/n$.
Thus,

We have
\begin{align*}
\sqrt{n}\bd{A}_{n}(\tilde{\bd{\beta}} - \bd{\beta}^{\star})
& =
\sqrt{n}\bd{A}_{n}R_{1}
+ \sqrt{n}\bd{A}_{n}R_{2}
+ \sqrt{n}\bd{A}_{n}T_{0}^{-1}T_{1}
+ \sqrt{n}\bd{A}_{n}T_{0}^{-1}T_{2} \\
& =:
\tilde{R}_{1}
+
\tilde{R}_{2}
+
V_{1}
+
V_{2}.
\end{align*}

\OnItem{On $\tilde{R}_{1}$ and $\tilde{R}_{2}$.}
Since $P(\normE{\tilde{R}_{1}}=0)\geq P\{\hat{I}_{0} = I_{0}\} \rightarrow 1$,
we have $\tilde{R}_{1} = o_{P}(1)$.
Similarly, $\tilde{R}_{2} = o_{P}(1)$.

\OnItem{On $V_{1}$.}
See the proof of Theorem
\ref{thm5.3}.

\OnItem{On $V_{2}$.}
See the proof of Theorem
\ref{thm5.3}.

Thus, we obtain the asymptotic distribution of $\tilde{\bd{\beta}}$ by Slutsky's lemma.

\end{proof}
\end{proofLong}

\begin{lem}[Consistency on $\hat{\sigma}$]
\label{lem3.9:d:infty:Paper}
Suppose the assumptions and conditions of
Theorem \ref{thm5.2} hold
with $r_d \geq \sqrt{d}$.
If $s_{2}=o(n/\gamma_{n}^{2})$,
then
$\hat{\sigma} \ConvProb \sigma$.
\end{lem}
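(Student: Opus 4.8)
The plan is to use the partial selection consistency of the one-step estimator to reduce $\hat\sigma^{2}$ to an ordinary least-squares residual sum of squares on a \emph{deterministic} index set, and then control that quantity by elementary moment estimates. First I would invoke Theorem~\ref{thm5.4}: the assumptions and conditions of Theorem~\ref{thm5.2} (with $r_d\ge\sqrt{d}$) give $r_d\|\hat{\bd{\beta}}_{n}-\bd{\beta}^{\star}\|_{2}\ConvProb 0$, and $r_d\ge\sqrt{d}\ge 1/\sqrt{d}$, so $P(\mathcal{E})\to 1$. On $\mathcal{E}$ the penalized procedure selects the incidental parameters exactly, whence $\hat{I}_{0}=J:=\{s_{1}+1,\dots,n\}$ is nonrandom and $\#(\hat{I}_{0})=n-s_{1}$. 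Let $P_{J}=\bd{X}_{J}(\bd{X}_{J}^{T}\bd{X}_{J})^{-1}\bd{X}_{J}^{T}$ be the orthogonal projection onto the column span of $\bd{X}_{J}$, which is well defined wpg1 under the conditions of Theorem~\ref{thm5.2}. Using $\bd{Y}_{J}=\bd{X}_{J}\bd{\beta}^{\star}+\bd{\mu}_{J}^{\star}+\bd{\epsilon}_{J}$ and $(\bd{I}-P_{J})\bd{X}_{J}\bd{\beta}^{\star}=0$, the two-step fitted residual vector over $\hat{I}_{0}$ equals $(\bd{I}-P_{J})(\bd{\mu}_{J}^{\star}+\bd{\epsilon}_{J})$, so on $\mathcal{E}$
\begin{equation*}
\hat\sigma^{2}=\frac{1}{n-s_{1}}\,\bigl\|(\bd{I}-P_{J})(\bd{\mu}_{J}^{\star}+\bd{\epsilon}_{J})\bigr\|_{2}^{2}.
\end{equation*}

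Next I would expand this and show the two ``error'' contributions are $o_{P}(1)$. For the noise part, $\|(\bd{I}-P_{J})\bd{\epsilon}_{J}\|_{2}^{2}=\|\bd{\epsilon}_{J}\|_{2}^{2}-\|P_{J}\bd{\epsilon}_{J}\|_{2}^{2}$; since $J$ is deterministic and $\bd{\epsilon}\perp\bd{X}$, the law of large numbers gives $(n-s_{1})^{-1}\|\bd{\epsilon}_{J}\|_{2}^{2}\ConvProb\sigma^{2}$ (using $s_{1}=o(n)$), while $\E\|P_{J}\bd{\epsilon}_{J}\|_{2}^{2}=\sigma^{2}\E[\operatorname{tr}P_{J}]\le\sigma^{2}d$, so $(n-s_{1})^{-1}\|P_{J}\bd{\epsilon}_{J}\|_{2}^{2}=O_{P}(d/n)=o_{P}(1)$ because $d^{3}/n\to 0$ forces $d=o(n)$. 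For the incidental part, $\|\bd{\mu}_{J}^{\star}\|_{2}=\|\bd{\mu}_{2}^{\star}\|_{2}=D_{n}$ with $D_{n}^{2}\le s_{2}\gamma_{n}^{2}=o(n)$ by the hypothesis $s_{2}=o(n/\gamma_{n}^{2})$ and $|\mu_{i}^{\star}|\le\gamma_{n}$ on the $\bd{\mu}_{2}^{\star}$ block, so $(n-s_{1})^{-1}\|(\bd{I}-P_{J})\bd{\mu}_{J}^{\star}\|_{2}^{2}\le D_{n}^{2}/(n-s_{1})\to 0$; the cross term is bounded in absolute value by $2(n-s_{1})^{-1}D_{n}\|\bd{\epsilon}_{J}\|_{2}=o(\sqrt{n})\cdot O_{P}(\sqrt{n})/(n-s_{1})=o_{P}(1)$ by Cauchy--Schwarz. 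Summing the three pieces gives $\hat\sigma^{2}\ConvProb\sigma^{2}$ on $\mathcal{E}$; since $P(\mathcal{E}^{c})\to 0$ this holds unconditionally, and continuity of $t\mapsto\sqrt{t}$ at $\sigma^{2}>0$ yields $\hat\sigma\ConvProb\sigma$, in parallel with Lemma~\ref{lem3.9}.

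I expect the one genuinely delicate step to be $(n-s_{1})^{-1}\|P_{J}\bd{\epsilon}_{J}\|_{2}^{2}=o_{P}(1)$. The crude bound $\|P_{J}\bd{\epsilon}_{J}\|_{2}\le\|\bd{\epsilon}_{J}\|_{2}$ only gives $O_{P}(1)$, which is not enough; the improvement to $O_{P}(d/n)$ relies on the fact that, \emph{on} $\mathcal{E}$, the selected set is the fixed set $J$, so $P_{J}$ depends on the covariates alone and is independent of $\bd{\epsilon}$, and $\operatorname{tr}P_{J}=\operatorname{rank}\bd{X}_{J}=d$ wpg1 --- this is exactly where the $d=o(n)$ requirement enters. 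Everything else is routine; in particular, unlike what the statement of Lemma~\ref{lem3.9} might superficially suggest, no convergence rate for $\tilde{\bd{\beta}}$ is needed, only the partial selection consistency of $\hat{\bd{\mu}}$ from Theorem~\ref{thm5.4}. Finally, normalizing by $\#(\hat{I}_{0})$ rather than $\#(\hat{I}_{0})-d$ is immaterial since $d/(n-s_{1})\to 0$.
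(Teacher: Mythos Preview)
Your argument is correct and is in fact cleaner than the paper's. The paper proceeds differently: after reducing to the deterministic set $I_{0}=\{s_{1}+1,\dots,n\}$ via Theorem~\ref{thm5.4}, it expands $Y_{i}-\bd{X}_{i}^{T}\tilde{\bd{\beta}}=\mu_{i}^{\star}+\bd{X}_{i}^{T}(\bd{\beta}^{\star}-\tilde{\bd{\beta}})+\epsilon_{i}$ into six cross terms $T_{1},\dots,T_{6}$ and bounds each separately. Controlling the terms involving $\bd{\beta}^{\star}-\tilde{\bd{\beta}}$ is where the paper actually uses the rate hypothesis $r_{d}\ge\sqrt{d}$: it needs $\sqrt{d}\,\|\tilde{\bd{\beta}}-\bd{\beta}^{\star}\|_{2}\ConvProb 0$ (via Theorem~\ref{thm5.2} and the supplementary Theorem~\ref{thm5.5}) so that, e.g., $(n-s_{1})^{-1}\sum_{i}[\bd{X}_{i}^{T}(\bd{\beta}^{\star}-\tilde{\bd{\beta}})]^{2}\le d^{-1}(n-s_{1})^{-1}\sum_{i}\|\bd{X}_{i}\|^{2}\cdot d\|\tilde{\bd{\beta}}-\bd{\beta}^{\star}\|_{2}^{2}\to 0$.

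Your projection trick bypasses this entirely: by recognizing that on $\mathcal{E}$ the two-step estimator is exactly OLS on the fixed set $J$, you write the residual vector as $(\bd{I}-P_{J})(\bd{\mu}_{J}^{\star}+\bd{\epsilon}_{J})$, so $\tilde{\bd{\beta}}$ disappears and no rate for it is needed. The only place the growing dimension enters is your bound $\E\|P_{J}\bd{\epsilon}_{J}\|_{2}^{2}\le\sigma^{2}d$, which uses nothing beyond $d=o(n)$. This shows the hypothesis $r_{d}\ge\sqrt{d}$ is stronger than necessary for the conclusion---you only invoke it to get $r_{d}\ge 1/\sqrt{d}$ for Theorem~\ref{thm5.4}. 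The paper's six-term expansion, on the other hand, would adapt to any plug-in estimator of $\bd{\beta}^{\star}$ with the stated rate, not just the OLS on $\hat{I}_{0}$.
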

\begin{proof}
[Proof of Lemma \ref{lem3.9:d:infty:Paper}]
Since the assumptions and conditions of
Theorem \ref{thm5.2} hold
with $r_d\geq\sqrt{d}$,
the penalized estimators $\hat{\bd{\beta}}$
and $\tilde{\bd{\beta}}$
are consistent estimators of $\bd{\beta}^{\star}$
wrt $\sqrt{d}\normE{\cdot}$
by Theorems \ref{thm5.2}
and
\ref{thm5.5} in Supplement \ref{PC.Paper.Supplement.Extension.d.infty}.
Let $\mathcal{\mathcal{A}}=\{\hat{I}_{0}=I_{0}\}$.
Then
$\mathcal{\mathcal{A}}$ occurs wpg1
by Theorem \ref{thm5.4}.

Note that
$\hat{\sigma}^{2}=T\mathcal{\mathcal{A}}+\hat{\sigma}^{2}\mathcal{\mathcal{A}}^{c}$,
where
$
T
=(n-s_{1})^{-1}\normE{ \bd{Y}_{I_{0}} - \bd{X}_{I_{0}}^{T} \tilde{\bd{\beta}} }^{2}
$.
It suffices to show that
$T\ConvProb \sigma^{2}$.
Note that $T=\sum_{i=1}^{6}T_{i}$, where
$T_{1}  =  (n-s_{1})^{-1}\sum_{i=s_{1}+1}^{n}[\bd{X}_{i}^{T}(\bd{\beta}^{\star} - \tilde{\bd{\beta}})]^{2}$,
$T_{2}  =  (n-s_{1})^{-1}\sum_{i=s_{1}+1}^{n} \epsilon_{i}^{2}$,
$T_{3}  =  2(n-s_{1})^{-1}\sum_{i=s_{1}+1}^{n} \bd{X}_{i}^{T}(\bd{\beta}^{\star} - \tilde{\bd{\beta}})\epsilon_{i}$,
$T_{4}  =  (n-s_{1})^{-1}\sum_{i=s_{1}+1}^{s} \mu_{i}^{\star 2}$,
$T_{5}  =  2(n-s_{1})^{-1}\sum_{i=s_{1}+1}^{s} \mu_{i}\bd{X}_{i}^{T}(\bd{\beta}^{\star} - \tilde{\bd{\beta}})$
and
$T_{6}  =  2(n-s_{1})^{-1}\sum_{i=s_{1}+1}^{s} \mu_{i}^{\star}\epsilon_{i}$.
It is clear that $T_{2}\ConvProb \sigma^{2}$.
Thus, it is sufficient to show
other $T_{i}$'s are $o_{P}(1)$.
For every $\eta>0$,
wpg1,
$\sqrt{d}\normE{\bd{\beta}^{\star} - \tilde{\bd{\beta}}}\leq \eta$.
By Assumption (E1), wpg1,
$
|T_{1}|
\leq
\frac{1}{d}\frac{1}{n-s_{1}}\sum_{i=s_{1}+1}^{n}\normE{\bd{X}_{i}^{T}}^{2}
(\sqrt{d}\normE{\bd{\beta}^{\star} - \tilde{\bd{\beta}}})^{2}
\leq
2\eta^{2}\frac{1}{d}\E\normE{\bd{X}_{0}^{T}}^{2}
=
2\eta^{2}\bar{\sigma}_{X}^{2}
\lesssim \eta^{2}
$.
For every $\eta>0$, wpg1,
$
|T_{3}|
\leq
2\frac{1}{\sqrt{d}}\frac{1}{n-s_{1}}\sum_{i=s_{1}+1}^{n} \normE{\bd{X}_{i}^{T}\epsilon_{i}}\sqrt{d}\normE{\bd{\beta}^{\star} - \tilde{\bd{\beta}}}
\leq
4\eta\frac{1}{\sqrt{d}}\E\normE{\bd{X}_{0}^{T}\epsilon_{0}}
=
4\sigma\eta\bar{\sigma}_{X}
\lesssim \eta
$.
For $s_{2}=o(n/\gamma_{n}^{2})$,
$
|T_{4}|
\leq
(n-s_{1})^{-1} s_{2} \gamma_{n}^{2}
\rightarrow 0
$.
For $s_{2}=o(\sqrt{d}n/(\gamma_{n}\kappa_{n}))$,
$
|T_{5}|
\leq
2\frac{1}{\sqrt{d}}\frac{1}{n-s_{1}}s_{2} \gamma_{n}\kappa_{n}\sqrt{d}\normE{\bd{\beta}^{\star} - \tilde{\bd{\beta}}}
\leq
2\eta\frac{1}{\sqrt{d}}\frac{1}{n-s_{1}}s_{2} \gamma_{n}\kappa_{n}
\ConvProb 0
$.
For $s_{2}=o(n/\gamma_{n})$, wpg1,
$
|T_{6}|
\leq
4\frac{1}{n-s_{1}}\gamma_{n}s_{2}\E|\epsilon_{0}|
\rightarrow 0.
$
%
\end{proof}

\begin{proofLong}
\begin{proof}
[Long Proof of Lemma \ref{lem3.9:d:infty:Paper}]
Since the assumptions and conditions of
Theorem \ref{thm5.2} hold
with $r_d\geq\sqrt{d}$,
the penalized estimators $\hat{\bd{\beta}}$
and $\tilde{\bd{\beta}}$
are consistent estimators of $\bd{\beta}^{\star}$
wrt $\sqrt{d}\normE{\cdot}$
by Theorems \ref{thm5.2}
and
\ref{thm5.6}.
Let $\mathcal{\mathcal{B}}=\{\hat{I}_{0}=I_{0}\}$.
Then
$\mathcal{\mathcal{B}}$ occurs wpg1
by Theorem \ref{thm5.4}.

\lineProof

We have
$\hat{\sigma}^{2}=T\mathcal{\mathcal{B}}+\hat{\sigma}^{2}\mathcal{\mathcal{B}}^{c}$,
where
\begin{align*}
T
& =
\frac{1}{\#(I_{0})} \normE{ \bd{Y}_{\hat I_{0}} - \bd{X}_{\hat I_{0}}^{T} \tilde{\bd{\beta}} }^{2}
=\frac{1}{n-s_{1}} \normE{ \bd{Y}_{I_{0}} - \bd{X}_{I_{0}}^{T} \tilde{\bd{\beta}} }^{2}.
\end{align*}
Since $\hat{\sigma}^{2}\mathcal{\mathcal{B}}^{c}\ConvProb 0$, it is sufficient to show that
$T\ConvProb \sigma^{2}$.

\lineProof

We have $T=\sum_{i=1}^{6}T_{i}$, where
\begin{align*}
T_{1} & =  \frac{1}{n-s_{1}}\sum_{i=s_{1}+1}^{n}[\bd{X}_{i}^{T}(\bd{\beta}^{\star} - \tilde{\bd{\beta}})]^{2}, \\
T_{2} & =  \frac{1}{n-s_{1}}\sum_{i=s_{1}+1}^{n} \epsilon_{i}^{2}, \\
T_{3} & =  2\frac{1}{n-s_{1}}\sum_{i=s_{1}+1}^{n} \bd{X}_{i}^{T}(\bd{\beta}^{\star} - \tilde{\bd{\beta}})\epsilon_{i}, \\
T_{4} & =  \frac{1}{n-s_{1}}\sum_{i=s_{1}+1}^{s} \mu_{i}^{\star 2}, \\
T_{5} & =  2\frac{1}{n-s_{1}}\sum_{i=s_{1}+1}^{s} \mu_{i}\bd{X}_{i}^{T}(\bd{\beta}^{\star} - \tilde{\bd{\beta}}), \\
T_{6} & =  2\frac{1}{n-s_{1}}\sum_{i=s_{1}+1}^{s} \mu_{i}^{\star}\epsilon_{i}.
\end{align*}
\lineProof

We will show that.
Then, we have the desired result.

\OnItem{On $T_{1}$.}
For every $\eta>0$, wpg1,
\begin{align*}
|T_{1}|
& =
|\frac{1}{n-s_{1}}\sum_{i=s_{1}+1}^{n}[\bd{X}_{i}^{T}(\bd{\beta}^{\star} - \tilde{\bd{\beta}})]^{2}| \\
& =
\frac{1}{n-s_{1}}\sum_{i=s_{1}+1}^{n}[\bd{X}_{i}^{T}(\bd{\beta}^{\star} - \tilde{\bd{\beta}})]^{2} \\
& \leq
\frac{1}{n-s_{1}}\sum_{i=s_{1}+1}^{n}\normE{\bd{X}_{i}^{T}}^{2}\normE{\bd{\beta}^{\star} - \tilde{\bd{\beta}}}^{2} \\
& \leq
\frac{1}{d}\frac{1}{n-s_{1}}\sum_{i=s_{1}+1}^{n}\normE{\bd{X}_{i}^{T}}^{2}d\normE{\bd{\beta}^{\star} - \tilde{\bd{\beta}}}^{2} \\
(wpg1)& \leq
\frac{1}{d}\frac{1}{n-s_{1}}\sum_{i=s_{1}+1}^{n}\normE{\bd{X}_{i}^{T}}^{2}\eta \\
(wpg1)& \leq
2\eta\frac{1}{d}\E\normE{\bd{X}_{0}^{T}}^{2} \\
& =
2\eta\frac{1}{d}\E\sum_{j=1}^{d}X_{0,j}^{2} \\
& =
2\eta\frac{1}{d}\sum_{j=1}^{d}\sigma_{j}^{2} \\
(assumption)& \lesssim \eta.
\end{align*}
Thus, $T_{1}=o_{P}(1)$ under the conditions:
\begin{itemize}
  \item $\sqrt{d}\normE{\bd{\beta}^{\star} - \tilde{\bd{\beta}}}\ConvProb 0$.
  \item $\frac{1}{d}\sum_{j=1}^{d}\sigma_{j}^{2}$ is bounded.
\end{itemize}

\lineProof
\OnItem{On $T_{2}$.}
By law of large number,
\begin{align*}
T_{2}
& =
\frac{1}{n-s_{1}}\sum_{i=s_{1}+1}^{n} \epsilon_{i}^{2}
\ConvProb \sigma^{2}.
\end{align*}
Thus, $T_{2} \ConvProb \sigma^{2}$.

\lineProof
\OnItem{On $T_{3}$.}
For every $\eta>0$, wpg1,
\begin{align*}
|T_{3}|
& =
|2\frac{1}{n-s_{1}}\sum_{i=s_{1}+1}^{n} \bd{X}_{i}^{T}(\bd{\beta}^{\star} - \tilde{\bd{\beta}})\epsilon_{i}| \\
& \leq
2\frac{1}{n-s_{1}}\sum_{i=s_{1}+1}^{n} |\bd{X}_{i}^{T}\epsilon_{i}(\bd{\beta}^{\star} - \tilde{\bd{\beta}})| \\
& \leq
2\frac{1}{n-s_{1}}\sum_{i=s_{1}+1}^{n} \normE{\bd{X}_{i}^{T}\epsilon_{i}}\normE{\bd{\beta}^{\star} - \tilde{\bd{\beta}}} \\
& \leq
2\frac{1}{\sqrt{d}}\frac{1}{n-s_{1}}\sum_{i=s_{1}+1}^{n} \normE{\bd{X}_{i}^{T}\epsilon_{i}}\sqrt{d}\normE{\bd{\beta}^{\star} - \tilde{\bd{\beta}}} \\
(wpg1) & \leq
2\frac{1}{\sqrt{d}}\frac{1}{n-s_{1}}\sum_{i=s_{1}+1}^{n} \normE{\bd{X}_{i}^{T}\epsilon_{i}}\eta \\
(wpg1) & \leq
4\eta\frac{1}{\sqrt{d}}\E\normE{\bd{X}_{0}^{T}\epsilon_{0}} \\
& =
4\eta\frac{1}{\sqrt{d}}\E(\sum_{j=1}^{d}X_{0,j}^{2}\epsilon_{0}^{2})^{1/2} \\
& \leq
4\eta\frac{1}{\sqrt{d}}(\E \sum_{j=1}^{d}X_{0,j}^{2}\epsilon_{0}^{2})^{1/2} \\
& =
4\eta(\frac{1}{d}\sum_{j=1}^{d}\sigma_{j}^{2}\sigma^{2})^{1/2} \\
& =
4\sigma\eta(\frac{1}{d}\sum_{j=1}^{d}\sigma_{j}^{2})^{1/2} \\
&\lesssim \eta.
\end{align*}
Thus, $T_{3}=o_{P}(1)$ under the conditions:
\begin{itemize}
  \item $\sqrt{d}\normE{\bd{\beta}^{\star} - \tilde{\bd{\beta}}}\ConvProb 0$.
  \item $\frac{1}{d}\sum_{j=1}^{d}\sigma_{j}^{2}$ is bounded.
\end{itemize}

\lineProof
\OnItem{On $T_{4}$.}
We have
\begin{align*}
|T_{4}|
& =
|\frac{1}{n-s_{1}}\sum_{i=s_{1}+1}^{s} \mu_{i}^{\star 2}| \\
& =
\frac{1}{n-s_{1}}\sum_{i=s_{1}+1}^{s} \mu_{i}^{\star 2} \\
& \leq
\frac{1}{n-s_{1}}\sum_{i=s_{1}+1}^{s} \gamma_{n}^{2} \\
& =
\frac{1}{n-s_{1}} s_{2} \gamma_{n}^{2}
\rightarrow 0.
\end{align*}
Thus, $T_{4}=o(1)$
under the condition:
\begin{itemize}
  \item $s_{2}=o(n/\gamma_{n}^{2})$.
\end{itemize}

\lineProof
\OnItem{On $T_{5}$.}
We have,
\begin{align*}
|T_{5}|
& =
|2\frac{1}{n-s_{1}}\sum_{i=s_{1}+1}^{s} \mu_{i}\bd{X}_{i}^{T}(\bd{\beta}^{\star} - \tilde{\bd{\beta}})| \\
& =
2\frac{1}{n-s_{1}}\sum_{i=s_{1}+1}^{s} |\mu_{i}\bd{X}_{i}^{T}(\bd{\beta}^{\star} - \tilde{\bd{\beta}})| \\
& \leq
2\frac{1}{n-s_{1}}\sum_{i=s_{1}+1}^{s} |\mu_{i}\|\bd{X}_{i}^{T}(\bd{\beta}^{\star} - \tilde{\bd{\beta}})| \\
& \leq
2\frac{1}{n-s_{1}}\sum_{i=s_{1}+1}^{s} \gamma_{n}\normE{\bd{X}_{i}}\normE{\bd{\beta}^{\star} - \tilde{\bd{\beta}}} \\
& \leq
2\frac{1}{n-s_{1}}s_{2} \gamma_{n}\kappa_{n}\normE{\bd{\beta}^{\star} - \tilde{\bd{\beta}}} \\
& \leq
2\frac{1}{\sqrt{d}}\frac{1}{n-s_{1}}s_{2} \gamma_{n}\kappa_{n}\sqrt{d}\normE{\bd{\beta}^{\star} - \tilde{\bd{\beta}}}
\ConvProb 0.
\end{align*}
Thus, $T_{5}=o_{P}(1)$ under the conditions:
\begin{itemize}
  \item $s_{2}=o(\sqrt{d}n/(\gamma_{n}\kappa_{n}))$.
  \item $\sqrt{d}\normE{\bd{\beta}^{\star} - \tilde{\bd{\beta}}}\ConvProb 0$.
\end{itemize}

\lineProof
\OnItem{On $T_{6}$.}
We have
\begin{align*}
|T_{6}|
& =
|2\frac{1}{n-s_{1}}\sum_{i=s_{1}+1}^{s} \mu_{i}^{\star}\epsilon_{i}| \\
& \leq
2\frac{1}{n-s_{1}}\sum_{i=s_{1}+1}^{s} |\mu_{i}^{\star}\epsilon_{i}| \\
& \leq
2\frac{1}{n-s_{1}}\gamma_{n}\sum_{i=s_{1}+1}^{s} |\epsilon_{i}| \\
(wpg1)& \leq
2\frac{1}{n-s_{1}}\gamma_{n}s_{2}2\E|\epsilon_{0}|
\rightarrow 0.
\end{align*}
Thus, we have
$T_{6}=o_{P}(1)$ under the conditions
\begin{itemize}
  \item $s_{2}=o(n/\gamma_{n})$.
\end{itemize}
\end{proof}
\end{proofLong}

\begin{thm}
[Asymptotic Distributions on $\hat{\bd{\beta}}$ and $\tilde{\bd{\beta}}$ with $\hat{\bd{G}}_{X,n}$]
\label{model:three:multivariate:mean:zero:thm:twostage:limitdistribution.general.covariates.estimated.Sigma.alternative.errors:d:infty:Paper}
Under the assumptions and conditions
of Theorem
\ref{thm3.4:d:infty:alternative:Paper},
if $d^{8}(\log(d))^{2}=o(n)$,
then
$\sqrt{n}\hat{\bd{G}}_{X,n}^{-1/2}\bd{A}_{n}(\hat{\bd{\beta}} - \bd{\beta}^{\star} )
\ConvDist
N(0, \sigma^{2}\bd{I}_{q})$.
Similarly, under the assumptions and  conditions
of Theorem
\ref{thm3.8:d:infty:alternative:Paper},
If $d^{8}(\log(d))^{2}=o(n)$,
then
$\sqrt{n}\hat{\bd{G}}_{X,n}^{-1/2}\bd{A}_{n}(\tilde{\bd{\beta}} - \bd{\beta}^{\star} )
\ConvDist
N(0, \sigma^{2}\bd{I}_{q})$.
\end{thm}

Note that a stronger requirement on $d$ is required to handle $\hat{\bd{G}}_{X,n}^{-1/2}$ in
Theorem \ref{model:three:multivariate:mean:zero:thm:twostage:limitdistribution.general.covariates.estimated.Sigma.alternative.errors:d:infty:Paper}..
Below is a lemma needed for proving
Theorem \ref{model:three:multivariate:mean:zero:thm:twostage:limitdistribution.general.covariates.estimated.Sigma.alternative.errors:d:infty:Paper}.
\begin{lem}
[\cite{Wihler2009}]
\label{PC.Paper.Lemma.Inequality.Matrix.Square.Root.Differentce}
Suppose
$\bd{A}$ and $\bd{B}$ are $m\times m$ symmetric positive-semidefinite matrices.
Then, for $p>1$,
$
\normF{\bd{A}^{1/p} -\bd{B}^{1/p}}^{p}
\leq
m^{(p-1)/2}
\normF{\bd{A}-\bd{B}}
$.
Specifically, for $p=2$,
$
\normF{\bd{A}^{1/2} -\bd{B}^{1/2}}
\leq
(m^{1/2}
\normF{\bd{A}-\bd{B}})^{1/2}
$.
\end{lem}

\begin{proofName}
\subsection{Proof of Theorem
\ref{model:three:multivariate:mean:zero:thm:twostage:limitdistribution.general.covariates.estimated.Sigma.alternative.errors:d:infty:Paper}}
\end{proofName}
\begin{proof}
[Proof of Theorem
\ref{model:three:multivariate:mean:zero:thm:twostage:limitdistribution.general.covariates.estimated.Sigma.alternative.errors:d:infty:Paper}]
We only show the result on $\hat{\bd{\beta}}$.
since the result on $\tilde{\bd{\beta}}$ can be obtained in a similar way.
We reuse the notations $T_{i}$'s in the proof of Theorems
\ref{thm5.2},
from which,
$
\sqrt{n}
\hat{\bd{G}}_{X,n}^{-1/2}
\bd{A}_{n}(\hat{\bd{\beta}}_{n} - \bd{\beta}^{\star})
= M + R
$,
where
$M = \sqrt{n}
\bd{G}_{X,n}^{-1/2}
\bd{A}_{n}(\hat{\bd{\beta}}_{n} - \bd{\beta}^{\star})$
and
$R = \sqrt{n}
(\hat{\bd{G}}_{X,n}^{-1/2} - \bd{G}_{X,n}^{-1/2})
\bd{A}_{n}(\hat{\bd{\beta}}_{n} - \bd{\beta}^{\star})$.
By Theorem
\ref{thm3.4:d:infty:alternative:Paper},
$M \ConvDist N(0, \sigma^{2}\bd{G}_{X})$.
Then, it is sufficient to show that $R\ConvProb 0$ wrt $\normE{\cdot}$.
We have
$
R
=R_{1}+R_{2}+R_{3}-R_{4}
$,
where
$R_{i}=\bd{B}_{n}T_{i}$
for $i=1,2,3,4$
and
$\bd{B}_{n}=\sqrt{n}
(\hat{\bd{G}}_{X,n}^{-1/2} - \bd{G}_{X,n}^{-1/2})
\bd{A}_{n}
T_{0}^{-1}$.
We will show each $R_{i}$ converges to zero in probability,
which finishes the proof.
Before that,
we first establish an inequality for
$\normF{\hat{\bd{G}}_{X,n}^{-1/2} - \bd{G}_{X,n}^{-1/2}}$.
By Lemma \ref{PC.Paper.Lemma.Inequality.Matrix.Square.Root.Differentce},
$\normF{\hat{\bd{G}}_{X,n}^{-1/2} - \bd{G}_{X,n}^{-1/2}}
\leq
(\sqrt{q}
\normF{\hat{\bd{G}}_{X,n}^{-1} - \bd{G}_{X,n}^{-1}}
)^{1/2}$.
Note that,
by Lemma \ref{PC.Paper.Sample.Covariance.Matrix.Convergence.d.infty},
$\normF{\hat{\bd{\Sigma}}_{n}-\bd{\Sigma}_{X}}\ConvProb 0$ for $d^{4}=o(n)$.
Then, by Lemma \ref{PC.Paper.Inverse.Matrix.Convergence.Probability.d.Infity},
$
\normF{\hat{\bd{G}}_{X,n} - \bd{G}_{X,n}}
\leq
\normF{\bd{A}_{n}}^{2}
\normF{\hat{\bd{\Sigma}}_{n}^{-1}-\bd{\Sigma}_{X}^{-1}}
\lesssim
\normF{\bd{A}_{n}}^{2}
\normF{\hat{\bd{\Sigma}}_{n}-\bd{\Sigma}_{X}}
\ConvProb 0
$.
Thus, by Lemma \ref{PC.Paper.Inverse.Matrix.Convergence.Probability.d.Infity},
$
\normF{\hat{\bd{G}}_{X,n}^{-1} - \bd{G}_{X,n}^{-1}}
\lesssim
\normF{\hat{\bd{G}}_{X,n} - \bd{G}_{X,n}}
\lesssim
\normF{\bd{A}_{n}}^{2}
\normF{\hat{\bd{\Sigma}}_{n}-\bd{\Sigma}_{X}}
$.
Since $q$ is a fixed integer,
it follows
$
\normF{\hat{\bd{G}}_{X,n}^{-1/2} - \bd{G}_{X,n}^{-1/2}}
\lesssim
\normF{\bd{A}_{n}}
(\sqrt{q}\normF{\hat{\bd{\Sigma}}_{n}-\bd{\Sigma}_{X}})^{1/2}
\lesssim
\normF{\bd{A}_{n}}
(\normF{\hat{\bd{\Sigma}}_{n}-\bd{\Sigma}_{X}})^{1/2}
$.
Consider $R_{1}$.
Note that
$\normE{R_{1}}
\leq
\sqrt{n}
\sqrt{d}\normF{\hat{\bd{G}}_{X,n}^{-1/2} - \bd{G}_{X,n}^{-1/2}}
\normF{\bd{A}_{n}}
\normFd{T_{0}^{-1}}
\normE{T_{1}}$,
which is
$\lesssim
\sqrt{n}
(d\normF{\hat{\bd{\Sigma}}_{n}-\bd{\Sigma}_{X}})^{1/2}
\normF{\bd{A}_{n}}^{2}
\normFd{T_{0}^{-1}}
\normE{T_{1}}$.
By Lemmas
\ref{PC.Paper.Inverse.Matrix.Convergence.Probability.d.Infity}
and
\ref{PC.Paper.Sample.Covariance.Matrix.Convergence.d.infty},
$d\normF{\hat{\bd{\Sigma}}_{n}-\bd{\Sigma}_{X}}=o_{P}(1)$
for $d^{6}=o(n)$.
By Assumption (F),
$\normF{\bd{A}_{n}}$
is bounded.
By Lemmas
\ref{PC.Paper.Inverse.Matrix.Convergence.Probability.d.Infity}
and
\ref{PC.Paper.Sample.Covariance.Matrix.Convergence.d.infty}
and Assumption (D),
for $d=o(n^{1/3})$, wpg1,
$\normFd{T_{0}^{-1}}$
is bounded.
Also note that, wpg1,
$\normE{T_{1}}
\leq
s_{2}\kappa_{n}\gamma_{n}/n$.
Then,
$\normE{R_{1}}
\lesssim
s_{2}\kappa_{n}\gamma_{n}/\sqrt{n}$.
Thus,
$\normE{R_{1}}=o_{P}(1)$
for $s_{2}=o(\sqrt{n}/(\kappa_{n}\gamma_{n}))$.
Consider $R_{2}$.
Note that
$\normE{R_{2}}
\leq
\normF{\hat{\bd{G}}_{X,n}^{-1/2} - \bd{G}_{X,n}^{-1/2}}
\normF{\bd{A}_{n}}
\normF{T_{0}^{-1}}
\normE{\sqrt{n}T_{2}}$,
which is
$\lesssim
(d^{2}\log(d)\normF{\hat{\bd{\Sigma}}_{n}-\bd{\Sigma}_{X}})^{1/2}
\normF{\bd{A}_{n}}^{2}
\normFd{T_{0}^{-1}}
(d\log(d))^{-1/2}
\normE{\sqrt{n}T_{2}}$.
By Lemmas \ref{PC.Paper.Inverse.Matrix.Convergence.Probability.d.Infity} and
        \ref{PC.Paper.Sample.Covariance.Matrix.Convergence.d.infty},
        $d^{2}\log(d)\normF{\hat{\bd{\Sigma}}_{n}-\bd{\Sigma}_{X}}$ is $o_{P}(1)$
        for $d^{8}(\log(d))^{2} = o(n)$.
By Assumption (F), $\normF{\bd{A}_{n}}$ is $O(1)$.
By Lemmas
\ref{PC.Paper.Inverse.Matrix.Convergence.Probability.d.Infity}
and
\ref{PC.Paper.Sample.Covariance.Matrix.Convergence.d.infty}
and Assumption (D),
for $d=o(n^{1/3})$, wpg1,
$\normFd{T_{0}^{-1}}$
is bounded.
By Lemma \ref{PC.Paper.Scaled.IID.Sum.Convergence.Rate.d.infty},
        $(d\log(d))^{-1/2}\normE{\sqrt{n}T_{2}}=(d\log(d))^{-1/2}\normE{\frac{1}{\sqrt{n}}\mathbb{S}_{s_{1}+1,n}^{\epsilon}}$ is $O_{P}(1)$ for $d=o(\sqrt{n})$.
Thus,
$R_{2}\ConvProb 0$.
Consider $R_{3}$ and $R_{4}$.
By $s_{1}=o(\sqrt{n}/(\lambda \kappa_{n}))$,
wpg1,
$\normE{R_{3}}
\leq
\sqrt{n}
\normF{\hat{\bd{G}}_{X,n}^{-1/2} - \bd{G}_{X,n}^{-1/2}}
\normF{\bd{A}_{n}}
\normF{T_{0}^{-1}}
\normE{T_{3}}$,
which is
$\lesssim
\sqrt{n}
(d\normF{\hat{\bd{\Sigma}}_{n}-\bd{\Sigma}_{X}})^{1/2}
\normF{\bd{A}_{n}}^{2}
\normFd{T_{0}^{-1}}
\normE{T_{3}}
\lesssim
\lambda s_{1}\kappa_{n}/\sqrt{n}
\rightarrow 0$.
Thus, $\normE{R_{3}}=o_{P}(1)$.
In the same way, $\normE{R_{4}}=o_{P}(1)$.
%
\end{proof}

\begin{proofLong}
\begin{proof}
[Long Proof of Corollary
\ref{model:three:multivariate:mean:zero:thm:twostage:limitdistribution.general.covariates.estimated.Sigma.alternative.errors:d:infty:Paper}]
We reuse the notations on $T_{i}$'s in the proof of Theorems
\ref{thm5.2},
from which,
$$
\sqrt{n}
\hat{\bd{G}}_{X,n}^{-1/2}
\bd{A}_{n}(\hat{\bd{\beta}}_{n} - \bd{\beta}^{\star})
= M + R,
$$
where
\begin{align*}
M &= \sqrt{n}
\bd{G}_{X,n}^{-1/2}
\bd{A}_{n}(\hat{\bd{\beta}}_{n} - \bd{\beta}^{\star}), \\
R &= \sqrt{n}
(\hat{\bd{G}}_{X,n}^{-1/2} - \bd{G}_{X,n}^{-1/2})
\bd{A}_{n}(\hat{\bd{\beta}}_{n} - \bd{\beta}^{\star}).
\end{align*}
By Theorem
\ref{thm3.4:d:infty:alternative:Paper},
$M \ConvDist N(0, \sigma^{2}\bd{G}_{X})$.
Then, it is sufficient to show that $R\ConvProb 0$ wrt $\normE{\cdot}$.
We have
$$
R
=R_{1}+R_{2}+R_{3}-R_{4},
$$
where
\begin{align*}
R_{1} &=
\sqrt{n}
(\hat{\bd{G}}_{X,n}^{-1/2} - \bd{G}_{X,n}^{-1/2})
\bd{A}_{n}
T_{0}^{-1}T_{1}, \\
R_{2} &=
\sqrt{n}
(\hat{\bd{G}}_{X,n}^{-1/2} - \bd{G}_{X,n}^{-1/2})
\bd{A}_{n}
T_{0}^{-1}T_{2}, \\
R_{3} &=
\sqrt{n}
(\hat{\bd{G}}_{X,n}^{-1/2} - \bd{G}_{X,n}^{-1/2})
\bd{A}_{n}
T_{0}^{-1}T_{3}, \\
R_{4} &=
\sqrt{n}
(\hat{\bd{G}}_{X,n}^{-1/2} - \bd{G}_{X,n}^{-1/2})
\bd{A}_{n}
T_{0}^{-1}T_{4}.
\end{align*}

Next we show each $R_{i}$ converges to zero in probability.

\lineProof
By Lemma \ref{PC.Paper.Lemma.Inequality.Matrix.Square.Root.Differentce},
\begin{align*}
\normF{\hat{\bd{G}}_{X,n}^{-1/2} - \bd{G}_{X,n}^{-1/2}}
& =
\normF{(\hat{\bd{G}}_{X,n}^{-1})^{1/2} - (\bd{G}_{X,n}^{-1})^{1/2}} \\
& \leq
(\sqrt{q}
\normF{\hat{\bd{G}}_{X,n}^{-1} - \bd{G}_{X,n}^{-1}}
)^{1/2}.
\end{align*}
Note that,
by Lemma \ref{PC.Paper.Sample.Covariance.Matrix.Convergence.d.infty},
$\normF{\hat{\bd{\Sigma}}_{n}-\bd{\Sigma}_{X}}\ConvProb 0$ for $d^{4}=o(n)$.
Then,
\begin{align*}
\normF{\hat{\bd{G}}_{X,n} - \bd{G}_{X,n}}
& =
\normF{\bd{A}_{n}
(\hat{\bd{\Sigma}}_{n}^{-1}-\bd{\Sigma}_{X}^{-1})
\bd{A}_{n}^{T}} \\
& \leq
\normF{\bd{A}_{n}}^{2}
\normF{\hat{\bd{\Sigma}}_{n}^{-1}-\bd{\Sigma}_{X}^{-1}} \\
& \lesssim
\normF{\bd{A}_{n}}^{2}
\normF{\hat{\bd{\Sigma}}_{n}-\bd{\Sigma}_{X}}
\ConvProb 0.
\end{align*}
Thus,
by Lemma \ref{PC.Paper.Inverse.Matrix.Convergence.Probability.d.Infity},
\begin{align*}
\normF{\hat{\bd{G}}_{X,n}^{-1} - \bd{G}_{X,n}^{-1}}
& \lesssim
\normF{\hat{\bd{G}}_{X,n} - \bd{G}_{X,n}} \\
& \lesssim
\normF{\bd{A}_{n}}^{2}
\normF{\hat{\bd{\Sigma}}_{n}-\bd{\Sigma}_{X}}.
\end{align*}

Thus,
\begin{align*}
\normF{\hat{\bd{G}}_{X,n}^{-1/2} - \bd{G}_{X,n}^{-1/2}}
& \lesssim
\normF{\bd{A}_{n}}
(\sqrt{q}\normF{\hat{\bd{\Sigma}}_{n}-\bd{\Sigma}_{X}})^{1/2}
\lesssim
\normF{\bd{A}_{n}}
(\normF{\hat{\bd{\Sigma}}_{n}-\bd{\Sigma}_{X}})^{1/2}.
\end{align*}

\lineProof

\OnItem{On $R_{1}$.}
We have
\begin{align*}
\normE{R_{1}}
& =
\normE{\sqrt{n}
(\hat{\bd{G}}_{X,n}^{-1/2} - \bd{G}_{X,n}^{-1/2})
\bd{A}_{n}
T_{0}^{-1}T_{1}} \\
& \leq
\sqrt{n}
\normF{\hat{\bd{G}}_{X,n}^{-1/2} - \bd{G}_{X,n}^{-1/2}}
\normF{\bd{A}_{n}}
\normF{T_{0}^{-1}}
\normE{T_{1}} \\
& =
\sqrt{n}
\sqrt{d}\normF{\hat{\bd{G}}_{X,n}^{-1/2} - \bd{G}_{X,n}^{-1/2}}
\normF{\bd{A}_{n}}
\normFd{T_{0}^{-1}}
\normE{T_{1}} \\
& \lesssim
\sqrt{n}
(d\normF{\hat{\bd{\Sigma}}_{n}-\bd{\Sigma}_{X}})^{1/2}
\normF{\bd{A}_{n}}^{2}
\normFd{T_{0}^{-1}}
\normE{T_{1}}.
\end{align*}

By Lemmas
\ref{PC.Paper.Inverse.Matrix.Convergence.Probability.d.Infity}
and
\ref{PC.Paper.Sample.Covariance.Matrix.Convergence.d.infty},
$d\normF{\hat{\bd{\Sigma}}_{n}^{-1}-\bd{\Sigma}_{X}^{-1}}=o_{P}(1)$
for $d^{6}=o(n)$.

By Assumption (F)
$\normF{\bd{A}_{n}}$
is bounded.

By Lemmas
\ref{PC.Paper.Inverse.Matrix.Convergence.Probability.d.Infity}
and
\ref{PC.Paper.Sample.Covariance.Matrix.Convergence.d.infty}
and Assumption (D),
for $d=o(n^{1/3})$, wpg1,
$\normFd{T_{0}^{-1}}$
is bounded.

We have, wpg1,
\begin{equation*}
\normE{T_{1}}
=
\normE{\frac{1}{n}\sum_{i=s_{1}+1}^{s}\bd{X}_{i}\mu_{i}^{\star}}
\leq
\frac{1}{n}s_{2}\kappa_{n}\gamma_{n}.
\end{equation*}

Then,
\begin{align*}
\normE{R_{1}}
& \lesssim
\frac{1}{\sqrt{n}}s_{2}\kappa_{n}\gamma_{n},
\end{align*}
where $\lesssim$ means that the left side is bounded by a constant times the right side.

Thus,
$\normE{R_{1}}=o_{P}(1)$
for $s_{2}=o(\sqrt{n}/(\kappa_{n}\gamma_{n}))$.

\lineProof

\OnItem{On $R_{2}$.}
We have
\begin{align*}
\normE{R_{2}}
& =
\normE{\sqrt{n}
(\hat{\bd{G}}_{X,n}^{-1/2} - \bd{G}_{X,n}^{-1/2})
\bd{A}_{n}T_{0}^{-1}T_{2}} \\
& \leq
\normF{\hat{\bd{G}}_{X,n}^{-1/2} - \bd{G}_{X,n}^{-1/2}}
\normF{\bd{A}_{n}}
\normF{T_{0}^{-1}}
\normE{\sqrt{n}T_{2}} \\
& \lesssim
d(\log(d))^{1/2}
(\normF{\hat{\bd{\Sigma}}_{n}-\bd{\Sigma}_{X}})^{1/2}
\normF{\bd{A}_{n}}^{2}
\normFd{T_{0}^{-1}}
(d\log(d))^{-1/2}
\normE{\sqrt{n}T_{2}}\\
& =
(d^{2}\log(d)\normF{\hat{\bd{\Sigma}}_{n}-\bd{\Sigma}_{X}})^{1/2}
\normF{\bd{A}_{n}}^{2}
\normFd{T_{0}^{-1}}
(d\log(d))^{-1/2}
\normE{\sqrt{n}T_{2}}.
\end{align*}

Note that
\begin{itemize}
  \item by Lemmas \ref{PC.Paper.Inverse.Matrix.Convergence.Probability.d.Infity} and
        \ref{PC.Paper.Sample.Covariance.Matrix.Convergence.d.infty},
        $d^{2}\log(d)\normF{\hat{\bd{\Sigma}}_{n}-\bd{\Sigma}_{X}}$ is $o_{P}(1)$
        for $d^{8}(\log(d))^{2} = o(n)$;
  \item By Assumption (F), $\normF{\bd{A}_{n}}$ is $O(1)$;
  \item By Lemmas
\ref{PC.Paper.Inverse.Matrix.Convergence.Probability.d.Infity}
and
\ref{PC.Paper.Sample.Covariance.Matrix.Convergence.d.infty}
and Assumption (D),
for $d=o(n^{1/3})$, wpg1,
$\normFd{T_{0}^{-1}}$
is bounded.
  \item by Lemma \ref{PC.Paper.Scaled.IID.Sum.Convergence.Rate.d.infty},
        $(d\log(d))^{-1/2}\normE{\sqrt{n}T_{2}}=(d\log(d))^{-1/2}\normE{\frac{1}{\sqrt{n}}\mathbb{S}_{s_{1}+1,n}^{\epsilon}}$ is $O_{P}(1)$ for $d=o(\sqrt{n})$.
\end{itemize}
Thus,
$R_{2}\ConvProb 0$.

\lineProof

\OnItem{On $R_{3}$ and $R_{4}$.}
First consider $V_{3}$.
By noting that $s_{1}=o(\sqrt{n}/(\lambda \kappa_{n}))$,
wpg1,
\begin{align*}
\normE{R_{3}}
& =
\normE{
\sqrt{n}
(\hat{\bd{G}}_{X,n}^{-1/2} - \bd{G}_{X,n}^{-1/2})
\bd{A}_{n}
T_{0}^{-1}T_{3}} \\
& \leq
\sqrt{n}
\normF{\hat{\bd{G}}_{X,n}^{-1/2} - \bd{G}_{X,n}^{-1/2}}
\normF{\bd{A}_{n}}
\normF{T_{0}^{-1}}
\normE{T_{3}} \\
& \lesssim
\sqrt{n}
(\normF{\hat{\bd{\Sigma}}_{n}-\bd{\Sigma}_{X}})^{1/2}
\normF{\bd{A}_{n}}^{2}
\normF{T_{0}^{-1}}
\normE{T_{3}} \\
& =
\sqrt{n}
(d\normF{\hat{\bd{\Sigma}}_{n}-\bd{\Sigma}_{X}})^{1/2}
\normF{\bd{A}_{n}}^{2}
\normFd{T_{0}^{-1}}
\normE{T_{3}} \\
& \lesssim
\sqrt{n}\lambda s_{1}\kappa_{n}/n
=
\lambda s_{1}\kappa_{n}/\sqrt{n}
\rightarrow 0.
\end{align*}
Thus, $\normE{R_{3}}=o_{P}(1)$.
In the same way, $\normE{R_{4}}=o_{P}(1)$.

Therefore, the result of the theorem follows
by Slutsky's lemma.
\end{proof}
\end{proofLong}



\section{Supplementary Materials}
Additional materials for
Sections \ref{sec1} to \ref{sec:Diverging number of structural parameters}
can be found in the file of supplementary materials.



\begin{thebibliography}{27}
\providecommand{\natexlab}[1]{#1}
\providecommand{\url}[1]{\texttt{#1}}
\expandafter\ifx\csname urlstyle\endcsname\relax
  \providecommand{\doi}[1]{doi: #1}\else
  \providecommand{\doi}{doi: \begingroup \urlstyle{rm}\Url}\fi

\bibitem[Basu(1977)]{Basu1977}
Debabrata Basu.
\newblock On the elimination of nuisance parameters.
\newblock \emph{Journal of the American Statistical Association}, 72:\penalty0
  355--366, 1977.

\bibitem[Bean et~al.(2012)Bean, Bickel, El~Karoui, Lim, and Yu]{Bean2012}
Derek Bean, Peter Bickel, Noureddine El~Karoui, Chinghway Lim, and Bin Yu.
\newblock Penalized robust regression in high-dimension.
\newblock 2012.

\bibitem[Chen et~al.(2010)Chen, Goldstein, and Shao]{Chen2010}
Louis~H.Y. Chen, Larry Goldstein, and Qi-Man Shao.
\newblock \emph{Normal Approximation by Stein's Method}.
\newblock Springer-Verlag, 2010.

\bibitem[Fan and Li(2001)]{Fan2001a}
Jianqing Fan and Runze Li.
\newblock Variable selection via nonconcave penalized likelihood and its oracle
  properties.
\newblock \emph{Journal of the American Statistical Association}, 96\penalty0
  (456):\penalty0 1348--1360, Dec 2001.
\newblock ISSN 0162-1459.

\bibitem[Fan and Lv(2010)]{Fan2010}
Jianqing Fan and Jinchi Lv.
\newblock A selective overview of variable selection in high dimensional
  feature space.
\newblock \emph{Statistica Sinica}, 20:\penalty0 101--148, 2010.

\bibitem[Fan and Peng(2004)]{Fan2004}
Jianqing Fan and Heng Peng.
\newblock On non-concave penalized likelihood with diverging number of
  parameters.
\newblock \emph{The Annals of Statistics}, 32:\penalty0 928--961, 2004.

\bibitem[Fan et~al.(2011)Fan, Liao, and Mincheva]{Fan2011b}
Jianqing Fan, Yuan Liao, and Martina Mincheva.
\newblock High dimensional covariance matrix estimation in approximate factor
  models.
\newblock 2011.

\bibitem[Fan et~al.(2012{\natexlab{a}})Fan, Fan, and Barut]{Fan2012}
Jianqing Fan, Yingying Fan, and Emre Barut.
\newblock Adaptive robust variable selection.
\newblock 2012{\natexlab{a}}.

\bibitem[Fan et~al.(2012{\natexlab{b}})Fan, Feng, and Tong]{Fan2012a}
Jianqing Fan, Yang Feng, and Xin Tong.
\newblock A road to classification in high dimensional space: the regularized
  optimal affine discriminant.
\newblock \emph{Journal of the Royal Statistical Society Series B},
  2012{\natexlab{b}}.

\bibitem[Huber(1964)]{Huber1964}
Peter~J. Huber.
\newblock Robust estimation of a location parameter.
\newblock \emph{The Annals of Mathematical Statistics}, 35:\penalty0 73--101,
  1964.

\bibitem[Huber(1973)]{Huber1973}
Peter~J. Huber.
\newblock Robust regression: Asymptotics, conjectures and monte carlo.
\newblock \emph{The Annals of Statistics}, 1:\penalty0 799--821, 1973.

\bibitem[Jahn(2007)]{Jahn2007}
Johannes Jahn.
\newblock \emph{Introduction to the Theory of Nonlinear Optimization}.
\newblock Springer Berlin Heidelberg, 2007.

\bibitem[Kiefer and Wolfowitz(1956)]{Kiefer1956}
J.~Kiefer and J.~Wolfowitz.
\newblock Consistency of the maximum likelihood estimator in the presence of
  infinitely many incidental parameters.
\newblock \emph{The Annals of Mathematical Statistics}, 27:\penalty0 887--906,
  1956.

\bibitem[Kosorok(2008)]{Kosorok2008}
Michael~R. Kosorok.
\newblock Bootstrapping the grenander estimator.
\newblock In \emph{Beyond Parametrics in Interdisciplinary Research:
  Festschrift in Honor of Professor Pranab K. Sen}, pages 282--292. Institute
  of Mathematical Statistics: Hayward, CA., 2008.

\bibitem[Lambert-Lacroix and Zwald(2011)]{Lambert-Lacroix2011}
S.~Lambert-Lacroix and L.~Zwald.
\newblock Robust regression through the huber's criterion and adaptive lasso
  penalty.
\newblock \emph{Electronic Journal of Statistics}, 5:\penalty0 1015--1053,
  2011.
\newblock ISSN 1935-7524.

\bibitem[Lancaster(2000)]{Lancaster2000}
Tony Lancaster.
\newblock The incidental parameter problem since 1948.
\newblock \emph{Journal of Econometrics}, 95:\penalty0 391--413, 2000.

\bibitem[Moreira(2009)]{Moreira2009}
Marcelo~J. Moreira.
\newblock A maximum likelihood method for the incidental parameter problem.
\newblock \emph{The Annals of Statistics}, 37\penalty0 (6A):\penalty0
  3660--3696, 2009.
\newblock ISSN 0090-5364.

\bibitem[Neyman and Scott(1948)]{Neyman1948}
Jerzy Neyman and Elizabeth~L. Scott.
\newblock Consistent estimates based on partially consistent observations.
\newblock \emph{Econometrica}, 16:\penalty0 1--32, 1948.

\bibitem[Portnoy and He(2000)]{Portnoy2000}
Stephen Portnoy and Xuming He.
\newblock A robust journey in the new millennium.
\newblock \emph{Journal of the American Statistical Association}, 95:\penalty0
  1331--1335, 2000.

\bibitem[Shiryaev(1995)]{Shiryaev1995}
Albert~N. Shiryaev.
\newblock \emph{Probability}.
\newblock Springer-Verlag, second edition, 1995.

\bibitem[Stewart(1969)]{Stewart1969}
G.~W. Stewart.
\newblock On the continuity of the generalized inverse.
\newblock \emph{SIAM Journal on Applied Mathematics}, 17:\penalty0 33--45,
  1969.

\bibitem[Tang et~al.(2012)Tang, Banerjee, and Kosorok]{Tang2012a}
Runlong Tang, Moulinath Banerjee, and Michael~R. Kosorok.
\newblock Likelihood based inference for current status data on a grid: A
  boundary phenomenon and an adaptive inference procedure.
\newblock \emph{The Annals of Statistics}, 40\penalty0 (1):\penalty0 45--72,
  2012.

\bibitem[Tibshirani(1996)]{Tibshirani1996a}
Robert Tibshirani.
\newblock Regression shrinkage and selection via the lasso.
\newblock \emph{J. Royal. Statist. Soc B}, 58:\penalty0 267--288, 1996.

\bibitem[van~der Vaart(1998)]{Vaart1998}
Aad~W. van~der Vaart.
\newblock \emph{Asymptotic Statistics}.
\newblock Cambridge University Press, 1998.

\bibitem[van~der Vaart and Wellner(1996)]{Vaart1996}
Aad~W. van~der Vaart and Jon~A. Wellner.
\newblock \emph{Weak Convergence and Empirical Processes}.
\newblock Springer, 1996.

\bibitem[Wihler(2009)]{Wihler2009}
Thomas~P. Wihler.
\newblock On the holder continuity of matrix functions for normal matrices.
\newblock \emph{Journal of inequalities in pure and applied mathematics}, 10,
  2009.

\bibitem[Zhao and Yu(2006)]{Zhao2006}
Peng Zhao and Bin Yu.
\newblock On model selection consistency of lasso.
\newblock \emph{The Journal of Machine Learning Research}, 7(Nov):\penalty0
  2541--2563, 2006.

\end{thebibliography}

\begin{thebibliography}{26}
\providecommand{\natexlab}[1]{#1}
\providecommand{\url}[1]{\texttt{#1}}
\expandafter\ifx\csname urlstyle\endcsname\relax
  \providecommand{\doi}[1]{doi: #1}\else
  \providecommand{\doi}{doi: \begingroup \urlstyle{rm}\Url}\fi


\bibitem[Fan and Lv(2011)]{Fan2011b}
Jianqing Fan and Jinchi Lv.
\newblock Non-concave penalized likelihood with np-dimensionality.
\newblock \emph{IEEE Transactions On Information Theory}, 57:\penalty0
  5467--5484, 2011.

\bibitem[Fan and Peng(2004)]{Fan2004}
Jianqing Fan and Heng Peng.
\newblock On non-concave penalized likelihood with diverging number of
  parameters.
\newblock \emph{The Annals of Statistics}, 32:\penalty0 928--961, 2004.


\bibitem[Kosorok(2008)]{Kosorok2008}
Michael~R. Kosorok.
\newblock \emph{Introduction to Empirical Processes and Semiparametric
  Inference}.
\newblock Springer New York, 2008.

\bibitem[Neyman and Scott(1948)]{Neyman1948}
Jerzy Neyman and Elizabeth~L. Scott.
\newblock Consistent estimates based on partially consistent observations.
\newblock \emph{Econometrica}, 16:\penalty0 1--32, 1948.

\bibitem[Shiryaev(1995)]{Shiryaev1995}
Albert~N. Shiryaev.
\newblock \emph{Probability}.
\newblock Springer-Verlag, second edition, 1995.

\bibitem[van~der Vaart(1998)]{Vaart1998}
Aad~W. van~der Vaart.
\newblock \emph{Asymptotic Statistics}.
\newblock Cambridge University Press, 1998.


\bibitem[Zhao and Yu(2006)]{Zhao2006}
Peng Zhao and Bin Yu.
\newblock On model selection consistency of lasso.
\newblock \emph{The Journal of Machine Learning Research}, 7(Nov):\penalty0
  2541--2563, 2006.

\end{thebibliography}

\newpage
\setcounter{page}{0}   
\pagenumbering{roman}  

\begin{center}
\Large\emph{\textbf{Supplementary Materials for Paper: }}

\vspace{3mm}
\textbf{``Partial Consistency with Sparse Incidental Parameters"}

\vspace{3mm}
\emph{by} Jianqing Fan, Runlong Tang and Xiaofeng Shi
\end{center}




\section{Supplement for Section \ref{sec1}}
\label{PC.Paper.Supplement.Introduction}
In this supplement, we first show the method proposed by \cite{Neyman1948}
does not work for model (\ref{model:basic})
and then explain which assumptions or conditions for the consistent results of the penalized methods in
\cite{Zhao2006},
\cite{Fan2004}
and \cite{Fan2011b}
are not satisfied for model
(\ref{model:basic}).

Although the modified equations of maximum likelihood method proposed by \cite{Neyman1948}
could handle ``a number of important cases"
with incidental parameters,
unfortunately, it does not work for model (\ref{model:basic}).
More specifically,
consider the simplest case of model  (\ref{model:basic}) with $d=1$:
\begin{equation*}
    Y_{i} = \mu_{i}^{\star} + {X}_{i}\beta^{\star} + \epsilon_{i}, \text{ for } i=1,2,\cdots, n,
\end{equation*}
where $\{\epsilon_{i}\}$ are i.i.d. copies of $N(0,\sigma^{2})$.
Using the notations of \cite{Neyman1948},
the likelihood function for $({X}_{i}, Y_{i})$ is
$
p_{i}
=
p_{i}(\beta,\sigma,\mu_{i}|{X}_{i}, Y_{i})
=
(\sqrt{2\pi}\sigma)^{-1}
\exp\{-(2\sigma^{2})^{-1}(Y_{i}-\mu_{i}^{\star}-{X}_{i}\beta)^{2}\},
$
and the log-likelihood function is
$
\log p_{i}
=
-\log (\sqrt{2\pi}\sigma)
- (2\sigma^{2})^{-1}(Y_{i}-\mu_{i}^{\star}-{X}_{i}\beta)^{2}.
$
Then, the score functions are
\begin{align*}
\phi_{i1}
& =
\frac{\partial \log p_{i}}{\partial \beta}
=
\frac{1}{\sigma^{2}}(Y_{i}-\mu_{i}^{\star}-{X}_{i}\beta){X}_{i}, \\
\phi_{i2}
& =
\frac{\partial \log p_{i}}{\partial \sigma}
=
\frac{1}{\sigma} + \frac{1}{\sigma^{3}}(Y_{i}-\mu_{i}^{\star}-{X}_{i}\beta)^{2}, \\
\omega_{i}
& =
\frac{\partial \log p_{i}}{\partial \mu_{i}}
=
\frac{1}{\sigma^{2}}(Y_{i}-\mu_{i}^{\star}-{X}_{i}\beta).
\end{align*}
From the equation $\omega_{i}=0$,
we have
$
    \hat\mu_{i} = Y_{i}-{X}_{i}\beta.
$
Plugging this $\hat\mu_{i}$ into $\phi_{i1}$ and $\phi_{i2}$
(replacing $\mu_{i}$ with $\hat\mu_{i}$),
we obtain
$\phi_{i1}=0$ and $\phi_{i2}=1/\sigma$.
Then,
$E_{i1} = \E \phi_{i1} = 0$
and
$E_{i2} = \E \phi_{i1} = 1/\sigma$.
Thus,
$E_{i1}$ and $E_{i2}$ do only depend on the structural parameters ($\bd{\beta}^{\star}$ and $\sigma$).
However, we then have
$\Phi_{i1}=\phi_{i1}-E_{i1}=0$ and $\Phi_{i2}=\phi_{i2}-E_{i2}=0$.
This means $F_{n1}=F_{n2}=0$,
independent of structural parameters!
Consequently, the estimation equations degenerate to two $0=0$ equations,
which means the modified equation of maximum likelihood method does not work
for model  (\ref{model:basic}).

Next,
we explicitly explain which assumptions or conditions for the consistent results of the penalized methods in
\cite{Zhao2006},
\cite{Fan2004}
and \cite{Fan2011b}
are not valid for model
(\ref{model:basic}).

\cite{Zhao2006} derive strong sign consistency for lasso estimator.
However, their consistency results Theorems 3 and 4 do not apply to model (\ref{model:basic}),
since the above specific design matrix $\bd{X}$ does not satisfy their regularity condition (6) on page 2546.
More specifically, with model (\ref{model:basic}),
\begin{equation*}
    C_{11}^{n} =
    \frac{1}{n}
    \begin{pmatrix}
  \bd{I}_{s} & \bd{X}_{1,s} \\
  \bd{X}_{1,s}^{T} & \sum_{i=1}^{n}\bd{X}_{i}\bd{X}_{i}^{T} \\
 \end{pmatrix}
 \SC
 \begin{pmatrix}
  \bd{0} & \bd{0} \\
  \bd{0} & \bd{\bd{\Sigma}}_{X} \\
 \end{pmatrix},
\end{equation*}
where $\bd{\bd{\Sigma}}_{X}$ is the covariance matrix of the covariates.
This means that some of the eigenvalues of $C_{11}^{n}$ goes to $0$ as $n\rightarrow \infty$.
Then the regularity condition (6),
which is
\begin{equation*}
    \alpha^{T} C_{11}^{n} \alpha \geq \text{ a positive constant }, \text{ for all } \alpha\in\mathbb{R}^{s+d} \text{ such that } \|\alpha\|_{2}^{2} = 1,
\end{equation*}
does not hold any more.
Thus the consistency results Theorems 3 and 4  in \cite{Zhao2006}
is not applicable for model (\ref{model:basic}).

\cite{Fan2004} show
the consistency with Euclidean metric of a penalized likelihood estimator
when the dimension of the sparse parameter increases
with the sample size
in Theorem 1 on Page 935.
Under their framework,
the log-likelihood function of the data point $V_{i}=(\bd{X}_{i},Y_{i})$ for each $i$
from model (\ref{model:basic})
with random errors being i.i.d. copies of $N(0,\sigma^{2})$
is given by
\begin{equation*}
\log f_{n}(V_{i},\mu_{i},\bd{\beta})
\propto
-\frac{1}{2\sigma^{2}}
(Y_{i} - \mu_{i} - \bd{X}_{i}^{T}\bd{\beta})^{2},
\end{equation*}
where $\propto$ means ``proportional to".
As we can see that log-likelihood functions with different $i$'s
might different since $\mu_{i}$'s might be different for different $i$'s.
This violates a condition
that all the data points are i.i.d. from a structural density
in Assumption (G) on Page 934.

This violation might not be essential,
however,
since we could consider the log-likelihood function
for all the data directly.
That is, we consider
\begin{equation*}
L_{n}(\bd{\mu},\bd{\beta})
=
\sum_{i=1}^{n}
\log  f_{n}(V_{i},\mu_{i},\bd{\beta})
\propto
-\frac{1}{2\sigma^{2}}
\sum_{i=1}^{n}
(Y_{i} - \mu_{i} - \bd{X}_{i}^{T}\bd{\beta})^{2}.
\end{equation*}
Then, the Fisher information matrix for $(\bd{\mu},\bd{\beta})$ is given by
\begin{equation*}
I_{n+d}(\bd{\mu},\bd{\beta})
=
 \begin{pmatrix}
  \sigma^{-2}\bd{I}_{n} & 0 \\
  0 & n\sigma^{-2}\bd{\bd{\Sigma}}_{X}^{2} \\
 \end{pmatrix},
\end{equation*}
where $I_{n}$ is the $n\times n$ identity matrix.
Then, the Fisher information for one data point is
\begin{equation*}
\frac{1}{n}I_{n+d}(\bd{\mu},\bd{\beta})
=
 \begin{pmatrix}
  n^{-1}\sigma^{-2}\bd{I}_{n} & 0 \\
  0 & \sigma^{-2}\bd{\bd{\Sigma}}_{X}^{2}\\
 \end{pmatrix}.
\end{equation*}
It is clear that
the minimal eigenvalue
$\lambda_{\min}(I_{n+d}(\bd{\mu},\bd{\beta})/n)=n^{-1}\sigma^{2}\rightarrow 0$
as $n\rightarrow \infty$.
This violates the condition that the minimal eigenvalue should be lower bounded from $0$
in Assumption (F) on Page 934.
Thus, the consistency result Theorem 1 in \cite{Fan2004}
can not be applied to model (\ref{model:basic}).

\cite{Fan2011b}
``consider the variable selection problem of nonpolynomial dimensionality
in the context of
generalized linear models"
by taking the penalized likelihood approach with folded-concave penalties.
Theorem 3 on page 5472 of \cite{Fan2011b}
shows that there exists a consistent estimator of the unknown parameters
with the Euclidean metric under certain conditions.
In Condition 4 on page 5472,
there is a condition on a minimal eigenvalue
\begin{equation*}
\underset{\bd{\delta}\in N_{0}}{\min}
\lambda_{\min}[\bd{X}_{I}^{T}\bd{\bd{\Sigma}(\bd{X_{I}\bd{\delta}})}\bd{X}_{I}] \geq cn,
\end{equation*}
where $\bd{X}_{I}$ consists of the first $s+d$ columns of
the design matrix $\bd{X}$.
With model (\ref{model:basic}),
this condition becomes
\begin{equation*}
\lambda_{\min}[\bd{X}_{I}^{T}\bd{X}_{I}] \geq c n,
\end{equation*}
which is
\begin{equation*}
\lambda_{\min}[(1/n)\bd{X}_{I}^{T}\bd{X}_{I}]
=
\lambda_{\min}[C_{11}^{n}]
\geq c,
\end{equation*}
where $C_{11}^{n}$ is the matrix defined in \cite{Zhao2006}
and $c$ is a positive constant.
Since the minimal eigenvalue $\lambda_{\min}[C_{11}^{n}]$
converges to 0,
the above condition does not hold.
Thus, the consistency result Theorem 3 of \cite{Fan2011b}
is not applicable for model (\ref{model:basic}).

\section{Supplement for Section \ref{sec2}}
\label{PC.Paper.Supplement.Model.Data.Method}
In this supplement,
we provide
Lemmas
\ref{lem2.1}
and
\ref{lem2.3},
Proposition
\ref{prop2.2}
and their proofs.
Before that,
there are two graphs,
Figures
\ref{model:three:simplest:mean:zero:Fig:modelThreeMultivariateMean0Onmupm:General:Covariates:Errors}
and
\ref{model:three:simplest:mean:zero:Fig:PLS:Soft:2},
illustrating
the incidental parameters
and
the step of updating the responses
in the iteration algorithm
with $d=1$.

\begin{figure}
\begin{center}
  \includegraphics[scale=0.7]{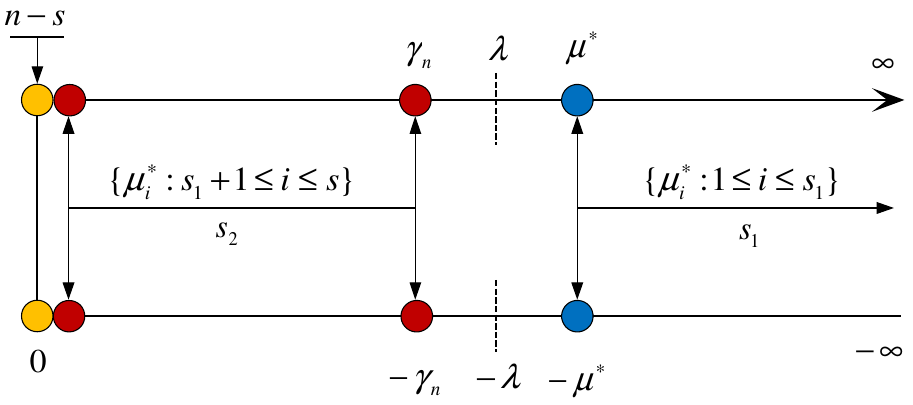}
  \caption[An illustration of $\mu_{i}^{\star}$'s.]
  {An illustration of three types of $\mu_{i}^{\star}$'s, that is,
   large $\bd{\mu}_{1}^{\star}$,  bounded $\bd{\mu}_{2}^{\star}$ and  zero $\bd{\mu}_{3}^{\star}$.
   The negative half of the real line is folded at 0
   under the positive half
   for convenience.
   For the penalized least square method
   with a soft penalty function
   and under the assumption of fixed $d$,
   the specification of the regularization parameter $\lambda$
   is that
   $\kappa_{n} \ll  \lambda,~  \alpha\gamma_{n} \leq \lambda, \text{ and } \lambda \ll  \min\{\mu^{\star},\sqrt{n}\}$.
  }
  \label{model:three:simplest:mean:zero:Fig:modelThreeMultivariateMean0Onmupm:General:Covariates:Errors}
\end{center}
\end{figure}

\begin{figure}
\begin{center}
  \includegraphics[scale=0.6]{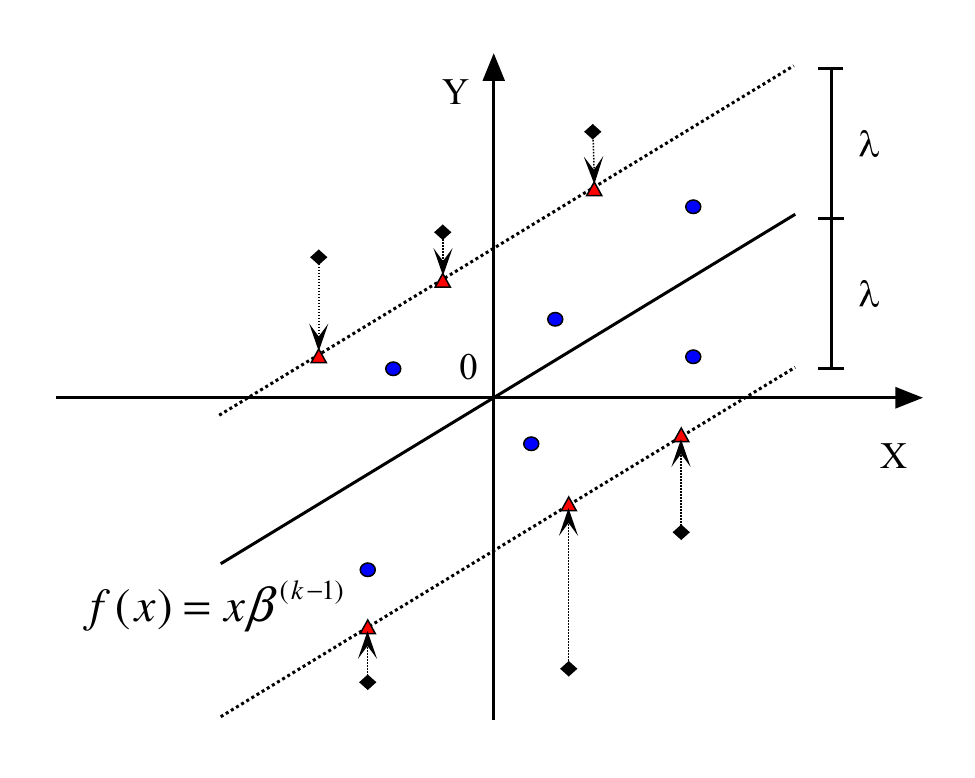}
  \caption[An illustration for the updating of responses with $d=1$.]
  {An illustration for the updating of responses with $d=1$.
  The solid black line is a fitted regression line.
  The dashed black lines are the corresponding shifted regression lines.
  The circle and diamond points are the original data points.
  The circle and triangle points are the updated data points.
  That is, the diamond points are drawn onto the shifted regression lines.
  }
  \label{model:three:simplest:mean:zero:Fig:PLS:Soft:2}
\end{center}
\end{figure}

\begin{lem}
\label{lem2.1}
A necessary and sufficient condition for
$(\hat{\bd{\mu}}, \hat{\bd{\beta}})$
to be a minimizer of $L(\bd{\mu},\bd{\beta})$ is that
\begin{align*}
&  \hat{\bd{\beta}} =
    (\bd{X}^{T}\bd{X})^{-1}\bd{X}^{T}(\bd{Y}-\hat{\bd{\mu}}), \\
& Y_{i} - \hat{\mu}_{i} - \bd{X}_{i}^{T}\hat{\bd{\beta}} = \lambda  \sign(\hat{\mu}_{i}),  ~~ \text{ for } i \in \hat{I}_{0}^{c}, \\
& |Y_{i} - \bd{X}_{i}^{T}\hat{\bd{\beta}}| \leq \lambda, ~~ \text{ for } i \in \hat{I}_{0},
\end{align*}
where $\sign(\cdot)$ is a sign function and $\hat{I}_{0}=\{1\leq i\leq n: \hat{\mu}_{i} = 0\}$.
\end{lem}

\begin{proofName}
\subsection{Proof of Lemma \ref{lem2.1}}
\end{proofName}
\begin{proof}
[Proof of Lemma \ref{lem2.1}]
By subdifferential calculus
(see, for example, Theorem 3.27 in \cite{Jahn2007}),
a necessary and sufficient condition for
$(\hat{\bd{\mu}}, \hat{\bd{\beta}})$
to be a minimizer of $L(\bd{\mu},\bd{\beta})$ is that
zero is in the subdifferential of $L$ at $(\hat{\bd{\mu}}, \hat{\bd{\beta}})$,
which means that, for each $i$,
\begin{align*}
&  \hat{\bd{\beta}} =
    (\bd{X}^{T}\bd{X})^{-1}\bd{X}^{T}(\bd{Y}-\hat{\bd{\mu}}), \\
& Y_{i} - \hat{\mu}_{i} - \bd{X}_{i}^{T}\hat{\bd{\beta}} = \lambda \sign(\hat{\mu}_{i}),  ~~ \text{ if } \hat\mu_{i} \not = 0, \\
& |Y_{i} - \bd{X}_{i}^{T}\hat{\bd{\beta}}| \leq \lambda, ~~ \text{ if } \hat\mu_{i} = 0.
\end{align*}
Thus, the conclusion of Lemma \ref{lem2.1} follows.
\end{proof}

\begin{prop}
\label{prop2.2}
Suppose Assumptions (A) and (B) hold and there exist positive constants $C_{1}$ and $C_{2}$ such that
$\normE{\bd{\beta}^{\star}} < C_{1}$ and
$\normE{\bd{\beta}^{(0)}} < C_{2}$ wpg1.
If
$s_{1}\lambda/n =O(1)$
and $s_{2}\gamma_{n}/n = o(1)$,
then,
for every $K\geq 1$ and $k\leq K$,
wpg1 as $n\rightarrow\infty$,
$$
    \normE{\bd{\beta}^{(K+1)} - \bd{\beta}^{(K)}}\leq O((s_{1}/n)^{K}),
    \quad \mbox{and} \quad
    \normE{\bd{\beta}^{(k)}}\leq 2\sqrt{d}C_{1} + C_{2}.
$$
\end{prop}

\begin{rem}
For any prespecified critical value in the stopping rule,
Proposition~\ref{prop2.2} implies that the algorithm stops at the second iteration wpg1.
In practice, the sample size $n$ might not be large enough for
the \revTang{two-iteration} estimator
to have a decent performance
so that more iterations are usually needed to activate the stopping rule.
By Proposition \ref{prop2.2},
$K$ iterations will make the distance $\normE{\bd{\beta}^{(K+1)} - \bd{\beta}^{(K)}}$ of the small order $(s_{1}/n)^{K}$.
When $s_{1}/n$ is small,
the algorithm converges quickly,
which has been verified by our simulations.
\end{rem}
\begin{proofName}
\subsection{Proof of Proposition \ref{prop2.2}}
\end{proofName}

\begin{proof}
[Proof of Proposition \ref{prop2.2}]
First, we show that, wpg1, $\normE{\bd{\beta}^{(1)}}$ is bounded by $2\sqrt{d}C_{1}+C_{2}$.
For each $k\geq 1$,
we have
\begin{align*}
\mathbb{S}\bd{\beta}^{(k)}
& =
\mathbb{S}_{S_{11}}^{\mu}
+
\mathbb{S}_{S_{12}}^{\mu}
+
\mathbb{S}_{S_{1}}\bd{\beta}^{\star}
+
\mathbb{S}_{S_{1}}^{\epsilon}
+
\mathbb{S}_{S_{2}\cup S_{3}}\bd{\beta}^{(k-1)}
+
\lambda(\mathcal{S}_{S_{2}}-\mathcal{S}_{S_{3}}),
\end{align*}
where $S_{i}=\cup_{j=1}^{3}S_{ij}(\bd{\beta}^{(k-1)})$
for $i=1,2,3$
and $S_{ij}$'s are defined at the end of Section
\ref{sec2}.
Denote $\mathcal{A}_{k-1}$ as the event
\begin{align*}
\{
S_{11}(\bd{\beta}^{(k-1)})=\emptyset,
S_{12}(\bd{\beta}^{(k-1)})=S_{12}^{\star},
S_{1}(\bd{\beta}^{(k-1)})=S_{10}^{\star}\cup S_{12}^{\star};
S_{2}(\bd{\beta}^{(k-1)})=S_{21}^{\star};
S_{3}(\bd{\beta}^{(k-1)})=S_{31}^{\star}
\},
\end{align*}
where $S_{ij}^{\star}$'s are defined at the beginning of
Section  \ref{sec3}.

By Lemma \ref{lem3.1},
$P(\mathcal{A}_{0})\rightarrow 1$.
Thus, wpg1,
$$
\bd{\beta}^{(1)}
=
T_{0}^{-1}T_{1}
+
T_{0}^{-1}T_{2}
+
T_{0}^{-1}T_{3}
+
T_{0}^{-1}T_{4}(\bd{\beta}^{(0)})
+
T_{0}^{-1}T_{5},
$$
where
$T_{0} = \mathbb{S}/n$,
$T_{1} = \mathbb{S}_{S_{12}^{\star}}^{\mu}/n$,
$T_{2} = \mathbb{S}_{s_{1}+1,n}\bd{\beta}^{\star}/n$,
$T_{3} = \mathbb{S}_{s_{1}+1,n}^{\epsilon}/n$,
$T_{4}(\bd{\beta}^{(0)}) = \mathbb{S}_{1,s_{1}}\bd{\beta}^{(0)}/n$
and
$T_{5} = (\mathbb{S}_{S_{21}^{\star}}-\mathbb{S}_{S_{31}^{\star}})\lambda/n$.
We will show that, wpg1,
$ \normE{T_{0}^{-1}T_{1}} \leq C_{2}/4$,
$ \normE{T_{0}^{-1}T_{2}} \leq 2\sqrt{d}C_{1}$,
$ \normE{T_{0}^{-1}T_{3}} \leq C_{2}/4$,
$ \normE{T_{0}^{-1}T_{4}(\bd{\beta}^{(0)})} \leq C_{2}/4$
and
$ \normE{T_{0}^{-1}T_{5}} \leq C_{2}/4$.
Then, wpg1,
\begin{equation*}
\normE{\bd{\beta}^{(1)}}
\leq
\sum_{i=1}^{5}
\normE{T_{0}^{-1}T_{i}}
\leq
2\sqrt{d}C_{1} + C_{2}.
\end{equation*}

\OnItem{On $T_{0}^{-1}T_{1}$.}
For $s_{2}\gamma_{n}/n=o(1)$, wpg1,
\begin{align*}
\normE{T_{0}^{-1}T_{1}}
& \leq
\normF{
(\frac{1}{n}\mathbb{S})^{-1}}
\normE{
\frac{1}{n}\mathbb{S}_{S_{12}^{\star}}^{\mu}
}
\leq
4\normF{
\bd{\Sigma}_{X}^{-1}}
\E
\normE{
\bd{X}_{0}
}
\frac{s_{2}}{n}
\gamma_{n}
\rightarrow 0.
\end{align*}
Thus, wpg1,
$\normE{T_{0}^{-1}T_{1}}
\leq C_{2}/4$.

\OnItem{On $T_{0}^{-1}T_{2}$.}
Wpg1,
\begin{align*}
\normE{T_{0}^{-1}T_{2}}
& \leq
\normF{
(\frac{1}{n}\mathbb{S})^{-1}
\frac{1}{n}\mathbb{S}_{s_{1}+1,n}}
\normE{
\bd{\beta}^{\star}
}
\leq
2 \normF{
\bd{I}_{d}}
C_{1}
=
2 \sqrt{d}
C_{1}.
\end{align*}

\OnItem{On $T_{0}^{-1}T_{3}$.}
Wpg1,
\begin{align*}
\normE{
T_{0}^{-1}T_{3}
}
& \leq
2\normF{
\bd{\Sigma}_{X}^{-1}}
\normE{\frac{1}{n}\mathbb{S}_{s_{1}+1,n}^{\epsilon}
}
\ConvProb 0.
\end{align*}
Thus, wpg1,
$\normE{T_{0}^{-1}T_{3}} \leq C_{2}/4$.

\OnItem{On $T_{0}^{-1}T_{4}(\bd{\beta}^{(0)})$.}
For $s_{1}/n = o(1)$,
\begin{align*}
\normE{T_{0}^{-1}T_{4}(\bd{\beta}^{(0)})}
& \leq
\frac{s_{1}}{n}
\normF{
(\frac{1}{n}\mathbb{S})^{-1}
\frac{1}{s_{1}}\mathbb{S}_{1,s_{1}}
}
\normE{
\bd{\beta}^{(0)}
}
\leq
\frac{s_{1}}{n}
2\sqrt{d}
C_{2}
\ConvProb 0.
\end{align*}
Thus, wpg1,
$\normE{T_{0}^{-1}T_{4}(\bd{\beta}^{(0)})} \leq C_{2}/4$.

\OnItem{On $T_{0}^{-1}T_{5}$.}
For $s_{1}\lambda/n = O(1)$, wpg1,
\begin{align*}
\normE{
T_{0}^{-1}T_{5}
}
& \leq
2\normF{
\bd{\Sigma}_{X}^{-1}}
\frac{s_{1}\lambda}{n}
(\normE{
\frac{1}{s_{1}}\mathcal{S}_{S_{21}^{\star}}}
+\normE{
\frac{1}{s_{1}}\mathcal{S}_{S_{31}^{\star}}
})
\ConvProb 0.
\end{align*}
Thus, wpg1,
$\normE{T_{0}^{-1}T_{5}} \leq C_{2}/4$.

Next, consider
$\normE{\bd{\beta}_{2} - \bd{\beta}_{1}}$.
Since $\bd{\beta}^{(1)}$ is bounded wpg1,
by Lemma \ref{lem3.1},
$\mathcal{A}_{1}$ occurs wpg1.
Then,
$$
\bd{\beta}^{(2)}
=
T_{0}^{-1}T_{1}
+
T_{0}^{-1}T_{2}
+
T_{0}^{-1}T_{3}
+
T_{0}^{-1}T_{4}(\bd{\beta}^{(1)})
+
T_{0}^{-1}T_{5},
$$
where
$
T_{4}(\bd{\beta}^{(1)}) = (1/n)\mathbb{S}_{1,s_{1}}\bd{\beta}^{(1)}.
$
Thus, wpg1,
$$\bd{\beta}^{(2)} - \bd{\beta}^{(1)}
=
\mathbb{S}^{-1}\mathbb{S}_{1,s_{1}}
(\bd{\beta}^{(1)}-\bd{\beta}^{(0)}).$$
It follows that, for $s_{1}=o(n)$, wpg1,
$$
\normE{\bd{\beta}^{(2)}  - \bd{\beta}^{(1)}}
\leq
\normF{\mathbb{S}^{-1}\mathbb{S}_{1,s_{1}}}
\normE{\bd{\beta}^{(1)} -\bd{\beta}^{(0)} }
\leq
(2\sqrt{d}s_{1}/n)
(4\sqrt{d}C_{1} + 2C_{2})
\rightarrow 0.
$$
Then, wpg1,
$\bd{\beta}^{(2)} = \bd{\beta}^{(1)}$,
which means that, wpg1,
the iteration algorithm
stops at the second iteration.

Finally, for any $K\geq 1$,
repeat the above arguments.
Then, with at least probability
$p_{n,K} = P(\bigcap_{k=0}^{K}\mathcal{A}_{k})$,
which increases to one by Lemma  \ref{lem3.1},
we have
\begin{align*}
\normE{\bd{\beta}^{(K+1)}  - \bd{\beta}^{(K)}}
\leq
(2\sqrt{d}s_{1}/n)^{K}
(4\sqrt{d}C_{1} + 2C_{2})
=O((s_{1}/n)^{K})
\rightarrow 0,
\end{align*}
and
$\normE{\bd{\beta}^{(k)}}\leq 2\sqrt{d}C_{1} + C_{2}$ for all $k\leq K$.
\end{proof}

\begin{proofLong}
\begin{proof}
[Long Proof of Lemma \ref{prop2.2}]
First, we show that, wpg1, $\normE{\bd{\beta}^{(1)}}$ is bounded by $2\sqrt{d}C_{1}+C_{2}$.
For each $k\geq 1$,
we have
\begin{align*}
\mathbb{S}\bd{\beta}^{(k)}
& =
\sum_{i\in S_{1}}\bd{X}_{i}Y_{i}
+
\sum_{i\in S_{2}}\bd{X}_{i}(\bd{X}_{i}^{T}\bd{\beta}^{(k-1)} + \lambda)
+
\sum_{i\in S_{3}}\bd{X}_{i}(\bd{X}_{i}^{T}\bd{\beta}^{(k-1)} - \lambda) \\
& =
\sum_{i\in S_{1}}\bd{X}_{i}(\mu_{i}^{\star} + \bd{X}_{i}^{T}\bd{\beta}^{\star} + \epsilon_{i})
+
\sum_{i\in S_{2}}\bd{X}_{i}(\bd{X}_{i}^{T}\bd{\beta}^{(k-1)} + \lambda)
+
\sum_{i\in S_{3}}\bd{X}_{i}(\bd{X}_{i}^{T}\bd{\beta}^{(k-1)} - \lambda) \\
& =
\sum_{i\in S_{1}}\bd{X}_{i}\mu_{i}^{\star}
+
\sum_{i\in S_{1}}\bd{X}_{i}\bd{X}_{i}^{T}\bd{\beta}^{\star}
+
\sum_{i\in S_{1}}\bd{X}_{i}\epsilon_{i}
+
\sum_{i\in S_{2}\cup S_{3}}\bd{X}_{i}\bd{X}_{i}^{T}\bd{\beta}^{(k-1)}
+
\lambda(\sum_{i\in S_{2}}\bd{X}_{i}-\sum_{i\in S_{3}}\bd{X}_{i})\\
& =
\sum_{i\in S_{11}}\bd{X}_{i}\mu_{i}^{\star}
+
\sum_{i\in S_{12}}\bd{X}_{i}\mu_{i}^{\star}
+
\sum_{i\in S_{1}}\bd{X}_{i}\bd{X}_{i}^{T}\bd{\beta}^{\star}
+
\sum_{i\in S_{1}}\bd{X}_{i}\epsilon_{i}
+
\sum_{i\in S_{2}\cup S_{3}}\bd{X}_{i}\bd{X}_{i}^{T}\bd{\beta}^{(k-1)}
+
\lambda(\sum_{i\in S_{2}}\bd{X}_{i}-\sum_{i\in S_{3}}\bd{X}_{i}),
\end{align*}
where the index sets all depend on $\bd{\beta}^{(k-1)}$. Specifically,
\begin{align*}
S_{1} = S_{1}(\bd{\beta}^{(k-1)}) &= \{1\leq i\leq n: |Y_{i}-\bd{X}_{i}^{T}\bd{\beta}^{(k-1)}| \leq \lambda\}, \\
S_{2} = S_{2}(\bd{\beta}^{(k-1)}) &= \{1\leq i\leq n: Y_{i}-\bd{X}_{i}^{T}\bd{\beta}^{(k-1)} > \lambda\}, \\
S_{3} = S_{3}(\bd{\beta}^{(k-1)}) &= \{1\leq i\leq n: Y_{i}-\bd{X}_{i}^{T}\bd{\beta}^{(k-1)} < -\lambda\}, \\
S_{11} = S_{11}(\bd{\beta}^{(k-1)}) &= \{1\leq i\leq s_{1}: |Y_{i}-\bd{X}_{i}^{T}\bd{\beta}^{(k-1)}| \leq \lambda\}, \\
S_{12} = S_{11}(\bd{\beta}^{(k-1)}) &= \{s_{1}+1\leq i\leq s: |Y_{i}-\bd{X}_{i}^{T}\bd{\beta}^{(k-1)}| \leq \lambda\}.
\end{align*}

Denote an event
\begin{align*}
\mathcal{A}_{k-1}
=
\{
S_{11}(\bd{\beta}^{(k-1)})=\emptyset,
S_{12}(\bd{\beta}^{(k-1)})=S_{12}^{\star},
S_{1}(\bd{\beta}^{(k-1)})=S_{10}^{\star}\cup S_{12}^{\star};
S_{2}(\bd{\beta}^{(k-1)})=S_{21}^{\star};
S_{3}(\bd{\beta}^{(k-1)})=S_{31}^{\star}
\}.
\end{align*}

By Lemma \ref{lem3.1},
$P(\mathcal{A}_{0})\rightarrow 1$.

Thus, wpg1, we have
$$
\bd{\beta}^{(1)}
=
T_{0}^{-1}T_{1}
+
T_{0}^{-1}T_{2}
+
T_{0}^{-1}T_{3}
+
T_{0}^{-1}T_{4}(\bd{\beta}^{(0)})
+
T_{0}^{-1}T_{5},
$$
where
\begin{align*}
T_{0} &= \frac{1}{n}\mathbb{S}, \\
T_{1} & =  \frac{1}{n}\mathbb{S}_{S_{12}^{\star}}^{\mu},\\
T_{2} &= \frac{1}{n}\mathbb{S}_{s_{1}+1,n}\bd{\beta}^{\star}, \\
T_{3} &= \frac{1}{n}\mathbb{S}_{s_{1}+1,n}^{\epsilon}, \\
T_{4}(\bd{\beta}^{(0)}) &= \frac{1}{n}\mathbb{S}_{1,s_{1}}\bd{\beta}^{(0)}, \\
T_{5} &= \frac{1}{n}(\mathbb{S}_{S_{21}^{\star}}-\mathbb{S}_{S_{31}^{\star}})\lambda.
\end{align*}

We will show that, wpg1,
\begin{align*}
& \normE{T_{0}^{-1}T_{1}} \leq C_{2}/4, \\
& \normE{T_{0}^{-1}T_{2}} \leq 2\sqrt{d}C_{1}, \\
& \normE{T_{0}^{-1}T_{3}} \leq C_{2}/4, \\
& \normE{T_{0}^{-1}T_{4}(\bd{\beta}^{(0)})} \leq C_{2}/4, \\
& \normE{T_{0}^{-1}T_{5}} \leq C_{2}/4.
\end{align*}

Thus, we have, wpg1,
\begin{equation*}
\normE{\bd{\beta}^{(1)}}
\leq
\sum_{i=1}^{5}
\normE{T_{0}^{-1}T_{i}}
\leq
2\sqrt{d}C_{1} + C_{2}.
\end{equation*}

\OnItem{On $T_{0}^{-1}T_{1}$.}
We have, for $s_{2}\gamma_{n}/n=o(1)$, wpg1,
\begin{align*}
\normE{T_{0}^{-1}T_{1}}
& =
\normE{
(\frac{1}{n}\mathbb{S})^{-1}
\frac{1}{n}\mathbb{S}_{S_{12}^{\star}}^{\mu}} \\
& \leq
\normF{
(\frac{1}{n}\mathbb{S})^{-1}}
\normE{
\frac{1}{n}\mathbb{S}_{S_{12}^{\star}}^{\mu}
} \\
& \leq
\normF{
(\frac{1}{n}\mathbb{S})^{-1}}
\normE{
\frac{1}{n}\sum_{i=s_{1}+1}^{s}\bd{X}_{i}\mu_{i}
} \\
& =
\normF{
(\frac{1}{n}\mathbb{S})^{-1}}
\frac{s_{2}}{n}\normE{
\frac{1}{s_{2}}
\sum_{i=s_{1}+1}^{s}\bd{X}_{i}\mu_{i}
} \\
& \leq
\normF{
(\frac{1}{n}\mathbb{S})^{-1}}
\frac{s_{2}}{n}
\gamma_{n}
\frac{1}{s_{2}}
\sum_{i=s_{1}+1}^{s}
\normE{
\bd{X}_{i}
} \\
(wpg1) & \leq
2\normF{
\bd{\Sigma}_{X}^{-1}}
\frac{s_{2}}{n}
\gamma_{n}
2
\E
\normE{
\bd{X}_{0}
} \\
& =
4\normF{
\bd{\Sigma}_{X}^{-1}}
\E
\normE{
\bd{X}_{0}
}
\frac{s_{2}}{n}
\gamma_{n}
\rightarrow 0.
\end{align*}

Thus, we have, wpg1,
$\normE{T_{0}^{-1}T_{1}}
\leq C_{2}/4$.

\OnItem{On $T_{0}^{-1}T_{2}$.}
We have, wpg1,
\begin{align*}
\normE{T_{0}^{-1}T_{2}}
& =
\normE{
(\frac{1}{n}\mathbb{S})^{-1}
\frac{1}{n}\mathbb{S}_{s_{1}+1,n}\bd{\beta}^{\star}
} \\
& \leq
\normF{
(\frac{1}{n}\mathbb{S})^{-1}
\frac{1}{n}\mathbb{S}_{s_{1}+1,n}}
\normE{
\bd{\beta}^{\star}
} \\
& \leq
\normF{
(\frac{1}{n}\mathbb{S})^{-1}
\frac{1}{n}\mathbb{S}_{s_{1}+1,n}}
C_{1} \\
(wpg1) & \leq
2 \normF{
\bd{\Sigma}_{X}^{-1}
\bd{\Sigma}_{X}}
C_{1} \\
& =
2 \normF{
\bd{I}_{d}}
C_{1} \\
& =
2 \sqrt{d}
C_{1}.
\end{align*}

\OnItem{On $T_{0}^{-1}T_{3}$.}
We have, wpg1,
\begin{align*}
\normE{
T_{0}^{-1}T_{3}
}
& =
\normE{
(\frac{1}{n}\mathbb{S})^{-1}\frac{1}{n}\mathbb{S}_{s_{1}+1,n}^{\epsilon}
} \\
& =
\normF{
(\frac{1}{n}\mathbb{S})^{-1}}
\normE{\frac{1}{n}\mathbb{S}_{s_{1}+1,n}^{\epsilon}
} \\
(wpg1) & \leq
2\normF{
\bd{\Sigma}_{X}^{-1}}
\normE{\frac{1}{n}\mathbb{S}_{s_{1}+1,n}^{\epsilon}
}
\ConvProb 0.
\end{align*}

Thus, wpg1,
$\normE{T_{0}^{-1}T_{3}} \leq C_{2}/4$.

\OnItem{On $T_{0}^{-1}T_{4}(\bd{\beta}^{(0)})$.}
We have,
\begin{align*}
\normE{T_{0}^{-1}T_{4}(\bd{\beta}^{(0)})}
& =
\normE{
(\frac{1}{n}\mathbb{S})^{-1}
\frac{1}{n}\mathbb{S}_{1,s_{1}}\bd{\beta}^{(0)}
} \\
& \leq
\normF{
(\frac{1}{n}\mathbb{S})^{-1}}
\normE{
\frac{1}{n}\mathbb{S}_{1,s_{1}}\bd{\beta}^{(0)}
} \\
& \leq
\normF{
(\frac{1}{n}\mathbb{S})^{-1}}
\normE{
\frac{1}{n}\mathbb{S}_{1,s_{1}}}
\normE{\bd{\beta}^{(0)}
} \\
(wpg1) & \leq
2\normF{
\bd{\Sigma}_{X}^{-1}}
\normE{
\frac{1}{n}\mathbb{S}_{1,s_{1}}}
C_{2}
\ConvProb 0.
\end{align*}

Thus, wpg1,
$\normE{T_{0}^{-1}T_{4}(\bd{\beta}^{(0)})} \leq C_{2}/4$.

\OnItem{On $T_{0}^{-1}T_{5}$.}
For $s_{1}\lambda/n = O(1)$, wpg1,
\begin{align*}
\normE{
T_{0}^{-1}T_{5}
}
& =
\normE{
(\frac{1}{n}\mathbb{S})^{-1}
\frac{1}{n}(\mathcal{S}_{S_{21}^{\star}}-\mathcal{S}_{S_{31}^{\star}})\lambda
} \\
& \leq
\normF{
(\frac{1}{n}\mathbb{S})^{-1}}
\frac{s_{1}\lambda}{n}
(\normE{
\frac{1}{s_{1}}\mathcal{S}_{S_{21}^{\star}}
-
\frac{1}{s_{1}}\mathcal{S}_{S_{31}^{\star}}
}) \\
& \leq
\normF{
(\frac{1}{n}\mathbb{S})^{-1}}
\frac{s_{1}\lambda}{n}
(\normE{
\frac{1}{s_{1}}\mathcal{S}_{S_{21}^{\star}}}
+\normE{
\frac{1}{s_{1}}\mathcal{S}_{S_{31}^{\star}}
}) \\
(wpg1) & \leq
2\normF{
\bd{\Sigma}_{X}^{-1}}
\frac{s_{1}\lambda}{n}
(\normE{
\frac{1}{s_{1}}\mathcal{S}_{S_{21}^{\star}}}
+\normE{
\frac{1}{s_{1}}\mathcal{S}_{S_{31}^{\star}}
})
\ConvProb 0.
\end{align*}
Thus, wpg1,
$\normE{T_{0}^{-1}T_{5}} \leq C_{2}/4$.

\lineProof

Next, consider
$\normE{\bd{\beta}_{2} - \bd{\beta}_{1}}$.
Since $\bd{\beta}^{(1)}$ is bounded wpg1,
by Lemma \ref{lem3.1},
$\mathcal{A}_{1}$ occurs wpg1.

Then,
$$
\bd{\beta}^{(2)}
=
T_{0}^{-1}T_{1}
+
T_{0}^{-1}T_{2}
+
T_{0}^{-1}T_{3}
+
T_{0}^{-1}T_{4}(\bd{\beta}^{(1)})
+
T_{0}^{-1}T_{5},
$$
where
\begin{align*}
T_{4}(\bd{\beta}^{(1)}) &= \frac{1}{n}\mathbb{S}_{1,s_{1}}\bd{\beta}^{(1)}.
\end{align*}

Thus, wpg1,
$$\bd{\beta}^{(2)} - \bd{\beta}^{(1)}
=
\mathbb{S}^{-1}\mathbb{S}_{1,s_{1}}
(\bd{\beta}^{(1)}-\bd{\beta}^{(0)}).$$
Thus, for $s_{1}=o(n)$,
$$
\normE{\bd{\beta}^{(2)}  - \bd{\beta}^{(1)}}
\leq
\normF{\mathbb{S}^{-1}\mathbb{S}_{1,s_{1}}}
\normE{\bd{\beta}^{(1)} -\bd{\beta}^{(0)} }
\leq
(2\sqrt{d}s_{1}/n)
(4\sqrt{d}C_{1} + 2C_{2})
\rightarrow 0.
$$
Thus, wpg1,
$\bd{\beta}^{(2)} = \bd{\beta}^{(1)}$,
which means that, wpg1, the iteration algorithm converges at the first iteration.

\lineProof

For any $K\geq 1$,
Repeat the above arguments,
with at least probability
$p_{n,K} = P(\bigcap_{k=0}^{K}\mathcal{A}_{k})$,
we have
\begin{align*}
\normE{\bd{\beta}^{(K+1)}  - \bd{\beta}^{(K)}}
\leq
(2\sqrt{d}s_{1}/n)^{K}
(4\sqrt{d}C_{1} + 2C_{2})
=O((s_{1}/n)^{K})
\rightarrow 0,
\end{align*}
and
$\normE{\bd{\beta}^{(K+1)} - \bd{\beta}^{(K)}}\leq O((s_{1}/n)^{K})$.
\end{proof}
\end{proofLong}

\begin{lem}
\label{lem2.3}
A necessary and sufficient condition for
$(\hat{\bd{\mu}}, \hat{\bd{\beta}})$
to be a minimizer of $L(\bd{\mu},\bd{\beta})$ is that
it is a solution to equations
(\ref{eq2.4}) and (\ref{eq2.5}).
\end{lem}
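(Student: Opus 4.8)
\emph{Proof plan.} The plan is to deduce this directly from Lemma~\ref{lem2.1}, since equations~(\ref{eq2.4})--(\ref{eq2.5}) are nothing more than a repackaging of the subdifferential conditions recorded there. First note that~(\ref{eq2.4}) is literally the first condition in Lemma~\ref{lem2.1}. Hence it suffices to show that, for a pair $(\hat{\bd{\mu}},\hat{\bd{\beta}})$ already satisfying~(\ref{eq2.4}), the two remaining conditions of Lemma~\ref{lem2.1} -- the stationarity identity $Y_i-\hat\mu_i-\bd{X}_i^{T}\hat{\bd{\beta}}=\lambda\,\sign(\hat\mu_i)$ on the indices with $\hat\mu_i\neq 0$, together with the inequality $|Y_i-\bd{X}_i^{T}\hat{\bd{\beta}}|\leq\lambda$ on the indices with $\hat\mu_i=0$ -- are jointly equivalent to the componentwise soft-thresholding identity~(\ref{eq2.5}).

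This reduces to a scalar statement. For fixed $i$, set $z_i=Y_i-\bd{X}_i^{T}\hat{\bd{\beta}}$, so that the $i$-th component of~(\ref{eq2.5}) reads $\hat\mu_i=(|z_i|-\lambda)_+\sign(z_i)$, abbreviated $S_\lambda(z_i)$. I would verify that $\hat\mu_i=S_\lambda(z_i)$ holds if and only if either ($\hat\mu_i\neq 0$ and $z_i-\hat\mu_i=\lambda\,\sign(\hat\mu_i)$) or ($\hat\mu_i=0$ and $|z_i|\leq\lambda$), via a short case split on the sign of $z_i$: if $|z_i|\leq\lambda$ then $S_\lambda(z_i)=0$, which is the second alternative; if $z_i>\lambda$ then $S_\lambda(z_i)=z_i-\lambda>0$, so $\hat\mu_i>0$, $\sign(\hat\mu_i)=\sign(z_i)$, and $z_i-\hat\mu_i=\lambda=\lambda\,\sign(\hat\mu_i)$; the case $z_i<-\lambda$ is symmetric. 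Reading the same computations backwards shows each alternative forces $\hat\mu_i=S_\lambda(z_i)$. Assembling this equivalence over $i=1,\ldots,n$ and combining with~(\ref{eq2.4}) reproduces exactly the characterization of Lemma~\ref{lem2.1}, which gives the claim.

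A self-contained route avoiding the citation is also available for the soft penalty: since then $L(\bd{\mu},\bd{\beta})$ is jointly convex and its nonsmooth part is separable and confined to $\bd{\mu}$, a standard block-coordinate argument shows that a pair is a global minimizer of $L$ if and only if it minimizes $L$ in $\bd{\beta}$ with $\bd{\mu}$ fixed -- giving the normal equations~(\ref{eq2.4}) -- and minimizes $L$ in $\bd{\mu}$ with $\bd{\beta}$ fixed, which decouples across coordinates into the scalar soft-thresholding problems solved by~(\ref{eq2.5}). Either way, no step is genuinely difficult: the only thing to be careful about is the bookkeeping of signs and the boundary case $|z_i|=\lambda$ in the soft-threshold operator, and, as already remarked in the text, for the hard or SCAD penalties the same fixed-point system~(\ref{eq2.4})--(\ref{eq2.5}) characterizes local rather than global minimizers.
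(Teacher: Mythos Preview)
Your proposal is correct and follows essentially the same approach as the paper: both reduce the claim to Lemma~\ref{lem2.1} and verify, by a three-case split on the sign and magnitude of $z_i=Y_i-\bd{X}_i^{T}\hat{\bd{\beta}}$ relative to $\lambda$, that the soft-thresholding identity~(\ref{eq2.5}) is equivalent to the second and third conditions there. Your alternative block-coordinate route via joint convexity is a valid extra observation not present in the paper.
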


\begin{proofName}
\subsection{Proof of Lemma \ref{lem2.3}}
\end{proofName}

\begin{proof}
[Proof of Lemma \ref{lem2.3}]
First, we show
a solution of
(\ref{eq2.4}) and (\ref{eq2.5})
satisfies the necessary and sufficient condition in
Lemma \ref{lem2.1}.
Denote a solution of (\ref{eq2.4}) and (\ref{eq2.5})
as $(\hat{\bd{\mu}}, \hat{\bd{\beta}})$.
Then
$
    \hat{\bd{\beta}}
    =
    (\bd{X}^{T}\bd{X})^{-1}\bd{X}^{T}(\bd{Y}-\hat{\bd{\mu}})
$,
which is exactly the first condition in Lemma \ref{lem2.1},
and, for each $i=1,2,\cdots, n$,
$(\hat{\bd{\mu}}, \hat{\bd{\beta}})$ satisfies
one of three cases:
$|Y_{i}- \bd{X}_{i}^{T}\hat{\bd{\beta}}| \leq \lambda$
and $\hat{\mu}_{i} = 0$;
$Y_{i}-\bd{X}_{i}^{T}\hat{\bd{\beta}} > \lambda$
and $\hat{\mu}_{i} =Y_{i} - \bd{X}_{i}^{T}\hat{\bd{\beta}} - \lambda$;
$Y_{i}-\bd{X}_{i}^{T}\hat{\bd{\beta}} < -\lambda$
and $\hat{\mu}_{i} =Y_{i} - \bd{X}_{i}^{T}\hat{\bd{\beta}} + \lambda$.
If $(\hat{\bd{\mu}}, \hat{\bd{\beta}})$ satisfies the first case,
it satisfies the third condition in Lemma \ref{lem2.1}.
If $(\hat{\bd{\mu}}, \hat{\bd{\beta}})$ satisfies the second case,
then $\hat{\mu}_{i}>0$
and
$Y_{i} - \hat{\mu}_{i} - \bd{X}_{i}^{T}\hat{\bd{\beta}} = \lambda = \lambda \sign(\hat{\mu}_{i})$,
which means that the second case satisfies  the second condition in Lemma \ref{lem2.1}.
Similarly, the third case also satisfies  the second condition in Lemma \ref{lem2.1}.
Thus $(\hat{\bd{\mu}}, \hat{\bd{\beta}})$ satisfies
the necessary and sufficient condition in
Lemma \ref{lem2.1}.

In the other direction,
suppose $(\hat{\bd{\mu}}, \hat{\bd{\beta}})$ satisfies
the necessary and sufficient condition in
Lemma \ref{lem2.1}.
Then, the first condition in Lemma \ref{lem2.1}
exactly $(\ref{eq2.4})$.
For each $i$,
$(\hat{\bd{\mu}}, \hat{\bd{\beta}})$ satisfies
one of three cases:
$\hat{\mu}_{i} = 0$ and
$|Y_{i}- \bd{X}_{i}^{T}\hat{\bd{\beta}}| \leq \lambda$;
$\hat{\mu}_{i} > 0$
and
$Y_{i} - \hat{\mu}_{i} -\bd{X}_{i}^{T}\hat{\bd{\beta}} = \lambda$;
$\hat{\mu}_{i} < 0$
and
$Y_{i} - \hat{\mu}_{i} -\bd{X}_{i}^{T}\hat{\bd{\beta}} = -\lambda$.
If $(\hat{\bd{\mu}}, \hat{\bd{\beta}})$ satisfies the first case,
it satisfies the first case in (\ref{eq2.5}).
If $(\hat{\bd{\mu}}, \hat{\bd{\beta}})$ satisfies the second case,
then $\hat{\mu}_{i} =Y_{i} - \bd{X}_{i}^{T}\hat{\bd{\beta}} - \lambda$ and
$Y_{i}-\bd{X}_{i}^{T}\hat{\bd{\beta}} > \lambda$,
which means that
$(\hat{\bd{\mu}}, \hat{\bd{\beta}})$ satisfies the second case of
(\ref{eq2.5}).
Similarly,
If $(\hat{\bd{\mu}}, \hat{\bd{\beta}})$ satisfies the third case,
then it satisfies the third case of
(\ref{eq2.5}).
Thus, $(\hat{\bd{\mu}}, \hat{\bd{\beta}})$ satisfies
(\ref{eq2.4})
and
(\ref{eq2.5}).
\end{proof}

\section{Supplement for Section \ref{sec3}}
\label{PC.Paper.Supplement.Asymptotic.Properties}
In this supplement,
we provide the proofs of the theoretical results
in Section \ref{sec3}.
Before that, we point out that
those two different sufficient conditions in Theorem
\ref{thm3.2}
come from the different analysis on the term $\mathbb{S}_{S_{12}^{\star}}^{\mu}$.
Each of the two different sufficient conditions
does not imply the other.
Specifically,
on one hand,
suppose the absolute values of $\mu_{i}^{\star}$'s are all equal
for $i=s_{1}+1, s_{2}+2, \cdots, s$.
Then,
$
\normE{\bd{\mu}_{2}^{\star}}^{2+\delta}
=s_{2}^{(2+\delta)/2}|\mu_{s}^{\star}|^{2+\delta}
$
and
$
\sum_{i=s_{1}+1}^{s}|\mu^{\star}_{i}|^{2+\delta}
=s_{2}|\mu_{s}^{\star}|^{2+\delta}
$.
Thus Assumption (C) holds automatically
since $s_{2}\rightarrow \infty$.
This means that
Assumption (C) holds
at least when the absolute magnitudes of $\mu_{i}^{\star}$'s are similar to each other.
For this case,
there still exists a consistent estimator
even if $n/(\kappa_{n}\gamma_{n}) \ll  s_{2} \ll  n$.
On the other hand,
suppose $\mu_{s}^{\star}=\gamma_{n}$
and the other $\mu_{i}^{\star}$'s are all equal to a constant $c>0$.
Then,
$
\normE{\bd{\mu}_{2}^{\star}}^{2+\delta}
=[\gamma_{n}^{2} + (s_{2}-1)c^{2}]^{(2+\delta)/2}
$
and
$
\sum_{i=s_{1}+1}^{s}|\mu^{\star}_{i}|^{2+\delta}
=\gamma_{n}^{2+\delta} + (s_{2}-1)c^{2+\delta}
$.
If $s_{2} \ll \gamma_{n}^{2} \ll n/(\kappa_{n}\gamma_{n})$,
the previous two terms are both
asymptotically equivalent to $\gamma_{n}^{2+\delta}$.
Thus Assumption (C) fails
but the other sufficient condition holds.

\begin{proofName}
\subsection{Proof of Lemma
\ref{lem3.1}}
\end{proofName}
\begin{proof}[Proof of Lemma \ref{lem3.1}]
The proof is the similar to that of Lemma \ref{PC.Paper.d.infty.index.sets.consistency} and omitted.
\end{proof}

\begin{proofName}
\subsection{Proof of Theorems
\ref{thm3.2}}
\end{proofName}
\begin{proof}[Proof of Theorems
\ref{thm3.2}]
By Lemma \ref{lem3.1},
wpg1,
the solution $\hat{\bd{\beta}}_{n}$ to $\varphi_{n}(\bd{\beta})=0$ on $\mathcal{B}_{C}(\bd{\beta}^{\star})$ is explicitly given by
\begin{equation*}
\hat{\bd{\beta}}_{n}
= \bd{\beta}^{\star} + T_{0}^{-1}(T_{1} + T_{2} + T_{3}-T_{4}),
\end{equation*}
where
$T_{0}=(1/n)\mathbb{S}_{s_{1}+1, n}$,
$T_{1}=(1/n)\mathbb{S}_{S_{12}^{\star}}^{\mu}$,
$T_{2}=(1/n)\mathbb{S}_{s_{1}+1, n}^{\epsilon}$,
$T_{3}=(\lambda/n)\mathcal{S}_{S_{21}^{\star}}$
and
$T_{4}=(\lambda/n)\mathcal{S}_{S_{31}^{\star}}$.
We will show that
$T_{0} \ConvProb \bd{\bd{\Sigma}}_{X}^{-1}>0$ with the Frobenius norm
and
$T_{i} \ConvProb 0$ with the Euclidean norm for $i=1,2,3,4$.
Thus,
by Slutsky's lemma (see, for example, Lemma 2.8 on page 11 of \cite{Vaart1998}),
$\hat{\bd{\beta}}_{n}$
is a consistent estimator of $\bd{\beta}^{\star}$.

\lineProof

\OnItem{On $T_{0}^{-1}$.}
By law of large number,
$T_{0} \ConvProb \bd{\bd{\Sigma}}_{X}>0$.
Then, by continuous mapping theorem,
$T_{0}^{-1} \ConvProb \bd{\bd{\Sigma}}_{X}^{-1}>0$.

\lineProof

\OnItem{On $T_{1}$: Approach One.}
Suppose $s_{2}=o(n/(\kappa_{n}\gamma_{n}))$.
Then,
\begin{align*}
\normE{T_{1}}
& \leq
\frac{1}{n}\sum_{i=s_{1}+1}^{s}\normE{\bd{X}_{i}\mu_{i}^{\star}}
=
\frac{1}{n}\sum_{i=s_{1}+1}^{s}\normE{\bd{X}_{i}}\cdot|\mu_{i}^{\star}|
\leq
s_{2}\kappa_{n}\gamma_{n}/n
= o(1).
\end{align*}

\lineProof

\OnItem{On $T_{1}$: Approach Two.}
Under Assumption (C), it follows
$
(\bd{\bd{\Sigma}}_{X}\sum_{i=s_{1}+1}^{s}\mu^{\star 2}_{i})^{-1/2}
\mathbb{S}_{S_{12}^{\star}}^{\mu}
\ConvDist N(0,I_{d})
$.
In fact, Assumption (C) implies the Lyapunov condition for sequence of random vectors
(see, e.g. Proposition 2.27 on page 332 of \citep{Vaart1998}).
More specifically,
recall the Lyapunov condition is that there exists some constant $\delta>0$
such that
$$
\sum_{i=s_{1}+1}^{s}
\E
\normE{
(\bd{\bd{\Sigma}}_{X}\sum_{j=s_{1}+1}^{s}\mu^{\star 2}_{j})^{-1/2}
\bd{X}_{i}\mu_{i}^{\star}
}^{2+\delta}
\rightarrow 0.
$$
Then, by Assumption (C),
\begin{align*}
\sum_{i=s_{1}+1}^{s}
\E
\normE{
(\bd{\bd{\Sigma}}_{X}\sum_{j=s_{1}+1}^{s}\mu^{\star 2}_{j})^{-\frac{1}{2}}
\bd{X}_{i}\mu_{i}^{\star}
}^{2+\delta}
\leq
(\sum_{j=s_{1}+1}^{s}\mu^{\star 2}_{j})^{-\frac{2+\delta}{2}}
\sum_{i=s_{1}+1}^{s}
|\mu_{i}^{\star}|^{2+\delta}
\lambda_{\min}^{-\frac{2+\delta}{2}}
\E
\normE{X_{0}}^{2+\delta}
\longrightarrow  0,
\end{align*}
where $\lambda_{\min}>0$ is the minimum eigenvalue of $\bd{\bd{\Sigma}}_{X}$.
Then,
\begin{align*}
\normE{T_{1}}
& =
\normE{\frac{1}{n}\mathbb{S}_{S_{12}^{\star}}^{\mu}}
 \leq
\frac{1}{n}
\normF{(\bd{\bd{\Sigma}}_{X}\sum_{i=s_{1}+1}^{s}\mu^{\star 2}_{i})^{1/2}}
\normE{(\bd{\bd{\Sigma}}_{X}\sum_{i=s_{1}+1}^{s}\mu^{\star 2}_{i})^{-1/2}
\mathbb{S}_{S_{12}^{\star}}^{\mu}}\\
& =
\frac{1}{n}
(\sum_{i=s_{1}+1}^{s}\mu^{\star 2}_{i})^{1/2}
\normF{\bd{\bd{\Sigma}}_{X}^{1/2}}
O_{P}(1)
 \leq
\frac{1}{n}
(s_{2}\gamma_{n}^{2})^{1/2}
O_{P}(1)
\leq
\frac{1}{\sqrt{n}}\gamma_{n}
O_{P}(1)
= o_{P}(1).
\end{align*}

\lineProof

\OnItem{On $T_{2}$.}
By law of large number, $T_{2}=o_{P}(1)$.
\lineProof

\OnItem{On $T_{3}$ and $T_{4}$.}
By noting $\lambda \ll  \sqrt{n}$,
\begin{align*}
\normE{T_{3}}
=
\normE{\lambda \frac{1}{n} \mathcal{S}_{S_{21}^{\star}}}
=
\lambda \frac{\sqrt{s_{1}}}{n} \normE{\frac{1}{\sqrt{s_{1}}} \mathcal{S}_{S_{21}^{\star}}}
\leq
\frac{\lambda}{\sqrt{n}} O_{P}(1)
= o_{P}(1).
\end{align*}

\lineProof
Thus $T_{3}=o_{P}(1)$.
In the same way, we can show that $T_{4}=o_{P}(1)$ holds.
%
%
%
\end{proof}

In Theorem,
one condition is $D_{n}/n=o(1)$.
It turns out we can consider other conditions on $D_{n}$
and derive more possible asymptotic distributions for $\hat{\bd{\beta}}_{n}$.
\begin{thm}[\revTang{Asymptotic Distributions on $\hat{\bd{\beta}}_{n}$}: more cases]
\label{thm3.6}
Under Assumptions (A), (B) and (C),
for all constants
$b, c\in\mathbb{R}^{+}$,
\begin{enumerate}\itemsep -0.05 in
\item [(1)]
when $s_{1} \ll  n/\lambda^{2}$ and $D_{n}^{2}/n=o(1)$,
$
\sqrt{n}(\hat{\bd{\beta}}_{n}-\bd{\beta}^{\star})
\ConvDist
N(0, \sigma^{2}\bd{\bd{\Sigma}}_{X}^{-1})
$; {\bf [main case]}

\item [(2)] when $s_{1} \ll  n/\lambda^{2}$ and $D_{n}^{2}/n \sim c$,
$
\sqrt{n}(\hat{\bd{\beta}}_{n}-\bd{\beta}^{\star})
\ConvDist
N(0, (c+\sigma^{2})\bd{\bd{\Sigma}}_{X}^{-1});
$

\item [(3)] when $s_{1} \ll  n/\lambda^{2}$ and $D_{n}^{2}/n \rightarrow \infty$,
$
r_{n}(\hat{\bd{\beta}}_{n}-\bd{\beta}^{\star})
\ConvDist
N(0, \bd{\bd{\Sigma}}_{X}^{-1}),
$
where $r_{n} \sim n/D_{n}\ll \sqrt{n}$;

\item [(4)] when $s_{1}\sim bn/\lambda^{2}$ and $D_{n}^{2}/n=o(1)$,
$\sqrt{n}(\hat{\bd{\beta}}_{n}-\bd{\beta}^{\star})
\ConvDist
N(0, (b+\sigma^{2})\bd{\bd{\Sigma}}_{X}^{-1});
$

\item [(5)] when $s_{1}\sim bn/\lambda^{2}$ and $D_{n}^{2}/n \sim c$,
$
\sqrt{n}(\hat{\bd{\beta}}_{n}-\bd{\beta}^{\star})
\ConvDist
N(0, (b + c +\sigma^{2})\bd{\bd{\Sigma}}_{X}^{-1});
$

\item [(6)] when $s_{1}\sim bn/\lambda^{2}$ and $D_{n}^{2}/n \rightarrow \infty$,
$
r_{n}(\hat{\bd{\beta}}_{n}-\bd{\beta}^{\star})
\ConvDist
N(0, \bd{\bd{\Sigma}}_{X}^{-1}),
$
where $r_{n} \sim n/D_{n}\ll \sqrt{n}$;

\item [(7)] when $s_{1} \gg  n/\lambda^{2}$ and $D_{n}^{2}/n=o(1)$
or $D_{n}^{2}/n\sim c$,
$
r_{n}(\hat{\bd{\beta}}_{n}-\bd{\beta}^{\star})
\ConvDist
N(0, \bd{\bd{\Sigma}}_{X}^{-1}),
$
where $r_{n}\sim n/(\lambda\sqrt{s_{1}})\ll \sqrt{n}$;


\item [(8)] when $s_{1} \gg  n/\lambda^{2}$ and $D_{n}^{2}/n \rightarrow \infty$,
letting $r_{n}\sim \min\{\sqrt{b}n/(\lambda\sqrt{s_{1}}),n/D_{n}\}\ll \sqrt{n}$,
\begin{enumerate}
\item [(8a)]
  if $\sqrt{b}n/(\lambda\sqrt{s_{1}}) \gg n/D_{n}$,
then
$
r_{n}(\hat{\bd{\beta}}_{n}-\bd{\beta}^{\star})
\ConvDist
N(0,\bd{\bd{\Sigma}}_{X}^{-1});
$

\item [(8b)] if $\sqrt{b}n/(\lambda\sqrt{s_{1}})\sim n/D_{n}$,
then
$
r_{n}(\hat{\bd{\beta}}_{n}-\bd{\beta}^{\star})
\ConvDist
N(0,(1+b)\bd{\bd{\Sigma}}_{X}^{-1});
$

\item [(8c)] if $\sqrt{b}n/(\lambda\sqrt{s_{1}}) \ll n/D_{n}$,
then
$
r_{n}(\hat{\bd{\beta}}_{n}-\bd{\beta}^{\star})
\ConvDist
N(0,b\bd{\bd{\Sigma}}_{X}^{-1}).
$
\end{enumerate}
\end{enumerate}
\end{thm}

  Theorem \ref{thm3.4}
  groups the results according to the asymptotic magnitude of $s_{1}$ given an upper bound
  of the diverging speed of $s_{2}$.
  Alternatively,
  Theorem \ref{thm3.6}
  groups the results according to the asymptotic magnitudes of $s_{1}$ and $D_{n}^{2}$.
  Since both $s_{1}$ and $D_{n}^{2}$ have three cases,
  Theorem \ref{thm3.6} basically contains nine cases.
  For the last case,
  there are further three cases on the relationship
  between $\sqrt{b}n/(\lambda\sqrt{s_{1}})$ and $n/D_{n}$.
  As in Theorem \ref{thm3.4},
  the first case of Theorem \ref{thm3.6} is denoted as the \emph{main case}
  since for this case the incidental parameters are sparse in the sense that
  the size and magnitude of
  the nonzero incidental parameters
  $\bd{\mu}_{1}^{\star}$ and $\bd{\mu}_{2}^{\star}$ are well controlled.
  Note that
  $s_{2}=o(\sqrt{n}/(\kappa_{n}\gamma_{n}))$
  implies
  $D_{n}^{2}/n=o(1)$.
  which means that, under Assumption (C),
  the cases \emph{(1)}, \emph{(4)} and \emph{(7)} of Theorem \ref{thm3.6}
  actually imply
  the three results of Theorem \ref{thm3.4}.
  As in Theorem \ref{thm3.4},
  the convergence rate of $\hat{\bd{\beta}}_{n}$ becomes less than $\sqrt{n}$
  when $s_{1} \gg  n/\lambda^{2}$ or $D_{n}^{2}/n \rightarrow \infty$,
  that is, when the size and magnitude of the nonzero incidental parameters are large;
  the boundary phenomenon also appears.

\begin{proofName}
\subsection{Proof of Theorems
\ref{thm3.4}
and \ref{thm3.6}}
\end{proofName}
\begin{proof}[Proof of Theorems
\ref{thm3.4}
and \ref{thm3.6}]
It is sufficient to provide the proof for the case
where the sizes of index sets
$S_{21}^{\star}=\{1\leq i\leq s_{1}: \mu_{i}^{\star} > 0\}$
and
$S_{31}^{\star}=\{1\leq i\leq s_{1}: \mu_{i}^{\star} < 0\}$
are both asymptotically $s_{1}/2$
and $b=2$.

From the proof of Theorems
\ref{thm3.2},
wpg1,
\begin{equation*}
\hat{\bd{\beta}}
=
\bd{\beta}^{\star}
+
\mathbb{S}_{s_{1}+1,n}^{-1}
[\mathbb{S}_{S_{12}^{\star}}^{\mu}
 +
\mathbb{S}_{s_{1}+1,n}^{\epsilon}
+
\lambda
(\mathcal{S}_{S_{21}^{\star}}
-\mathcal{S}_{S_{31}^{\star}})].
\end{equation*}
Let $r_{n}$ be a sequence going to infinity.
Then,
$
r_{n}(\hat{\bd{\beta}}_{n} - \bd{\beta}^{\star})
=T_{0}^{-1}(V_{1}+V_{2}+V_{3}-V_{4})
$,
where
$V_{1} = r_{n}T_{1}$,
$V_{2} = r_{n}T_{2}$,
$V_{3} = r_{n}T_{3}$,
$V_{4} = r_{n}T_{4}$
and
$T_{i}$'s are defined in
the proof of
Theorem \ref{thm3.2}.
Next we derive the asymptotic properties of $T_{0}$ and $V_{i}$'s,
from which the desired results follow by Slutsky's lemma.

\OnItem{On $T_{0}$.}
By the proof of
Theorem \ref{thm3.2},
$T_{0}^{-1}\ConvProb \bd{\bd{\Sigma}}_{X}^{-1}$

\lineProof

\OnItem{On $V_{1}$: Approach One.}
If $r_{n}=\sqrt{n}$
and
$s_{2}=o(\sqrt{n}/(\kappa_{n}\gamma_{n}))$,
then
\begin{align*}
\normE{T_{1}}
=
\normE{r_{n}\frac{1}{n}\mathbb{S}_{S_{12}^{\star}}^{\mu}}
\leq
r_{n}\frac{1}{n}\sum_{i=s_{1}+1}^{s}\normE{\bd{X}_{i}}\cdot|\mu_{i}^{\star}|
\leq
r_{n}\frac{1}{n}s_{2}\kappa_{n}\gamma_{n}
=
\frac{1}{\sqrt{n}}s_{2}\kappa_{n}\gamma_{n}
=
o(1).
\end{align*}
Thus, if $r_{n}=\sqrt{n}$ or $r_{n}\ll \sqrt{n}$
and
$s_{2}=o(\sqrt{n}/(\kappa_{n}\gamma_{n}))$,
then
$T_{1}=o_{P}(1)$.

\lineProof

\OnItem{On $V_{1}$: Approach Two.}
If $r_{n}=\sqrt{n}$,
then
\begin{align*}
T_{1}
& =
r_{n}\frac{1}{n}\mathbb{S}_{S_{12}^{\star}}^{\mu}
=
r_{n}\frac{D_{n}}{n}\frac{1}{D_{n}}\mathbb{S}_{S_{12}^{\star}}^{\mu}
=
\frac{D_{n}}{\sqrt{n}}\frac{1}{D_{n}}\mathbb{S}_{S_{12}^{\star}}^{\mu},
\end{align*}
where
$
D_{n}= \normE{\bd{\mu}_{2}^{\star}} =(\sum_{i=s_{1}+1}^{s}\mu^{\star 2}_{i})^{1/2}.
$
There are three cases on $D_{n}/\sqrt{n}$ or $D_{n}^{2}/n$.
If $D_{n}^{2}/n \rightarrow 0$,
then $T_{1}\ConvProb 0$.
If $D_{n}^{2}/n \rightarrow 1$,
then $T_{1}\ConvDist N(0,\bd{\bd{\Sigma}}_{X})$.
If $D_{n}^{2}/n \rightarrow \infty$,
it means that $r_{n}=\sqrt{n}$ is too fast.
Let $r_{n}\sim n/D_{n} = \sqrt{n}\sqrt{n/D_{n}^{2}}\ll \sqrt{n}$.
Then $T_{1}\ConvDist N(0,\bd{\bd{\Sigma}}_{X})$;

\lineProof

\OnItem{On $V_{2}$.}
If $r_{n}=\sqrt{n}$, then
$T_{2}\ConvDist N(0,\sigma^{2}\bd{\bd{\Sigma}}_{X})$.
Thus,
if $r_{n}\ll \sqrt{n}$,
$T_{2}\ConvProb 0$;
if $r_{n}\gg \sqrt{n}$;
$T_{2}\ConvProb \infty$.

\lineProof

\OnItem{On $V_{3}$ and $V_{4}$.}
First consider $T_{3}$.
Denote $\#(\cdot)$ as the size function.
If $r_{n}=\sqrt{n}$, then
\begin{align*}
T_{3}
& =
\lambda r_{n} \frac{1}{n} \mathcal{S}_{S_{21}^{\star}}
=
\lambda \sqrt{\frac{s_{1}/2}{n}} \frac{1}{\sqrt{\#(S_{21}^{\star})}} \mathcal{S}_{S_{21}^{\star}}.
\end{align*}
Note that $\#(S_{21}^{\star})= s_{1}/2$.
There are three cases on $\lambda \sqrt{s_{1}/(2n)}$.
If $\lambda \sqrt{s_{1}/(2n)}\rightarrow 0$,
then $T_{3}\ConvProb 0$.
Note that $\lambda \sqrt{s_{1}/(2n)}\rightarrow 0$
is equivalent to  $s_{1}=o(2n/\lambda^{2})$.
If $\lambda \sqrt{s_{1}/(2n)}\rightarrow 1$,
then
$T_{3}\ConvDist N(0, \bd{\bd{\Sigma}}_{X})$.
Note that $\lambda \sqrt{s_{1}/(2n)}\rightarrow 1$
is equivalent to  $s_{1}\sim 2n/\lambda^{2}$.
If $\lambda \sqrt{s_{1}/(2n)}\rightarrow \infty$,
it means $r_{n}=\sqrt{n}$ is too large.
Let
$r_{n}\sim n/(\lambda\sqrt{(s_{1}/2)})=\sqrt{n}\sqrt{2n}/(\lambda\sqrt{s_{1}})\ll \sqrt{n}$.
With this rate $r_{n}$,
$T_{3}\ConvDist N(0, \bd{\bd{\Sigma}}_{X})$.
Note that $\lambda \sqrt{s_{1}/2n}\rightarrow \infty$
is equivalent to  $s_{1}\gg O(2n/\lambda^{2})$.
In the same way, $T_{4}$ can be analyzed and
parallel results can be obtained.
%
\end{proof}

\begin{proofName}
\subsection{Proof of Theorem
\ref{thm3.7}}
\end{proofName}
\begin{proof}[Proof of Theorem
\ref{thm3.7}]
The proof is similar to that of
Theorem
\ref{thm5.4}
and omitted.
\end{proof}

\begin{proofLong}
\begin{proof}[Long Proof of Theorem
\ref{thm3.7}]
By the definition of $\mathcal{E}$, we have
$
P(\mathcal{\mathcal{E}})
= T_{1}T_{2}T_{3}
$,
where
$
T_{1} = P(\bigcap_{i=1}^{s_{1}}\{|\mu_{i}^{\star}+\bd{X}_{i}^{T}(\bd{\beta}^{\star}-\hat{\bd{\beta}})+\epsilon_{i}| > \lambda\})
$,
$
T_{2} = P(\bigcap_{i=s_{1}+1}^{s}\{|\mu_{i}^{\star}+\bd{X}_{i}^{T}(\bd{\beta}^{\star}-\hat{\bd{\beta}})+\epsilon_{i}| \leq \lambda\})
$,
and
$
T_{3} = P(\bigcap_{i=s+1}^{n}\{|\bd{X}_{i}^{T}(\bd{\beta}^{\star}-\hat{\bd{\beta}})+\epsilon_{i}| \leq \lambda\}).
$
We will show that each of $T_{1}$, $T_{2}$ and $T_{3}$ converges to one.
Then $P(\mathcal{\mathcal{E}})\rightarrow 1$.

\lineProof

Denote $\mathcal{C}=\{\normE{\hat{\bd{\beta}}-\bd{\beta}^{\star}}\leq 1\}$.
Then $P(\mathcal{C})\rightarrow 1$ since $\hat{\bd{\beta}}$ is a consistent estimator of $\bd{\beta}^{\star}$.

\OnItem{On $T_{1}$.}
Note that $s_{1}/n \rightarrow 0$,
$\mu^{\star}=\min\{|\mu_{i}^{\star}|:1\leq i\leq s_{1}\}$
and
$\alpha\gamma_{n} \leq \lambda$ and $\kappa_{n} \ll  \lambda \ll  \sqrt{n}$.
Then $1-T_{1}\leq T_{11} + T_{12}$, where
\begin{align*}
T_{11} =
P(\bigcup_{i\in S_{21}^{\star}}\{|\mu_{i}^{\star}+\bd{X}_{i}^{T}(\bd{\beta}^{\star}-\hat{\bd{\beta}})+\epsilon_{i}| \leq \lambda\})
\text{ and }
T_{12} =
P(\bigcup_{i\in S_{31}^{\star}}\{|\mu_{i}^{\star}+\bd{X}_{i}^{T}(\bd{\beta}^{\star}-\hat{\bd{\beta}})+\epsilon_{i}| \leq \lambda\}).
\end{align*}
For $T_{11}$, we have
\begin{align*}
T_{11}
& \leq
P(
\bigcup_{i\in S_{21}^{\star}}\{\epsilon_{i} \leq \lambda - \mu^{\star} + \normE{\bd{X}_{i}}\cdot\normE{\hat{\bd{\beta}} - \bd{\beta}^{\star}}\}, \mathcal{C}) + P(\mathcal{C}^{c})
\leq
P(
\bigcup_{i\in S_{21}^{\star}}\{\epsilon_{i} \leq \lambda - \mu^{\star} + \kappa_{n}\}) + P(\mathcal{C}^{c}) \\
& \leq
s_{1}
P\{\epsilon_{0} \leq -\gamma_{n}\} + P(\mathcal{C}^{c})
\longrightarrow 0.
\end{align*}
Similarly, $T_{12}\rightarrow 0$.
Thus
$T_{1}\rightarrow 1$.

\lineProof

\OnItem{On $T_{2}$ and $T_{3}$.}
By noting that
$s_{2}/n\rightarrow 0$,
$|\mu_{i}^{\star}|\leq \gamma_{n}$ for $s_{1}+1\leq i\leq s$
and
$\lambda \geq 3\gamma_{n}$,
we have
\begin{align*}
T_{2}
\geq
P(
\bigcap_{i=s_{1}}^{s}\{-\lambda - \mu_{i}^{\star} + \kappa_{n} \leq \epsilon_{i} \leq \lambda - \mu_{i}^{\star} - \kappa_{n} \}, \mathcal{C})
\geq
P(
\bigcap_{i=s_{1}}^{s}\{-\gamma_{n} \leq \epsilon_{i} \leq \gamma_{n} \}, \mathcal{C})
\rightarrow 1.
\end{align*}
Then
$T_{2}\rightarrow 1$.
Similarly, $T_{3} \rightarrow 1$.
This completes the proof.
\end{proof}
\end{proofLong}

\subsection{Supplement for Subsection \ref{subsec3.1}}
The following Theorem implies Theorem \ref{thm3.8} since it contains more details.
\begin{thm}[Consistency and Asymptotic Normality on $\tilde{\bd{\beta}}$]
\label{thm3.8.old}
Suppose Assumptions (A) and (B) hold.
If either $s_{2}=o(n/(\kappa_{n}\gamma_{n}))$
or Assumption (C) holds,
then $\tilde{\bd{\beta}} \ConvProb \bd{\beta}^{\star}$.
If $s_{2}=o(\sqrt{n}/(\kappa_{n}\gamma_{n}))$,
then
$
\sqrt{n}(\tilde{\bd{\beta}} - \bd{\beta}^{\star} )
\ConvDist
N(0, \sigma^{2}\bd{\bd{\Sigma}}_{X}^{-1}).
$
On the other hand, under  Assumption (C),
\begin{enumerate}
\item [(1)] if $D_{n}^{2}/n=o(1)$,
then $
\sqrt{n}(\tilde{\bd{\beta}}-\bd{\beta}^{\star})
\ConvDist N(0, \sigma^{2}\bd{\bd{\Sigma}}_{X}^{-1});
$ {\bf [main case]}

\item [(2)] if $D_{n}^{2}/n \sim c$,
then $
\sqrt{n}(\tilde{\bd{\beta}}-\bd{\beta}^{\star})
\ConvDist
N(0, (c +\sigma^{2})\bd{\bd{\Sigma}}_{X}^{-1})
$, for every constant $c\in\mathbb{R}^{+}$;

\item [(3)] if $D_{n}^{2}/n \rightarrow \infty$,
then $
r_{n}(\tilde{\bd{\beta}}-\bd{\beta}^{\star})
\ConvDist
N(0, \bd{\bd{\Sigma}}_{X}^{-1})
$
where $r_{n} \sim n/D_{n}\ll \sqrt{n}$.
\end{enumerate}
\end{thm}

\begin{proof}[Proof of Theorem \ref{thm3.8.old}]
Denote $I_{0}=\{s_{1}+1, s_{1}+2, \cdots, s=s_{1}+s_{2}, s+1, \cdots, n\}$.
Note that $s_{2}=o(\sqrt{n}/(\kappa_{n}\gamma_{n}))$ ensures that $\hat{\bd{\beta}}$ is consistent by Theorem
\ref{thm3.2}.
By Theorem \ref{thm3.7},
$P\{\hat{I}_{0}=I_{0}\}$ goes to 1.
Then,
\begin{equation*}
\tilde{\bd{\beta}}
=
R_{1} + R_{2}
+
T_{0}^{-1}(T_{1} + T_{2}),
\end{equation*}
where
$
R_{1}
=
(\bd{X}_{\hat I_{0}}^{T}\bd{X}_{\hat I_{0}})^{-1}\bd{X}_{\hat I_{0}}^{T}\bd{Y}_{\hat I_{0}}\{\hat{I}_{0}\not=I_{0}\}
$
and
$
R_{2}
=
-(\bd{X}_{I_{0}}^{T}\bd{X}_{I_{0}})^{-1}\bd{X}_{I_{0}}^{T}\bd{Y}_{I_{0}}\{\hat{I}_{0}\not=I_{0}\}
$
and $T_{i}$'s are defined in the proof of
Theorem \ref{thm3.2}.
The proof for the consistency is similar to that of Theorem
\ref{thm3.2}
and is omitted.
Next we show the asymptotic normality.
We have,
\begin{equation*}
r_{n}(\tilde{\bd{\beta}} - \bd{\beta}^{\star})
=
r_{n}R_{1} + r_{n}R_{2} + T_{0}^{-1}(V_{1} + V_{2}),
\end{equation*}
where $V_{i}$'s are defined
in the proof of Theorem
\ref{thm3.6}.
Since $P(\sqrt{n}R_{1}=0)\geq P\{\hat{I}_{0} = I_{0}\} \rightarrow 1$,
we have $\sqrt{n}R_{1} = o_{P}(1)$.
Similarly, $\sqrt{n}R_{2} = o_{P}(1)$.
From the analysis on $V_{i}$'s
in the proof of Theorem
\ref{thm3.6},
the asymptotic distributions follows by Slutsky's lemma.
\end{proof}

\begin{proofLong}
\begin{proof}
[Long Proof of Theorem
\ref{thm3.8}]
First, we provide a decomposition of $\tilde{\bd{\beta}}$.
Denote $I_{0}=\{s_{1}+1, s_{1}+2, \cdots, s=s_{1}+s_{2}, s+1, \cdots, n\}$.
By Theorems
\ref{thm3.2}
and
\ref{thm3.7},
$P\{\hat{I}_{0}=I_{0}\}\rightarrow 1$
if Assumption (A) holds or $s_{2}=o(n/(\kappa_{n}\gamma_{n}))$.
Then,
\begin{equation*}
\tilde{\bd{\beta}}
=
(\bd{X}_{\hat I_{0}}^{T}\bd{X}_{\hat I_{0}})^{-1}\bd{X}_{\hat I_{0}}^{T}\bd{Y}_{\hat I_{0}}
=
(\bd{X}_{I_{0}}^{T}\bd{X}_{I_{0}})^{-1}\bd{X}_{I_{0}}^{T}\bd{Y}_{I_{0}}
+
T_{1} + T_{2},
\end{equation*}
where
$
T_{1}
=
(\bd{X}_{\hat I_{0}}^{T}\bd{X}_{\hat I_{0}})^{-1}\bd{X}_{\hat I_{0}}^{T}\bd{Y}_{\hat I_{0}}\{\hat{I}_{0}\not=I_{0}\}
$
and
$
T_{2}
=
-(\bd{X}_{I_{0}}^{T}\bd{X}_{I_{0}})^{-1}\bd{X}_{I_{0}}^{T}\bd{Y}_{I_{0}}\{\hat{I}_{0}\not=I_{0}\}
$.
Note that
\begin{align*}
(\bd{X}_{I_{0}}^{T}\bd{X}_{I_{0}})^{-1}\bd{X}_{I_{0}}^{T}\bd{Y}_{I_{0}}
& =
\bd{\beta}^{\star}
+
T_{0}^{-1}T_{3}
+
T_{0}^{-1}T_{4},
\end{align*}
where
$T_{0} = \mathbb{S}_{s_{1}+1,n}/n$,
$
T_{3} =
\mathbb{S}_{s_{1}+1,n}^{\epsilon}/n$ and
$
T_{4} =
\mathbb{S}_{S_{12}^{\star}}^{\mu}/n$.
Thus,
\begin{equation*}
\tilde{\bd{\beta}} - \bd{\beta}^{\star}
=
T_{1} + T_{2} + T_{0}^{-1}T_{3} + T_{0}^{-1}T_{4}.
\end{equation*}

Next, we consider the consistency of $\tilde{\bd{\beta}}$.
We will show that
$T_{0}^{-1} \ConvProb \bd{\bd{\Sigma}}_{X}^{-1}$
and
$T_{i}=o_{P}(1)$ for $i=1,2, 3, 4$.
Then, $\tilde{\bd{\beta}} \ConvProb \bd{\beta}^{\star}$.

\OnItem{On $T_{0}$.}
See the proof of Theorem
\ref{thm3.2}.

\OnItem{On $T_{1}$ and $T_{2}$.}
Since $P(T_{1}=0)\geq P\{\hat{I}_{0} = I_{0}\} \rightarrow 1$,
we have $T_{1} = o_{P}(1)$.
Similarly, $T_{2} = o_{P}(1)$.

\OnItem{On $T_{3}$.}
See the proof of Theorem
\ref{thm3.2}.

\OnItem{On $T_{4}$.}
See the proof of Theorem
\ref{thm3.2}.
(Condition: Assumption (A) holds or $s_{2}=o(n/(\kappa_{n}\gamma_{n}))$.)

Thus, if Assumption (A) holds or $s_{2}=o(n/(\kappa_{n}\gamma_{n}))$,
then $\tilde{\bd{\beta}}$ is a consistent estimator of $\bd{\beta}^{\star}$.\\

Next we show the asymptotic normality.
Let $r_{n}$ be a sequence diverging to infinity.
We have
\begin{equation*}
r_{n}(\tilde{\bd{\beta}} - \bd{\beta}^{\star})
=
r_{n}T_{1} + r_{n}T_{2} + T_{0}^{-1}r_{n}T_{3} + T_{0}^{-1}r_{n}T_{4}.
\end{equation*}
First, we analyze $r_{n}T_{i}$ individually.

\OnItem{On $r_{n}T_{1}$ and $r_{n}T_{2}$.}
Since $P(r_{n}T_{1}=0)\geq P\{\hat{I}_{0} = I_{0}\} \rightarrow 1$,
we have $r_{n}T_{1} = o_{P}(1)$.
Similarly, $\sqrt{n}T_{2} = o_{P}(1)$.

\OnItem{On $r_{n}T_{3}$.}
See the proof of Theorems
\ref{thm3.4}
and \ref{thm3.6}.

\OnItem{On $r_{n}T_{4}$.}
See the proof of Theorems
\ref{thm3.4}
and \ref{thm3.6}.

Then we obtain asymptotic distributions of $\tilde{\bd{\beta}}$ by Slutsky's lemma.

\end{proof}
\end{proofLong}

\begin{lem}[Consistency on $\hat{\sigma}$]
\label{lem3.9}
Suppose Assumptions (A) and (B) hold and either $s_{2}=o(n/(\kappa_{n}\gamma_{n}))$ or Assumption (C) holds.
If $s_{2}=o(n/\gamma_{n}^{2})$,
then
$\hat{\sigma} \ConvProb \sigma$.
\end{lem}
\begin{proofName}
\subsection{Proof of Lemma \ref{lem3.9}}
\end{proofName}
\begin{proof}
[Proof of Lemma \ref{lem3.9}]
When Assumption (C) or $s_{2}=o(n/(\kappa_{n}\gamma_{n}))$ holds,
the penalized estimators $\hat{\bd{\beta}}$
and $\tilde{\bd{\beta}}$
are consistent estimators of $\bd{\beta}^{\star}$
by Theorems \ref{thm3.2}
and
\ref{thm3.8}.
Denote $\mathcal{\mathcal{C}}=\{\hat{I}_{0}=I_{0}\}$.
By Theorem \ref{thm3.7},
$\mathcal{\mathcal{C}}$ occurs wpg1.
Then,
$
\hat{\sigma}^{2}
=
T\mathcal{\mathcal{C}}
+ \hat{\sigma}^{2}\mathcal{\mathcal{C}}^{c}
$,
where
$
T= a_{n}\normE{ \bd{Y}_{I_{0}} - \bd{X}_{I_{0}}^{T} \tilde{\bd{\beta}} }^{2}
$
and
$a_{n}=1/(n-s_{1})$.
It is sufficient to show
$
T \ConvProb \sigma^{2}
$.
We have $T=\sum_{i=1}^{6}T_{i}$, where
$T_{1} =  a_{n}\sum_{i=s_{1}+1}^{n}[\bd{X}_{i}^{T}(\bd{\beta}^{\star} - \tilde{\bd{\beta}})]^{2}$,
$T_{2} =  a_{n}\sum_{i=s_{1}+1}^{n} \epsilon_{i}^{2}$,
$T_{3} =  2a_{n}\sum_{i=s_{1}+1}^{n} \bd{X}_{i}^{T}(\bd{\beta}^{\star} - \tilde{\bd{\beta}})\epsilon_{i}$,
$T_{4} =  a_{n}\sum_{i=s_{1}+1}^{s} \mu_{i}^{\star 2}$,
$T_{5} =  2a_{n}\sum_{i=s_{1}+1}^{s} \mu_{i}\bd{X}_{i}^{T}(\bd{\beta}^{\star} - \tilde{\bd{\beta}})$
and
$T_{6} =  2a_{n}\sum_{i=s_{1}+1}^{s} \mu_{i}^{\star}\epsilon_{i}$.
It is straightforward to show that $T_{2}\ConvProb \sigma^{2}$ and each other $T_{i}\ConvProb 0$
under the condition $s_{2}=o(n/\gamma_{n}^{2})$
and by noting that $\tilde{\bd{\beta}}\ConvProb \bd{\beta}^{\star}$.
Then $\hat\sigma$ is a consistent estimator of $\sigma$.
\end{proof}

\begin{proofLong}
\begin{proof}
[Long Proof of Lemma \ref{lem3.9}]
Under the condition that Assumption (A) holds or $s_{2}=o(n/(\kappa_{n}\gamma_{n}))$,
the penalized estimators $\hat{\bd{\beta}}$
and $\tilde{\bd{\beta}}$
are consistent estimators of $\bd{\beta}^{\star}$
by Theorems \ref{thm3.2}
and
\ref{thm3.8}.
Denote $\mathcal{\mathcal{B}}=\{\hat{I}_{0}=I_{0}\}$.
Then
$\mathcal{\mathcal{B}}$ occurs wpg1
by Theorem \ref{thm3.7}.

\lineProof

We have
\begin{align*}
\hat{\sigma}^{2}
& =
\frac{1}{\#(\hat I_{0})} \normE{ \bd{Y}_{\hat I_{0}} - \bd{X}_{\hat I_{0}}^{T} \tilde{\bd{\beta}} }^{2}\mathcal{\mathcal{B}}
+ \frac{1}{\#(\hat I_{0})} \normE{ \bd{Y}_{\hat I_{0}} - \bd{X}_{\hat I_{0}}^{T} \tilde{\bd{\beta}} }^{2}\mathcal{\mathcal{B}}^{c} \\
& =
\frac{1}{\#(I_{0})} \normE{ \bd{Y}_{I_{0}} - \bd{X}_{I_{0}}^{T} \tilde{\bd{\beta}} }^{2}\mathcal{\mathcal{B}}
+ \frac{1}{\#(\hat I_{0})} \normE{ \bd{Y}_{\hat I_{0}} - \bd{X}_{\hat I_{0}}^{T} \tilde{\bd{\beta}} }^{2}\mathcal{\mathcal{B}}^{c}. \\
\end{align*}
Then, it is sufficient to show that
\begin{equation*}
T:=\frac{1}{\#(I_{0})} \normE{ \bd{Y}_{I_{0}} - \bd{X}_{I_{0}}^{T} \tilde{\bd{\beta}} }^{2}
=\frac{1}{n-s_{1}} \normE{ \bd{Y}_{I_{0}} - \bd{X}_{I_{0}}^{T} \tilde{\bd{\beta}} }^{2}
\ConvProb \sigma^{2}.
\end{equation*}

\lineProof

We have $T=\sum_{i=1}^{6}T_{i}$, where
\begin{align*}
T_{1} & =  \frac{1}{n-s_{1}}\sum_{i=s_{1}+1}^{n}[\bd{X}_{i}^{T}(\bd{\beta}^{\star} - \tilde{\bd{\beta}})]^{2}, \\
T_{2} & =  \frac{1}{n-s_{1}}\sum_{i=s_{1}+1}^{n} \epsilon_{i}^{2}, \\
T_{3} & =  2\frac{1}{n-s_{1}}\sum_{i=s_{1}+1}^{n} \bd{X}_{i}^{T}(\bd{\beta}^{\star} - \tilde{\bd{\beta}})\epsilon_{i}, \\
T_{4} & =  \frac{1}{n-s_{1}}\sum_{i=s_{1}+1}^{s} \mu_{i}^{\star 2}, \\
T_{5} & =  2\frac{1}{n-s_{1}}\sum_{i=s_{1}+1}^{s} \mu_{i}\bd{X}_{i}^{T}(\bd{\beta}^{\star} - \tilde{\bd{\beta}}), \\
T_{6} & =  2\frac{1}{n-s_{1}}\sum_{i=s_{1}+1}^{s} \mu_{i}^{\star}\epsilon_{i}.
\end{align*}
\lineProof

\OnItem{On $T_{1}$.}
We have
\begin{align*}
|T_{1}|
& =
|\frac{1}{n-s_{1}}\sum_{i=s_{1}+1}^{n}[\bd{X}_{i}^{T}(\bd{\beta}^{\star} - \tilde{\bd{\beta}})]^{2}| \\
& =
\frac{1}{n-s_{1}}\sum_{i=s_{1}+1}^{n}[\bd{X}_{i}^{T}(\bd{\beta}^{\star} - \tilde{\bd{\beta}})]^{2} \\
& \leq
\frac{1}{n-s_{1}}\sum_{i=s_{1}+1}^{n}\normE{\bd{X}_{i}^{T}}^{2}\normE{\bd{\beta}^{\star} - \tilde{\bd{\beta}}}^{2} \\
(wpg1) & \leq
2\E\normE{\bd{X}_{0}^{T}}^{2}
\normE{\bd{\beta}^{\star} - \tilde{\bd{\beta}}}^{2}
\ConvProb 0.
\end{align*}
Thus, $T_{1}=o_{P}(1)$ under the conditions:
\begin{itemize}
  \item $\normE{\bd{\beta}^{\star} - \tilde{\bd{\beta}}}\ConvProb 0$.
\end{itemize}

\lineProof
\OnItem{On $T_{2}$.}
We have
\begin{align*}
T_{2}
& =
\frac{1}{n-s_{1}}\sum_{i=s_{1}+1}^{n} \epsilon_{i}^{2}
\ConvProb \sigma^{2}.
\end{align*}
Thus, $T_{2} \ConvProb \sigma^{2}$.

\lineProof
\OnItem{On $T_{3}$.}
We have
\begin{align*}
|T_{3}|
& =
|2\frac{1}{n-s_{1}}\sum_{i=s_{1}+1}^{n} \bd{X}_{i}^{T}(\bd{\beta}^{\star} - \tilde{\bd{\beta}})\epsilon_{i}| \\
& \leq
2\frac{1}{n-s_{1}}\sum_{i=s_{1}+1}^{n} |\bd{X}_{i}^{T}\epsilon_{i}(\bd{\beta}^{\star} - \tilde{\bd{\beta}})| \\
& \leq
2\frac{1}{n-s_{1}}\sum_{i=s_{1}+1}^{n} \normE{\bd{X}_{i}^{T}\epsilon_{i}}\normE{\bd{\beta}^{\star} - \tilde{\bd{\beta}}} \\
(wpg1) & \leq
4\E\normE{\bd{X}_{0}^{T}\epsilon_{0}}\normE{\bd{\beta}^{\star} - \tilde{\bd{\beta}}}
\ConvProb 0.
\end{align*}
Thus, $T_{3}=o_{P}(1)$ under the conditions:
\begin{itemize}
  \item $\normE{\bd{\beta}^{\star} - \tilde{\bd{\beta}}}\ConvProb 0$.
\end{itemize}

\lineProof
\OnItem{On $T_{4}$.}
We have
\begin{align*}
|T_{4}|
& =
|\frac{1}{n-s_{1}}\sum_{i=s_{1}+1}^{s} \mu_{i}^{\star 2}| \\
& =
\frac{1}{n-s_{1}}\sum_{i=s_{1}+1}^{s} \mu_{i}^{\star 2} \\
& \leq
\frac{1}{n-s_{1}}\sum_{i=s_{1}+1}^{s} \gamma_{n}^{2} \\
& =
\frac{1}{n-s_{1}} s_{2} \gamma_{n}^{2}
\rightarrow 0.
\end{align*}
Thus, $T_{4}=o(1)$
under the condition:
\begin{itemize}
  \item $s_{2}=o(n/\gamma_{n}^{2})$.
\end{itemize}

\lineProof
\OnItem{On $T_{5}$.}
We have
\begin{align*}
|T_{5}|
& =
|2\frac{1}{n-s_{1}}\sum_{i=s_{1}+1}^{s} \mu_{i}\bd{X}_{i}^{T}(\bd{\beta}^{\star} - \tilde{\bd{\beta}})| \\
& =
2\frac{1}{n-s_{1}}\sum_{i=s_{1}+1}^{s} |\mu_{i}\bd{X}_{i}^{T}(\bd{\beta}^{\star} - \tilde{\bd{\beta}})| \\
& \leq
2\frac{1}{n-s_{1}}\sum_{i=s_{1}+1}^{s} |\mu_{i}\|\bd{X}_{i}^{T}(\bd{\beta}^{\star} - \tilde{\bd{\beta}})| \\
& \leq
2\frac{1}{n-s_{1}}\sum_{i=s_{1}+1}^{s} \gamma_{n}\normE{\bd{X}_{i}}\normE{\bd{\beta}^{\star} - \tilde{\bd{\beta}}} \\
(wpg1)& \leq
2\frac{1}{n-s_{1}}\gamma_{n}s_{2}2\E\normE{\bd{X}_{0}}\normE{\bd{\beta}^{\star} - \tilde{\bd{\beta}}}
\ConvProb
0.
\end{align*}
Thus, $T_{5}=o_{P}(1)$ under the conditions.
\begin{itemize}
  \item  $s_{2}=o(n/\gamma_{n})$.
\end{itemize}

\lineProof
\OnItem{On $T_{6}$.}
We have
\begin{align*}
|T_{6}|
& =
|2\frac{1}{n-s_{1}}\sum_{i=s_{1}+1}^{s} \mu_{i}^{\star}\epsilon_{i}| \\
& \leq
2\frac{1}{n-s_{1}}\sum_{i=s_{1}+1}^{s} |\mu_{i}^{\star}\epsilon_{i}| \\
& \leq
2\frac{1}{n-s_{1}}\gamma_{n}\sum_{i=s_{1}+1}^{s} |\epsilon_{i}| \\
(wpg1)& \leq
2\frac{1}{n-s_{1}}\gamma_{n}s_{2}2\E|\epsilon_{0}|
\rightarrow 0.
\end{align*}
Thus, we have
$T_{6}=o_{P}(1)$ under the conditions
\begin{itemize}
  \item $s_{2}=o(n/\gamma_{n})$.
\end{itemize}
\end{proof}
\end{proofLong}

\subsection{Supplement for Subsection \ref{subsec:Regularization Parameters}}
\label{PC.Paper.Supplement.Typical.Covariates.Errors}

In this supplement,
we consider a special case with exponentially tailed covariates and errors.
For convenience,
we  first introduce the definition
of Orlicz norm and related inequalities.
For a strictly increasing and convex function $\psi$ with $\psi(0)=0$,
the Orlicz norm of a random variable $Z$ with respect to $\psi$ is defined as
\begin{equation*}
\|Z\|_{\psi}
=
\inf
\{C>0: \E \psi(|Z|/C) \leq 1\}.
\end{equation*}
Then, for each $x>0$,
\begin{equation}
\label{eq4.1}
P(|Z|>x) \leq {1}/{\psi(x/\|Z\|_{\psi})}.
\end{equation}
(See Page 96 of \cite{Vaart1996}).
Next, we introduce a lemma on Orlicz norm with $\psi_{1}$.
Suppose
$\{Z_{i}\}_{i=1}^{n}$ is a sequence of random variables
and
$\{\bd{Z}_{i}\}_{i=1}^{n}$ is a sequence of $d$-dimensional random vectors
with $\bd{Z}_{i}=(Z_{i1}, Z_{i2}, \cdots, Z_{id})^{T}$.
From Lemma 8.3 on Page 131 of \cite{Kosorok2008},
we have the following extension.
\begin{lem}
\label{PC.Paper.Lemma.Max.Variable.Vector.Orlicz.psi1}
If for each $1\leq i\leq n$ and $1\leq j \leq d$,
\begin{align*}
P(|Z_{i}|> x ) \leq c\exp\{-\frac{1}{2}\cdot\frac{x^{2}}{ax+b}\} \text{ and }
P(|Z_{ij}|> x ) \leq c\exp\{-\frac{1}{2}\cdot\frac{x^{2}}{ax+b}\},
\end{align*}
with $a, b \geq 0$ and $c>0$,
then
\begin{align*}
& \|\max_{1\leq i\leq n}|Z_{i}\||_{\psi_{1}}
\leq
K
\{a(1+c)\log(1+n) + \sqrt{b(1+c)}\sqrt{\log(1+n)}\}, \\
& \|\max_{1\leq i\leq n}\normE{\bd{Z}_{i}}\|_{\psi_{1}}
\leq
K
\{a\sqrt{d}(1+cd)\log(1+n) + \sqrt{bd(1+cd)}\sqrt{\log(1+n)}\}.
\end{align*}
where $K$ is a universal constant which is independent of $a, b, c$, $\{Z_{i}\}$ and $\{\bd{Z}_{i}\}$.
\end{lem}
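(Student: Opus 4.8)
## Proof Proposal for Lemma \ref{PC.Paper.Lemma.Max.Variable.Vector.Orlicz.psi1}

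The plan is to reduce both assertions to Lemma 8.3 of \cite{Kosorok2008}, which handles exactly the scalar case: if $P(|Z_i| > x) \leq c \exp\{-x^2/(2(ax+b))\}$ for all $i$, then $\|\max_{1\leq i\leq n}|Z_i|\|_{\psi_1} \leq K\{a(1+c)\log(1+n) + \sqrt{b(1+c)}\sqrt{\log(1+n)}\}$. The first displayed bound is then literally Lemma 8.3, so there is nothing to prove there beyond citing it. The entire content is the second bound, on $\|\max_{1\leq i\leq n}\|\bd{Z}_i\|_2\|_{\psi_1}$, and the strategy is to view $\|\bd{Z}_i\|_2$ as a single scalar random variable, derive a tail bound for it of the same Bernstein-type shape (with $a,b,c$ replaced by suitably inflated constants depending on $d$), and invoke the scalar lemma once more.

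First I would bound the tail of $\|\bd{Z}_i\|_2$. Since $\normE{\bd{Z}_i}^2 = \sum_{j=1}^d Z_{ij}^2$, the event $\{\normE{\bd{Z}_i} > x\}$ is contained in $\bigcup_{j=1}^d \{|Z_{ij}| > x/\sqrt{d}\}$, so by the union bound and the hypothesis,
\begin{equation*}
P(\normE{\bd{Z}_i} > x) \leq \sum_{j=1}^d P(|Z_{ij}| > x/\sqrt{d}) \leq cd \exp\Bigl\{-\frac{1}{2}\cdot\frac{x^2/d}{ax/\sqrt{d} + b}\Bigr\} = cd\exp\Bigl\{-\frac{1}{2}\cdot\frac{x^2}{a\sqrt{d}\,x + bd}\Bigr\}.
\end{equation*}
This is precisely a bound of the form $c' \exp\{-x^2/(2(a'x+b'))\}$ with $a' = a\sqrt{d}$, $b' = bd$, $c' = cd$. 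Applying the scalar Lemma 8.3 to the family $\{\normE{\bd{Z}_i}\}_{i=1}^n$ with these parameters gives
\begin{equation*}
\|\max_{1\leq i\leq n}\normE{\bd{Z}_i}\|_{\psi_1} \leq K\{a\sqrt{d}(1+cd)\log(1+n) + \sqrt{bd(1+cd)}\sqrt{\log(1+n)}\},
\end{equation*}
which is exactly the claimed inequality. Since the $K$ in Lemma 8.3 is universal, the $K$ here is universal as well; one would note explicitly that the inflated parameters $a',b',c'$ are simply fed into the scalar statement, so no new constant is introduced.

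I do not expect any genuine obstacle: the only things to be careful about are the union-bound step (making sure the factor of $d$ inside the exponent lands as $a\sqrt{d}$ and $bd$ rather than, say, $ad$) and the bookkeeping of which quantities play the roles of $a,b,c$ in the cited lemma. A minor point worth stating is that the hypothesis is assumed uniformly over $i$ (and, for the vector case, over $j$), which is what Lemma 8.3 requires. One could optionally remark that the scalar conclusion for $\{|Z_i|\}$ is the special case $d=1$ of the vector conclusion, so really only one application of \cite{Kosorok2008} is needed, but presenting both lines separately as the statement does is cleaner.
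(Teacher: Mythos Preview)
Your proposal is correct and matches the paper's proof essentially line for line: the paper also cites Lemma 8.3 of \cite{Kosorok2008} for the scalar bound, then for the vector case uses the same union-bound reduction $P(\normE{\bd{Z}_i} > x) \leq \sum_{j=1}^d P(|Z_{ij}| > x/\sqrt{d})$ to obtain the Bernstein-type tail with $a' = a\sqrt{d}$, $b' = bd$, $c' = cd$, and feeds these into the scalar result.
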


\begin{proofName}
\subsection{Proof of Lemma
\ref{PC.Paper.Lemma.Max.Variable.Vector.Orlicz.psi1}}
\end{proofName}
\begin{proof}
[Proof of Lemma
\ref{PC.Paper.Lemma.Max.Variable.Vector.Orlicz.psi1}]
The proof for random variables $\{Z_{i}\}$
is the same to the proof of
Lemma 8.3 on Page 131 of \cite{Kosorok2008}.
For random vectors $\{\bd{Z}_{i}\}$,
\begin{align*}
P(\normE{\bd{Z}_{i}} \geq x)
\leq
P( \max_{1\leq j\leq d} |Z_{ij}| > x/\sqrt{d})
\leq
\sum_{j=1}^{d}P( |Z_{ij}| > x/\sqrt{d})
\leq
c'
\exp\{-\frac{1}{2}\frac{x^{2}}{a'x+b'}\},
\end{align*}
where
$a' = a\sqrt{d}$,
$b' = bd$
and $c' = cd$.
Then, by the result on random variables,
the desired result on random vectors follows.
\end{proof}

Now,
suppose, for every $x>0$,
\begin{equation}
\label{PC.Paper.Covariate.Error.General.Heavy.Tails}
P(|\epsilon_{i}|>x)
\leq
c_{1}\exp\{-\frac{1}{2}\cdot\frac{x^{2}}{a_{1}x + b_{1}}\}
\text{ and }
P(|X_{ij}|>x)
\leq
c_{2}\exp\{-\frac{1}{2}\cdot\frac{x^{2}}{a_{2}x + b_{2}}\},
\end{equation}
with $a_{i}, b_{i} \geq 0$ and $c_{i}>0$  for $i=1,2$.
By Lemma \ref{PC.Paper.Lemma.Max.Variable.Vector.Orlicz.psi1}, it follows
\begin{align*}
& \|\max_{1\leq i\leq n}|\epsilon_{i}\||_{\psi_{1}}
\leq
K
\{a_{1}(1+c_{2})\log(1+n) + \sqrt{b_{1}(1+c_{1})}\sqrt{\log(1+n)}\}, \\
& \|\max_{1\leq i\leq n}\normE{\bd{X}_{i}}\|_{\psi_{1}}
\leq
K
\{a_{2}\sqrt{d}(1+c_{2}d)\log(1+n) + \sqrt{b_{2}d(1+c_{2}d)}\sqrt{\log(1+n)}\}.
\end{align*}

Thus, from the inequality (\ref{eq4.1}),
if $a_{1} > 0$,
let $\gamma_{n}\gg\log(n)$;
otherwise,
let $\gamma_{n}\gg \sqrt{\log(n)}$.
Similarly,
if $a_{2} > 0$,
let $\kappa_{n} \gg \log(n)$;
otherwise,
let $\kappa_{n} \gg \sqrt{\log(n)}$.
Then, such $\gamma_{n}$ and $\kappa_{n}$
satisfy the condition (\ref{eq2.2}).
Suppose
both $a_{1}$ and $a_{2}$ are positive,
which means both $\epsilon_{i}$ and $X_{ij}$'s
have exponential tails.
As before, set $\kappa_{n}=\gamma_{n}=\log(n)\tau_{n}$.
For this case,
the regularization parameter specification (\ref{eq3.1}) becomes
$
   \log(n)\tau_{n} \ll \lambda \ll \min\{\mu^{\star},\sqrt{n}\}.
$

At the end of this supplement,
we simply list explicit expressions of $\kappa_{n}$
under different assumptions on the covariates
for the case with a diverging number of covariates,
which are the extension of the results in
Section \ref{subsec:Regularization Parameters}.
The magnitude of $\kappa_{n}$ becomes larger
than that for the case with $d$ fixed
while $\gamma_{n}$ keeps the same.
Specifically,
if $\bd{X}_{0}$ is bounded with $C_{X}>0$,
then $\kappa_{n}=\sqrt{d}C_{X}$.
If $\bd{X}_{0}$ follows a Gaussian distribution $N(0,\bd{\Sigma}_{X})$,
then $\kappa_{n}=\sqrt{2d\sigma_{X}^{2}[(3/2)\log(d)+\log(n)]}$.
If the Orlicz norm $\norm{X_{0j}}_{\psi}$ exists for $1\leq j\leq d$
and their average $(1/d)\sum_{j=1}^{d}\norm{X_{0j}}_{\psi}$ is bounded,
then $\kappa_{n}\gg d\psi^{-1}(n)$;
for instance, if $\psi=\psi_{p}$ with $p\geq 1$,
then $\kappa_{n}\gg d(\log(n))^{1/p}$.
Finally,
if the data $\{\bd{X}_{i}\}$ satisfies the right inequality of
(\ref{PC.Paper.Covariate.Error.General.Heavy.Tails})
with $a_{2}>0$,
that is, each component of $\bd{X}_{i}$ is sub-exponentially tailed,
then $\kappa_{n}\gg d^{3/2}\log(n)$.
It is worthwhile to note that
these expressions of $\kappa_{n}$ depend on a factor involving the diverging number of covariates $d$,
which will influence the specification of the regularization parameter and the sufficient conditions
of all the theoretical results in Section
\ref{sec:Diverging number of structural parameters}.

\section{Supplement for Section \ref{sec:Diverging number of structural parameters}}
\label{PC.Paper.Supplement.Extension.d.infty}

In this supplement,
we provide Proposition \ref{prop2.2} and its proof,
the proofs of the lemmas
in the appendix
and some additional results.

We first
extend Proposition \ref{prop2.2}
to the case with $d\rightarrow\infty$ and $d\ll n$.
Before that,
we list two simple lemmas for a diverging $d$.
Suppose $\{\bd{\xi}_{i}\}$ is a sequence of i.i.d. copies of $\bd{\xi}_{0}$,
a $d$-dimensional random vector
with mean zero.
Denote $\bar{\sigma}_{\xi}^{2}=(1/d)\sum_{j=1}^{d}\Var[\xi_{0j}]$.
\begin{lem}
\label{PC.Paper.Lemma.Weak.LLN.d.infty}
Suppose $\bar{\sigma}_{\xi}^{2}$ is bounded.
If $d/n=o(1)$,
then
\begin{equation*}
\normE{\frac{1}{n}\sum_{i=1}^{n}\bd{\xi}_{i}}
\ConvProb 0.
\end{equation*}
\end{lem}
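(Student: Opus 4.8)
The plan is to prove a weak law of large numbers for the normalized sum $\frac{1}{n}\sum_{i=1}^n \bd{\xi}_i$ in Euclidean norm, using a second-moment (Chebyshev-type) argument, exactly paralleling the style of the proof of Lemma \ref{PC.Paper.Sample.Covariance.Matrix.Convergence.d.infty} in the excerpt. First I would fix $\delta>0$ and estimate $P(\normE{\frac{1}{n}\sum_{i=1}^n \bd{\xi}_i} > \delta)$ by Markov's inequality applied to the squared norm: it is bounded by $\delta^{-2}\E\normE{\frac{1}{n}\sum_{i=1}^n \bd{\xi}_i}^2$. Then I would expand the squared norm as $\frac{1}{n^2}\sum_{i,k}\E[\bd{\xi}_i^T\bd{\xi}_k]$, and use independence and the mean-zero assumption to kill all cross terms $i\neq k$, leaving $\frac{1}{n^2}\sum_{i=1}^n \E\normE{\bd{\xi}_i}^2 = \frac{1}{n}\E\normE{\bd{\xi}_0}^2$.

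Next I would rewrite $\E\normE{\bd{\xi}_0}^2 = \sum_{j=1}^d \E[\xi_{0j}^2] = \sum_{j=1}^d \Var[\xi_{0j}] = d\,\bar{\sigma}_\xi^2$, using that $\bd{\xi}_0$ has mean zero. Putting this together gives the bound
\begin{equation*}
P\left(\normE{\frac{1}{n}\sum_{i=1}^n \bd{\xi}_i} > \delta\right) \leq \frac{1}{\delta^2}\cdot\frac{d}{n}\,\bar{\sigma}_\xi^2.
\end{equation*}
Since $\bar{\sigma}_\xi^2$ is bounded by hypothesis and $d/n \to 0$, the right-hand side tends to zero for every fixed $\delta>0$, which is exactly the definition of $\normE{\frac{1}{n}\sum_{i=1}^n \bd{\xi}_i}\ConvProb 0$.

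I do not anticipate a genuine obstacle here — this is a routine computation. The only point requiring a small amount of care is the interchange between the Euclidean norm squared and the coordinatewise sum, i.e. confirming that $\E\normE{\bd{\xi}_0}^2$ is finite, which follows because each $\Var[\xi_{0j}]$ is finite (their average being bounded) and $d$ is finite for each $n$. I would also note that the same argument shows the convergence rate: $\normE{\frac{1}{n}\sum_{i=1}^n\bd{\xi}_i} = O_P(\sqrt{d/n})$, though the lemma as stated only asks for the $o_P(1)$ conclusion. The structure mirrors the displayed computation for $T_2$ in the proof of Theorem \ref{thm5.2}, so I would keep the presentation terse and in the same notational conventions ($\lesssim$, wrt $\normE{\cdot}$, etc.).
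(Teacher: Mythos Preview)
Your proposal is correct and follows exactly the same approach as the paper's proof: Chebyshev's inequality on the squared Euclidean norm, independence to eliminate cross terms, then $\E\normE{\bd{\xi}_0}^2 = d\,\bar{\sigma}_\xi^2$ and the conclusion from $d/n\to 0$. The paper's argument is identical step for step, so there is nothing to add.
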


\begin{proofLong}
\begin{proof}
For every $\delta>0$,
\begin{align*}
P(\normE{\frac{1}{n}\sum_{i=1}^{n}\bd{\xi}_{i}} > \delta)
& \leq
\frac{1}{\delta^{2}}
P\normE{\frac{1}{n}\sum_{i=1}^{n}\bd{\xi}_{i}}^{2} \\
& =
\frac{1}{\delta^{2}}
\frac{1}{n^{2}}\sum_{i=1}^{n}P\normE{\bd{\xi}_{i}}^{2} \\
& =
\frac{1}{\delta^{2}}
\frac{1}{n}P\normE{\bd{\xi}_{0}}^{2} \\
& =
\frac{1}{\delta^{2}}
\frac{d}{n}
\frac{1}{d}
\sum_{j=1}^{d}
P\bd{\xi}_{0j}^{2} \\
& =
\frac{1}{\delta^{2}}
\frac{d}{n}
\bar{\sigma}_{\xi}^{2}\\
(d/n=o(1))&\rightarrow 0.
\end{align*}
\end{proof}
\end{proofLong}

\begin{proofName}
\subsection{Lemma \ref{PC.Paper.Lemma.Weak.LLN.d.infty.2}}
\end{proofName}
\begin{lem}
\label{PC.Paper.Lemma.Weak.LLN.d.infty.2}
Suppose $\bar{\sigma}_{\xi}^{2}$ is bounded.
If $d/n=o(1)$,
then
\begin{equation*}
\frac{1}{n}
\sum_{i=1}^{n}
\normE{\bd{\xi}_{i}}
-
P\normE{\bd{\xi}_{0}}
\ConvProb
0.
\end{equation*}
\end{lem}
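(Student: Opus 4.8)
The plan is to prove Lemma \ref{PC.Paper.Lemma.Weak.LLN.d.infty.2} by a truncated second-moment argument, mirroring the proof of Lemma \ref{PC.Paper.Lemma.Weak.LLN.d.infty} but with the scalar random variables $\normE{\bd{\xi}_i}$ in place of the vectors $\bd{\xi}_i$. First I would observe that $\{\normE{\bd{\xi}_i}\}_{i=1}^n$ are i.i.d.\ copies of $\normE{\bd{\xi}_0}$, so $(1/n)\sum_{i=1}^n \normE{\bd{\xi}_i} - \E\normE{\bd{\xi}_0}$ is a centered average of i.i.d.\ random variables, provided $\E\normE{\bd{\xi}_0}<\infty$. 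By Chebyshev's inequality, for every $\delta>0$,
\begin{equation*}
P\Bigl( \bigl| \tfrac{1}{n}\sum_{i=1}^n \normE{\bd{\xi}_i} - \E\normE{\bd{\xi}_0} \bigr| > \delta \Bigr)
\leq
\frac{1}{\delta^2}\cdot\frac{1}{n}\Var\bigl(\normE{\bd{\xi}_0}\bigr)
\leq
\frac{1}{\delta^2}\cdot\frac{1}{n}\E\normE{\bd{\xi}_0}^2 .
\end{equation*}

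Next I would bound the second moment in terms of $d$ and $\bar\sigma_\xi^2$. Since $\bd{\xi}_0$ has mean zero, $\E\normE{\bd{\xi}_0}^2 = \sum_{j=1}^d \E\xi_{0j}^2 = \sum_{j=1}^d \Var[\xi_{0j}] = d\,\bar\sigma_\xi^2$. Substituting this into the bound above gives
\begin{equation*}
P\Bigl( \bigl| \tfrac{1}{n}\sum_{i=1}^n \normE{\bd{\xi}_i} - \E\normE{\bd{\xi}_0} \bigr| > \delta \Bigr)
\leq
\frac{1}{\delta^2}\cdot\frac{d}{n}\,\bar\sigma_\xi^2 ,
\end{equation*}
which tends to zero under the hypotheses that $\bar\sigma_\xi^2$ is bounded and $d/n = o(1)$. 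This is exactly the conclusion $\tfrac{1}{n}\sum_{i=1}^n \normE{\bd{\xi}_i} - \E\normE{\bd{\xi}_0} \ConvProb 0$.

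There is one technical point worth addressing carefully: to apply Chebyshev to $\normE{\bd{\xi}_0}$ one needs $\E\normE{\bd{\xi}_0}^2<\infty$, which indeed follows from $\sum_{j=1}^d \Var[\xi_{0j}] = d\bar\sigma_\xi^2 < \infty$ for each fixed $d$ (the variances are finite by the boundedness of $\bar\sigma_\xi^2$), so finiteness of the first and second moments is automatic and no additional assumption is required. The main obstacle—such as it is—is simply making sure the substitution $\E\normE{\bd{\xi}_0}^2 = d\bar\sigma_\xi^2$ is clean; there is no real difficulty beyond that, since the argument is a direct scalar analogue of Lemma \ref{PC.Paper.Lemma.Weak.LLN.d.infty}. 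I would write the proof in three short displayed lines exactly as above, noting the i.i.d.\ reduction, then Chebyshev, then the moment identity and the limit.
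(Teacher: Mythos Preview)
Your proposal is correct and follows essentially the same approach as the paper: apply Chebyshev's inequality to the centered i.i.d.\ average of the scalars $\normE{\bd{\xi}_i}$, bound $\Var(\normE{\bd{\xi}_0})\le \E\normE{\bd{\xi}_0}^2 = d\,\bar\sigma_\xi^2$, and conclude from $d/n\to 0$. The paper's argument is line-for-line the same.
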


\begin{proofLong}
\begin{proof}
[Long Proof of Lemma \ref{PC.Paper.Lemma.Weak.LLN.d.infty.2}]
For every $\delta>0$,
\begin{align*}
P(
\abs{
\frac{1}{n}
\sum_{i=1}^{n}
\normE{\bd{\xi}_{i}}
-
P\normE{\bd{\xi}_{0}}}
> \delta
)
& =
P(
\abs{
\frac{1}{n}
\sum_{i=1}^{n}
(\normE{\bd{\xi}_{i}}
-
P\normE{\bd{\xi}_{0}})}
> \delta )\\
& \leq
P(
\abs{
\frac{1}{n}
\sum_{i=1}^{n}
(\normE{\bd{\xi}_{i}}
-
P\normE{\bd{\xi}_{0}})}
> \delta)\\
& \leq
\frac{1}{\delta^{2}}
P
\abs{
\frac{1}{n}
\sum_{i=1}^{n}
(\normE{\bd{\xi}_{i}}
-
P\normE{\bd{\xi}_{0}})}^{2}\\
& \leq
\frac{1}{\delta^{2}}
\frac{1}{n}
P
(\normE{\bd{\xi}_{0}}
-
P\normE{\bd{\xi}_{0}})^{2}\\
& =
\frac{1}{\delta^{2}}
\frac{1}{n}
(P
\normE{\bd{\xi}_{0}}^{2}
-
(P\normE{\bd{\xi}_{0}})^{2})\\
& \leq
\frac{1}{\delta^{2}}
\frac{1}{n}
P
\normE{\bd{\xi}_{0}}^{2}\\
& =
\frac{1}{\delta^{2}}
\frac{d}{n}
\bar{\sigma}_{\xi}^{2}
\rightarrow 0.
\end{align*}
\end{proof}
\end{proofLong}

\begin{proofName}
\subsection{Lemma \ref{PC.Paper.Lemma.NormF.NormE.Equal}}
\begin{lem}
\label{PC.Paper.Lemma.NormF.NormE.Equal}
Suppose $\bd{a}$ is a vector.
Then
\begin{align*}
\normF{\bd{a}\bd{a}^{T}}
=
\normE{\bd{a}}^{2}.
\end{align*}
\end{lem}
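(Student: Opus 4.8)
The plan is to compute the squared Frobenius norm of the rank-one matrix $\bd{a}\bd{a}^{T}$ directly from the definition and recognize the result as a perfect square. Writing $\bd{a}=(a_{1},\dots,a_{m})^{T}$, the $(i,j)$ entry of $\bd{a}\bd{a}^{T}$ is $a_{i}a_{j}$, so by definition of the Frobenius norm
\begin{equation*}
\normF{\bd{a}\bd{a}^{T}}^{2}
=
\sum_{i=1}^{m}\sum_{j=1}^{m}(a_{i}a_{j})^{2}
=
\sum_{i=1}^{m}\sum_{j=1}^{m}a_{i}^{2}a_{j}^{2}
=
\Bigl(\sum_{i=1}^{m}a_{i}^{2}\Bigr)\Bigl(\sum_{j=1}^{m}a_{j}^{2}\Bigr)
=
\normE{\bd{a}}^{4}.
\end{equation*}
Taking square roots (both sides being nonnegative) gives $\normF{\bd{a}\bd{a}^{T}}=\normE{\bd{a}}^{2}$.

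Alternatively, and perhaps more cleanly, I would use the identity $\normF{\bd{M}}^{2}=\operatorname{tr}(\bd{M}^{T}\bd{M})$ together with the cyclic property of the trace: for $\bd{M}=\bd{a}\bd{a}^{T}$ one has $\bd{M}^{T}\bd{M}=\bd{a}(\bd{a}^{T}\bd{a})\bd{a}^{T}=\normE{\bd{a}}^{2}\,\bd{a}\bd{a}^{T}$, hence $\operatorname{tr}(\bd{M}^{T}\bd{M})=\normE{\bd{a}}^{2}\operatorname{tr}(\bd{a}\bd{a}^{T})=\normE{\bd{a}}^{2}\cdot\normE{\bd{a}}^{2}$, and the claim follows.

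There is no real obstacle here: the statement is an elementary identity about rank-one matrices, and the only thing to be mildly careful about is bookkeeping the double sum and factoring it, or invoking the trace formula for $\normF{\cdot}$. I would present the first (direct) computation as the proof since it requires no auxiliary facts beyond the definition of the two norms.
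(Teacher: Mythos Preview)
Your proof is correct and matches the paper's approach exactly: the paper also computes $\normF{\bd{a}\bd{a}^{T}}=\bigl(\sum_{i}\sum_{j}a_{i}^{2}a_{j}^{2}\bigr)^{1/2}$, factors the double sum, and reads off $\normE{\bd{a}}^{2}$. Your additional trace-based argument is a nice alternative but not needed.
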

\end{proofName}
\begin{proofLong}
\begin{proof}
We have
\begin{align*}
\normF{\bd{a}\bd{a}^{T}}
& =
(
\sum_{i=1}^{d}
\sum_{j=1}^{d}
a_{i}^{2}a_{j}^{2}
)^{1/2}  \\
& =
(
\sum_{i=1}^{d}
a_{i}^{2}
)^{1/2}
(\sum_{j=1}^{d}
a_{j}^{2}
)^{1/2}  \\
& =
\sum_{i=1}^{d}
a_{i}^{2}\\
& =
\normE{\bd{a}}^{2}.
\end{align*}
\end{proof}
\end{proofLong}

\begin{proofName}
\subsection{Proposition \ref{prop2.2:infty}}
\end{proofName}

Suppose the specification of the regularization parameter is given by
\begin{equation}
\label{eq5.1:algorithm.convergence}
d\kappa_{n} \ll  \lambda,~  \alpha\gamma_{n} \leq \lambda, \text{ and } \lambda \ll  \mu^{\star},
\end{equation}
where $\alpha$ is a constant greater than 2.

\begin{prop}
\label{prop2.2:infty}
Suppose assumptions (D) and (G) hold
and the regularization parameter satisfies (\ref{eq5.1:algorithm.convergence}).
Suppose there exist constants $C_{1}$ and $C_{2}$ such that
$\normE{\bd{\beta}^{\star}} < C_{1}\sqrt{d}$ and
$\normE{\bd{\beta}^{(0)}} < C_{2}\sqrt{d}$ wpg1.
If the regularization parameter satisfies (\ref{eq3.1}),
$s_{1}\lambda\kappa_{n}/(n\sqrt{d}) =o(1)$
and $s_{2}\kappa_{n}\gamma_{n}/(n\sqrt{d}) = o(1)$,
then,
for every $K\geq 1$,
with at least probability $p_{n,K}$ which increases to one as $n\rightarrow\infty$,
$\normE{\bd{\beta}^{(K+1)} - \bd{\beta}^{(K)}}\leq O((\sqrt{d}s_{1}\kappa_{n}^{2}/n)^{K}d )$
and $\normE{\bd{\beta}^{(k)}}\leq (2C_{1}+C_{2})d$ for all $k\leq K$.
Specifically,
wpg1,
the iterative algorithm stops at the second iteration.
\end{prop}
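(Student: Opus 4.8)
The plan is to mirror the proof of Proposition~\ref{prop2.2} step by step, keeping track of how each bound changes when $d$ is allowed to diverge and the regularization parameter satisfies (\ref{eq5.1:algorithm.convergence}) rather than (\ref{eq3.1}). The skeleton is the same: first control $\normE{\bd{\beta}^{(1)}}$ by decomposing the first iterate via Lemma~\ref{PC.Paper.d.infty.index.sets.consistency} (the diverging-$d$ analogue of Lemma~\ref{lem3.1}), then show the successive difference $\bd{\beta}^{(2)}-\bd{\beta}^{(1)}$ is negligible wpg1, and finally iterate the argument $K$ times with the event probabilities intersected. The key input allowing the whole scheme is that, wpg1, the index sets $S_{ij}$ stabilize and the iteration equations become linear, so that on the good event
$$
\bd{\beta}^{(k)}
= T_{0}^{-1}\bigl(T_{1}+T_{2}+T_{3}+T_{4}(\bd{\beta}^{(k-1)})+T_{5}\bigr),
$$
with $T_{0}=\mathbb{S}/n$, $T_{4}(\bd{\beta}^{(k-1)})=\mathbb{S}_{1,s_{1}}\bd{\beta}^{(k-1)}/n$, and the other $T_{i}$'s as in the proof of Proposition~\ref{prop2.2}. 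Subtracting consecutive iterates kills everything except the $T_{4}$ term, giving $\bd{\beta}^{(k+1)}-\bd{\beta}^{(k)}=\mathbb{S}^{-1}\mathbb{S}_{1,s_{1}}(\bd{\beta}^{(k)}-\bd{\beta}^{(k-1)})$ exactly as before.

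First I would bound each $\normE{T_{0}^{-1}T_{i}}$ on the stabilized event. For $T_{0}^{-1}$ I would use Lemma~\ref{PC.Paper.Sample.Covariance.Matrix.Convergence.d.infty} together with Lemma~\ref{PC.Paper.Inverse.Matrix.Convergence.Probability.d.Infity} and assumption \assum{(B1)} to conclude that $\normFd{T_{0}^{-1}}$ is bounded wpg1; since $\normF{\cdot}\le\sqrt{d}\,\normE{\cdot}$ is not what I need here, I would instead track the operator-type bounds through $\normF{\cdot}$ and absorb the $\sqrt{d}$ factors carefully. The $T_{2}$ (leading regression) term produces the $2\sqrt{d}C_{1}$-type contribution, now scaled: $\normE{T_{0}^{-1}T_{2}}\lesssim\sqrt{d}\,\normE{\bd{\beta}^{\star}}\lesssim C_{1}d$. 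The noise term $T_{3}=\mathbb{S}_{s_{1}+1,n}^{\epsilon}/n$ is handled by Lemma~\ref{PC.Paper.Lemma.Weak.LLN.d.infty} (with $\bd{\xi}_{i}=\bd{X}_{i}\epsilon_{i}$, whose averaged variance is bounded under \assum{(E)} and $d/n=o(1)$), so $\normE{T_{0}^{-1}T_{3}}=o_{P}(1)$. The bounded-incidental term $T_{1}=\mathbb{S}_{S_{12}^{\star}}^{\mu}/n$ satisfies $\normE{T_{0}^{-1}T_{1}}\lesssim s_{2}\kappa_{n}\gamma_{n}/n$, which is $o(\sqrt d)$ by the hypothesis $s_{2}\kappa_{n}\gamma_{n}/(n\sqrt d)=o(1)$; similarly $T_{5}=(\mathbb{S}_{S_{21}^{\star}}-\mathbb{S}_{S_{31}^{\star}})\lambda/n$ gives $\normE{T_{0}^{-1}T_{5}}\lesssim s_{1}\lambda\kappa_{n}/n=o(\sqrt d)$ using $s_{1}\lambda\kappa_{n}/(n\sqrt d)=o(1)$. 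The initial-value term $T_{4}(\bd{\beta}^{(0)})$ gives $\normE{T_{0}^{-1}T_{4}(\bd{\beta}^{(0)})}\lesssim(\sqrt d\,s_{1}/n)\normE{\bd{\beta}^{(0)}}\lesssim (d^{3/2}s_{1}/n)C_{2}$, which is $o(d)$ since $s_{1}=o(n)$ and $d\ll n$ (one uses $d^{3}/n\to0$ here, or just the weaker $\sqrt d s_1/n\to 0$ together with $\normE{\bd\beta^{(0)}}\lesssim C_2\sqrt d$). Adding these up, $\normE{\bd{\beta}^{(1)}}\le(2C_{1}+C_{2})d$ wpg1.

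Next I would turn to the successive-difference bound. With $\normE{\bd{\beta}^{(1)}}\lesssim d$ wpg1, Lemma~\ref{PC.Paper.d.infty.index.sets.consistency} applies again (the stabilization is uniform over the relevant bounded-in-$\sqrt d$-scale hypercube, which is the point requiring $d\kappa_{n}\ll\lambda$ in (\ref{eq5.1:algorithm.convergence})), so the event $\mathcal{A}_{1}$ occurs wpg1 and the linear recursion holds at step $k=2$. Then
$$
\normE{\bd{\beta}^{(2)}-\bd{\beta}^{(1)}}
\le \normF{\mathbb{S}^{-1}\mathbb{S}_{1,s_{1}}}\,\normE{\bd{\beta}^{(1)}-\bd{\beta}^{(0)}}
\lesssim (\sqrt d\, s_{1}\kappa_{n}^{2}/n)\,\normE{\bd{\beta}^{(1)}-\bd{\beta}^{(0)}}
\lesssim (\sqrt d\, s_{1}\kappa_{n}^{2}/n)\,d,
$$
where the operator-norm bound on $\mathbb{S}^{-1}\mathbb{S}_{1,s_{1}}=n T_{0}^{-1}\cdot\tfrac{1}{n}\mathbb{S}_{1,s_{1}}$ comes from $\normFd{T_{0}^{-1}}$ bounded wpg1 and $\normF{\mathbb{S}_{1,s_{1}}/n}\le s_{1}\kappa_{n}^{2}/n$ on the event $\mathcal B$. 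Since by hypothesis $\sqrt d\, s_{1}\kappa_{n}^{2}/n\to0$ (this is implied by the stated $s_1\lambda\kappa_n/(n\sqrt d)=o(1)$ together with $\lambda\gg d\kappa_n$, hence $s_1\kappa_n^2/n = o(s_1\lambda\kappa_n/(nd)) = o(1/\sqrt d \cdot 1/\sqrt d)$... — more precisely one checks $\sqrt d s_1\kappa_n^2/n \le (s_1\lambda\kappa_n)/(n d)\cdot(d^{3/2}\kappa_n/\lambda)$ and both factors are $o(1)$), the right-hand side tends to zero, so wpg1 $\bd{\beta}^{(2)}=\bd{\beta}^{(1)}$ and the algorithm stops at the second iteration. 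Finally, for general $K$, I would intersect the events $\mathcal{A}_{0},\dots,\mathcal{A}_{K}$, set $p_{n,K}=P(\bigcap_{k=0}^{K}\mathcal{A}_{k})\to1$ by Lemma~\ref{PC.Paper.d.infty.index.sets.consistency}, and iterate the contraction bound $K$ times to get $\normE{\bd{\beta}^{(K+1)}-\bd{\beta}^{(K)}}\lesssim(\sqrt d\,s_{1}\kappa_{n}^{2}/n)^{K}d$ and $\normE{\bd{\beta}^{(k)}}\le(2C_{1}+C_{2})d$ for all $k\le K$.

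The main obstacle I expect is bookkeeping the powers of $d$: unlike the fixed-$d$ case, I must distinguish $\normE{\cdot}$, $\normF{\cdot}$, and $\normFd{\cdot}$ at every step and make sure the hypotheses $s_{1}\lambda\kappa_{n}/(n\sqrt d)=o(1)$ and $s_{2}\kappa_{n}\gamma_{n}/(n\sqrt d)=o(1)$, together with (\ref{eq5.1:algorithm.convergence}) and the implicit $d$-growth restriction needed for Lemmas~\ref{PC.Paper.Inverse.Matrix.Convergence.Probability.d.Infity} and \ref{PC.Paper.Sample.Covariance.Matrix.Convergence.d.infty} (namely $d^{3}/n\to0$), are exactly strong enough to push both the boundedness of $\bd{\beta}^{(1)}$ and the geometric decay of the successive differences through. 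The other delicate point is verifying that the stabilization of the index sets in Lemma~\ref{PC.Paper.d.infty.index.sets.consistency} is uniform over $\bd{\beta}$ in a ball of radius growing like $\sqrt d$ (so that it applies to $\bd{\beta}^{(0)}$ and $\bd{\beta}^{(1)}$), which is precisely why the separation condition $d\kappa_{n}\ll\lambda$ — rather than $\kappa_{n}\ll\lambda$ — is imposed; this should follow from re-reading the proof of Lemma~\ref{PC.Paper.d.infty.index.sets.consistency} with $C$ replaced by a slowly growing sequence, but it needs to be stated carefully.
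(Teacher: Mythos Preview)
Your proposal is correct and follows essentially the same route as the paper's proof: the same decomposition $\bd{\beta}^{(1)}=T_0^{-1}(T_1+\cdots+T_5)$, the same term-by-term bounds (the paper uses Lemma~\ref{PC.Paper.Scaled.IID.Sum.Convergence.Rate.d.infty} rather than Lemma~\ref{PC.Paper.Lemma.Weak.LLN.d.infty} for $T_3$, but either suffices), and the same contraction argument $\bd{\beta}^{(k+1)}-\bd{\beta}^{(k)}=\mathbb{S}^{-1}\mathbb{S}_{1,s_1}(\bd{\beta}^{(k)}-\bd{\beta}^{(k-1)})$ with $\normF{\mathbb{S}_{1,s_1}/n}\le s_1\kappa_n^2/n$. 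One correction to your bookkeeping: your ``more precisely'' factorization of $\sqrt{d}\,s_1\kappa_n^2/n$ fails because $d^{3/2}\kappa_n/\lambda$ need not be $o(1)$; the clean derivation is simply $\sqrt{d}\,s_1\kappa_n^2/n = s_1(d\kappa_n)\kappa_n/(n\sqrt{d}) \ll s_1\lambda\kappa_n/(n\sqrt{d}) = o(1)$ directly from $d\kappa_n\ll\lambda$.
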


\begin{proof}
[Proof of Proposition \ref{prop2.2:infty}]
Reuse the notations in the proof of
Lemma \ref{prop2.2}.
First, we show that, wpg1, $\normE{\bd{\beta}^{(1)}}\leq (2C_{1}+C_{2})d$.
For each $k\geq 1$,
\begin{align*}
\mathbb{S}\bd{\beta}^{(k)}
& =
\mathbb{S}_{S_{11}}^{\mu}
+
\mathbb{S}_{S_{12}}^{\mu}
+
\mathbb{S}_{S_{1}}\bd{\beta}^{\star}
+
\mathbb{S}_{S_{1}}^{\epsilon}
+
\mathbb{S}_{S_{2}\cup S_{3}}\bd{\beta}^{(k-1)}
+
\lambda(\mathcal{S}_{S_{2}}-\mathcal{S}_{S_{3}}),
\end{align*}
Since the regularization parameter satisfies (\ref{eq5.1:algorithm.convergence}),
it is easy to check that
the conclusion of
Lemma \ref{PC.Paper.d.infty.index.sets.consistency} continues to hold,
which implies
$P(\mathcal{A}_{0})\rightarrow 1$.

Thus, wpg1,
$$
\bd{\beta}^{(1)}
=
T_{0}^{-1}T_{1}
+
T_{0}^{-1}T_{2}
+
T_{0}^{-1}T_{3}
+
T_{0}^{-1}T_{4}(\bd{\beta}^{(0)})
+
T_{0}^{-1}T_{5}.
$$
We will show that, wpg1,
\begin{align*}
& \normE{T_{0}^{-1}T_{1}} \leq (C_{2}/4)d, \\
& \normE{T_{0}^{-1}T_{2}} \leq 2C_{1}d, \\
& \normE{T_{0}^{-1}T_{3}} \leq (C_{2}/4)d, \\
& \normE{T_{0}^{-1}T_{4}(\bd{\beta}^{(0)})} \leq (C_{2}/4)d, \\
& \normE{T_{0}^{-1}T_{5}} \leq (C_{2}/4)d.
\end{align*}
Thus, wpg1,
\begin{equation*}
\normE{\bd{\beta}^{(1)}}
\leq
\sum_{i=1}^{5}
\normE{T_{0}^{-1}T_{i}}
\leq
(2C_{1}+C_{2})d.
\end{equation*}

\OnItem{On $T_{0}^{-1}T_{1}$.}
Under Assumption (D),
for $s_{2}\kappa_{n}\gamma_{n}/(n\sqrt{d})=o(1)$, wpg1,
\begin{align*}
\normE{T_{0}^{-1}T_{1}}
& \leq
\normF{
(\frac{1}{n}\mathbb{S})^{-1}}
\normE{
\frac{1}{n}\mathbb{S}_{S_{12}^{\star}}^{\mu}
}
\leq
2\normFd{
\bd{\Sigma}_{X}^{-1}}
\frac{s_{2}}{n\sqrt{d}}
\kappa_{n}\gamma_{n}d
\rightarrow 0.
\end{align*}
Thus, wpg1,
$\normE{T_{0}^{-1}T_{1}}
\leq C_{2}d/4$.

\OnItem{On $T_{0}^{-1}T_{2}$.}
Wpg1,
\begin{align*}
\normE{T_{0}^{-1}T_{2}}
& \leq
\normF{
(\frac{1}{n}\mathbb{S})^{-1}
\frac{1}{n}\mathbb{S}_{s_{1}+1,n}}
\normE{
\bd{\beta}^{\star}
} \\
& \leq
\normF{
\bd{I}_{d}}
C_{1}\sqrt{d}
+
\normF{
(\frac{1}{n}\mathbb{S})^{-1}
\frac{1}{n}\mathbb{S}_{1,s_{1}}}
C_{1}\sqrt{d} \\
& \leq
C_{1}d
+
\normF{
(\frac{1}{n}\mathbb{S})^{-1}}
\normF{
\frac{1}{n}\mathbb{S}_{1,s_{1}}}
C_{1}\sqrt{d},
\end{align*}
and
\begin{align*}
\normF{
\frac{1}{n}\mathbb{S}_{1,s_{1}}}
& =
\frac{1}{n}
\sum_{i=1}^{s_{1}}
\normE{
\bd{X}_{i}}^{2}
\leq
\frac{s_{1}}{n}
\kappa_{n}^{2}.
\end{align*}
Thus, Under Assumption (D),
for $s_{1}\kappa_{n}^{2}/n=o(1)$, wpg1,
\begin{align*}
\normE{T_{0}^{-1}T_{2}}
& \leq
C_{1}d
+
2
\normFd{\bd{\Sigma}_{X}^{-1}}
\frac{\sqrt{d}s_{1}}{n}
\kappa_{n}^{2}
C_{1}\sqrt{d}
\leq
2
C_{1}d.
\end{align*}

\OnItem{On $T_{0}^{-1}T_{3}$.}
Under assumptions (D)  and (G),
for $\log(d)/n=o(1)$, wpg1,
\begin{align*}
\normE{
T_{0}^{-1}T_{3}
}
& =
\sqrt{d}\frac{1}{\sqrt{n}}\sqrt{d\log(d)}
\normFd{
(\frac{1}{n}\mathbb{S})^{-1}}
(d\log(d))^{-1/2}
\normE{\frac{1}{\sqrt{n}}\mathbb{S}_{s_{1}+1,n}^{\epsilon}
} \\
& \leq
\frac{d\sqrt{\log(d)}}{\sqrt{n}}
2\normFd{
\bd{\Sigma}_{X}^{-1}}
O_{P}(1)
\ConvProb 0.
\end{align*}
Thus, wpg1,
$\normE{T_{0}^{-1}T_{3}} \leq C_{2}d/4$.

\OnItem{On $T_{0}^{-1}T_{4}(\bd{\beta}^{(0)})$.}
Under Assumption (D),
for $s_{1}
\kappa_{n}^{2}/n$, wpg1,
\begin{align*}
\normE{T_{0}^{-1}T_{4}(\bd{\beta}^{(0)})}
& \leq
\sqrt{d}
\normFd{
(\frac{1}{n}\mathbb{S})^{-1}}
\normF{
\frac{1}{n}\mathbb{S}_{1,s_{1}}}
\normE{\bd{\beta}^{(0)}
} \\
& \leq
\sqrt{d}
2\normFd{
\bd{\Sigma}_{X}^{-1}}
\frac{s_{1}}{n}
\kappa_{n}^{2}
C_{2}\sqrt{d}
\ConvProb 0.
\end{align*}
Thus, wpg1,
$\normE{T_{0}^{-1}T_{4}(\bd{\beta}^{(0)})} \leq C_{2}d/4$.

\OnItem{On $T_{0}^{-1}T_{5}$.}
Under Assumption (D),
for $s_{1}\kappa_{n}\lambda/(n\sqrt{d}) = o(1)$, wpg1,
\begin{align*}
\normE{
T_{0}^{-1}T_{5}
}
& \leq
\sqrt{d}
\normFd{
(\frac{1}{n}\mathbb{S})^{-1}}
\frac{\lambda}{n}
(\normE{
\mathcal{S}_{S_{21}^{\star}}}
+
\normE{
\mathcal{S}_{S_{31}^{\star}}
}) \\
& \leq
\sqrt{d}
2\normFd{
\bd{\Sigma}_{X}^{-1}}
\frac{\lambda}{n}
s_{1}\kappa_{n}
\leq C_{2}d/4.
\end{align*}

\lineProof

Next, consider
$\normE{\bd{\beta}_{2} - \bd{\beta}_{1}}$.
Since $\bd{\beta}^{(1)}\leq (2C_{1}+C_{2})d$ wpg1,
the conclusion of
Lemma \ref{PC.Paper.d.infty.index.sets.consistency} holds,
which implies
$\mathcal{A}_{1}$ occurs wpg1.

Then,
$$
\bd{\beta}^{(2)}
=
T_{0}^{-1}T_{1}
+
T_{0}^{-1}T_{2}
+
T_{0}^{-1}T_{3}
+
T_{0}^{-1}T_{4}(\bd{\beta}^{(1)})
+
T_{0}^{-1}T_{5},
$$
where
\begin{align*}
T_{4}(\bd{\beta}^{(1)}) &= \frac{1}{n}\mathbb{S}_{1,s_{1}}\bd{\beta}^{(1)}.
\end{align*}

Thus, wpg1,
$$\bd{\beta}^{(2)} - \bd{\beta}^{(1)}
=
\mathbb{S}^{-1}\mathbb{S}_{1,s_{1}}
(\bd{\beta}^{(1)}-\bd{\beta}^{(0)}).$$
Thus, for $d^{3/2}s_{1} \kappa_{n}^{2}/n = o(1)$, wpg1,
\begin{align*}
\normE{\bd{\beta}^{(2)}  - \bd{\beta}^{(1)}}
& \leq
\sqrt{d}
\normFd{\frac{1}{n}\mathbb{S}^{-1}}
\normF{\frac{1}{n}\mathbb{S}_{1,s_{1}}}
\normE{\bd{\beta}^{(1)} -\bd{\beta}^{(0)} } \\
& \leq
2
\normFd{\bd{\Sigma}_{X}^{-1}}
\sqrt{d}
\frac{s_{1}}{n}
\kappa_{n}^{2}
(2C_{1}+C_{2})d
\lesssim
d^{3/2} s_{1} \kappa_{n}^{2}/n
\rightarrow 0.
\end{align*}
Thus, wpg1,
$\bd{\beta}^{(2)} = \bd{\beta}^{(1)}$,
which means that, wpg1, the iteration algorithm stops at the second iteration.

\lineProof

For any $K\geq 1$,
repeating the above arguments,
with at least probability
$p_{n,K} = P(\bigcap_{k=0}^{K}\mathcal{A}_{k})$,
which increases to one,
we have
$\bd{\beta}^{(k)}\leq (2C_{1}+C_{2})d$ for $k\leq K$
and
\begin{align*}
\normE{\bd{\beta}^{(K+1)}  - \bd{\beta}^{(K)}}
& \leq
(2
\normFd{\bd{\Sigma}_{X}^{-1}}
\sqrt{d}
\frac{s_{1}}{n}
\kappa_{n}^{2})^{K}
(2C_{1}+C_{2})d
\lesssim
(\sqrt{d}s_{1}\kappa_{n}^{2}/n)^{K}
d
\rightarrow 0.
\end{align*}
This completes the proof.
\end{proof}

\begin{proofLong}
\begin{proof}
[Long Proof of Lemma \ref{prop2.2:infty}]
First, we show that, wpg1, $\normE{\bd{\beta}^{(1)}}\lesssim d$.
For each $k\geq 1$,
we have
\begin{align*}
\mathbb{S}\bd{\beta}^{(k)}
& =
\sum_{i\in S_{1}}\bd{X}_{i}Y_{i}
+
\sum_{i\in S_{2}}\bd{X}_{i}(\bd{X}_{i}^{T}\bd{\beta}^{(k-1)} + \lambda)
+
\sum_{i\in S_{3}}\bd{X}_{i}(\bd{X}_{i}^{T}\bd{\beta}^{(k-1)} - \lambda) \\
& =
\sum_{i\in S_{1}}\bd{X}_{i}(\mu_{i}^{\star} + \bd{X}_{i}^{T}\bd{\beta}^{\star} + \epsilon_{i})
+
\sum_{i\in S_{2}}\bd{X}_{i}(\bd{X}_{i}^{T}\bd{\beta}^{(k-1)} + \lambda)
+
\sum_{i\in S_{3}}\bd{X}_{i}(\bd{X}_{i}^{T}\bd{\beta}^{(k-1)} - \lambda) \\
& =
\sum_{i\in S_{1}}\bd{X}_{i}\mu_{i}^{\star}
+
\sum_{i\in S_{1}}\bd{X}_{i}\bd{X}_{i}^{T}\bd{\beta}^{\star}
+
\sum_{i\in S_{1}}\bd{X}_{i}\epsilon_{i}
+
\sum_{i\in S_{2}\cup S_{3}}\bd{X}_{i}\bd{X}_{i}^{T}\bd{\beta}^{(k-1)}
+
\lambda(\sum_{i\in S_{2}}\bd{X}_{i}-\sum_{i\in S_{3}}\bd{X}_{i})\\
& =
\sum_{i\in S_{11}}\bd{X}_{i}\mu_{i}^{\star}
+
\sum_{i\in S_{12}}\bd{X}_{i}\mu_{i}^{\star}
+
\sum_{i\in S_{1}}\bd{X}_{i}\bd{X}_{i}^{T}\bd{\beta}^{\star}
+
\sum_{i\in S_{1}}\bd{X}_{i}\epsilon_{i}
+
\sum_{i\in S_{2}\cup S_{3}}\bd{X}_{i}\bd{X}_{i}^{T}\bd{\beta}^{(k-1)}
+
\lambda(\sum_{i\in S_{2}}\bd{X}_{i}-\sum_{i\in S_{3}}\bd{X}_{i}),
\end{align*}
where the index sets all depend on $\bd{\beta}^{(k-1)}$. Specifically,
\begin{align*}
S_{1} = S_{1}(\bd{\beta}^{(k-1)}) &= \{1\leq i\leq n: |Y_{i}-\bd{X}_{i}^{T}\bd{\beta}^{(k-1)}| \leq \lambda\}, \\
S_{2} = S_{2}(\bd{\beta}^{(k-1)}) &= \{1\leq i\leq n: Y_{i}-\bd{X}_{i}^{T}\bd{\beta}^{(k-1)} > \lambda\}, \\
S_{3} = S_{3}(\bd{\beta}^{(k-1)}) &= \{1\leq i\leq n: Y_{i}-\bd{X}_{i}^{T}\bd{\beta}^{(k-1)} < -\lambda\}, \\
S_{11} = S_{11}(\bd{\beta}^{(k-1)}) &= \{1\leq i\leq s_{1}: |Y_{i}-\bd{X}_{i}^{T}\bd{\beta}^{(k-1)}| \leq \lambda\}, \\
S_{12} = S_{11}(\bd{\beta}^{(k-1)}) &= \{s_{1}+1\leq i\leq s: |Y_{i}-\bd{X}_{i}^{T}\bd{\beta}^{(k-1)}| \leq \lambda\}.
\end{align*}

Denote an event
\begin{align*}
\mathcal{A}_{k-1}
=
\{
S_{11}(\bd{\beta}^{(k-1)})=\emptyset,
S_{12}(\bd{\beta}^{(k-1)})=S_{12}^{\star},
S_{1}(\bd{\beta}^{(k-1)})=S_{10}^{\star}\cup S_{12}^{\star};
S_{2}(\bd{\beta}^{(k-1)})=S_{21}^{\star};
S_{3}(\bd{\beta}^{(k-1)})=S_{31}^{\star}
\}.
\end{align*}

Since the regularization parameter satisfies (\ref{eq5.1:algorithm.convergence}),
it is easy to check that
the conclusion of
Lemma \ref{PC.Paper.d.infty.index.sets.consistency} continues to hold,
which implies
$P(\mathcal{A}_{0})\rightarrow 1$.

Thus, wpg1, we have
$$
\bd{\beta}^{(1)}
=
T_{0}^{-1}T_{1}
+
T_{0}^{-1}T_{2}
+
T_{0}^{-1}T_{3}
+
T_{0}^{-1}T_{4}(\bd{\beta}^{(0)})
+
T_{0}^{-1}T_{5},
$$
where
\begin{align*}
T_{0} &= \frac{1}{n}\mathbb{S}, \\
T_{1} & =  \frac{1}{n}\mathbb{S}_{S_{12}^{\star}}^{\mu},\\
T_{2} &= \frac{1}{n}\mathbb{S}_{s_{1}+1,n}\bd{\beta}^{\star}, \\
T_{3} &= \frac{1}{n}\mathbb{S}_{s_{1}+1,n}^{\epsilon}, \\
T_{4}(\bd{\beta}^{(0)}) &= \frac{1}{n}\mathbb{S}_{1,s_{1}}\bd{\beta}^{(0)}, \\
T_{5} &= \frac{1}{n}(\mathbb{S}_{S_{21}^{\star}}-\mathbb{S}_{S_{31}^{\star}})\lambda.
\end{align*}

We will show that, wpg1,
\begin{align*}
& \normE{T_{0}^{-1}T_{1}} \leq (C_{2}/4)d, \\
& \normE{T_{0}^{-1}T_{2}} \leq 2C_{1}d, \\
& \normE{T_{0}^{-1}T_{3}} \leq (C_{2}/4)d, \\
& \normE{T_{0}^{-1}T_{4}(\bd{\beta}^{(0)})} \leq (C_{2}/4)d, \\
& \normE{T_{0}^{-1}T_{5}} \leq (C_{2}/4)d.
\end{align*}

Thus, we have, wpg1,
\begin{equation*}
\normE{\bd{\beta}^{(1)}}
\leq
\sum_{i=1}^{5}
\normE{T_{0}^{-1}T_{i}}
\leq
(2C_{1}+C_{2})d.
\end{equation*}

\OnItem{On $T_{0}^{-1}T_{1}$.}
Under Assumption (D),
for $s_{2}\kappa_{n}\gamma_{n}/(n\sqrt{d})=o(1)$, wpg1,
\begin{align*}
\normE{T_{0}^{-1}T_{1}}
& =
\normE{
(\frac{1}{n}\mathbb{S})^{-1}
\frac{1}{n}\mathbb{S}_{S_{12}^{\star}}^{\mu}} \\
& \leq
\normF{
(\frac{1}{n}\mathbb{S})^{-1}}
\normE{
\frac{1}{n}\mathbb{S}_{S_{12}^{\star}}^{\mu}
} \\
& \leq
\normF{
(\frac{1}{n}\mathbb{S})^{-1}}
\normE{
\frac{1}{n}\sum_{i=s_{1}+1}^{s}\bd{X}_{i}\mu_{i}
} \\
& \leq
\normF{
(\frac{1}{n}\mathbb{S})^{-1}}
\frac{s_{2}}{n}
\kappa_{n}\gamma_{n} \\
& \leq
\normFd{
(\frac{1}{n}\mathbb{S})^{-1}}
\frac{s_{2}\sqrt{d}}{n}
\kappa_{n}\gamma_{n} \\
(wpg1) & \leq
2\normFd{
\bd{\Sigma}_{X}^{-1}}
\frac{s_{2}}{n\sqrt{d}}
\kappa_{n}\gamma_{n}d
\rightarrow 0.
\end{align*}

Thus, we have, wpg1,
$\normE{T_{0}^{-1}T_{1}}
\leq C_{2}d/4$.

\OnItem{On $T_{0}^{-1}T_{2}$.}
We have, wpg1,
\begin{align*}
\normE{T_{0}^{-1}T_{2}}
& =
\normE{
(\frac{1}{n}\mathbb{S})^{-1}
\frac{1}{n}\mathbb{S}_{s_{1}+1,n}\bd{\beta}^{\star}
} \\
& \leq
\normF{
(\frac{1}{n}\mathbb{S})^{-1}
\frac{1}{n}\mathbb{S}_{s_{1}+1,n}}
\normE{
\bd{\beta}^{\star}
} \\
& \leq
\normF{
(\frac{1}{n}\mathbb{S})^{-1}
\frac{1}{n}\mathbb{S}_{s_{1}+1,n}}
C_{1}\sqrt{d} \\
& =
\normF{
\bd{I}_{d}-
(\frac{1}{n}\mathbb{S})^{-1}
\frac{1}{n}\mathbb{S}_{1,s_{1}}}
C_{1}\sqrt{d} \\
& \leq
\normF{
\bd{I}_{d}}
C_{1}\sqrt{d}
+
\normF{
(\frac{1}{n}\mathbb{S})^{-1}
\frac{1}{n}\mathbb{S}_{1,s_{1}}}
C_{1}\sqrt{d} \\
& \leq
C_{1}d
+
\normF{
(\frac{1}{n}\mathbb{S})^{-1}}
\normF{
\frac{1}{n}\mathbb{S}_{1,s_{1}}}
C_{1}\sqrt{d},
\end{align*}
and
\begin{align*}
\normF{
\frac{1}{n}\mathbb{S}_{1,s_{1}}}
& =
\frac{1}{n}
\normF{
\mathbb{S}_{1,s_{1}}} \\
& =
\frac{1}{n}
\sum_{i=1}^{s_{1}}
\normF{
\bd{X}_{i}\bd{X}_{i}^{T}} \\
& =
\frac{1}{n}
\sum_{i=1}^{s_{1}}
\normE{
\bd{X}_{i}}^{2} \\
(wpg1) & \leq
\frac{s_{1}}{n}
\kappa_{n}^{2}.
\end{align*}

Thus, Under Assumption (D),
for $s_{1}\kappa_{n}^{2}/n=o(1)$, wpg1,
\begin{align*}
\normE{T_{0}^{-1}T_{2}}
& \leq
C_{1}d
+
2
\normFd{\bd{\Sigma}_{X}^{-1}}
\frac{\sqrt{d}s_{1}}{n}
\kappa_{n}^{2}
C_{1}\sqrt{d} \\
& \leq
2
C_{1}d.
\end{align*}

\OnItem{On $T_{0}^{-1}T_{3}$.}
Under assumptions (D)  and (G),
for $\log(d)/n=o(1)$, wpg1,
\begin{align*}
\normE{
T_{0}^{-1}T_{3}
}
& =
\normE{
(\frac{1}{n}\mathbb{S})^{-1}\frac{1}{n}\mathbb{S}_{s_{1}+1,n}^{\epsilon}
} \\
& =
\normF{
(\frac{1}{n}\mathbb{S})^{-1}}
\normE{\frac{1}{n}\mathbb{S}_{s_{1}+1,n}^{\epsilon}
} \\
& =
\sqrt{d}\frac{1}{\sqrt{n}}\sqrt{d\log(d)}
\normFd{
(\frac{1}{n}\mathbb{S})^{-1}}
(d\log(d))^{-1/2}
\normE{\frac{1}{\sqrt{n}}\mathbb{S}_{s_{1}+1,n}^{\epsilon}
} \\
(wpg1) & \leq
\frac{d\sqrt{\log(d)}}{\sqrt{n}}
2\normFd{
\bd{\Sigma}_{X}^{-1}}
O_{P}(1)
\ConvProb 0.
\end{align*}

Thus, wpg1,
$\normE{T_{0}^{-1}T_{3}} \leq C_{2}d/4$.

\OnItem{On $T_{0}^{-1}T_{4}(\bd{\beta}^{(0)})$.}
Under Assumption (D),
for $s_{1}
\kappa_{n}^{2}/n$, wpg1,
\begin{align*}
\normE{T_{0}^{-1}T_{4}(\bd{\beta}^{(0)})}
& =
\normE{
(\frac{1}{n}\mathbb{S})^{-1}
\frac{1}{n}\mathbb{S}_{1,s_{1}}\bd{\beta}^{(0)}
} \\
& \leq
\normF{
(\frac{1}{n}\mathbb{S})^{-1}}
\normE{
\frac{1}{n}\mathbb{S}_{1,s_{1}}\bd{\beta}^{(0)}
} \\
& \leq
\normF{
(\frac{1}{n}\mathbb{S})^{-1}}
\normF{
\frac{1}{n}\mathbb{S}_{1,s_{1}}}
\normE{\bd{\beta}^{(0)}
} \\
& =
\sqrt{d}
\normFd{
(\frac{1}{n}\mathbb{S})^{-1}}
\normF{
\frac{1}{n}\mathbb{S}_{1,s_{1}}}
\normE{\bd{\beta}^{(0)}
} \\
(wpg1) & \leq
\sqrt{d}
2\normFd{
\bd{\Sigma}_{X}^{-1}}
\frac{s_{1}}{n}
\kappa_{n}^{2}
C_{2}\sqrt{d} \\
& =
\frac{s_{1}}{n}
\kappa_{n}^{2}
2\normFd{
\bd{\Sigma}_{X}^{-1}}
C_{2}d
\ConvProb 0.
\end{align*}

Thus, wpg1,
$\normE{T_{0}^{-1}T_{4}(\bd{\beta}^{(0)})} \leq C_{2}d/4$.

\OnItem{On $T_{0}^{-1}T_{5}$.}
Under Assumption (D),
for $s_{1}\kappa_{n}\lambda/(n\sqrt{d}) = o(1)$, wpg1,
\begin{align*}
\normE{
T_{0}^{-1}T_{5}
}
& =
\normE{
(\frac{1}{n}\mathbb{S})^{-1}
\frac{1}{n}(\mathcal{S}_{S_{21}^{\star}}-\mathcal{S}_{S_{31}^{\star}})\lambda
} \\
& \leq
\normF{
(\frac{1}{n}\mathbb{S})^{-1}}
\frac{\lambda}{n}
(\normE{
\mathcal{S}_{S_{21}^{\star}}
-
\mathcal{S}_{S_{31}^{\star}}
}) \\
& \leq
\normF{
(\frac{1}{n}\mathbb{S})^{-1}}
\frac{\lambda}{n}
(\normE{
\mathcal{S}_{S_{21}^{\star}}}
+
\normE{
\mathcal{S}_{S_{31}^{\star}}
}) \\
& \leq
\sqrt{d}
\normFd{
(\frac{1}{n}\mathbb{S})^{-1}}
\frac{\lambda}{n}
(\normE{
\mathcal{S}_{S_{21}^{\star}}}
+
\normE{
\mathcal{S}_{S_{31}^{\star}}
}) \\
(wpg1) & \leq
\sqrt{d}
2\normFd{
\bd{\Sigma}_{X}^{-1}}
\frac{\lambda}{n}
s_{1}\kappa_{n} \\
& =
d
\frac{s_{1}\kappa_{n}\lambda}{n\sqrt{d}}
2\normFd{
\bd{\Sigma}_{X}^{-1}}.
\end{align*}
Thus, wpg1,
$\normE{T_{0}^{-1}T_{5}} \leq C_{2}d/4$.

\lineProof

Next, consider
$\normE{\bd{\beta}_{2} - \bd{\beta}_{1}}$.
Since $\bd{\beta}^{(1)}\leq (2C_{1}+C_{2})d$ wpg1,
the conclusion of
Lemma \ref{PC.Paper.d.infty.index.sets.consistency} holds,
which implies
$\mathcal{A}_{1}$ occurs wpg1.

Then,
$$
\bd{\beta}^{(2)}
=
T_{0}^{-1}T_{1}
+
T_{0}^{-1}T_{2}
+
T_{0}^{-1}T_{3}
+
T_{0}^{-1}T_{4}(\bd{\beta}^{(1)})
+
T_{0}^{-1}T_{5},
$$
where
\begin{align*}
T_{4}(\bd{\beta}^{(1)}) &= \frac{1}{n}\mathbb{S}_{1,s_{1}}\bd{\beta}^{(1)}.
\end{align*}

Thus, wpg1,
$$\bd{\beta}^{(2)} - \bd{\beta}^{(1)}
=
\mathbb{S}^{-1}\mathbb{S}_{1,s_{1}}
(\bd{\beta}^{(1)}-\bd{\beta}^{(0)}).$$
Thus, for $d^{3/2}s_{1} \kappa_{n}^{2}/n = o(1)$,
\begin{align*}
\normE{\bd{\beta}^{(2)}  - \bd{\beta}^{(1)}}
& \leq
\normF{\mathbb{S}^{-1}\mathbb{S}_{1,s_{1}}}
\normE{\bd{\beta}^{(1)} -\bd{\beta}^{(0)} } \\
& \leq
\normF{\mathbb{S}^{-1}}
\normF{\mathbb{S}_{1,s_{1}}}
\normE{\bd{\beta}^{(1)} -\bd{\beta}^{(0)} } \\
& =
\sqrt{d}
\normFd{\frac{1}{n}\mathbb{S}^{-1}}
\normF{\frac{1}{n}\mathbb{S}_{1,s_{1}}}
\normE{\bd{\beta}^{(1)} -\bd{\beta}^{(0)} } \\
(wpg1) & \leq
2
\normFd{\bd{\Sigma}_{X}^{-1}}
\sqrt{d}
\frac{s_{1}}{n}
\kappa_{n}^{2}
\normE{\bd{\beta}^{(1)} -\bd{\beta}^{(0)} } \\
& \leq
2
\normFd{\bd{\Sigma}_{X}^{-1}}
\sqrt{d}
\frac{s_{1}}{n}
\kappa_{n}^{2}
(2C_{1}+C_{2})d \\
& \lesssim
d^{3/2} s_{1} \kappa_{n}^{2}/n
\rightarrow 0.
\end{align*}
Thus, wpg1,
$\bd{\beta}^{(2)} = \bd{\beta}^{(1)}$,
which means that, wpg1, the iteration algorithm converges at the first iteration.

\lineProof

For any $K\geq 1$,
repeating the above arguments,
with at least probability
$p_{n,K} = P(\bigcap_{k=0}^{K}\mathcal{A}_{k})$,
we have
$\bd{\beta}^{(k)}\leq (2C_{1}+C_{2})d$ for $k\leq K$
and
\begin{align*}
\normE{\bd{\beta}^{(K+1)}  - \bd{\beta}^{(K)}}
& \leq
(2
\normFd{\bd{\Sigma}_{X}^{-1}}
\sqrt{d}
\frac{s_{1}}{n}
\kappa_{n}^{2})^{K}
(2C_{1}+C_{2})d \\
& \lesssim
(\sqrt{d}s_{1}\kappa_{n}^{2}/n)^{K}
d
\rightarrow 0.
\end{align*}
\end{proof}
\end{proofLong}

Next, we provide the proofs of Lemmas
\ref{PC.Paper.Inverse.Matrix.Convergence.Probability.d.Infity},
\ref{PC.Paper.Sample.Covariance.Matrix.Convergence.d.infty}
and
\ref{PC.Paper.Scaled.IID.Sum.Convergence.Rate.d.infty}
in the appendix.

\begin{proofName}
\subsection{Lemma \ref{PC.Paper.Inverse.Matrix.Convergence.Probability.d.Infity}}
\end{proofName}

\begin{proof}
[Proof of Lemma \ref{PC.Paper.Inverse.Matrix.Convergence.Probability.d.Infity}]
Let $\bd{E}=\bd{A}_{n}-\bd{A}$.
Note that $r_d\geq 1/\sqrt{d}$.
Then, $r_d\normF{\bd{E}}\ConvProb 0$
implies $\normFd{\bd{E}}\ConvProb 0$.
Thus, wpg1,
$\normFd{\bd{E}}$ is bounded by a constant $C>0$.
By Lemma \ref{lemma:Stewart:1969},
\begin{align*}
\normFd{\bd{A}_{n}^{-1} - \bd{A}^{-1}}
\leq
\normFd{\bd{A}^{-1}}
\frac{\normFd{\bd{A}^{-1}}\normFd{\bd{E}}}
{1-\normFd{\bd{A}^{-1}}\normFd{\bd{E}}}
\leq
C^{2}
\frac{\normFd{\bd{E}}}
{1-C\normFd{\bd{E}}}.
\end{align*}
Therefore,
\begin{align*}
r_d\normF{\bd{A}_{n}^{-1} - \bd{A}^{-1}}
\leq
C^{2}
\frac{r_d\normF{\bd{E}}}
{1-C\normFd{\bd{E}}}
\ConvProb  0.
\end{align*}
This completes the proof.
\end{proof}

\begin{proofName}
\subsection{Lemma \ref{PC.Paper.Sample.Covariance.Matrix.Convergence.d.infty}}
\end{proofName}
\begin{proof}
[Proof of Lemma \ref{PC.Paper.Sample.Covariance.Matrix.Convergence.d.infty}]
For any $\delta>0$, we have
\begin{align*}
P(\normF{\hat{\bd{\Sigma}}_{n} - \bd{\Sigma}_{X}}>\delta)
\leq &
\sum_{k=1}^{d}\sum_{l=1}^{d}\frac{d^{2}}{\delta^{2}}
P(\frac{1}{n}\sum_{i=1}^{n} X_{ik}X_{il}-\sigma_{kl})^{2}
\leq
\frac{d^{4}}{n}
\frac{1}{\delta^{2}}
\bar{\sigma}_{XX}^{2}.
\end{align*}
Thus,
$P(r_d\normF{\hat{\bd{\Sigma}}_{n} - \bd{\Sigma}_{X}}>\delta)
\leq
\bar{\sigma}_{XX}^{2}
r_d^{2}d^{4}/(n\delta^{2})
=o(1)$
by Assumption (E2)
and for
$r_d^{2}d^{4}/n \rightarrow 0$.
Thus, $\hat{\bd{\Sigma}}_{n}$ is a consistent estimator of $\bd{\Sigma}_{X}$
wrt $r_d\normF{\cdot}$.
\end{proof}

\begin{proofName}
\subsection{Lemma \ref{PC.Paper.Scaled.IID.Sum.Convergence.Rate.d.infty}}
\end{proofName}

\begin{proof}
[Proof of Lemma \ref{PC.Paper.Scaled.IID.Sum.Convergence.Rate.d.infty}]
Let $\alpha_{d}=\sqrt{d \log{d}}$ and $C_{1}\geq \sqrt{2}\sigma_{\xi,\max}$. Then
\begin{align*}
P(\normE{\frac{1}{\sqrt{n}}\sum_{i=1}^{n}\bd{\xi}_{i}} > \alpha_{d}C_{1})
\leq &
\sum_{j=1}^{d}
P(|\frac{1}{\sqrt{n}}\sum_{i=1}^{n}\frac{\xi_{ij}}{\sigma_{j}}| > \frac{\alpha_{d}C_{1}}{\sigma_{j}\sqrt{d}}),
\end{align*}
where $\sigma_{j}$ is the standard deviation of $\xi_{0j}$.
By Berry and Esseen Theorem
(see, for example, P375 in \cite{Shiryaev1995}),
there exists a constant $C_{2}>0$ such that
$
P(\normE{(1/\sqrt{n})\sum_{i=1}^{n}\bd{\xi}_{i}} > \alpha_{d}C_{1})
\leq T_{1} + 2T_{2},
$
where
\begin{align*}
T_{1}=
\sum_{j=1}^{d}
P(|N(0,1)| > \frac{\alpha_{d}C_{1}}{\sigma_{j}\sqrt{d}}), ~~~
T_{2}=
\sum_{j=1}^{d}\frac{C_{2}\E|\xi_{0j}|^{3}}{\sigma_{j}^{3}\sqrt{n}}.
\end{align*}
By noting $d^{2}=o(n)$,
\begin{align*}
T_{1}
\leq & \sum_{j=1}^{d}
P(|N(0,1)| > \frac{\alpha_{d}C_{1}}{\sigma_{\xi,\max}\sqrt{d}})
<
2d
\frac{\sigma_{\xi,\max}\sqrt{d}}{\alpha_{d}C_{1}}
\phi(\frac{\alpha_{d}C_{1}}{\sigma_{\xi,\max}\sqrt{d}})
\rightarrow 0, \\
T_{2}
\leq &
\sum_{j=1}^{d}
\frac{C_{2}\gamma_{\xi,\max}}{\sigma_{\xi,\min}^{3}\sqrt{n}}
=
d\frac{C_{2}\gamma_{\xi,\max}}{\sigma_{\min}^{3}\sqrt{n}}
\rightarrow 0.
\end{align*}
Therefore,
$\normE{(1/\sqrt{n})\sum_{i=1}^{n}\bd{\xi}_{i}}=O_{P}(\alpha_{d})$.
\end{proof}

\begin{proofLong}
\begin{proof}
[Long Proof of Lemma \ref{PC.Paper.Scaled.IID.Sum.Convergence.Rate.d.infty}]
Let $C_{1}\geq \sqrt{2}\sigma_{\xi,\max}$. Then
\begin{align*}
P(\normE{\frac{1}{\sqrt{n}}\sum_{i=1}^{n}\bd{\xi}_{i}} > \alpha_{d}C_{1})
= &
P((\frac{1}{\sqrt{n}}\sum_{i=1}\xi_{i1})^{2}
+ (\frac{1}{\sqrt{n}}\sum_{i=1}^{n}\xi_{i1})^{2}
+ \cdots
+ (\frac{1}{\sqrt{n}}\sum_{i=1}^{n}\xi_{id})^{2} > \alpha_{d}^{2}C_{1}^{2}) \\
\leq &
\sum_{j=1}^{d}
P((\frac{1}{\sqrt{n}}\sum_{i=1}^{n}\xi_{ij})^{2} > \frac{\alpha_{d}^{2}C_{1}^{2}}{d}) \\
= &
\sum_{j=1}^{d}
P(|\frac{1}{\sqrt{n}}\sum_{i=1}^{n}\xi_{ij}| > \frac{\alpha_{d}C_{1}}{\sqrt{d}}) \\
= &
\sum_{j=1}^{d}
P(|\frac{1}{\sqrt{n}}\sum_{i=1}^{n}\frac{\xi_{ij}}{\sigma_{j}}| > \frac{\alpha_{d}C_{1}}{\sigma_{j}\sqrt{d}}),
\end{align*}
where $\sigma_{j}$ is the standard deviation of $\xi_{0j}$.
By Berry and Esseen Theorem
(see, for example, P375 in \cite{Shiryaev1995}),
there exists a constant $C_{2}>0$ such that
\begin{align*}
P(|\frac{1}{\sqrt{n}}\sum_{i=1}^{n}\frac{\xi_{ij}}{\sigma_{j}}| > \frac{\alpha_{d}C_{1}}{\sigma_{j}\sqrt{d}})
& \leq
P(|N(0,1)| > \frac{\alpha_{d}C_{1}}{\sigma_{j}\sqrt{d}})
+
2\frac{C_{2}\E|\xi_{0j}|^{3}}{\sigma_{j}^{3}\sqrt{n}}.
\end{align*}
Then,
\begin{align*}
P(\normE{\frac{1}{\sqrt{n}}\sum_{i=1}^{n}\bd{\xi}_{i}} > \alpha_{d}C_{1})
\leq &
\sum_{j=1}^{d}
P(|N(0,1)| > \frac{\alpha_{d}C_{1}}{\sigma_{j}\sqrt{d}})
+
2\sum_{j=1}^{d}\frac{C_{2}\E|\xi_{0j}|^{3}}{\sigma_{j}^{3}\sqrt{n}}.
\end{align*}
On one hand,
\begin{align*}
\sum_{j=1}^{d}
P(|N(0,1)| > \frac{\alpha_{d}C_{1}}{\sigma_{j}\sqrt{d}})
\leq & \sum_{j=1}^{d}
P(|N(0,1)| > \frac{\alpha_{d}C_{1}}{\sigma_{\xi,\max}\sqrt{d}})\\
= &
d
P(|N(0,1)| > \frac{\alpha_{d}C_{1}}{\sigma_{\xi,\max}\sqrt{d}})\\
< &
2d
\frac{\sigma_{\xi,\max}\sqrt{d}}{\alpha_{d}C_{1}}
\phi(\frac{\alpha_{d}C_{1}}{\sigma_{\xi,\max}\sqrt{d}})\\
= &
2d
\frac{\sigma_{\xi,\max}\sqrt{d}}{\alpha_{d}C_{1}}
\frac{1}{\sqrt{2\pi}}
\exp\{-\frac{1}{2}\frac{\alpha_{d}^{2}C_{1}^{2}}{d\sigma_{\xi,\max}^{2}}\}\\
= &
2d
\frac{\sigma_{\xi,\max}}{C_{1}\sqrt{\log(d)}}
\frac{1}{\sqrt{2\pi}}
\exp\{-\frac{1}{2}\frac{C_{1}^{2}\log(d)}{\sigma_{\xi,\max}^{2}}\}\\
= &
2d
\frac{\sigma_{\xi,\max}}{C_{1}\sqrt{\log(d)}}
\frac{1}{\sqrt{2\pi}}
d^{-\frac{1}{2}\frac{C_{1}^{2}}{\sigma_{\xi,\max}^{2}}} \\
= &
2
\frac{\sigma_{\xi,\max}}{C_{1}\sqrt{\log(d)}}
\frac{1}{\sqrt{2\pi}}
d^{1-\frac{1}{2}\frac{C_{1}^{2}}{\sigma_{\xi,\max}^{2}}} \\
\leq &
2
\frac{\sigma_{\xi,\max}}{C_{1}\sqrt{\log(d)}}
\frac{1}{\sqrt{2\pi}} \rightarrow 0.
\end{align*}
On the other hand,
for $d^{2}=o(n)$,
\begin{align*}
\sum_{j=1}^{d}
\frac{C_{2}\E|\xi_{0j}|^{3}}{\sigma_{j}^{3}\sqrt{n}}
\leq &
\sum_{j=1}^{d}
\frac{C_{2}\gamma_{\xi,\max}}{\sigma_{\xi,\min}^{3}\sqrt{n}} \\
= &
d\frac{C_{2}\gamma_{\xi,\max}}{\sigma_{\min}^{3}\sqrt{n}}
\rightarrow 0.
\end{align*}

Therefore,
$\normE{(1/\sqrt{n})\sum_{i=1}^{n}\bd{\xi}_{i}}=O_{P}(\alpha_{d})$.
\end{proof}
\end{proofLong}

Next result is on the consistency of the penalized two-step estimator
$\tilde{\bd{\beta}}$.

\begin{thm}
[Consistency on $\tilde{\bd{\beta}}$]
\label{thm5.5}
Suppose the assumptions and conditions of
Theorem \ref{thm5.2} hold.
If $r_d\geq 1/\sqrt{d}$,
then $\tilde{\bd{\beta}} \ConvProb \bd{\beta}^{\star}$
wrt $r_d\normE{\cdot}$.
\end{thm}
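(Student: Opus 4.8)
\textbf{Proof proposal for Theorem \ref{thm5.5}.}
The plan is to mimic the structure of the proof of Theorem \ref{thm3.8} (consistency of $\tilde{\bd{\beta}}$ in the fixed-$d$ case), but carrying the extra factor $r_d$ and the diverging-dimension bookkeeping already developed in the proof of Theorem \ref{thm5.2}. First I would invoke Theorem \ref{thm5.4}: since the hypotheses of Theorem \ref{thm5.2} hold, $\hat{\bd{\beta}}$ is consistent wrt $r_d\normE{\cdot}$, and since $r_d\geq 1/\sqrt{d}$, Theorem \ref{thm5.4} gives $P(\mathcal{E})\to 1$, hence $P(\hat I_{0}=I_{0})\to 1$, where $I_{0}=\{s_{1}+1,\dots,n\}$. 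This lets me replace $\hat I_{0}$ by the deterministic $I_{0}$ up to an event of vanishing probability, so that
$$
\tilde{\bd{\beta}} - \bd{\beta}^{\star}
=
R_{1} + R_{2} + T_{0}^{-1}(T_{1}+T_{2}),
$$
where $R_{1}=(\bd{X}_{\hat I_{0}}^{T}\bd{X}_{\hat I_{0}})^{-1}\bd{X}_{\hat I_{0}}^{T}\bd{Y}_{\hat I_{0}}\{\hat I_{0}\neq I_{0}\}$, $R_{2}=-(\bd{X}_{I_{0}}^{T}\bd{X}_{I_{0}})^{-1}\bd{X}_{I_{0}}^{T}\bd{Y}_{I_{0}}\{\hat I_{0}\neq I_{0}\}$, and $T_{0}=\mathbb{S}_{s_{1}+1,n}/n$, $T_{1}=\mathbb{S}_{S_{12}^{\star}}^{\mu}/n$, $T_{2}=\mathbb{S}_{s_{1}+1,n}^{\epsilon}/n$ are exactly the quantities from the proof of Theorem \ref{thm5.2}.

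Next I would bound each piece in $r_d\normE{\cdot}$. For the remainder terms, $P(r_d\normE{R_{1}}=0)\geq P(\hat I_{0}=I_{0})\to 1$, so $r_d\normE{R_{1}}=o_{P}(1)$, and similarly for $R_{2}$; this is where the partial-selection-consistency step does all the work. For the main term, I would write $r_d\normE{T_{0}^{-1}(T_{1}+T_{2})}\leq \normFd{T_{0}^{-1}}\bigl(r_d\sqrt{d}\normE{T_{1}}+r_d\sqrt{d}\normE{T_{2}}\bigr)$ and reuse verbatim the estimates already established in the proof of Theorem \ref{thm5.2}: namely $\normFd{T_{0}^{-1}}$ is bounded wpg1 (from assumptions \textbf{(B1)}, \textbf{(C)}, Lemmas \ref{PC.Paper.Inverse.Matrix.Convergence.Probability.d.Infity} and \ref{PC.Paper.Sample.Covariance.Matrix.Convergence.d.infty}, and $d^{3}/n\to 0$), $r_d\sqrt{d}\normE{T_{1}}\leq r_d\sqrt{d}s_{2}\kappa_{n}\gamma_{n}/n=o(1)$ since $s_{2}=o(n/(r_d\sqrt{d}\kappa_{n}\gamma_{n}))$, and $r_d\sqrt{d}\normE{T_{2}}=o_{P}(1)$ since $(r_dd)^{2}/n\to 0$ and assumption \textbf{(C1)} holds. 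Combining, $r_d\normE{\tilde{\bd{\beta}}-\bd{\beta}^{\star}}=o_{P}(1)$, which is the claim. Note the condition $s_{1}=o(n/(r_d\sqrt{d}\kappa_{n}\lambda))$ from Theorem \ref{thm5.2} is still needed — not to bound $T_{1},T_{2}$ directly but to guarantee $\hat{\bd{\beta}}$ is $r_d$-consistent in the first place, which feeds Theorem \ref{thm5.4}.

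The step I expect to be the only real subtlety is verifying that the $r_d$-consistency of $\hat{\bd{\beta}}$ together with $r_d\geq 1/\sqrt{d}$ genuinely suffices to trigger Theorem \ref{thm5.4} (whose hypothesis is exactly ``$\hat{\bd{\beta}}$ consistent wrt $r_d\normE{\cdot}$'' plus $r_d\geq 1/\sqrt{d}$), and then that the deterministic set $I_{0}$ produced there matches the index set over which $\tilde{\bd{\beta}}$ is computed — this is a matter of carefully tracking that $\mathcal{E}$ asserts precisely $\hat\mu_{i}=0$ for $i\geq s_{1}+1$ and $\hat\mu_{i}\neq 0$ for $i\leq s_{1}$, so $\hat I_{0}=\{s_{1}+1,\dots,n\}=I_{0}$ on $\mathcal{E}$. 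Everything else is a direct transcription of bounds already proved, so the proof can legitimately be short: I would simply state the decomposition, cite Theorems \ref{thm5.2} and \ref{thm5.4}, and refer to the proof of Theorem \ref{thm5.2} for the termwise estimates, exactly as the paper does for the analogous fixed-$d$ result in the proof of Theorem \ref{thm3.8}.
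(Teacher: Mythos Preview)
Your proposal is correct and matches the paper's own proof essentially line for line: the paper invokes Theorem~\ref{thm5.2} for $r_d$-consistency of $\hat{\bd{\beta}}$, then Theorem~\ref{thm5.4} (using $r_d\geq 1/\sqrt{d}$) to get $P(\hat I_{0}=I_{0})\to 1$, writes the same decomposition $\tilde{\bd{\beta}}-\bd{\beta}^{\star}=R_{1}+R_{2}+T_{0}^{-1}T_{1}+T_{0}^{-1}T_{2}$, disposes of $R_{1},R_{2}$ via the indicator argument, and bounds the remaining piece by $\normFd{T_{0}^{-1}}\bigl(r_d\sqrt{d}\normE{T_{1}}+r_d\sqrt{d}\normE{T_{2}}\bigr)$, citing the proof of Theorem~\ref{thm5.2} for each factor. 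Your identification of the one subtlety (that $\mathcal{E}$ in Theorem~\ref{thm5.4} is exactly the event $\hat I_{0}=I_{0}$) is also on point.
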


\begin{proofName}
\subsection{Proof of Theorem
\ref{thm5.5}}
\end{proofName}
\begin{proof}[Proof of Theorems
\ref{thm5.5}]
By
Theorem \ref{thm5.2},
$\hat{\bd{\beta}}\ConvProb \bd{\beta}^{\star}$ wrt $r_d\normE{\cdot}$.
By Theorem
\ref{thm5.4},
$P\{\hat{I}_{0}=I_{0}\}\rightarrow 1$ for $r_d\geq 1/\sqrt{d}$,
where $I_{0}=\{s_{1}+1, s_{1}+2, \cdots, s=s_{1}+s_{2}, s+1, \cdots, n\}$.
Then, wpg1,
\begin{align*}
\tilde{\bd{\beta}} - \bd{\beta}^{\star}
& =
R_{1} + R_{2} + T_{0}^{-1}T_{1} + T_{0}^{-1}T_{2},
\end{align*}
where
$
R_{1}
=
(\bd{X}_{\hat I_{0}}^{T}\bd{X}_{\hat I_{0}})^{-1}\bd{X}_{\hat I_{0}}^{T}\bd{Y}_{\hat I_{0}}\{\hat{I}_{0}\not=I_{0}\}
$,
$
R_{2}
=
-(\bd{X}_{I_{0}}^{T}\bd{X}_{I_{0}})^{-1}\bd{X}_{I_{0}}^{T}\bd{Y}_{I_{0}}\{\hat{I}_{0}\not=I_{0}\}
$
and $T_{i}$'s are defined in the proof of Theorem
\ref{thm5.2}.
Then,
\begin{align*}
r_d\normE{\tilde{\bd{\beta}} - \bd{\beta}^{\star}}
& \leq
r_d\normE{R_{1}}
+
r_d\normE{R_{2}}
+
\normFd{T_{0}^{-1}}
r_d\sqrt{d}\normE{T_{1}}
+
\normFd{T_{0}^{-1}}
r_d\sqrt{d}\normE{T_{2}}.
\end{align*}
Since $P(\normEd{R_{1}}=0)\geq P\{\hat{I}_{0} = I_{0}\} \rightarrow 1$,
we have $R_{1} = o_{P}(1)$.
Similarly, $R_{2} = o_{P}(1)$.
By the proof of Theorem
\ref{thm5.2},
$\normFd{T_{0}^{-1}}$ is bounded and
$r_d\sqrt{d}\normE{T_{i}}\ConvProb 0$
for $i=1,2$.
Thus, $\tilde{\bd{\beta}} \ConvProb \bd{\beta}^{\star}$
wrt $r_d\normE{\cdot}$
and  $r_d\geq 1/\sqrt{d}$.
\end{proof}

Finally,
we provide some additional results on
the asymptotic distributions of
$\hat{\bd{\beta}}$ and $\tilde{\bd{\beta}}$
with a different scaling.
Specifically,
the scaling in Section \ref{sec:Diverging number of structural parameters}
is $\sqrt{n}\bd{A}_{n}$.
Next, we consider
another natural scaling
$\sqrt{n}\bd{A}_{n}\bd{\Sigma}_{X}^{1/2}$.

\begin{thm}[Asymptotic Distribution on $\hat{\bd{\beta}}$]
\label{thm5.3}
Suppose assumptions (D'), (D''), (E), (F) and (G) hold.
If $d^{6}\log d = o(n)$, $s_{1}=o(\sqrt{n}/(\lambda d\kappa_{n}))$ and $s_{2}=o(\sqrt{n}/(d\kappa_{n}\gamma_{n}))$,
then
$$
\sqrt{n}\bd{A}_{n}\bd{\Sigma}_{X}^{1/2}(\hat{\bd{\beta}}_{n} - \bd{\beta}^{\star})
\ConvDist
N(0, \sigma^{2}\bd{G}).
$$
\end{thm}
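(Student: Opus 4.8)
\textbf{Proof proposal for Theorem \ref{thm5.3}.}

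The plan is to mirror exactly the structure of the proof of Theorem \ref{thm3.4:d:infty:alternative:Paper}, changing only the scaling matrix from $\sqrt{n}\bd{A}_{n}$ to $\sqrt{n}\bd{A}_{n}\bd{\Sigma}_{X}^{1/2}$, and carefully tracking where the extra factor $\bd{\Sigma}_{X}^{1/2}$ costs us. First I would invoke Lemma \ref{PC.Paper.d.infty.index.sets.consistency} (valid since $\lambda$ satisfies (\ref{eq5.1}), which follows from the hypotheses here), so that wpg1 the analytic expression (\ref{eq3.2}) holds, giving
$$
\sqrt{n}\bd{A}_{n}\bd{\Sigma}_{X}^{1/2}(\hat{\bd{\beta}}_{n}-\bd{\beta}^{\star}) = W_{1}+W_{2}+W_{3}-W_{4},
$$
where $W_{i}=\bd{C}_{n}T_{i}$ with $\bd{C}_{n}=\sqrt{n}\bd{A}_{n}\bd{\Sigma}_{X}^{1/2}T_{0}^{-1}$ and the $T_{i}$'s are as in the proof of Theorem \ref{thm5.2}. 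I would then show $W_{2}\ConvDist N(0,\sigma^{2}\bd{G})$ and that $W_{1},W_{3},W_{4}$ are $o_{P}(1)$ in $\normE{\cdot}$; Slutsky's lemma finishes the argument.

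For the negligible terms: writing $W_{i}$ through $\bd{C}_{n}$, one bounds $\normE{W_{i}}\le \sqrt{n}\,\normF{\bd{A}_{n}}\,\normFd{\bd{\Sigma}_{X}^{1/2}}\,\sqrt{d}\,\normFd{T_{0}^{-1}}\,\normE{T_{i}}$. Here $\normF{\bd{A}_{n}}$ is bounded by assumption \assum{(D)}; $\normFd{\bd{\Sigma}_{X}^{1/2}}$ is bounded — this is where \assum{(B2)} (boundedness of $\normFd{\bd{\bd{\Sigma}}_{X}}$) is used, combined with Lemma \ref{PC.Paper.Lemma.Inequality.Matrix.Square.Root.Differentce} or a direct eigenvalue bound using \assum{(B1')}; and $\normFd{T_{0}^{-1}}$ is bounded wpg1 by Lemmas \ref{PC.Paper.Inverse.Matrix.Convergence.Probability.d.Infity} and \ref{PC.Paper.Sample.Covariance.Matrix.Convergence.d.infty} together with \assum{(B1)} (implied by \assum{(B1')}) for $d=o(n^{1/3})$. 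Then wpg1 $\normE{T_{1}}\le s_{2}\kappa_{n}\gamma_{n}/n$, so $\normE{W_{1}}\lesssim \sqrt{d/n}\,s_{2}\kappa_{n}\gamma_{n}$, which is $o(1)$ precisely because $s_{2}=o(\sqrt{n}/(d\kappa_{n}\gamma_{n}))$ — note the extra factor $d$ versus the $\sqrt{d}$ in Theorem \ref{thm3.4:d:infty:alternative:Paper}, absorbed here by the stronger hypothesis. Similarly $\normE{W_{3}},\normE{W_{4}}\lesssim \sqrt{d}\,\lambda s_{1}\kappa_{n}/\sqrt{n}=o(1)$ from $s_{1}=o(\sqrt{n}/(\lambda d\kappa_{n}))$.

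For the main term $W_{2}$: I would split $W_{2}=W_{21}+W_{22}$ with $W_{21}=\sqrt{n}\bd{A}_{n}\bd{\Sigma}_{X}^{1/2}\bd{\Sigma}_{X}^{-1}T_{2} = \sqrt{n}\bd{A}_{n}\bd{\Sigma}_{X}^{-1/2}T_{2}$ and $W_{22}=\sqrt{n}\bd{A}_{n}\bd{\Sigma}_{X}^{1/2}(T_{0}^{-1}-\bd{\Sigma}_{X}^{-1})T_{2}$. For $W_{21}$, write $\bd{Z}_{n,i}=(n-s_{1})^{-1/2}\bd{A}_{n}\bd{\Sigma}_{X}^{-1/2}\bd{X}_{i}\epsilon_{i}$; the covariance sum telescopes to $\sigma^{2}\bd{A}_{n}\bd{A}_{n}^{T}=\sigma^{2}\bd{G}_{n}\to\sigma^{2}\bd{G}$ by \assum{(D)}, and the Lindeberg/Lyapunov fourth-moment check goes through using $\E(\bd{X}_{0}^{T}\bd{\Sigma}_{X}^{-1}\bd{X}_{0})^{2}\lesssim d^{2}$ from \assum{(B1')} and \assum{(C)}/\assum{(E)} for $d=o(\sqrt{n})$, exactly as in Theorem \ref{thm3.4:d:infty:alternative:Paper}. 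For $W_{22}$, bound $\normE{W_{22}}\le \normF{\bd{A}_{n}}\,\normFd{\bd{\Sigma}_{X}^{1/2}}\,d\,(d\log d)^{1/2}\,\normF{T_{0}^{-1}-\bd{\Sigma}_{X}^{-1}}\cdot(d\log d)^{-1/2}\normE{\sqrt{n}T_{2}}$; the middle factor is $o_{P}(1)$ by Lemmas \ref{PC.Paper.Inverse.Matrix.Convergence.Probability.d.Infity} and \ref{PC.Paper.Sample.Covariance.Matrix.Convergence.d.infty} when $d^{6}\log d=o(n)$ — this is the source of the $d^{6}\log d$ requirement, one power of $d$ worse than the $d^{5}\log d$ of Theorem \ref{thm3.4:d:infty:alternative:Paper}, again because of the $\bd{\Sigma}_{X}^{1/2}$ factor — and the last factor is $O_{P}(1)$ by Lemma \ref{PC.Paper.Scaled.IID.Sum.Convergence.Rate.d.infty}. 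The main obstacle is bookkeeping the accumulation of powers of $d$ through the $\bd{\Sigma}_{X}^{1/2}$ multiplications: one must verify that $\normFd{\bd{\Sigma}_{X}^{1/2}}$ is genuinely $O(1)$ (needing \assum{(B2)}) rather than growing, and that each $\lesssim$ step does not silently cost an unbudgeted factor of $\sqrt{d}$. Once the dimension accounting is pinned down, everything else is a routine repetition of the earlier proof.
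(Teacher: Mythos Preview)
Your plan is exactly the paper's proof: same decomposition $W_1+W_2+W_3-W_4$, same split $W_2=W_{21}+W_{22}$ with the Lindeberg CLT giving $W_{21}\ConvDist N(0,\sigma^{2}\bd{G})$, and the same remainder bounds. One slip in the dimension bookkeeping you flagged as the main obstacle: converting \emph{both} $\normF{\bd{\Sigma}_X^{1/2}}$ and $\normF{T_0^{-1}}$ to their $\normFd{\cdot}$ versions costs $\sqrt{d}$ each, so the generic bound is $\normE{W_i}\le \sqrt{n}\,d\,\normF{\bd{A}_n}\,\normFd{\bd{\Sigma}_X^{1/2}}\,\normFd{T_0^{-1}}\,\normE{T_i}$ (prefactor $d$, not $\sqrt{d}$), and correspondingly the $W_{22}$ bound should carry $d(\log d)^{1/2}\normF{T_0^{-1}-\bd{\Sigma}_X^{-1}}$ rather than $d(d\log d)^{1/2}$; with these corrections your stated conditions $s_1=o(\sqrt{n}/(\lambda d\kappa_n))$, $s_2=o(\sqrt{n}/(d\kappa_n\gamma_n))$, and $d^{6}\log d=o(n)$ line up exactly.
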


\begin{thm}
[Asymptotic Distribution on $\tilde{\bd{\beta}}$]
\label{thm5.6}
Suppose
the assumptions and conditions
of Theorem
\ref{thm5.3} hold
except the condition $s_{1}=o(\sqrt{n}/(\lambda d\kappa_{n}))$.
Then
$$
\sqrt{n}\bd{A}_{n}\bd{\bd{\Sigma}}_{X}^{1/2}(\tilde{\bd{\beta}} - \bd{\beta}^{\star} )
\ConvDist
N(0, \sigma^{2}\bd{G}).
$$
\end{thm}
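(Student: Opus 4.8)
\textbf{Proof proposal for Theorem \ref{thm5.6}.}

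The plan is to mirror the proof of Theorem \ref{thm3.8:d:infty:alternative:Paper} almost verbatim, replacing the scaling matrix $\bd{A}_{n}$ by $\bd{A}_{n}\bd{\Sigma}_{X}^{1/2}$ throughout. First I would note that, by the partial selection consistency (Theorem \ref{thm5.4}) and the fact that under the hypotheses of Theorem \ref{thm5.3} (via Theorem \ref{thm5.5}) the two-step estimator is consistent, we have $P\{\hat I_{0}=I_{0}\}\to 1$ with $I_{0}=\{s_{1}+1,\dots,n\}$. Hence, wpg1,
$$
\sqrt{n}\bd{A}_{n}\bd{\Sigma}_{X}^{1/2}(\tilde{\bd{\beta}} - \bd{\beta}^{\star})
=
\tilde{R}_{1} + \tilde{R}_{2} + W_{1} + W_{2},
$$
where $\tilde{R}_{1}=\sqrt{n}\bd{A}_{n}\bd{\Sigma}_{X}^{1/2}R_{1}$ and $\tilde{R}_{2}=\sqrt{n}\bd{A}_{n}\bd{\Sigma}_{X}^{1/2}R_{2}$ with $R_{1},R_{2}$ exactly as in the proof of Theorem \ref{thm3.8:d:infty:alternative:Paper}, while $W_{i}=\sqrt{n}\bd{A}_{n}\bd{\Sigma}_{X}^{1/2}T_{0}^{-1}T_{i}$ for $i=1,2$ and $T_{0},T_{1},T_{2}$ are the quantities from the proof of Theorem \ref{thm5.2}. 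Since $P(\normE{\tilde{R}_{1}}=0)\geq P\{\hat I_{0}=I_{0}\}\to 1$, we get $\tilde{R}_{1}=o_{P}(1)$, and similarly $\tilde{R}_{2}=o_{P}(1)$.

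Next I would control $W_{1}$ and $W_{2}$. For $W_{1}$, bound $\normE{W_{1}}\leq \sqrt{nd}\,\normF{\bd{A}_{n}\bd{\Sigma}_{X}^{1/2}}\,\normFd{T_{0}^{-1}}\,\normE{T_{1}}$; here $\normF{\bd{A}_{n}\bd{\Sigma}_{X}^{1/2}}\leq\normF{\bd{A}_{n}}\sqrt{d}\,\normFd{\bd{\Sigma}_{X}^{1/2}}$ is $O(\sqrt d)$ by assumptions \assum{(B2)} and \assum{(D)}, $\normFd{T_{0}^{-1}}$ is bounded wpg1 by Lemmas \ref{PC.Paper.Inverse.Matrix.Convergence.Probability.d.Infity}--\ref{PC.Paper.Sample.Covariance.Matrix.Convergence.d.infty} and \assum{(B1)} for $d=o(n^{1/3})$, and $\normE{T_{1}}\leq s_{2}\kappa_{n}\gamma_{n}/n$ wpg1; this gives $\normE{W_{1}}\lesssim d\, s_{2}\kappa_{n}\gamma_{n}/\sqrt{n}=o(1)$ under $s_{2}=o(\sqrt{n}/(d\kappa_{n}\gamma_{n}))$. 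The term $W_{2}$ is the one carrying the limit: split $W_{2}=W_{21}+W_{22}$ with $W_{21}=\sqrt{n}\bd{A}_{n}\bd{\Sigma}_{X}^{1/2}\bd{\Sigma}_{X}^{-1}T_{2}=\sqrt{n}\bd{A}_{n}\bd{\Sigma}_{X}^{-1/2}T_{2}$ and $W_{22}=\sqrt{n}\bd{A}_{n}\bd{\Sigma}_{X}^{1/2}(T_{0}^{-1}-\bd{\Sigma}_{X}^{-1})T_{2}$. For $W_{21}$, apply the Lindeberg--Feller CLT to $\sum_{i=s_{1}+1}^{n}\bd{Z}_{n,i}$ with $\bd{Z}_{n,i}=(n-s_{1})^{-1/2}\bd{A}_{n}\bd{\Sigma}_{X}^{-1/2}\bd{X}_{i}\epsilon_{i}$: the covariance sum equals $\sigma^{2}\bd{A}_{n}\bd{\Sigma}_{X}^{-1/2}\bd{\Sigma}_{X}\bd{\Sigma}_{X}^{-1/2}\bd{A}_{n}^{T}=\sigma^{2}\bd{G}_{n}\to\sigma^{2}\bd{G}$ by \assum{(D)}, and the Lyapunov-type fourth-moment check uses $\E(\bd{X}_{0}^{T}\bd{\Sigma}_{X}^{-1}\bd{A}_{n}^{T}\bd{A}_{n}\bd{\Sigma}_{X}^{-1}\bd{X}_{0})^{2}\lesssim d^{2}\lambda_{\max}(\bd{G}_{n})\lambda_{\min}^{-2}(\bd{\Sigma}_{X})\kappa_{X}^{2}$ under \assum{(B1')},\assum{(C)},\assum{(D)} and $d=o(\sqrt n)$, exactly as for $V_{21}$ in Theorem \ref{thm3.4:d:infty:alternative:Paper} but with $\bd{\Sigma}_{X}^{-1/2}$ in place of $\bd{\Sigma}_{X}^{-1}$. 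For $W_{22}$, write $\normE{W_{22}}\lesssim \normF{\bd{A}_{n}}\,(d^{2}\log d)^{1/2}\normF{T_{0}^{-1}-\bd{\Sigma}_{X}^{-1}}\,(d\log d)^{-1/2}\normE{\sqrt n\,T_{2}}$; the middle factor is $o_{P}(1)$ for $d^{6}\log d=o(n)$ (the stronger dimension requirement here, arising from the extra $\sqrt d$ in $\bd{\Sigma}_{X}^{1/2}$ compared with the $\bd{A}_{n}$-scaling), and the last factor is $O_{P}(1)$ by Lemma \ref{PC.Paper.Scaled.IID.Sum.Convergence.Rate.d.infty}. Finally, Slutsky's lemma assembles $\sqrt{n}\bd{A}_{n}\bd{\Sigma}_{X}^{1/2}(\tilde{\bd{\beta}}-\bd{\beta}^{\star})\ConvDist N(0,\sigma^{2}\bd{G})$.

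The main obstacle is bookkeeping the dimension-growth conditions: every appearance of $\bd{\Sigma}_{X}^{1/2}$ costs a factor of roughly $\sqrt d$ in Frobenius norm relative to the $\bd{A}_{n}$-only scaling, and one must verify that the single dropped condition ($s_{1}=o(\sqrt n/(\lambda d\kappa_{n}))$) is genuinely not needed — this follows because the $T_{3},T_{4}$ terms are absent from the decomposition of $\tilde{\bd{\beta}}$ once $\hat I_{0}=I_{0}$ wpg1, so the $s_{1}$-controlling hypothesis plays no role. One should also double-check that $\normFd{\bd{\Sigma}_{X}^{1/2}}$ is bounded, which is where \assum{(B2)} enters; since $\bd{\Sigma}_{X}^{1/2}$ is symmetric positive-semidefinite, $\normF{\bd{\Sigma}_{X}^{1/2}}^{2}=\mathrm{tr}(\bd{\Sigma}_{X})=\sum_{j}\Var[X_{0j}]\leq d\sigma_{X,\max}^{2}$, so $\normFd{\bd{\Sigma}_{X}^{1/2}}$ is bounded under \assum{(E)} (or directly by \assum{(B2)} after Lemma \ref{PC.Paper.Lemma.Inequality.Matrix.Square.Root.Differentce}-type reasoning). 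Everything else is a routine transcription of the Theorem \ref{thm3.8:d:infty:alternative:Paper} argument.
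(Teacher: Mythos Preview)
Your proposal is correct and follows essentially the same route as the paper: decompose $\sqrt{n}\bd{A}_{n}\bd{\Sigma}_{X}^{1/2}(\tilde{\bd{\beta}}-\bd{\beta}^{\star})$ into the two $\tilde{R}$-terms (handled via $P\{\hat I_{0}=I_{0}\}\to1$) plus the $V_{1},V_{2}$ terms from the proof of Theorem~\ref{thm5.3}, and then simply cite that proof for $V_{1}=o_{P}(1)$ and $V_{2}\ConvDist N(0,\sigma^{2}\bd{G})$. The only cosmetic point is that with $\bd{\Sigma}_{X}^{-1/2}$ in the Lindeberg check the bound carries $\lambda_{\min}(\bd{\Sigma}_{X})^{-1}$ rather than $\lambda_{\min}(\bd{\Sigma}_{X})^{-2}$, but this is immaterial.
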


By Theorems
\ref{thm5.3}
and
\ref{thm5.6},
Wald-type confidence regions can be constructed.
In order to validate these confidence regions
with estimated
$\sigma$
and
$\bd{\Sigma}_{X}$,
we need Lemma \ref{lem3.9:d:infty:Paper}
and the following result.
\begin{thm}
[Asymptotic Distributions on $\hat{\bd{\beta}}$ and $\tilde{\bd{\beta}}$ with $\hat{\bd{\Sigma}}_{n}$]
\label{model:three:multivariate:mean:zero:thm:twostage:limitdistribution.general.covariates.estimated.Sigma.errors:d:infty:Paper}
Suppose
the assumptions and conditions
of Theorem
\ref{thm5.3} hold.
If $d^{9}(\log(d))^{2}=o(n)$,
then
$$
\sqrt{n}\bd{A}_{n}\hat{\bd{\Sigma}}_{n}^{1/2}(\hat{\bd{\beta}} - \bd{\beta}^{\star} )
\ConvDist
N(0, \sigma^{2}\bd{G}).
$$
Similarly,
suppose
the assumptions and conditions
of Theorem
\ref{thm5.6} hold.
If $d^{9}(\log(d))^{2}=o(n)$,
then
$$
\sqrt{n}\bd{A}_{n}\hat{\bd{\Sigma}}_{n}^{1/2}(\tilde{\bd{\beta}} - \bd{\beta}^{\star} )
\ConvDist
N(0, \sigma^{2}\bd{G}).$$
\end{thm}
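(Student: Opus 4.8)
\textbf{Proof proposal for Theorem \ref{model:three:multivariate:mean:zero:thm:twostage:limitdistribution.general.covariates.estimated.Sigma.errors:d:infty:Paper}.}

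The plan is to follow exactly the template used in the proof of Theorem \ref{model:three:multivariate:mean:zero:thm:twostage:limitdistribution.general.covariates.estimated.Sigma.alternative.errors:d:infty:Paper}, replacing the preconditioning matrix $\hat{\bd{G}}_{X,n}^{-1/2}$ by the postmultiplier $\hat{\bd{\Sigma}}_{n}^{1/2}$. As there, I would only write out the argument for $\hat{\bd{\beta}}$, since the argument for $\tilde{\bd{\beta}}$ is identical once one invokes the partial selection consistency (Theorem \ref{thm5.4}) to replace $\hat I_0$ by $I_0$ wpg1 and absorbs the remainder terms $\tilde R_1,\tilde R_2$ that vanish with probability going to one. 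First I would write
$$
\sqrt{n}\bd{A}_{n}\hat{\bd{\Sigma}}_{n}^{1/2}(\hat{\bd{\beta}}_{n} - \bd{\beta}^{\star}) = M + R,
$$
where $M = \sqrt{n}\bd{A}_{n}\bd{\Sigma}_{X}^{1/2}(\hat{\bd{\beta}}_{n} - \bd{\beta}^{\star})$, which converges to $N(0,\sigma^{2}\bd{G})$ by Theorem \ref{thm5.3}, and $R = \sqrt{n}\bd{A}_{n}(\hat{\bd{\Sigma}}_{n}^{1/2} - \bd{\Sigma}_{X}^{1/2})(\hat{\bd{\beta}}_{n} - \bd{\beta}^{\star})$. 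By Slutsky it suffices to show $R \ConvProb 0$ wrt $\normE{\cdot}$.

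The key step is to bound $\normF{\hat{\bd{\Sigma}}_{n}^{1/2} - \bd{\Sigma}_{X}^{1/2}}$. Here I would apply Lemma \ref{PC.Paper.Lemma.Inequality.Matrix.Square.Root.Differentce} with $p=2$ and $m=d$, giving
$$
\normF{\hat{\bd{\Sigma}}_{n}^{1/2} - \bd{\Sigma}_{X}^{1/2}} \leq (d^{1/2}\normF{\hat{\bd{\Sigma}}_{n} - \bd{\Sigma}_{X}})^{1/2},
$$
which by Lemma \ref{PC.Paper.Sample.Covariance.Matrix.Convergence.d.infty} (applied with a suitable $r_d$) is $o_{P}(1)$ at a rate controlled by $d$. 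This is the place where the stronger dimension restriction $d^{9}(\log d)^{2}=o(n)$ enters, precisely because the square-root inequality loses a factor of $d^{1/2}$ relative to $\normF{\hat{\bd{\Sigma}}_{n}-\bd{\Sigma}_{X}}$ and one more factor of $d^{1/2}$ compared to the $\hat{\bd{G}}_{X,n}^{-1/2}$ case (which works with the fixed $q\times q$ matrix $\bd{G}_{X,n}$ rather than the $d\times d$ matrix $\bd{\Sigma}_X$). Then I would decompose $R = R_1+R_2+R_3-R_4$ with $R_i = \sqrt{n}\bd{A}_{n}(\hat{\bd{\Sigma}}_{n}^{1/2} - \bd{\Sigma}_{X}^{1/2})T_0^{-1}T_i$, reusing the $T_i$ notation from the proof of Theorem \ref{thm5.2}, and handle each $R_i$ exactly as $R_i$ was handled in the $\hat{\bd{G}}_{X,n}$ proof: for $R_1$ use $\normE{T_1}\leq s_2\kappa_n\gamma_n/n$ together with $\normFd{T_0^{-1}}$ bounded (Lemmas \ref{PC.Paper.Inverse.Matrix.Convergence.Probability.d.Infity}, \ref{PC.Paper.Sample.Covariance.Matrix.Convergence.d.infty} and assumption \assum{(B1)}); for $R_2$ split off a factor $(d\log d)^{-1/2}\normE{\sqrt{n}T_2}=O_P(1)$ via Lemma \ref{PC.Paper.Scaled.IID.Sum.Convergence.Rate.d.infty}; and for $R_3,R_4$ use $\normE{T_3},\normE{T_4}\leq \lambda s_1\kappa_n/n$. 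Tracking the powers of $d$ and $\log d$ absorbed into each of these bounds against the respective conditions $d^{6}\log d=o(n)$, $s_1=o(\sqrt n/(\lambda d\kappa_n))$, $s_2=o(\sqrt n/(d\kappa_n\gamma_n))$ from Theorem \ref{thm5.3}, plus the new $d^{9}(\log d)^{2}=o(n)$, yields $R\ConvProb 0$.

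The main obstacle is the bookkeeping in the $R_2$ term: one must verify that the quantity $d\cdot(\log d)^{1/2}\cdot(\normF{\hat{\bd{\Sigma}}_{n}-\bd{\Sigma}_{X}})^{1/2}$ — coming from combining the square-root loss $(d^{1/2}\normF{\hat\bd\Sigma_n-\bd\Sigma_X})^{1/2}$ with the $(d\log d)^{1/2}$ needed to normalize $\sqrt n T_2$ — is $o_P(1)$, which forces $d^{2}(\log d)\cdot d^{1/2}\normF{\hat\bd\Sigma_n-\bd\Sigma_X}=o_P(1)$, i.e. roughly $d^{5/2}(\log d)\normF{\hat\bd\Sigma_n-\bd\Sigma_X}=o_P(1)$; since $\normF{\hat\bd\Sigma_n-\bd\Sigma_X}=O_P(d^2/\sqrt n)$ by Lemma \ref{PC.Paper.Sample.Covariance.Matrix.Convergence.d.infty}, this needs $d^{9/2}(\log d)/\sqrt n\to 0$, i.e. $d^{9}(\log d)^2=o(n)$, matching the hypothesis. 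I would double-check the exact exponents rather than trusting this sketch, but structurally nothing new is required beyond what already appears in the alternative-errors proof; everything else is a direct transcription.
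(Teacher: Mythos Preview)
Your proposal is correct and matches the paper's proof essentially line by line: the same $M+R$ split, the same use of Lemma \ref{PC.Paper.Lemma.Inequality.Matrix.Square.Root.Differentce} with $m=d$ to bound $\normF{\hat{\bd{\Sigma}}_{n}^{1/2}-\bd{\Sigma}_{X}^{1/2}}$, the same $R_1,\dots,R_4$ decomposition, and the same identification of $R_2$ as the term that forces $d^{5/2}(\log d)\normF{\hat{\bd{\Sigma}}_n-\bd{\Sigma}_X}=o_P(1)$ and hence $d^{9}(\log d)^{2}=o(n)$. Your bookkeeping sketch for the exponents is exactly what the paper does, so there is nothing further to add.
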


\begin{rem}
A comparison of the assumptions and conditions of
Theorem
\ref{model:three:multivariate:mean:zero:thm:twostage:limitdistribution.general.covariates.estimated.Sigma.errors:d:infty:Paper}
with those of
Theorems
\ref{thm5.3}
and
\ref{thm5.6}
reveals that
a much stronger requirement on $d$ is needed to ensure
$\hat{\bd{\Sigma}}_{n}$ is a good estimator of $\bd{\Sigma}_{X}$.
Precisely,
the former require that
$d^{9}(\log(d))^{2}=o(n)$
and the latter
$d^{6}\log(d)=o(n)$.
This stronger requirement on $d$ is a price paid for estimating $\bd{\Sigma}_{X}$.
\end{rem}

\begin{rem}
The condition on the dimension $d$ in
Theorems
\ref{thm3.4:d:infty:alternative:Paper}
and
\ref{thm3.8:d:infty:alternative:Paper}
is $d^{5}\log(d)=o(n)$,
slightly weaker than
the condition
$d^{6}\log(d)=o(n)$ in
Theorems
\ref{thm5.3}
and
\ref{thm5.6}.
Accordingly,
The condition on the dimension $d$ in
Theorem
\ref{model:three:multivariate:mean:zero:thm:twostage:limitdistribution.general.covariates.estimated.Sigma.alternative.errors:d:infty:Paper}
is $d^{8}(\log(d))^{2}=o(n)$,
slightly weaker than the condition $d^{9}(\log(d))^{2}=o(n)$
in
Theorem
\ref{model:three:multivariate:mean:zero:thm:twostage:limitdistribution.general.covariates.estimated.Sigma.errors:d:infty:Paper}.
This means that the scaling $\sqrt{n}\bd{A}_{n}$
is slightly better than the scaling $\sqrt{n}\bd{A}_{n}\bd{\Sigma}_{X}^{1/2}$
in terms of the condition on $d$.
Further, the former scaling is more suitable
for constructing confidence regions for
some entries of $\bd{\beta}^{\star}$.
\end{rem}

At the end of this supplement,
we provide the proofs of the above theorems.

\begin{proofName}
\subsection{Proof of Theorem \ref{thm5.3}}
\end{proofName}
\begin{proof}[Proof of Theorems
\ref{thm5.3}]
Reuse the notations $T_{i}$'s in the proof of Theorems
\ref{thm5.2},
from which,
$$
\sqrt{n}\bd{A}_{n}\bd{\Sigma}_{X}^{1/2}(\hat{\bd{\beta}}_{n} - \bd{\beta}^{\star})
=V_{1}+V_{2}+V_{3}-V_{4},
$$
where
$V_{i}=\bd{B}_{n}T_{i}$ for $i=1,2,3,4$
and
$\bd{B}_{n}=\sqrt{n}\bd{A}_{n}\bd{\Sigma}_{X}^{1/2}
T_{0}^{-1}$.
We will show $V_{2}\ConvDist N(0, \sigma^{2}\bd{G})$
and other $V_{i}$'s are $o_{P}(1)$,
from which the desired result follows by applying Slutsky's lemma.

\OnItem{On $V_{1}$.}
We have
$
\normE{V_{1}}
\leq
\sqrt{n}d
\normF{\bd{A}_{n}}
\normFd{\bd{\Sigma}_{X}^{1/2}}
\normFd{T_{0}^{-1}}
\normE{T_{1}}
$.
By Assumption (F),
$\normF{\bd{A}_{n}}$
is bounded.
By Assumption (D''),
$\normFd{\bd{\Sigma}_{X}^{1/2}}$
is bounded.
By Lemmas
\ref{PC.Paper.Inverse.Matrix.Convergence.Probability.d.Infity}
and
\ref{PC.Paper.Sample.Covariance.Matrix.Convergence.d.infty}
and Assumption (D),
for $d=o(n^{1/3})$, wpg1,
$\normFd{T_{0}^{-1}}$
is bounded.
Further, wpg1,
$\normE{T_{1}}
\leq
\frac{1}{n}s_{2}\kappa_{n}\gamma_{n}$.
Then,
$
\normE{V_{1}}
\lesssim
\frac{1}{\sqrt{n}}s_{2}d\kappa_{n}\gamma_{n},
$
where $\lesssim$ means that the left side is bounded by a constant times the right side,
as noted at the beginning of the appendix.
Thus,
$\normE{V_{1}}=o_{P}(1)$
for $s_{2}=o(\sqrt{n}/(d\kappa_{n}\gamma_{n}))$.

\OnItem{On $V_{2}$.}
We have
$V_{2}=V_{21}+ V_{22}$,
where
\begin{align*}
V_{21}
& =
\sqrt{n}\bd{A}_{n}\bd{\bd{\Sigma}}_{X}^{-1/2}
T_{2}, ~~~
V_{22}
=
\sqrt{n}\bd{A}_{n}\bd{\bd{\Sigma}}_{X}^{1/2}
(T_{0}^{-1} - \bd{\bd{\Sigma}}_{X}^{-1})
T_{2}.
\end{align*}

First, consider $V_{21}$.
We have
$
V_{21}
=
\sqrt{(n-s_{1})/n}
\sum_{i=s_{1}+1}^{n}
\bd{Z}_{n,i}
$,
where
\begin{equation*}
\bd{Z}_{n,i}
=
\frac{1}{\sqrt{n-s_{1}}}
\bd{A}_{n}\bd{\bd{\Sigma}}_{X}^{-1/2}\bd{X}_{i}\epsilon_{i}.
\end{equation*}

On one hand, for every $\delta>0$,
$
\sum_{i=s_{1}+1}^{n}
\E\normE{\bd{Z}_{n,i}}^{2}
\{\normE{\bd{Z}_{n,i}} > \delta\}
\leq
(n-s_{1})
\E\normE{\bd{Z}_{n,0}}^{4}/\delta^{2}
$
and
\begin{align*}
\E\normE{\bd{Z}_{n,0}}^{4}
& =
\frac{1}{(n-s_{1})^{2}}
\E
\epsilon_{0}^{4}
\E
(
\bd{X}_{0}^{T}
\bd{\bd{\Sigma}}_{X}^{-1/2}
\bd{A}_{n}^{T}
\bd{A}_{n}
\bd{\bd{\Sigma}}_{X}^{-1/2}
\bd{X}_{0}
)^{2} \\
& \leq
\frac{1}{(n-s_{1})^{2}}
\E
\epsilon_{0}^{4}
\lambda_{\max}(\bd{G}_{n})
\lambda_{\min}(\bd{\bd{\Sigma}}_{X})^{-1}
\E
(
\bd{X}_{0}^{T}
\bd{X}_{0}
)^{2} \\
& \leq
\frac{d^{2}}{(n-s_{1})^{2}}
\E
\epsilon_{0}^{4}
\lambda_{\max}(\bd{G}_{n})
\lambda_{\min}(\bd{\bd{\Sigma}}_{X})^{-1}
(\frac{1}{d}\sum_{j=1}^{d}(\E X_{0j}^{4})^{1/2})^{2}.
\end{align*}
Thus, by assumptions (D'), (E) and (F),
$
\sum_{i=s_{1}+1}^{n}
\E\normE{\bd{Z}_{n,i}}^{2}
\{\normE{\bd{Z}_{n,i}} > \delta\}
\rightarrow 0
$
for $d=o(\sqrt{n})$.
On the other hand,
$
\sum_{i=s_{1}+1}^{n}
\Cov(\bd{Z}_{n,i})
=
\sigma^{2}\bd{A}_{n}\bd{A}_{n}^{T}
\rightarrow
\sigma^{2}\bd{G}.
$
Thus, by central limit theorem
(see, for example, Proposition 2.27 in \cite{Vaart1998}),
$
V_{21}
\ConvDist
N(0, \sigma^{2}\bd{G})
$.
Next, consider $V_{22}$.
We have
$$
\normE{V_{22}}
\leq
\normF{\bd{A}_{n}}
\normFd{\bd{\bd{\Sigma}}_{X}^{1/2}}
d(\log(d))^{1/2}\normF{T_{0}^{-1} - \bd{\bd{\Sigma}}_{X}^{-1}}
(d\log(d))^{-1/2}\normE{\sqrt{n}T_{2}}.
$$
By Assumption (F), $\normF{\bd{A}_{n}}$ is $O(1)$;
By Assumption (D''), $\normFd{\bd{\bd{\Sigma}}_{X}^{1/2}}$ is $O(1)$;
by Lemmas \ref{PC.Paper.Inverse.Matrix.Convergence.Probability.d.Infity} and
        \ref{PC.Paper.Sample.Covariance.Matrix.Convergence.d.infty},
        $d(\log(d))^{1/2}\normF{T_{0}^{-1} - \bd{\bd{\Sigma}}_{X}^{-1}}$ is $o_{P}(1)$
        for $d^{6}\log(d) = o(n)$;
By Lemma \ref{PC.Paper.Scaled.IID.Sum.Convergence.Rate.d.infty},
        together with Assumption (G),
        $(d\log(d))^{-1/2}\normE{\sqrt{n}T_{2}}=(d\log(d))^{-1/2}\normE{\frac{1}{\sqrt{n}}\mathbb{S}_{s_{1}+1,n}^{\epsilon}}$ is $O_{P}(1)$ for $d=o(\sqrt{n})$.
Thus,
$V_{22}\ConvProb 0$.
By slutsky's lemma,
$V_{2}\ConvDist
N(0, \sigma^{2}\bd{G})$.

\OnItem{On $V_{3}$ and $V_{4}$.}
First consider $V_{3}$.
By noting that $s_{1}=o(\sqrt{n}/(\lambda d\kappa_{n}))$,
wpg1,
$
\normE{V_{3}}
\leq
d\sqrt{n}
\normF{\bd{A}_{n}}
\normFd{\bd{\Sigma}_{X}^{1/2}}
\normFd{T_{0}^{-1}}
\normE{T_{3}}
\lesssim
d\lambda s_{1}\kappa_{n}/\sqrt{n}
\rightarrow 0.
$
Thus, $\normE{V_{3}}=o_{P}(1)$.
In the same way, $\normE{V_{4}}=o_{P}(1)$.
This completes the proof.
\end{proof}

\begin{proofLong}
\begin{proof}[Long Proof of Theorems
\ref{thm5.3}]
We reuse the notations on $T_{i}$'s in the proof of Theorems
\ref{thm5.2},
from which,
$$
\sqrt{n}\bd{A}_{n}\bd{\Sigma}_{X}^{1/2}(\hat{\bd{\beta}}_{n} - \bd{\beta}^{\star})
=V_{1}+V_{2}+V_{3}-V_{4},
$$
where
\begin{align*}
V_{1} &=
\sqrt{n}\bd{A}_{n}\bd{\Sigma}_{X}^{1/2}
T_{0}^{-1}T_{1}, \\
V_{2} &=
\sqrt{n}\bd{A}_{n}\bd{\Sigma}_{X}^{1/2}
T_{0}^{-1}T_{2}, \\
V_{3} &=
\sqrt{n}\bd{A}_{n}\bd{\Sigma}_{X}^{1/2}
T_{0}^{-1}T_{3}, \\
V_{4} &=
\sqrt{n}\bd{A}_{n}\bd{\Sigma}_{X}^{1/2}
T_{0}^{-1}T_{4}.
\end{align*}

Next we derive the asymptotic properties of $V_{i}$'s,
from which the desired result follows by applying Slutsky's lemma.

\lineProof

\OnItem{On $V_{1}$.}
We have
\begin{align*}
\normE{V_{1}}
& =
\normE{\sqrt{n}\bd{A}_{n}\bd{\Sigma}_{X}^{1/2}
T_{0}^{-1}T_{1}} \\
& \leq
\sqrt{n}
\normF{\bd{A}_{n}}
\normF{\bd{\Sigma}_{X}^{1/2}}
\normF{T_{0}^{-1}}
\normE{T_{1}} \\
& =
\sqrt{n}d
\normF{\bd{A}_{n}}
\normFd{\bd{\Sigma}_{X}^{1/2}}
\normFd{T_{0}^{-1}}
\normE{T_{1}}.
\end{align*}

By Assumption (F),
$\normF{\bd{A}_{n}}$
is bounded.

By Assumption (D''),
$\normFd{\bd{\Sigma}_{X}^{1/2}}$
is bounded.

By Lemmas
\ref{PC.Paper.Inverse.Matrix.Convergence.Probability.d.Infity}
and
\ref{PC.Paper.Sample.Covariance.Matrix.Convergence.d.infty}
and Assumption (D),
for $d=o(n^{1/3})$, wpg1,
$\normFd{T_{0}^{-1}}$
is bounded.

We have, wpg1,
\begin{equation*}
\normE{T_{1}}
=
\normE{\frac{1}{n}\sum_{i=s_{1}+1}^{s}\bd{X}_{i}\mu_{i}^{\star}}
\leq
\frac{1}{n}s_{2}\kappa_{n}\gamma_{n}.
\end{equation*}

Then,
\begin{align*}
\normE{V_{1}}
& \lesssim
\frac{1}{\sqrt{n}}s_{2}d\kappa_{n}\gamma_{n},
\end{align*}
where $\lesssim$ means that the left side is bounded by a constant times the right side.

Thus,
$\normE{V_{1}}=o_{P}(1)$
for $s_{2}=o(\sqrt{n}/(d\kappa_{n}\gamma_{n}))$.

\lineProof

\OnItem{On $V_{2}$.}
We have
$V_{2}=V_{21}+ V_{22}$,
where
\begin{align*}
V_{21}
& =
\sqrt{n}\bd{A}_{n}\bd{\bd{\Sigma}}_{X}^{-1/2}
T_{2}, \\
V_{22}
& =
\sqrt{n}\bd{A}_{n}\bd{\bd{\Sigma}}_{X}^{1/2}
(T_{0}^{-1} - \bd{\bd{\Sigma}}_{X}^{-1})
T_{2}. \\
\end{align*}

First, consider $V_{21}$.
We have
\begin{align*}
V_{21}
& =
\sqrt{n}\bd{A}_{n}\bd{\bd{\Sigma}}_{X}^{-1/2}T_{2} \\
& =
\sqrt{n}\bd{A}_{n}\bd{\bd{\Sigma}}_{X}^{-1/2}
\frac{1}{n}\sum_{i=s_{1}+1}^{n}\bd{X}_{i}\epsilon_{i} \\
& =
\frac{\sqrt{n-s_{1}}}{\sqrt{n}}
\sum_{i=s_{1}+1}^{n}
\bd{Z}_{n,i},
\end{align*}
where
\begin{equation*}
\bd{Z}_{n,i}
=
\frac{1}{\sqrt{n-s_{1}}}
\bd{A}_{n}\bd{\bd{\Sigma}}_{X}^{-1/2}\bd{X}_{i}\epsilon_{i}.
\end{equation*}

For every $\delta>0$,
\begin{align*}
\sum_{i=s_{1}+1}^{n}
\E\normE{\bd{Z}_{n,i}}^{2}
\{\normE{\bd{Z}_{n,i}} > \delta\}
& \leq
\sum_{i=s_{1}+1}^{n}
\E\normE{\bd{Z}_{n,i}}^{4}/\delta^{2}
=
(n-s_{1})
\E\normE{\bd{Z}_{n,0}}^{4}/\delta^{2},
\end{align*}
and
\begin{align*}
\E\normE{\bd{Z}_{n,0}}^{4}
& =
\E(\normE{\bd{Z}_{n,0}}^{2})^{2} \\
& =
\E(
\frac{1}{n-s_{1}}
\epsilon_{0}^{2}
\bd{X}_{0}^{T}
\bd{\bd{\Sigma}}_{X}^{-1/2}
\bd{A}_{n}^{T}
\bd{A}_{n}
\bd{\bd{\Sigma}}_{X}^{-1/2}
\bd{X}_{0}
)^{2} \\
& =
\frac{1}{(n-s_{1})^{2}}
\E
\epsilon_{0}^{4}
\E
(
\bd{X}_{0}^{T}
\bd{\bd{\Sigma}}_{X}^{-1/2}
\bd{A}_{n}^{T}
\bd{A}_{n}
\bd{\bd{\Sigma}}_{X}^{-1/2}
\bd{X}_{0}
)^{2} \\
& \leq
\frac{1}{(n-s_{1})^{2}}
\E
\epsilon_{0}^{4}
\lambda_{\max}(\bd{G}_{n})
\lambda_{\min}(\bd{\bd{\Sigma}}_{X})^{-1}
\E
(
\bd{X}_{0}^{T}
\bd{X}_{0}
)^{2} \\
& \leq
\frac{d^{2}}{(n-s_{1})^{2}}
\E
\epsilon_{0}^{4}
\lambda_{\max}(\bd{G}_{n})
\lambda_{\min}(\bd{\bd{\Sigma}}_{X})^{-1}
\frac{1}{d^{2}}
\E
(
\bd{X}_{0}^{T}
\bd{X}_{0}
)^{2}\\
& \leq
\frac{d^{2}}{(n-s_{1})^{2}}
\E
\epsilon_{0}^{4}
\lambda_{\max}(\bd{G}_{n})
\lambda_{\min}(\bd{\bd{\Sigma}}_{X})^{-1}
\frac{1}{d^{2}}
\E
(
\sum_{j=1}^{d}X_{0j}^{2}
)^{2}\\
& \leq
\frac{d^{2}}{(n-s_{1})^{2}}
\E
\epsilon_{0}^{4}
\lambda_{\max}(\bd{G}_{n})
\lambda_{\min}(\bd{\bd{\Sigma}}_{X})^{-1}
\frac{1}{d^{2}}
\sum_{k=1}^{d}\sum_{l=1}^{d}\E X_{0k}^{2}X_{0l}^{2} \\
& \leq
\frac{d^{2}}{(n-s_{1})^{2}}
\E
\epsilon_{0}^{4}
\lambda_{\max}(\bd{G}_{n})
\lambda_{\min}(\bd{\bd{\Sigma}}_{X})^{-1}
\frac{1}{d^{2}}
\sum_{k=1}^{d}\sum_{l=1}^{d}(\E X_{0k}^{4})^{1/2}(\E X_{0l}^{4})^{1/2} \\
& \leq
\frac{d^{2}}{(n-s_{1})^{2}}
\E
\epsilon_{0}^{4}
\lambda_{\max}(\bd{G}_{n})
\lambda_{\min}(\bd{\bd{\Sigma}}_{X})^{-1}
(\frac{1}{d}\sum_{j=1}^{d}(\E X_{0j}^{4})^{1/2})^{2}.
\end{align*}
Then,
\begin{align*}
\sum_{i=s_{1}+1}^{n}
\E\normE{\bd{Z}_{n,i}}^{2}
\{\normE{\bd{Z}_{n,i}} > \delta\}
& \leq
\frac{d^{2}}{n-s_{1}}
\frac{1}{\delta^{2}}
\E
\epsilon_{0}^{4}
\lambda_{\max}(\bd{G}_{n})
\lambda_{\min}(\bd{\bd{\Sigma}}_{X})^{-1}
(\frac{1}{d}\sum_{j=1}^{d}(\E X_{0j}^{4})^{1/2})^{2}.
\end{align*}
Thus, by assumptions (D'), (E) and (F) and for $d=o(\sqrt{n})$,
\begin{equation*}
\sum_{i=s_{1}+1}^{n}
\E\normE{\bd{Z}_{n,i}}^{2}
\{\normE{\bd{Z}_{n,i}} > \delta\}
\rightarrow 0.
\end{equation*}

On the other hand,
\begin{align*}
\sum_{i=s_{1}+1}^{n}
\Cov(\bd{Z}_{n,i})
& =
\sum_{i=s_{1}+1}^{n}
\frac{1}{\sqrt{n-s_{1}}}
\bd{A}_{n}\bd{\bd{\Sigma}}_{X}^{-1/2}
\Cov(\bd{X}_{i}\epsilon_{i})
\bd{\bd{\Sigma}}_{X}^{-1/2}
\bd{A}_{n}^{T}
\frac{1}{\sqrt{n-s_{1}}} \\
& =
\sum_{i=s_{1}+1}^{n}
\frac{1}{n-s_{1}}
\bd{A}_{n}
\bd{\bd{\Sigma}}_{X}^{-1/2}
\bd{\bd{\Sigma}}_{X}\sigma^{2}
\bd{\bd{\Sigma}}_{X}^{-1/2}
\bd{A}_{n}^{T}\\
& =
\sigma^{2}\bd{A}_{n}\bd{A}_{n}^{T}
\rightarrow
\sigma^{2}\bd{G}.
\end{align*}

Thus, by central limit theorem
(see, for example, Proposition 2.27 in \cite{Vaart1998}),
\begin{align*}
V_{21}
& \ConvDist
N(0, \sigma^{2}\bd{G}).\\
\end{align*}

Next, consider $V_{22}$.
We have
\begin{align*}
\normE{V_{22}}
& =
\normE{\sqrt{n}\bd{A}_{n}\bd{\bd{\Sigma}}_{X}^{1/2}
(T_{0}^{-1} - \bd{\bd{\Sigma}}_{X}^{-1})T_{2}} \\
& \leq
\sqrt{n}
\normF{\bd{A}_{n}}
\normF{\bd{\bd{\Sigma}}_{X}^{1/2}}
\normF{T_{0}^{-1} - \bd{\bd{\Sigma}}_{X}^{-1}}
\normE{T_{2}} \\
& =
\normF{\bd{A}_{n}}
d^{1/2}d^{-1/2}\normF{\bd{\bd{\Sigma}}_{X}^{1/2}}
d^{-1}(\log(d))^{-1/2}d(\log(d))^{1/2}\normF{T_{0}^{-1} - \bd{\bd{\Sigma}}_{X}^{-1}}
(d\log(d))^{1/2}(d\log(d))^{-1/2}\normE{\sqrt{n}T_{2}} \\
& =
\normF{\bd{A}_{n}}
d^{-1/2}\normF{\bd{\bd{\Sigma}}_{X}^{1/2}}
d(\log(d))^{1/2}\normF{T_{0}^{-1} - \bd{\bd{\Sigma}}_{X}^{-1}}
(d\log(d))^{-1/2}\normE{\sqrt{n}T_{2}}\\
& =
\normF{\bd{A}_{n}}
\normFd{\bd{\bd{\Sigma}}_{X}^{1/2}}
d(\log(d))^{1/2}\normF{T_{0}^{-1} - \bd{\bd{\Sigma}}_{X}^{-1}}
(d\log(d))^{-1/2}\normE{\sqrt{n}T_{2}}.
\end{align*}
Note that
\begin{itemize}
  \item By Assumption (F), $\normF{\bd{A}_{n}}$ is $O(1)$;
  \item By Assumption (D''), $\normFd{\bd{\bd{\Sigma}}_{X}^{1/2}}$ is $O(1)$;
  \item by Lemmas \ref{PC.Paper.Inverse.Matrix.Convergence.Probability.d.Infity} and
        \ref{PC.Paper.Sample.Covariance.Matrix.Convergence.d.infty},
        $d(\log(d))^{1/2}\normF{T_{0}^{-1} - \bd{\bd{\Sigma}}_{X}^{-1}}$ is $o_{P}(1)$
        for $d^{6}\log(d) = o(n)$;
  \item by Lemma \ref{PC.Paper.Scaled.IID.Sum.Convergence.Rate.d.infty},
        together with Assumption (G),
        $(d\log(d))^{-1/2}\normE{\sqrt{n}T_{2}}=(d\log(d))^{-1/2}\normE{\frac{1}{\sqrt{n}}\mathbb{S}_{s_{1}+1,n}^{\epsilon}}$ is $O_{P}(1)$ for $d=o(\sqrt{n})$.
\end{itemize}
Then,
$V_{22}\ConvProb 0$.

\lineProof

Thus, by slutsky's lemma,
$V_{2}\ConvDist
N(0, \sigma^{2}\bd{G})$.

\lineProof

\OnItem{On $V_{3}$ and $V_{4}$.}
First consider $V_{3}$.
By noting that $s_{1}=o(\sqrt{n}/(\lambda d\kappa_{n}))$,
wpg1,
\begin{align*}
\normE{V_{3}}
& =
\normE{\sqrt{n}\bd{A}_{n}\bd{\Sigma}_{X}^{1/2}
T_{0}^{-1}T_{3}} \\
& \leq
\sqrt{n}
\normF{\bd{A}_{n}}
\normF{\bd{\Sigma}_{X}^{1/2}}
\normF{T_{0}^{-1}}
\normE{T_{3}} \\
& =
d\sqrt{n}
\normF{\bd{A}_{n}}
\normFd{\bd{\Sigma}_{X}^{1/2}}
\normFd{T_{0}^{-1}}
\normE{T_{3}} \\
& \lesssim
d\sqrt{n}\lambda s_{1}\kappa_{n}/n
=
d\lambda s_{1}\kappa_{n}/\sqrt{n}
\rightarrow 0.
\end{align*}
Thus, $\normE{V_{3}}=o_{P}(1)$.
In the same way, $\normE{V_{4}}=o_{P}(1)$.

Therefore, the result of the theorem follows
by Slutsky's lemma.
\end{proof}
\end{proofLong}

\begin{proofName}
\subsection{Proof of Theorem \ref{thm5.4}}
\end{proofName}

\begin{proofName}
\subsection{Proof of Theorem
\ref{thm5.6}}
\end{proofName}
\begin{proof}[Proof of Theorem
\ref{thm5.6}]
From the proof of Theorem
\ref{thm5.5},
we have
$ \sqrt{n}\bd{A}_{n}\bd{\bd{\Sigma}}_{X}^{1/2}(\tilde{\bd{\beta}} - \bd{\beta}^{\star})
=
\tilde{R}_{1}
+
\tilde{R}_{2}
+
V_{1}
+
V_{2}
$,
where
$\tilde{R}_{1} = \sqrt{n}\bd{A}_{n}\bd{\bd{\Sigma}}_{X}^{1/2}R_{1}$,
$\tilde{R}_{2} = \sqrt{n}\bd{A}_{n}\bd{\bd{\Sigma}}_{X}^{1/2}R_{2}$,
and $R_{i}$'s
and $V_{i}$'s are defined in the proofs of Theorems
\ref{thm5.5} and
\ref{thm5.3}.
Since $P(\normE{\tilde{R}_{1}}=0)\geq P\{\hat{I}_{0} = I_{0}\} \rightarrow 1$,
we have $\tilde{R}_{1} = o_{P}(1)$.
Similarly, $\tilde{R}_{2} = o_{P}(1)$.
By the proof of Theorem
\ref{thm5.3},
$V_{1}=o_{P}(1)$ and $V_{2}\ConvDist N(0, \sigma^{2}\bd{G})$.
Thus, the asymptotic distribution of $\tilde{\bd{\beta}}$ is Gaussian by Slutsky's lemma.
\end{proof}

\begin{proofLong}
\begin{proof}[Proof of Theorems
\ref{thm5.5}
and \ref{thm5.6}]
By
Theorem \ref{thm5.2},
$\hat{\bd{\beta}}\ConvProb \bd{\beta}^{\star}$ wrt $r_d\normE{\cdot}$.
By Theorem
\ref{thm5.4},
$P\{\hat{I}_{0}=I_{0}\}\rightarrow 1$ for $r_d\geq 1/\sqrt{d}$,
where $I_{0}=\{s_{1}+1, s_{1}+2, \cdots, s=s_{1}+s_{2}, s+1, \cdots, n\}$.
Then,
\begin{equation*}
\tilde{\bd{\beta}}
=
(\bd{X}_{\hat I_{0}}^{T}\bd{X}_{\hat I_{0}})^{-1}\bd{X}_{\hat I_{0}}^{T}\bd{Y}_{\hat I_{0}}
=
(\bd{X}_{I_{0}}^{T}\bd{X}_{I_{0}})^{-1}\bd{X}_{I_{0}}^{T}\bd{Y}_{I_{0}}
+
R_{1} + R_{2},
\end{equation*}
where
$
R_{1}
=
(\bd{X}_{\hat I_{0}}^{T}\bd{X}_{\hat I_{0}})^{-1}\bd{X}_{\hat I_{0}}^{T}\bd{Y}_{\hat I_{0}}\{\hat{I}_{0}\not=I_{0}\}
$
and
$
R_{2}
=
-(\bd{X}_{I_{0}}^{T}\bd{X}_{I_{0}})^{-1}\bd{X}_{I_{0}}^{T}\bd{Y}_{I_{0}}\{\hat{I}_{0}\not=I_{0}\}
$.
Note that
\begin{align*}
(\bd{X}_{I_{0}}^{T}\bd{X}_{I_{0}})^{-1}\bd{X}_{I_{0}}^{T}\bd{Y}_{I_{0}}
& =
\bd{\beta}^{\star}
+
T_{0}^{-1}T_{1}
+
T_{0}^{-1}T_{2},
\end{align*}
where
$T_{0} = \mathbb{S}_{s_{1}+1,n}/n$,
$
T_{1} =
\mathbb{S}_{S_{12}^{\star}}^{\mu}/n$
and
$
T_{2} =
\mathbb{S}_{s_{1}+1,n}^{\epsilon}/n$.
Thus,
\begin{align*}
\tilde{\bd{\beta}} - \bd{\beta}^{\star}
& =
R_{1} + R_{2} + T_{0}^{-1}T_{1} + T_{0}^{-1}T_{2}.
\end{align*}

\lineProof

Next, we consider the consistency of $\tilde{\bd{\beta}}$.
We have
\begin{align*}
\normE{\tilde{\bd{\beta}} - \bd{\beta}^{\star}}
& \leq
\normE{R_{1}}
+
\normE{R_{2}}
+
\normF{(d^{1/2}T_{0})^{-1}}
\normE{d^{1/2}T_{1}}
+
\normF{(d^{1/2}T_{0})^{-1}}
\normE{d^{1/2}T_{2}},
\end{align*}
which implies
\begin{align*}
r_d\normE{\tilde{\bd{\beta}} - \bd{\beta}^{\star}}
& \leq
r_d\normE{R_{1}}
+
r_d\normE{R_{2}}
+
\normFd{T_{0}^{-1}}
r_d\sqrt{d}\normE{T_{1}}
+
\normFd{T_{0}^{-1}}
r_d\sqrt{d}\normE{T_{2}}.
\end{align*}

We will show that
$r_d\normEd{R_{1}}\ConvProb 0$,
$r_d\normEd{R_{2}}\ConvProb 0$,
$\normFd{T_{0}^{-1}}$ is bounded,
$r_d\sqrt{d}\normE{T_{1}}\ConvProb 0$
and
$r_d\sqrt{d}\normE{T_{2}}\ConvProb 0$.
Then, $\tilde{\bd{\beta}} \ConvProb \bd{\beta}^{\star}$
wrt $r_d\normE{\cdot}$.

\lineProof

\OnItem{On $R_{1}$ and $R_{2}$.}
Since $P(\normEd{R_{1}}=0)\geq P\{\hat{I}_{0} = I_{0}\} \rightarrow 1$,
we have $R_{1} = o_{P}(1)$.
Similarly, $R_{2} = o_{P}(1)$.

\OnItem{On $T_{0}$.}
See the proof of Theorem
\ref{thm5.2}.

\OnItem{On $T_{1}$.}
See the proof of Theorem
\ref{thm5.2}.

\OnItem{On $T_{2}$.}
See the proof of Theorem
\ref{thm5.2}.

Thus, $\tilde{\bd{\beta}}$ is a consistent estimator of $\bd{\beta}^{\star}$
wrt $r_d\normE{\cdot}$ and $r_d\geq 1/\sqrt{d}$.

\lineProof

Next we show the asymptotic normality.
We have
\begin{align*}
\sqrt{n}\bd{A}_{n}\bd{\bd{\Sigma}}_{X}^{1/2}(\tilde{\bd{\beta}} - \bd{\beta}^{\star})
& =
\sqrt{n}\bd{A}_{n}\bd{\bd{\Sigma}}_{X}^{1/2}R_{1}
+ \sqrt{n}\bd{A}_{n}\bd{\bd{\Sigma}}_{X}^{1/2}R_{2}
+ \sqrt{n}\bd{A}_{n}\bd{\bd{\Sigma}}_{X}^{1/2}T_{0}^{-1}T_{1}
+ \sqrt{n}\bd{A}_{n}\bd{\bd{\Sigma}}_{X}^{1/2}T_{0}^{-1}T_{2} \\
& =:
\tilde{R}_{1}
+
\tilde{R}_{2}
+
V_{1}
+
V_{2}.
\end{align*}

\OnItem{On $\tilde{R}_{1}$ and $\tilde{R}_{2}$.}
Since $P(\normE{\tilde{R}_{1}}=0)\geq P\{\hat{I}_{0} = I_{0}\} \rightarrow 1$,
we have $\tilde{R}_{1} = o_{P}(1)$.
Similarly, $\tilde{R}_{2} = o_{P}(1)$.

\OnItem{On $V_{1}$.}
See the proof of Theorem
\ref{thm5.3}.

\OnItem{On $V_{2}$.}
See the proof of Theorem
\ref{thm5.3}.

Thus, we obtain the asymptotic distribution of $\tilde{\bd{\beta}}$ by Slutsky's lemma.
\end{proof}
\end{proofLong}

\begin{proofName}
\subsection{Proof of Theorem \ref{model:three:multivariate:mean:zero:thm:twostage:limitdistribution.general.covariates.estimated.Sigma.errors:d:infty:Paper}}
\end{proofName}
\begin{proof}
[Proof of Theorem \ref{model:three:multivariate:mean:zero:thm:twostage:limitdistribution.general.covariates.estimated.Sigma.errors:d:infty:Paper}]
We only show the result on $\hat{\bd{\beta}}$.
since the result on $\tilde{\bd{\beta}}$ can be obtained in a similar way.
We reuse the definitions of $T_{i}$'s in the proof of Theorems
\ref{thm5.2},
from which,
$$
\sqrt{n}\bd{A}_{n}\hat{\bd{\Sigma}}_{n}^{1/2}(\hat{\bd{\beta}}_{n} - \bd{\beta}^{\star})
= M + R,
$$
where
$ M = \sqrt{n}\bd{A}_{n}\bd{\Sigma}_{X}^{1/2}(\hat{\bd{\beta}}_{n} - \bd{\beta}^{\star})$ and
$R = \sqrt{n}\bd{A}_{n}(\hat{\bd{\Sigma}}_{n}^{1/2}-\bd{\Sigma}_{X}^{1/2})(\hat{\bd{\beta}}_{n} - \bd{\beta}^{\star})$.
By Theorem
\ref{thm5.3},
$M \ConvDist N(0, \sigma^{2}\bd{G})$.
Then, it is sufficient to show that $R\ConvProb 0$ wrt $\normE{\cdot}$.
We have
$$
R
=R_{1}+R_{2}+R_{3}-R_{4},
$$
where
$R_{i} =
\bd{B}_{n}T_{i}$ for $i=1,2,3,4$
and
$\bd{B}_{n}=\sqrt{n}\bd{A}_{n}(\hat{\bd{\Sigma}}_{n}^{1/2}-\bd{\Sigma}_{X}^{1/2})T_{0}^{-1}$.
We will show each $R_{i}$ converges to zero in probability,
which finishes the proof.

\OnItem{On $R_{1}$.}
By Lemma \ref{PC.Paper.Lemma.Inequality.Matrix.Square.Root.Differentce},
$
\normF{\hat{\bd{\Sigma}}_{n}^{1/2}-\bd{\Sigma}_{X}^{1/2}}
\leq
(d^{1/2}\normF{\hat{\bd{\Sigma}}_{n}-\bd{\Sigma}_{X}})^{1/2}
$.
Then,
\begin{align*}
\normE{R_{1}}
& \leq
\sqrt{n}
\normF{\bd{A}_{n}}
\normF{\hat{\bd{\Sigma}}_{n}^{1/2}-\bd{\Sigma}_{X}^{1/2}}
\normF{T_{0}^{-1}}
\normE{T_{1}} \\
& \leq
\sqrt{n} d
\normF{\bd{A}_{n}}
(\normFd{\hat{\bd{\Sigma}}_{n}-\bd{\Sigma}_{X}})^{1/2}
\normFd{T_{0}^{-1}}
\normE{T_{1}}.
\end{align*}
By Assumption (F),
$\normF{\bd{A}_{n}}$
is bounded.
By Lemma \ref{PC.Paper.Sample.Covariance.Matrix.Convergence.d.infty},
$\normFd{\hat{\bd{\Sigma}}_{n}-\bd{\Sigma}_{X}}=o_{P}(1)$
for $d=o(n^{1/3})$.
By Lemmas
\ref{PC.Paper.Inverse.Matrix.Convergence.Probability.d.Infity}
and
\ref{PC.Paper.Sample.Covariance.Matrix.Convergence.d.infty}
and Assumption (D),
for $d=o(n^{1/3})$, wpg1,
$\normFd{T_{0}^{-1}}$
is bounded.
We have, wpg1,
$
\normE{T_{1}}
\leq
\frac{1}{n}s_{2}\kappa_{n}\gamma_{n}.
$
Then,
$
\normE{R_{1}}
\lesssim
\frac{1}{\sqrt{n}}s_{2}d\kappa_{n}\gamma_{n}$.
Thus,
$\normE{R_{1}}=o_{P}(1)$
for $s_{2}=o(\sqrt{n}/(d\kappa_{n}\gamma_{n}))$.

\OnItem{On $R_{2}$.}
We have
$$
\normE{R_{2}}
\leq
\normF{\bd{A}_{n}}
d(\log(d))^{1/2}
\normF{\hat{\bd{\Sigma}}_{n}^{1/2}-\bd{\Sigma}_{X}^{1/2}}
\normFd{T_{0}^{-1}}
(d\log(d))^{-1/2}
\normE{\sqrt{n}T_{2}},
$$
and
\begin{align*}
d(\log(d))^{1/2}
\normF{\hat{\bd{\Sigma}}_{n}^{1/2}-\bd{\Sigma}_{X}^{1/2}}
& \leq
(d^{5/2}\log(d)\normF{\hat{\bd{\Sigma}}_{n}-\bd{\Sigma}_{X}})^{1/2}.
\end{align*}
By Assumption (F), $\normF{\bd{A}_{n}}$ is $O(1)$;
by Lemma \ref{PC.Paper.Sample.Covariance.Matrix.Convergence.d.infty},
        $d^{5/2}\log(d)\normF{\hat{\bd{\Sigma}}_{n}-\bd{\Sigma}_{X}}=o_{P}(1)$
        for $d^{9}(\log(d))^{2} = o(n)$;
by Lemmas \ref{PC.Paper.Inverse.Matrix.Convergence.Probability.d.Infity} and
        \ref{PC.Paper.Sample.Covariance.Matrix.Convergence.d.infty},
        $d(\log(d))^{1/2}\normF{T_{0}^{-1} - \bd{\bd{\Sigma}}_{X}^{-1}}$ is $o_{P}(1)$
        for $d^{6}\log(d) = o(n)$;
by Lemma \ref{PC.Paper.Scaled.IID.Sum.Convergence.Rate.d.infty},
        $(d\log(d))^{-1/2}\normE{\sqrt{n}T_{2}}=(d\log(d))^{-1/2}\normE{\frac{1}{\sqrt{n}}\mathbb{S}_{s_{1}+1,n}^{\epsilon}}$ is $O_{P}(1)$ for $d=o(\sqrt{n})$.
Thus,
$R_{2}\ConvProb 0$.

\OnItem{On $R_{3}$ and $R_{4}$.}
First consider $R_{3}$.
By noting that $s_{1}=o(\sqrt{n}/(\lambda d\kappa_{n}))$,
wpg1,
\begin{align*}
\normE{R_{3}}
& \leq
d\sqrt{n}
\normF{\bd{A}_{n}}
(\normFd{\hat{\bd{\Sigma}}_{n}^{1/2}-\bd{\Sigma}_{X}^{1/2}})^{1/2}
\normFd{T_{0}^{-1}}
\normE{T_{3}}
\lesssim
d\lambda s_{1}\kappa_{n}/\sqrt{n}
\rightarrow 0.
\end{align*}
Thus, $\normE{R_{3}}=o_{P}(1)$.
In the same way, $\normE{R_{4}}=o_{P}(1)$.
%
\end{proof}

\begin{proofLong}
\begin{proof}
[Long Proof of Corollary \ref{model:three:multivariate:mean:zero:thm:twostage:limitdistribution.general.covariates.estimated.Sigma.errors:d:infty:Paper}]
We reuse the notations on $T_{i}$'s in the proof of Theorems
\ref{thm5.2},
from which,
$$
\sqrt{n}\bd{A}_{n}\hat{\bd{\Sigma}}_{n}^{1/2}(\hat{\bd{\beta}}_{n} - \bd{\beta}^{\star})
= M + R,
$$
where
\begin{align*}
M &= \sqrt{n}\bd{A}_{n}\bd{\Sigma}_{X}^{1/2}(\hat{\bd{\beta}}_{n} - \bd{\beta}^{\star}), \\
R &= \sqrt{n}\bd{A}_{n}(\hat{\bd{\Sigma}}_{n}^{1/2}-\bd{\Sigma}_{X}^{1/2})(\hat{\bd{\beta}}_{n} - \bd{\beta}^{\star}).
\end{align*}
By Theorem
\ref{thm5.3},
$M \ConvDist N(0, \sigma^{2}\bd{G})$.
Then, it is sufficient to show that $R\ConvProb 0$ wrt $\normE{\cdot}$.
We have
$$
R
=R_{1}+R_{2}+R_{3}-R_{4},
$$
where
\begin{align*}
R_{1} &=
\sqrt{n}\bd{A}_{n}(\hat{\bd{\Sigma}}_{n}^{1/2}-\bd{\Sigma}_{X}^{1/2})
T_{0}^{-1}T_{1}, \\
R_{2} &=
\sqrt{n}\bd{A}_{n}(\hat{\bd{\Sigma}}_{n}^{1/2}-\bd{\Sigma}_{X}^{1/2})
T_{0}^{-1}T_{2}, \\
R_{3} &=
\sqrt{n}\bd{A}_{n}(\hat{\bd{\Sigma}}_{n}^{1/2}-\bd{\Sigma}_{X}^{1/2})
T_{0}^{-1}T_{3}, \\
R_{4} &=
\sqrt{n}\bd{A}_{n}(\hat{\bd{\Sigma}}_{n}^{1/2}-\bd{\Sigma}_{X}^{1/2})
T_{0}^{-1}T_{4}.
\end{align*}

Next we show each $R_{i}$ converges to zero in probability.

\lineProof

\OnItem{On $R_{1}$.}
We have
\begin{align*}
\normE{R_{1}}
& =
\normE{\sqrt{n}\bd{A}_{n}(\hat{\bd{\Sigma}}_{n}^{1/2}-\bd{\Sigma}_{X}^{1/2})
T_{0}^{-1}T_{1}} \\
& \leq
\sqrt{n}
\normF{\bd{A}_{n}}
\normF{\hat{\bd{\Sigma}}_{n}^{1/2}-\bd{\Sigma}_{X}^{1/2}}
\normF{T_{0}^{-1}}
\normE{T_{1}}.
\end{align*}

By Lemma,
\begin{align*}
\normF{\hat{\bd{\Sigma}}_{n}^{1/2}-\bd{\Sigma}_{X}^{1/2}}
\leq
(d^{1/2}\normF{\hat{\bd{\Sigma}}_{n}-\bd{\Sigma}_{X}})^{1/2}.
\end{align*}

Thus,
\begin{align*}
\normE{R_{1}}
& \leq
\sqrt{n}
\normF{\bd{A}_{n}}
(d^{1/2}\normF{\hat{\bd{\Sigma}}_{n}-\bd{\Sigma}_{X}})^{1/2}
\normF{T_{0}^{-1}}
\normE{T_{1}} \\
& =
\sqrt{n} d
\normF{\bd{A}_{n}}
(\normFd{\hat{\bd{\Sigma}}_{n}-\bd{\Sigma}_{X}})^{1/2}
\normFd{T_{0}^{-1}}
\normE{T_{1}}.
\end{align*}

By Assumption (F)
$\normF{\bd{A}_{n}}$
is bounded.

By Lemma \ref{PC.Paper.Sample.Covariance.Matrix.Convergence.d.infty},
$\normFd{\hat{\bd{\Sigma}}_{n}-\bd{\Sigma}_{X}}=o_{P}(1)$
for $d=o(n^{1/3})$.

By Lemmas
\ref{PC.Paper.Inverse.Matrix.Convergence.Probability.d.Infity}
and
\ref{PC.Paper.Sample.Covariance.Matrix.Convergence.d.infty}
and Assumption (D),
for $d=o(n^{1/3})$, wpg1,
$\normFd{T_{0}^{-1}}$
is bounded.

We have, wpg1,
\begin{equation*}
\normE{T_{1}}
=
\normE{\frac{1}{n}\sum_{i=s_{1}+1}^{s}\bd{X}_{i}\mu_{i}^{\star}}
\leq
\frac{1}{n}s_{2}\kappa_{n}\gamma_{n}.
\end{equation*}

Then,
\begin{align*}
\normE{V_{1}}
& \lesssim
\frac{1}{\sqrt{n}}s_{2}d\kappa_{n}\gamma_{n},
\end{align*}
where $\lesssim$ means that the left side is bounded by a constant times the right side.

Thus,
$\normE{R_{1}}=o_{P}(1)$
for $s_{2}=o(\sqrt{n}/(d\kappa_{n}\gamma_{n}))$.

\lineProof

\OnItem{On $R_{2}$.}
We have
\begin{align*}
\normE{R_{2}}
& =
\normE{\sqrt{n}\bd{A}_{n}(\hat{\bd{\Sigma}}_{n}^{1/2}-\bd{\Sigma}_{X}^{1/2})T_{0}^{-1}T_{2}} \\
& \leq
\normF{\bd{A}_{n}}
\normF{\hat{\bd{\Sigma}}_{n}^{1/2}-\bd{\Sigma}_{X}^{1/2}}
\normF{T_{0}^{-1}}
\normE{\sqrt{n}T_{2}} \\
& =
\normF{\bd{A}_{n}}
d(\log(d))^{1/2}
\normF{\hat{\bd{\Sigma}}_{n}^{1/2}-\bd{\Sigma}_{X}^{1/2}}
\normFd{T_{0}^{-1}}
(d\log(d))^{-1/2}
\normE{\sqrt{n}T_{2}},
\end{align*}
and
\begin{align*}
d(\log(d))^{1/2}
\normF{\hat{\bd{\Sigma}}_{n}^{1/2}-\bd{\Sigma}_{X}^{1/2}}
& \leq
d(\log(d))^{1/2}
(d^{1/2}\normF{\hat{\bd{\Sigma}}_{n}-\bd{\Sigma}_{X}})^{1/2} \\
& =
(d^{5/2}\log(d)\normF{\hat{\bd{\Sigma}}_{n}-\bd{\Sigma}_{X}})^{1/2}.
\end{align*}

Note that
\begin{itemize}
  \item By Assumption (F), $\normF{\bd{A}_{n}}$ is $O(1)$;
  \item By Lemma \ref{PC.Paper.Sample.Covariance.Matrix.Convergence.d.infty},
        $d^{5/2}\log(d)\normF{\hat{\bd{\Sigma}}_{n}-\bd{\Sigma}_{X}}=o_{P}(1)$
        for $d^{9}(\log(d))^{2} = o(n)$;
  \item by Lemmas \ref{PC.Paper.Inverse.Matrix.Convergence.Probability.d.Infity} and
        \ref{PC.Paper.Sample.Covariance.Matrix.Convergence.d.infty},
        $d(\log(d))^{1/2}\normF{T_{0}^{-1} - \bd{\bd{\Sigma}}_{X}^{-1}}$ is $o_{P}(1)$
        for $d^{6}\log(d) = o(n)$;
  \item by Lemma \ref{PC.Paper.Scaled.IID.Sum.Convergence.Rate.d.infty},
        $(d\log(d))^{-1/2}\normE{\sqrt{n}T_{2}}=(d\log(d))^{-1/2}\normE{\frac{1}{\sqrt{n}}\mathbb{S}_{s_{1}+1,n}^{\epsilon}}$ is $O_{P}(1)$ for $d=o(\sqrt{n})$.
\end{itemize}
Thus,
$R_{2}\ConvProb 0$.

\lineProof

\OnItem{On $R_{3}$ and $R_{4}$.}
First consider $V_{3}$.
By noting that $s_{1}=o(\sqrt{n}/(\lambda d\kappa_{n}))$,
wpg1,
\begin{align*}
\normE{R_{3}}
& =
\normE{\sqrt{n}\bd{A}_{n}(\hat{\bd{\Sigma}}_{n}^{1/2}-\bd{\Sigma}_{X}^{1/2})
T_{0}^{-1}T_{3}} \\
& \leq
\sqrt{n}
\normF{\bd{A}_{n}}
\normF{\hat{\bd{\Sigma}}_{n}^{1/2}-\bd{\Sigma}_{X}^{1/2}}
\normF{T_{0}^{-1}}
\normE{T_{3}} \\
& \leq
\sqrt{n}
\normF{\bd{A}_{n}}
(d^{1/2}\normF{\hat{\bd{\Sigma}}_{n}-\bd{\Sigma}_{X}})^{1/2}
\normF{T_{0}^{-1}}
\normE{T_{3}} \\
& =
d\sqrt{n}
\normF{\bd{A}_{n}}
(\normFd{\hat{\bd{\Sigma}}_{n}^{1/2}-\bd{\Sigma}_{X}^{1/2}})^{1/2}
\normFd{T_{0}^{-1}}
\normE{T_{3}} \\
& \lesssim
d\sqrt{n}\lambda s_{1}\kappa_{n}/n
=
d\lambda s_{1}\kappa_{n}/\sqrt{n}
\rightarrow 0.
\end{align*}
Thus, $\normE{R_{3}}=o_{P}(1)$.
In the same way, $\normE{R_{4}}=o_{P}(1)$.

Therefore, the result of the theorem follows
by Slutsky's lemma.
\end{proof}
\end{proofLong}


\end{document}